\setlist[itemize]{leftmargin=25pt}
\setlist[enumerate]{leftmargin=25pt}
\newtheorem{theorem}{Theorem}
\newtheorem{corollary}[theorem]{Corollary}
\newtheorem{lemma}[theorem]{Lemma}
\newtheorem{proposition}[theorem]{Proposition}
\theoremstyle{remark}
\newtheorem{remark}[theorem]{Remark}
\theoremstyle{definition}
\newtheorem{definition}[theorem]{Definition}
\newtheorem{example}[theorem]{Example}
\numberwithin{theorem}{section}
\numberwithin{equation}{section}
\def\N{{\mathbb N}}
\def\Z{{\mathbb Z}}
\def\R{{\mathbb R}}
\def\C{{\mathbb C}}
\def\T{{\mathbb T}}
\renewcommand{\P}{{\mathbb P}}
\renewcommand{\vec}[1]{\bm{#1}}
\newcommand{\Dom}{\mathcal{O}}
\newcommand{\norm}[1]{\nrm{#1}}
\newcommand{\CO}{C}
\DeclareFontFamily{U}{mathx}{\hyphenchar\font45}
\DeclareFontShape{U}{mathx}{m}{n}{<5> <6> <7> <8> <9> <10> <10.95> <12> <14.4> <17.28> <20.74> <24.88> mathx10}{}
\DeclareSymbolFont{mathx}{U}{mathx}{m}{n}
\DeclareMathAccent{\widecheck}{0}{mathx}{"71}
\newcommand{\mbb}{\mathbb}
\newcommand{\mc}{\mathcal}
\newcommand{\ms}{\mathscr}
\newcommand{\mf}{\mathfrak}
\newcommand{\mrm}{\mathrm}
\DeclarePairedDelimiter\abs{\lvert}{\rvert}
\DeclarePairedDelimiter\brac[]
\DeclarePairedDelimiter\cbrace\{\}
\DeclarePairedDelimiter\ha()
\DeclarePairedDelimiter{\ip}\langle\rangle
\DeclarePairedDelimiter{\nrm}\lVert\rVert
\newcommand{\nrmb}[1]{\bigl\|#1\bigr\|}
\newcommand{\hab}[1]{\bigl(#1\bigr)}
\newcommand{\cbraceb}[1]{\bigl\{#1\bigr\}}
\newcommand{\ipb}[1]{\bigl\langle#1\bigr\rangle}
\newcommand{\bracb}[1]{\bigl[#1\bigr]}
\newcommand{\nrms}[1]{\Bigl\|#1\Bigr\|}
\newcommand{\abss}[1]{\Bigl|#1\Bigr|}
\newcommand{\has}[1]{\Bigl(#1\Bigr)}
\newcommand{\cbraces}[1]{\Bigl\{#1\Bigr\}}
\newcommand{\bracs}[1]{\Bigl[#1\Bigr]}
\newcommand{\dd}{\hspace{2pt}\mathrm{d}}
\newcommand{\ee}{\mathrm{e}}
\DeclareMathOperator{\KB}{KB}
\DeclareMathOperator{\RNP}{RNP}
\DeclareMathOperator{\re}{Re}
\DeclareMathOperator{\ind}{\mathbf{1}}
\newcommand{\one}{\ind}
\begin{document}

\title[A discrete framework for the interpolation of Banach spaces]{A discrete framework for the interpolation of Banach spaces}

\author{Nick Lindemulder}
\address[N. Lindemulder]{Institute of Analysis \\
Karlsruhe Institute of Technology \\
Englerstra{\ss}e 2 \\
76131 Karlsruhe\\
Germany}
\email{nick.lindemulder@gmail.com}

\author{Emiel Lorist}
\address[E. Lorist]{Delft Institute of Applied Mathematics\\
Delft University of Technology \\ P.O. Box 5031\\ 2600 GA Delft\\The
Netherlands} \email{e.lorist@tudelft.nl}

\subjclass[2020]{Primary: 46B70; Secondary 46M35}


\keywords{interpolation theory, sequence structure, analytic operator family, reiteration}

\thanks{The second author was supported by the Vidi subsidy 639.032.427 of the Netherlands Organisation for Scientific Research (NWO) and the Academy of Finland through grant no. 336323}

\begin{abstract}
We develop a discrete framework for the interpolation of Banach spaces, which contains the well-known real and complex interpolation methods, but also more recent methods like the Rademacher, $\gamma$- and $\ell^q$-interpolation methods. Our framework is based on a sequential structure imposed on a Banach space, which allows us to deduce properties of interpolation methods from properties of sequential structures.

Our framework has a formulation modelled after both the real and the complex interpolation methods. This enables us to extend various results, previously known only for either the real or the complex interpolation method, to all interpolation methods that fit into our framework. As applications, we prove an interpolation result for analytic operator families and an interpolation result for intersections.
\end{abstract}

\maketitle
\setcounter{tocdepth}{1}
{
\hypersetup{linkcolor=black}
\tableofcontents
}

\section{Introduction}
Interpolation of bounded linear operators on Banach spaces is a widely used technique in analysis, in which key roles are played by the real and complex interpolation methods.
Besides these well-known methods, there is a wealth of other interpolation methods used in applications. Our main goal is to develop an interpolation method for Banach spaces that are equipped with a space of sequences, which is motivated by applications of the Rademacher interpolation method \cite{KKW06}, $\gamma$-interpolation method \cite{SW06} and $\ell^q$-interpolation method \cite{Ku15} to the study of partial differential equations. Before turning to our abstract framework, we will first explain these motivating applications.

\subsection{New interpolation methods for partial differential equations}
The scales of Besov spaces and Triebel-Lizorkin spaces play prominent roles in function space theory and its applications to the study of partial differential equations (PDEs). Many classical spaces, such as Sobolev spaces and variants with fractional smoothness, are contained in them.
As both scales admit a description through Littlewood-Paley decompositions, their theories have many similarities.
However, there are also major differences.
Whereas the Besov space $B^{s}_{p,q}(\R^d)$ arises as the real interpolation space between  the Lebesgue space $L^p(\R^d)$ and the Sobolev space $W^{m}_{p}(\R^d)$,  the Triebel-Lizorkin space $F^{s}_{p,q}(\R^d)$ is in general not an interpolation space between $L^p(\R^d)$ and $W^{m}_{p}(\R^d)$ in the sense of classical interpolation theory.
This issue was, to some extend, overcome by Kunstmann in \cite{Ku15}, going beyond the realm of classical interpolation.
More explicitly, it was shown that $F^{s}_{p,q}(\R^d)$ can be obtained from $L^p(\R^d)$ and  $W^{m}_{p}(\R^d)$ by the newly introduced $\ell^q$-interpolation method.
The philosophy in \cite{Ku15} is that, roughly speaking, for the scale of Triebel-Lizorkin spaces, the $\ell^q$-method plays the same role as the real interpolation method plays for the scale of Besov spaces.

The $\ell^q$-interpolation method is formally defined for couples of Banach spaces that are equipped with an isometric embedding $J:X \to E$ into some Banach function space $E$. The idea behind this is that one can make sense of expressions like $\norm{(\sum_j|x_j|^q)^{1/q}}_{X}$ through the embedding $J$.
In this setting $\ell^q$-interpolation is, loosely speaking, obtained by modifying the discrete mean method for real interpolation by Lions and Peetre \cite{LP64}, moving the $\ell^q$-sequence space from ``the outside'' to ``the inside''.

Our interest in the $\ell^q$-interpolation method comes from trace theory for parabolic boundary value problems.
It is a classical application of  interpolation theory that in the maximal regularity approach to evolution equations, the space of initial values is given as a real interpolation space.  In the concrete case of maximal $L^p$-$L^q$-regularity for parabolic boundary value problems, this real interpolation space can be identified as Besov space.
The treatment of boundary values gets quite involved on the function space theoretic side in the case that $p \neq q$ (see e.g.\ \cite{DHP07,HL19,Li17b,LV20,PS16,Wei02}),
which is a case that is of crucial importance, as has become apparent through recent advances in the maximal $L^p$-$L^q$-regularity approach to quasi-linear PDEs based on a
novel systematic approach in critical spaces \cite{AV20a,AV20b,HNVW23, PW18,PSW18}.
For e.g.\ the heat equation with Dirichlet boundary condition on a domain $\Dom$ in $\R^d$ and an time interval $J=(0,T)$, the boundary value has to be in the intersection space
\begin{equation}\label{eq:intro:space_BV}
F^{\delta}_{p,q}(J;L^q(\partial\mathscr{O})) \cap L^p(J;F^{2\delta}_{q,q}(\partial\mathscr{O})), \qquad \delta = 1-\frac{1}{2q}.
\end{equation}
The appearances of Triebel-Lizorkin spaces suggest a connection between $\ell^q$-interpolation and the space of boundary values in analogy to the treatment of initial values.

However, in the development of the theory of $\ell^q$-interpolation, it turns out that the setting of Banach spaces equipped with an isometric embedding into a Banach function space as in \cite{Ku15} has some limitations.
It is, for instance, not broad enough to work with Littlewood-Paley decompositions, spaces on domains (defined as quotients) and anisotropic mixed-norm spaces related to~\eqref{eq:intro:space_BV}.
In these applications the relevant $\ell^q$-space of sequences comes naturally with the given function space and is dependent on the context. Trying to fit these applications into a generalization of $\ell^q$-interpolation seems to make matters unnecessarily complicated.

The issue with Littlewood-Paley decompositions and anisotropic mixed-norm spaces related to \eqref{eq:intro:space_BV} is the that $\ell^q$-space should not be placed on ``the inside'', but ``somewhere in between''. More specifically, given an inhomogeneous Littlewood-Paley decomposition $\varphi = (\varphi_{n})_{n \in \N}$ of $\R^d$, the Sobolev space $W^m_p(\R^d)$ admits the corresponding Littlewood-Paley decomposition
\begin{align*}
\nrm{f}_{W^m_p(\R^d)} &\eqsim_{p,d,m} \nrm{(2^{nm}\varphi_n*f)_{n \in \N}}_{L^{p}(\R^d;\ell^2(\N))}.
\end{align*}
This decomposition provides an isomorphic embedding into the Banach function space $L^{p}(\R^d;\ell^2(\N))$ and the natural $\ell^q$-structure $L^{p}(\R^d;\ell^2(\N;\ell^q(\Z)))$ induces an $\ell^q$-structure on $W^m_p(\R^d)$ through this embedding.
However, in this way one does not obtain the natural $\ell^q$-structure $W^m_p(\R^d;\ell^q(\Z))$.
Indeed, for $q \in (1,\infty)$, the Littlewood-Paley decomposition of the vector-valued Sobolev space $W^m_p(\R^d;\ell^q(\Z))$ takes the form
\begin{align*}
\nrm{F}_{W^m_p(\R^d;\ell^q(\Z))} &\eqsim_{p,d,m,q} \nrm{(2^{nm}\varphi_n*F)_{n \in \N}}_{L^{p}(\R^d;\ell^q(\Z;\ell^2(\N)))}.
\end{align*}
This illustrates that in order to exploit the power of harmonic analysis, one needs to leave the current framework of $\ell^q$-interpolation from \cite{Ku15}.

\bigskip

The above suggests to build the $\ell^q$-interpolation framework in a setting of Banach spaces equipped with an $\ell^q$-space of sequences, instead of having the construction of the sequence space built into the framework.
It turns out that, for a major part of the theory, the space of sequences does not even need to resemble some kind of $\ell^q$-structure, which opens the door for us to set up a very general theory of interpolation.

The advantage of such a general theory of interpolation is that it allows us to develop  other modern interpolation methods, such as the Rademacher and $\gamma$-methods, simultaneously.
These two methods are closely connected to the $H^\infty$-calculus of sectorial operators \cite{KKW06,KLW19}.
This functional calculus can be thought of as an extension of the spectral theory of self-adjoint operators on Hilbert spaces. It was initially developed by McIntosh
and collaborators \cite{CDMY96,Mc86}, motivated by the longstanding Kato square root problem, which was
eventually solved in \cite{AHLT02} using techniques inspired by the $H^\infty$-calculus.
For a general introduction to the $H^\infty$-calculus we refer the reader to \cite{HNVW17,We06}.

Verifying the boundedness of the  $H^\infty$-calculus for concrete operators can get quite involved.
A powerful tool for this are comparison principles, which allow one to transfer the property of having a bounded $H^\infty$-calculus from one sectorial operator to another. Such principles, based on the Rademacher and $\gamma$-interpolation methods, have been developed in e.g.\ \cite{KKW06,KLW19, KW17}.

\subsection{The sequentially structured interpolation method}
Motivated by the preceding discussion, we will develop a method for the interpolation of Banach spaces that are equipped with a space of sequences.
This will allow us to simultaneously develop the Rademacher method, $\gamma$-method and $\ell^q$-method and its variants. Moreover, we will treat known results for the real and complex interpolation methods in a unified fashion.

The  starting point for our framework is the discrete mean method for real interpolation by Lions and Peetre.
In order to describe their method, let $(X_0,X_1)$ be a compatible couple of Banach spaces and $\theta \in (0,1)$.  In \cite{LP64} Lions and Peetre introduced the real interpolation spaces $(X_0,X_1)_{\theta,p_0,p_1}$ for $p_0,p_1 \in [1,\infty]$ as the space of all $x \in X_0+X_1$ such that
\begin{equation}\label{eq:realmeanmethod}
  \nrm{x}_{(X_0,X_1)_{\theta,p_0,p_1}} =   \inf \,\max_{j=0,1} \,\nrmb{(\ee^{k(j-\theta)} x_k)_{k\in \Z} }_{\ell^{p_j}(\Z;X_j)}<\infty,
\end{equation}
where the infimum is taken over all  sequences $(x_k)_{k \in \Z}$ in $X_0 \cap X_1$  such that $\sum_{k \in \Z} x_k = x$ with convergence in $X_0+X_1$. These spaces are isomorphic to the real interpolation spaces $(X_0,X_1)_{\theta,p}$, defined using the K-functional, where $\frac1p = \frac{1-\theta}{p_0}+\frac{\theta}{p_1}$.
In this paper we will study the spaces defined by the right hand-side of \eqref{eq:realmeanmethod}, in which we replace the sequence spaces $\ell^{p_j}(\Z;X_j)$ by a \emph{sequence structure} $\mf{S}_j$ for $j=0,1$.

 A sequence structure $\mf{S}$ on a Banach space $X$ is a translation invariant Banach space of $X$-valued sequences such that
\begin{equation*}
  \ell^1(\Z;X) \hookrightarrow \mf{S} \hookrightarrow \ell^\infty(\Z;X)
\end{equation*}
contractively. Given a sequence structure $\mf{S}_j$ on $X_j$, we set $\mc{X}_j:=[X_j,\mf{S}_j]$ for $j=0,1$ and define the space $(\mc{X}_0,\mc{X}_1)_\theta$ as the space of all $x \in X_0+X_1$ for which
\begin{equation*}
  \nrm{x}_{(\mc{X}_0,\mc{X}_1)_\theta} :=    \inf \,\max_{j=0,1} \,\nrmb{(\ee^{k(j-\theta)} x_k)_{k\in \Z} }_{\mf{S}_j}<\infty,
\end{equation*}
where the infimum is taken over all  sequences $(x_k)_{k \in \Z}$ in $X_0 \cap X_1$  such that $\sum_{k \in \Z} x_k = x$ with convergence in $X_0+X_1$.

For specific choices of $\mf{S}_j$ for $j=0,1$, this framework includes, for example,  the following interpolation methods:
\begin{enumerate}[(i)]
  \item The real interpolation method, using the sequence spaces $\ell^{p_j}(\Z;X_j)$.
  \item \label{it:complexintro} The lower and upper complex interpolation methods \cite{Ca64}, using the space of Fourier coefficients of functions in $C(\T;X_j)$ and measures in $\Lambda^{\infty}(\T;X_j)$ respectively.
  \item The Rademacher and $\gamma$-interpolation methods \cite{KKW06,SW06}, using the random sequence spaces $\varepsilon^p(\Z;X_j)$ and $\gamma^p(\Z;X_j)$.
  \item \label{it:ellqintro} The $\ell^q$-interpolation method \cite{Ku15}, using the spaces $X_j(\ell^q(\Z))$.
  \item The $\alpha$-interpolation method \cite{KLW19} for a global Euclidean structure~$\alpha$, using the spaces $\alpha(\Z;X_j)$.
\end{enumerate}
In the literature there exist many works that unify various interpolation methods, see, for example,
 the generalized interpolation spaces by Williams \cite{Wi71}, the minimal and maximal methods of interpolation by Janson \cite{Ja81b},
the method of orbits by Ovchinnikov \cite{Ov84} and
the general real interpolation method (see \cite{BK91, BS88} and the references therein).
The closest work to our approach is the unified framework for commutator estimates by Cwikel, Kalton, Milman and Rochberg \cite{CKMR02}, for which a detailed comparison will be given in Remark \ref{remark:CKMR02}.
A major difference between these prior works and our framework is that our assumptions do not necessarily give rise to an interpolation functor in the classical sense. This relaxation from prior works allows us to include e.g. the $\ell^q$-interpolation method and related methods in our framework, which is crucial for future applications to trace theory for parabolic boundary value problems.

One of the merits of our approach is that properties of all interpolation methods fitting in our framework can be studied simultaneously. Questions regarding e.g. density of $X_0 \cap X_1$, interpolation of operators, duality, embeddings between different interpolation methods, reiteration and change of basis are reduced to properties of the associated sequence structures (see Sections \ref{section:ssi},   \ref{section:interpolationofoperators} and \ref{section:reiteration}).
 While these results are well-known for the real and complex interpolation methods, our general theorems provide a wealth of new results for e.g. the less thoroughly developed Rademacher, $\gamma$-,  $\ell^q$- and $\alpha$-interpolation methods. Moreover, some of these results were phrased as open problems for the method developed in \cite{CKMR02}, see \cite[p.662]{Ka16b}.

Our framework also explains quite clearly the need for additional assumptions in certain results for concrete interpolation methods. For example, the real interpolation method is self-dual for any compatible couple of Banach spaces (duality of the \emph{sequence} spaces $\ell^p(\Z;X)$), whereas one needs an additional geometric assumption for the complex interpolation method to be self-dual (duality of the \emph{function} spaces $C(\T;X)$).

\bigskip

As we noted before, the complex interpolation method fits into our framework by using e.g. the space of Fourier coefficients of functions in $C(\T;X_j)$. While this formulation of the complex interpolation method is well-known (see Cwikel \cite{Cw78}), it is not the original one introduced by Calder\'on \cite{Ca64}. This raises the question what the relation between the classical formulation of the complex interpolation method and our framework is. It turns out that our framework admits a complex formulation, which yields a complex formulation of all previously mentioned interpolation methods (see Section \ref{sec:complex_formulations}).
 This in particular means that, from our viewpoint, the real and complex interpolation methods are not inherently real or complex.
 These interpolation methods are rather living on opposite sides
of the Fourier transform.

Since our interpolation framework admits a real and complex formulation, results that were previously only known for either the real or the complex method, can now be extended to all interpolation methods that fit in our framework. A prime example of this observation is the fact that we are able to deduce a version of the interpolation of analytic families of operators of Stein \cite{St56} for our interpolation framework (see Theorem \ref{theorem:Steininterpolation1}). This theorem is well-known for the complex interpolation method and was proven for the $\gamma$-interpolation method in \cite{SW06}. For the specific case of real interpolation, we used similar ideas in a continuous setting in \cite{LL21c}. We remark that for the interpolation framework developed in \cite{CKMR02}, Stein interpolation was phrased as an open problem \cite[p.662]{Ka16b}.

As an example of the generalization of a result only known for the real interpolation method, we will extend Peetre's result on the interpolation of intersections  \cite{Pe74} to our interpolation framework. In particular, we will show that under suitable assumptions on the sequence structures  $\mc{X}$, $\mc{Y}$ and $\mc{Z}$ (see Theorem \ref{thm:interpol_intersection}), one has  $(\mc{X},\mc{Y})_\theta \cap (\mc{X},\mc{Z})_\theta = (\mc{X},\mc{Y} \cap \mc{Z})_\theta$. In the specific case of $\ell^q$-interpolation, this result yields intersection representations for Triebel--Lizorkin spaces.

\subsection{Open questions}
Besides the basic properties of our sequentially structured interpolation method in Section \ref{section:ssi}, the selection of topics in interpolation theory covered in this article
is based on the application of our theory to trace theory of parabolic boundary value problems and $H^\infty$-calculus. There are of course many other topics that would be interesting to study
in our framework. A non-exhaustive list of such topics is given below.
\begin{itemize}
  \item In \cite{CKMR02} Cwikel, Kalton, Milman and Rochberg build their interpolation framework to study commutator estimates. Studying such estimates in our setting, and thus obtaining commutator estimates for e.g. the $\ell^q$-interpolation method, could be quite interesting.
  \item In \cite{Sn73} {\v{S}ne\u{\i}berg} proved the stability of the invertibility of operators on the complex interpolation scale. His result was proven on the real interpolation scale by Zafran \cite{Za80}. In \cite{AKM20} this result was extended to the general interpolation framework of \cite{CKMR02}. It would be interesting to study these results in our setting as well.
  \item Wolff's reiteration theorem \cite{Wo82} (see also \cite{JNP84}) roughly states that if $X_1,X_2,X_3,X_4$ are Banach spaces such that
      \begin{itemize}
        \item $X_2$ is a real or complex interpolation space between $X_1$ and $X_3$,
        \item $X_3$ is a real or complex interpolation space between $X_2$ and $X_4$,
      \end{itemize}
      then $X_2$ and $X_3$ are also real, respectively complex, interpolation spaces between $X_1$ and $X_4$. A sequentially structured proof of this theorem could provide new insights in this area.
  \item If an operator $T$ is compact from $X_0$ to $Y_0$ and bounded from $X_1$ to $Y_1$, one may wonder whether $T$ is also compact from an intermediate space between $X_0$ and $X_1$ to an intermediate space between $Y_0$ and $Y_1$. For the real interpolation method this was answered affirmatively by Cwikel \cite{Cw92}, with an alternative proof by Cobos, K\"uhn and Schonbek \cite{CBS92}. For the complex interpolation method this is a long standing open problem, for which a breakthrough partial solution was given by Cwikel and Kalton \cite{CK95}. A, probably very hard, open question is whether one could obtain such an interpolation of compactness result in our framework.
\end{itemize}

\subsection{Structure}
\begin{itemize}
  \item In Section \ref{section:sequencestructures} we will introduce sequence structures and some of their basic properties.
  \item In Section \ref{section:ssi} we introduce the sequentially structured interpolation method and study all the basic properties, i.e. density of $X_0 \cap X_1$, equivalent norms, duality, embeddings between different interpolation methods and changes of basis. Most of the proofs are adaptations from the corresponding results for the real interpolation method. To make this section suitable for a first introduction to interpolation theory, we include full details.
  \item Section \ref{sec:complex_formulations} is our first section with major, new results. It contains complex formulations of the sequentially structured interpolation method modelled after both Calder\'on's lower and upper complex interpolation methods.
  \item In Section \ref{section:interpolationofoperators} we discuss the interpolation of operators and analytic operator families using the sequentially structured interpolation method.
  \item In Section \ref{section:reiteration} we give a general reiteration theorem for the sequentially structured interpolation method, which we make more concrete for the real, complex and $\gamma$-interpolation methods.
  \item In Section \ref{section:intersections} we generalize a result by Peetre on the interpolation of intersections  to our interpolation framework.
\end{itemize}
Results for specific interpolation methods, like the Rademacher, $\gamma$- and $\ell^q$-interpolation methods, will typically be put in examples and are scattered throughout the text. The examples should therefore not be overlooked and, in some sense, contain the main concrete results of this paper.

\subsection{Notation and conventions}
We denote by $\T$ the one-dimensional torus
$$
\T =  \faktor{\R}{2\pi\Z} \simeq S^1=\cbraceb{ e^{it} : t \in [-\pi,\pi)} \subseteq \C,
$$
which we often identify with $[-\pi,\pi)$ equipped with the Lebesgue measure.

For a Banach space $X$ we denote by $\ell^0(\Z;X)$ the space of all $X$-valued sequences $\vec{x}=(x_k)_{k \in \Z}$ equipped with the topology of pointwise convergence. The subspace of $\ell^0(\Z;X)$ consisting of all finitely nonzero sequences is denoted by $c_{00}(\Z;X)$.

For two topological vector spaces $X$ and $Y$, we will write $X=Y$ to state that $X$ and $Y$ are isomorphic, unless explicitly specified otherwise.

Given a normed space $X$ that is a linear subspace of a vector space $\mf{X}$, we will view the norm $\nrm{\,\cdot\,}_{X}$ on $X$ as an extended norm on $\mf{X}$ by setting $\nrm{x}_{X}=\infty$ for $x \in \mf{X} \setminus X$.

 By $\lesssim_{a,b,\ldots}$ we mean that there is a constant $C>0$ depending on $a$, $b$, $\ldots$ such that inequality holds and by $\eqsim_{a,b,\ldots}$ we mean that $\lesssim_{a,b,\ldots}$ and $\gtrsim_{a,b,\ldots}$ hold.

\section{Sequence structures}\label{section:sequencestructures}

A \emph{sequence structure} on a Banach space $X$ is a Banach space $\mf{S} \subseteq \ell^0(\Z;X)$ that is translation invariant and satisfies
\begin{equation}\label{eq:ell_1_infty_sandwich}
  \ell^1(\Z;X) \hookrightarrow \mf{S} \hookrightarrow \ell^\infty(\Z;X)
\end{equation}
contractively. Equivalently, $\mf{S}$ is a sequence structure on $X$ if
the following hold:
\begin{align}
\label{eq:sequence1}   \nrm{(\ldots,0,x,0,\ldots)}_{\mf{S}} &= \nrm{x}_X, && x \in X,\\
\label{eq:sequence2}   \nrm{(x_{k+n})_{k \in \Z}}_{\mf{S}} &= \nrm{\vec{x}}_{\mf{S}}, && \vec{x} \in \mf{S}, \,n \in \Z,\\
\label{eq:sequence3}  \nrm{x_n}_X & \leq \nrm{\vec{x}}_{\mf{S}}, && \vec{x} \in \mf{S}, \, n \in \Z.
\end{align}
The pair $\mc{X}=[X,\mf{S}]$ is called a \emph{{sequentially} structured Banach space}.
In the rest of the paper, we will use  the shorthand notation $\mc{X}$, $\mc{Y}$ and $\mc{Z}$ to denote the sequentially structured Banach spaces $\mc{X}=[X,\mf{S}]$, $\mc{Y}=[Y,\mf{T}]$ and $\mc{Z}=[Z,\mf{U}]$, respectively, with a similar convention for indexed variants.

For each $n \in \N$ we define the \emph{Ces\`aro operator} $\CO_n$ on $\ell^0(\Z;X)$ by
\begin{equation}\label{eq:Cesaro_operator}
\CO_n\vec{x} := \frac{1}{n+1}\sum_{m=0}^n (\ldots, 0,x_{-m},\ldots,x_m,0,\ldots), \qquad \vec{x} \in \ell^0(\Z;X).
\end{equation}
If we have 
\begin{align}
\label{eq:sequence4b}\sup_{n \in \N}\, \nrm{\CO_n\vec{x}}_{\mf{S}}\leq \nrm{\vec{x}}_{\mf{S}}, \qquad \vec{x} \in \mf{S},
\end{align}
we call $\mf{S}$ (and $\mc{X}$) \emph{Ces\`aro bounded} and if we additionally have
\begin{align}
\label{eq:sequence4}\lim_{n \to \infty} \CO_n\vec{x}=\vec x, \qquad \vec{x} \in \mf{S},
\end{align}
we call $\mf{S}$ (and $\mc{X}$)  \emph{Ces\`aro convergent}. Note that \eqref{eq:sequence4b} and \eqref{eq:sequence4} hold in particular if
\begin{align}\label{eq:sequence5b}
\sup_{n \in \N} \,\nrmb{(\ldots, 0,x_{-n},\ldots,x_n,0,\ldots) }_{\mf{S}} &\leq \nrm{\vec{x}}_{\mf{S}},  &&\vec{x} \in \mf{S},\\
\lim_{n \to \infty} (\ldots, 0,x_{-n},\ldots,x_n,0,\ldots) &= \vec x,  &&\vec{x} \in \mf{S},\label{eq:sequence5}
\end{align}
respectively. In most concrete examples we will be able to check \eqref{eq:sequence5b} and \eqref{eq:sequence5}, but we use the slightly more general assumptions \eqref{eq:sequence4b} and \eqref{eq:sequence4} to make our results applicable to the complex interpolation method.

 If $\mf{S}$ is a Ces\`aro convergent sequence structure, then  $c_{00}(\Z;X)$ is dense in $\mf{S}$. Conversely we have the following:

\begin{lemma}\label{lemma:c00dense}
  Let $\mf{S}$ be an  Ces\`aro bounded sequence structure on a Banach space $X$ and suppose that $c_{00}(\Z;X)$ is dense in $\mf{S}$. Then $\mf{S}$ is Ces\`aro convergent.
\end{lemma}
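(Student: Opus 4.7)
The plan is a standard density argument: a uniformly bounded net of operators that converges pointwise on a dense subspace automatically converges pointwise on the whole space. The Ces\`aro boundedness assumption \eqref{eq:sequence4b} says precisely that $\CO_n\colon \mf{S} \to \mf{S}$ is contractive for every $n \in \N$, and the hypothesis gives density of $c_{00}(\Z;X)$ in $\mf{S}$. Thus it suffices to verify the Ces\`aro convergence \eqref{eq:sequence4} for $\vec{x} \in c_{00}(\Z;X)$ and then transfer it to arbitrary $\vec{x} \in \mf{S}$ by a $3\varepsilon$-argument.

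For the first step, fix $\vec{x} \in c_{00}(\Z;X)$ and pick $N \in \N$ such that $x_k = 0$ for $\abs{k} > N$. Abbreviate $\vec{x}^{(m)} := (\ldots, 0, x_{-m}, \ldots, x_m, 0, \ldots)$, so that $\CO_n\vec{x} = \frac{1}{n+1}\sum_{m=0}^n \vec{x}^{(m)}$ and $\vec{x}^{(m)} = \vec{x}$ for all $m \geq N$. Hence, for $n \geq N$,
\[
\CO_n\vec{x} - \vec{x} \;=\; \frac{1}{n+1}\sum_{m=0}^{n}\bigl(\vec{x}^{(m)} - \vec{x}\bigr) \;=\; \frac{1}{n+1}\sum_{m=0}^{N-1}\bigl(\vec{x}^{(m)} - \vec{x}\bigr).
\]
Each $\vec{x}^{(m)}$ and $\vec{x}$ lies in $\ell^1(\Z;X) \hookrightarrow \mf{S}$ by \eqref{eq:ell_1_infty_sandwich}, so the sum on the right has some finite $\mf{S}$-norm $M_{\vec{x}}$ that is independent of $n$. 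This yields $\nrm{\CO_n \vec{x} - \vec{x}}_{\mf{S}} \leq M_{\vec{x}}/(n+1) \to 0$.

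For the second step, given $\vec{x} \in \mf{S}$ and $\varepsilon > 0$, choose $\vec{y} \in c_{00}(\Z;X)$ with $\nrm{\vec{x} - \vec{y}}_{\mf{S}} < \varepsilon$. Using \eqref{eq:sequence4b},
\[
\nrm{\CO_n \vec{x} - \vec{x}}_{\mf{S}} \leq \nrm{\CO_n(\vec{x} - \vec{y})}_{\mf{S}} + \nrm{\CO_n \vec{y} - \vec{y}}_{\mf{S}} + \nrm{\vec{y} - \vec{x}}_{\mf{S}} \leq 2\varepsilon + \nrm{\CO_n \vec{y} - \vec{y}}_{\mf{S}},
\]
and the last term is below $\varepsilon$ for $n$ large by the previous step. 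No step is genuinely hard here; the only subtlety worth flagging is that one should not try to bound $\nrm{\vec{x}^{(m)}}_{\mf{S}}$ by $\nrm{\vec{x}}_{\mf{S}}$, since without the stronger symmetric truncation bound \eqref{eq:sequence5b} this need not hold. In the $c_{00}$ case, however, the crude estimate via $\ell^1(\Z;X) \hookrightarrow \mf{S}$ is more than enough, and in the general case the density hypothesis together with Ces\`aro boundedness carries the day.
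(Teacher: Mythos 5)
Your proof is correct and follows the same route as the paper: Ces\`aro boundedness gives uniform contractivity of the $\CO_n$, convergence is checked on the dense subspace $c_{00}(\Z;X)$, and a standard $3\varepsilon$-argument transfers it to all of $\mf{S}$. The paper merely asserts the convergence on $c_{00}(\Z;X)$ without detail, whereas you supply the (correct) $O(1/(n+1))$ estimate explicitly.
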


\begin{proof}
  As $\mf{S}$ is  Ces\`aro bounded, we have
  $\nrm{\CO_n}_{\mf{S} \to \mf{S} }\leq 1$ for all $n \in \N$. Moreover note that $\CO_n\vec{x} \to \vec{x}$ as $n \to \infty$ for all $\vec{x} \in c_{00}(\Z;X)$, so the lemma follows by density.
\end{proof}

For a sequence structure $\mf{S}$ on a Banach space $X$ and $a \in (0,\infty)$ we define the weighted space $\mf{S}(a) \subseteq \ell^0(\Z;X)$ as the Banach space
\begin{align*}
\mf{S}(a) &:= \left\{ \vec{x} \in \ell^{0}(\Z;X) : (a^k x_k)_{k \in \Z} \in \mf{S} \right\}
\end{align*}
with norm $ \nrm{\vec{x}}_{\mf{S}(a)}
:= \nrm{(a^k x_k)_{k \in \Z}}_{\mf{S}}.$
Note that $\mf{S}$ satisfies \eqref{eq:sequence4b} or \eqref{eq:sequence4} if and only if $\mf{S}(a)$ satisfies \eqref{eq:sequence4b} or \eqref{eq:sequence4}, respectively.

We will now give some examples of sequence  structures.
\begin{example}\label{example:sequencestructures} Let $X$ be a Banach space.
  \begin{enumerate}[(i)]
    \item \label{it:ssreal} Let $p \in [1,\infty]$. Then $\ell^p(\Z;X)$ is a sequence structure on $X$, which is Ces\`aro convergent if $p<\infty$ and Ces\`aro bounded if $p=\infty$.
    \item \label{it:sscomplex} Let $p \in [1,\infty)$ and define
    \begin{equation*}
      \widehat{L}^p(\T;X):= \cbraceb{\widehat{f}: f \in L^p(\T;X)},
    \end{equation*}
        with norm
\begin{equation*}
  \nrm{\widehat{f}\hspace{2pt}}_{\widehat{L}^p(\T;X)} := \frac{1}{(2\pi)^{1/p}}\nrm{f}_{L^p(\T;X)},
\end{equation*}
    where we use the $2\pi$-periodic Fourier transform
    \begin{equation*}
  \widehat{f}(k):= \frac{1}{2 \pi} \int_{\T} f(t)\ee^{-ikt}\dd t, \qquad  k \in \Z.
\end{equation*}
    Then $\widehat{L}^p(\T;X)$ is a Ces\`aro convergent sequence structure on $X$. Similarly, we define the Ces\`aro convergent sequence structure $\widehat{C}(\T;X)$ on $X$ as the space of all sequences $\widehat{f}$ for $f \in C(\T;X)$ with norm
$
  \nrm{\widehat{f}\hspace{2pt}}_{\widehat{C}(\T;X)} := \nrm{f}_{C(\T;X)}.
$
    \item \label{it:sscomplexdual} Let $p \in [1,\infty]$ and let $\Lambda^p(\T;X)$ be the space of vector-valued measures $\nu$ such that the Radon--Nikod\'ym derivative of $\abs{\nu}$ with respect to the Lebesgue measure is in $L^p(\T)$ (see \cite[Chapter 2]{Pi16} for an introduction).
    We define
    \begin{equation*}
      \widehat{\Lambda}^p(\T;X):= \cbraceb{\widehat{f}: f \in \Lambda^p(\T;X)},
    \end{equation*}
        with norm
\begin{equation*}
  \nrm{\widehat{\mu}\hspace{2pt}}_{\widehat{\Lambda}^p(\T;X)} := \frac{1}{(2\pi)^{1/p}}\nrm{\mu}_{\Lambda^p(\T;X)},
\end{equation*}
    where we use the $2\pi$-periodic Fourier transform
    \begin{equation*}
  \widehat{\mu}(k):= \frac{1}{2\pi} \int_{\T} \ee^{- ikt}\dd \mu(t), \qquad  k \in \Z.
\end{equation*}
    Then $\widehat{\Lambda}^p(\T_;X)$ is an Ces\`aro bounded sequence structure on $X$. 
    \item \label{it:ssrademacher} Let $(\varepsilon_k)_{k \in \Z}$ be a sequence of independent Rademachers on a probability space $(\Omega,\P)$ and fix $p \in [1,\infty)$. Define $\varepsilon^p(\Z;X)$ as the space of all $\vec{x} \in \ell^0(\Z;X)$ such that $\sum_{k\in \Z} \varepsilon_kx_k$ converges in $L^p(\Omega;X)$ with norm
        \begin{equation*}
         \nrm{\vec{x}}_{\varepsilon^p(\Z;X)} :=  \nrmb{\sum_{k\in \Z} \varepsilon_kx_k}_{L^p(\Omega;X)} = \,\sup_{n \in \N}\, \nrmb{\sum_{k = -n}^n \varepsilon_kx_k}_{L^p(\Omega;X)}.
        \end{equation*}
    Then $\varepsilon^p(\Z;X)$ is a Ces\`aro convergent sequence structure on $X$.
 \item \label{it:ssgauss} We define $\gamma^p(\Z;X)$ similarly to $\varepsilon^p(\Z;X)$, with a sequence of independent Gaussians $(\gamma_k)_{k \in \Z}$ instead of Rademachers $(\varepsilon_k)_{k \in \Z}$. Then $\gamma^p(\Z;X)$ is a Ces\`aro convergent sequence structure on $X$.
    \item \label{it:sslattice} Suppose that $X$ is a Banach lattice and let $q \in [1,\infty)$. Define
    $X(\ell^q_+(\Z))$ as the space of  all $\vec{x} \in \ell^0(\Z;X)$ for which the norm
     \begin{equation*}
         \nrm{\vec{x}}_{X(\ell_+^q(\Z))} :=  \sup_{n \in \N} \,\nrms{ \has{\sum_{k=-n}^n \abs{x_k}^q}^{1/q}}_X
        \end{equation*}
        is finite, where $\ha{\sum_{k=-n}^n \abs{x_k}^q}^{1/q}$ is defined through the Krivine calculus (see \cite[Theorem 1.d.1]{LT79}). Then $X(\ell^q_+(\Z))$ is an Ces\`aro bounded sequence structure on $X$. We let $X(\ell^q(\Z))$ be the closure of $c_{00}(\Z;X)$ in  $X(\ell^q_+(\Z))$, which is a Ces\`aro convergent sequence structure on $X$. 

        We define $X(\ell^\infty(\Z))=X(\ell^\infty_+(\Z))$ as the space of  all $\vec{x} \in \ell^0(\Z;X)$ for which the norm
     \begin{equation*}
         \nrm{\vec{x}}_{X(\ell^\infty(\Z))} :=  \sup_{n \in \N} \,\nrms{ \sup_{\abs{k} \leq n} \abs{x_k}}_X
        \end{equation*}
        is finite, which is a Ces\`aro bounded sequence structure on $X$. We let $X(c_0(\Z))$ be the closure of $c_{00}(\Z;X)$ in $X(\ell^\infty(\Z))$, which is a Ces\`aro convergent sequence structure on $X$.
    \item \label{it:sseuclidean} In \cite{KLW19} the notion of a \emph{Euclidean structure} $\alpha$ on $X$ is introduced. The space $\alpha(\Z;X)$ defined in \cite[Section 3.3]{KLW19} is a  Ces\`aro convergent sequence structure on $X$. These spaces are a generalization of the $\gamma^p(\Z;X)$-spaces in \ref{it:ssgauss}.
  \end{enumerate}
\end{example}

\begin{proof}
\ref{it:ssreal} is clear.
 For \ref{it:sscomplex} we note that for $x \in X$
  \begin{equation*}
    \nrm{(\ldots,0,x,0,\ldots)}_{\widehat{L}^p(\T;X)} = \frac{1}{(2\pi)^{1/p}}\nrm{\ind_{\T}\cdot x}_{L^p(\T;X)} = \nrm{x}_X,
  \end{equation*}
  and for  $\vec{x} \in \widehat{L}^p(\T;X)$, $f \in L^p(\T;X)$ such that $\widehat{f} = \vec{x}$, $n \in \Z$ and $\epsilon = \pm 1$, writing $e_n(t) = \ee^{int}$, we have
\begin{align*}
 &\nrm{(x_{k+n})_{k \in \Z}}_{\widehat{L}^p(\T;X)} = \frac{1}{(2\pi)^{1/p}}\nrm{ t\mapsto e_{- n}(t) \cdot f( t)}_{L^p(\T;X)}  = \nrm{\vec{x}}_{\widehat{L}^p(\T;X)},\\
   &\nrm{x_n}_X= \frac{1}{(2\pi)^{1+1/p}}\nrm{ e_n *f}_{L^p(\T;X)}
 \leq \frac{1}{(2\pi)^{1/p}}\nrm{f}_{L^p(\T;X)} = \nrm{\vec{x}}_{\widehat{L}^p(\T;X)}.
\end{align*}
 Moreover, since the Fej\'er kernel is an approximate identity (see e.g. \cite[Proposition 3.1.10]{Gr14a}), \eqref{eq:sequence4} holds. Similar reasoning also shows that $\widehat{C}(\T;X)$ is a Ces\`aro convergent sequence structure and $\widehat{\Lambda}^p(\T;X)$ in \ref{it:sscomplexdual} is an Ces\`aro bounded sequence structure.
 For \ref{it:ssrademacher}, \eqref{eq:sequence1} and \eqref{eq:sequence2} are clear, \eqref{eq:sequence3} follows from \cite[Proposition 6.1.5]{HNVW17} and \eqref{eq:sequence5} is a consequence of the convergence of $\sum_{k\in \Z} \varepsilon_kx_k$ in $L^p(\Omega;X)$. \ref{it:ssgauss} follows similarly.
The spaces in \ref{it:sslattice} are Banach spaces by \cite[Section 1.d]{LT79}, the sequence structure properties and \eqref{eq:sequence4b} are clear and \eqref{eq:sequence4} follows from Lemma \ref{lemma:c00dense}. Finally, \ref{it:sseuclidean} is discussed in detail in \cite{KLW19}.
 \end{proof}

\section{Sequentially structured interpolation}\label{section:ssi}
 We call a pair of Banach spaces $(X_0,X_1)$ a \emph{compatible couple} if both $X_0$ and $X_1$ are continuously embedded into a Hausdorff topological vector space $\mf{X}$. We define the Banach spaces
\begin{align*}
  X_0\cap X_1 &:= \cbrace{x \in \mf{X}:x \in X_0 \text{ and } x \in X_1}, \\
  X_0+X_1 &:= \cbrace{x \in \mf{X}: x=x_0+x_1 \text{ with } x_0 \in X_0,x_1 \in X_1},
\end{align*}
with norms
\begin{align*}
  \nrm{x}_{X_0\cap X_1} &:= \max_{j=0,1}\,\nrm{x}_{X_j},\\
  \nrm{x}_{X_0+X_1}&:= \inf\cbraceb{\nrm{x_0}_{X_0}+\nrm{x_1}_{X_1}: x=x_0+x_1, x_0 \in X_0, x_1 \in X_1}.
\end{align*}
We call a Banach space $X$ an \emph{intermediate space} with respect to $(X_0,X_1)$ if we have continuous embeddings
\begin{equation*}
  X_0\cap X_1 \hookrightarrow X \hookrightarrow X_0+X_1.
\end{equation*}
If this is the case we define $X^\circ$ as the closure of $X_0 \cap X_1$ in $X$.

Let $\mc{X}_j=[X_j,\mf{S}_j]$ be sequentially structured Banach spaces for $j=0,1$. We call the pair $(\mc{X}_0,\mc{X}_1)$ a compatible couple of sequentially structured Banach spaces if the pair $(X_0,X_1)$ is a compatible couple of Banach spaces.
We will define a discrete interpolation method for $(\mc{X}_0,\mc{X}_1)$, which is modelled after the mean method for real interpolation by Lions and Peetre \cite{LP64}.

\begin{definition}\label{definition:sequenceinterpolation}
Let $(\mc{X}_0,\mc{X}_1)$ be a compatible couple of sequentially structured Banach spaces and let $\theta \in (0,1)$.
For $x \in X_0+X_1$ we set
\begin{align*}
  \nrm{x}_{(\mc{X}_0,\mc{X}_1)_\theta}
  & := \inf \,\nrm{\vec{x}}_{\mf{S}_0(\ee^{-\theta})\cap \mf{S}_1(\ee^{1-\theta})},
\end{align*}
where the infimum is taken over all  sequences $\vec{x} \in \mf{S}_0(\ee^{-\theta}) \cap \mf{S}_1(\ee^{1-\theta})$ such that $\sum_{k \in \Z} x_k = x$ with convergence in $X_0+X_1$.
We define
\begin{equation*}
  (\mc{X}_0,\mc{X}_1)_\theta := \cbraceb{x \in X_0+X_1:\nrm{x}_{(\mc{X}_0,\mc{X}_1)_\theta}< \infty},
\end{equation*}
with norm $\nrm{\,\cdot\,}_{(\mc{X}_0,\mc{X}_1)_\theta}$.
\end{definition}

\begin{remark}\label{remark:automaticconvergenceX0X1}
  Note that the convergence of $\sum_{k \in \Z}x_k$ in $X_0+X_1$ is automatic for $\vec{x} \in \mf{S}_0(\ee^{-\theta}) \cap \mf{S}_1(\ee^{1-\theta})$, since we have by \eqref{eq:sequence1} and \eqref{eq:sequence3} that
\begin{align*}
  \sum_{k \in \Z} \nrm{x_k}_{X_0+X_1} &\leq \sum_{k \in \Z_{\leq 0}} \ee^{k\theta} \nrm{\ee^{-k\theta}x_k}_{X_0} + \sum_{k \in \Z_{>0}} \ee^{-k(1-\theta)} \nrm{\ee^{k(1-\theta)}x_k}_{X_1}\\
  &\leq  \nrm{\vec{x}}_{\mf{S}_0(\ee^{-\theta}) \cap \mf{S}_1(\ee^{1-\theta})}\has{ \sum_{k \in \Z_{\leq 0}}\ee^{k\theta}  +   \sum_{k \in \Z_{>0}} \ee^{-k(1-\theta)}}<\infty.
\end{align*}
\end{remark}

Before turning to the properties of our interpolation method, let us connect this definition to already existing interpolation methods using the sequence structures from Example~\ref{example:sequencestructures}.

\begin{example} \label{example:interpolationmethods}
Let $(\mc{X}_0,\mc{X}_1)$ be a compatible couple of sequentially structured Banach spaces and let $\theta \in (0,1)$.
\begin{enumerate}[(i)]
  \item \label{it:ssireal} If $\mf{S}_j = \ell^{p_j}(\Z;X_j)$ for $j=0,1$ with $p_0,p_1 \in [1,\infty]$, then
  \begin{equation*}
    (\mc{X}_0,\mc{X}_1)_\theta  = (X_0,X_1)_{\theta,p_0,p_1},
  \end{equation*}
where $(X_0,X_1)_{\theta,p_0,p_1}$ is the Lions--Peetre mean method \cite{LP64}. In particular, for $\frac1p = \frac{1-\theta}{p_0}+\frac{\theta}{p_1}$ we have
\begin{equation*}
  (\mc{X}_0,\mc{X}_1)_\theta  =(X_0,X_1)_{\theta,p},
\end{equation*}
where $(X_0,X_1)_{\theta,p}$ denotes the real interpolation method.
\item \label{it:ssicomplex} Let $p_0,p_1 \in [1,\infty]$. If
\begin{equation*}
  \mf{S}_j= \begin{cases}
     \widehat{L}^{p_j}(\T;X_j)&\text{ if $p_j\in [1,\infty)$},\\
     \widehat{C}(\T;X_j) &\text{ if $p_j = \infty$},
  \end{cases}
\end{equation*}
for  $j=0,1$, then
  \begin{equation*}
    (\mc{X}_0,\mc{X}_1)_\theta  = \brac{X_0,X_1}_{\theta},
  \end{equation*}
where $\brac{X_0,X_1}_{\theta}$ denotes the complex interpolation method \cite{Ca64}. If $\mf{S}_j=\widehat{\Lambda}^{p_j}(\T;X_j)$
for  $j=0,1$, then
  \begin{equation*}
    (\mc{X}_0,\mc{X}_1)_\theta  = \brac{X_0,X_1}^{\theta},
  \end{equation*}
where $\brac{X_0,X_1}^{\theta}$ denotes Calder\'on's upper complex interpolation method \cite{Ca64}. We will prove these identities in Examples \ref{example:complexmethod} and  \ref{example:complexmethod;upper}.
\item \label{it:ssirademacher}If $\mf{S}_j = \varepsilon^p(\Z;X_j)$ for $j=0,1$ and $p \in [1,\infty)$, then
\begin{equation*}
  (\mc{X}_0,\mc{X}_1)_\theta  = (X_0,X_1)_{\theta,\varepsilon},
\end{equation*}
where $(X_0,X_1)_{\theta,\varepsilon}$ denotes the Rademacher interpolation method \cite{KKW06}.
\item \label{it:ssigaussian} If $\mf{S}_j = \gamma^p(\Z;X_j)$ for $j=0,1$ and $p \in [1,\infty)$, then
\begin{equation*}
  (\mc{X}_0,\mc{X}_1)_\theta  = (X_0,X_1)_{\theta,\gamma},
\end{equation*}
where $(X_0,X_1)_{\theta,\gamma}$ denotes the $\gamma$-interpolation method \cite{SW06,SW09}.
\item \label{it:ssilattice} Suppose that there is a Banach lattice $E_j$ and an isometric embedding  $J \colon X_j \to E_j$ for $j=0,1$. Let $q_0,q_1\in [1,\infty]$. If  $\mf{S}_j$ is the space of all $\vec{x} \in \ell^0(\Z;X_j)$ such that
    \begin{equation*}
      \nrm{\vec{x}}_{\mf{S}_j} := \nrm{(Jx_k)_{k\in \Z}}_{E_j(\ell^{q_j}(\Z))}<\infty,
    \end{equation*}
then
 \begin{equation*}
  (\mc{X}_0,\mc{X}_1)_\theta  = (X_0,X_1)_{\theta,\ell^{q_0}, \ell^{q_1}},
\end{equation*}
 where $(X_0,X_1)_{\theta,\ell^{q_0},\ell^{q_1}}$ denotes the $\ell^q$-interpolation method introduced in \cite{Ku15}. We define $(X_0,X_1)_{\theta,c_0,c_0}$ and $(X_0,X_1)_{\theta,\ell^{q_0}_+,\ell^{q_1}_+}$ similarly.
\item \label{it:ssiEuclidean} The Rademacher, $\gamma$- and $\ell^q$-interpolation methods were inspired by the $\alpha$-interpolation method for a global Euclidean structure $\alpha$ introduced in \cite{KLW19}, a preprint of which circulated since the beginning of the 2000s. If $\mf{S}_j = \alpha(\Z;X_j)$ for $j= 0,1$, then
    \begin{equation*}
  (\mc{X}_0,\mc{X}_1)_\theta  = (X_0,X_1)_{\theta,\alpha},
\end{equation*}
 where $(X_0,X_1)_{\theta,\alpha}$ denotes the $\alpha$-interpolation method. For details we refer to \cite[Section 3.3 and 3.4]{KLW19}.
\end{enumerate}
\end{example}

The real and complex interpolation methods in Example \ref{example:interpolationmethods}\ref{it:ssireal}-\ref{it:ssicomplex} were already studied by Peetre  \cite{Pe71} from this viewpoint, where also the problem was posed to study these methods in an abstract, translation invariant setting as we do in the present article (see \cite[Probl\`eme 1]{Pe71}).

\begin{remark}\label{remark:CKMR02}
In \cite{CKMR02} Cwikel, Kalton, Milman and Rochberg gave a different approach to \cite[Probl\`eme 1]{Pe71} in order to study commutator estimates, using so-called \emph{pseudo-$\Z$-lattices} instead of sequence structures. Comparing our basic assumptions to the pseudo-$\Z$-lattices in \cite{CKMR02}, we note:
\begin{itemize}
  \item We assume translation invariance, while this is not a requirement for a pseudo-$\Z$-lattice. Various results in \cite{CKMR02} do additionally assume either translation invariance or decay of the operator norm of translation operators.
  \item We assume contractivity of coordinate projections, while this is not a requirement for a pseudo-$\Z$-lattice. Various results in \cite{CKMR02} rely on \emph{Laurent compatibility}, which is implied by the contractivity of coordinate projections.
  \item A pseudo-$\Z$-lattice is assumed to be an interpolation functor, whereas our sequentially structured interpolation framework does not insist on defining an interpolation functor in the classical sense.
\end{itemize}
While the assumptions on a sequence structure are rather different from the assumptions on  a pseudo-$\Z$-lattices, most of the main examples satisfying either set of assumptions also satisfy the other set. The key example that fits our framework, but not the framework of \cite{CKMR02}, is the $\ell^q$-interpolation method in Example \ref{example:interpolationmethods}\ref{it:ssilattice} and its variants discussed in the introduction. These methods will play a major role in our applications to trace theory for parabolic boundary value problems in future work.
\end{remark}

\begin{remark}
  There are also various generalizations of real interpolation based on the $K$-functional, replacing $L^q(\R)$ or $\ell^q(\Z)$ by a Banach function or sequence space $E$, see e.g. \cite{CK17, Nil82, BK91, BS88} and the references therein. We note that these generalizations are disjoint from our approach, as, on the one hand, the assumptions on $E$ can be more lenient than our assumptions on $\mf{S}$, but, on the other hand, the complex, Rademacher, $\gamma$- and $\ell^q$-interpolation methods do not fit into such a framework.
\end{remark}

\subsection{Basic properties}
Having motivated the sequentially structured interpolation method with some examples, let us turn to a few basic properties. We start by proving that $(\mc{X}_0,\mc{X}_1)_\theta$ is an intermediate space with respect to $X_0$ and $X_1$.

\begin{proposition}\label{proposition:Banachembeddings}
  Let $(\mc{X}_0,\mc{X}_1)$ be a compatible couple of sequentially structured Banach spaces and let $\theta \in (0,1)$. Then $(\mc{X}_0,\mc{X}_1)_\theta$ is a Banach space and we have
  continuous embeddings
  \begin{equation*}
  X_0 \cap X_1 \hookrightarrow (\mc{X}_0,\mc{X}_1)_\theta \hookrightarrow X_0 + X_1.
\end{equation*}
\end{proposition}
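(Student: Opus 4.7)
The plan is to handle the two embeddings first (these are short) and then establish completeness via absolutely convergent series.

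For the embedding $(\mc{X}_0,\mc{X}_1)_\theta \hookrightarrow X_0+X_1$, I would observe that the chain of inequalities in Remark \ref{remark:automaticconvergenceX0X1} already gives, for any admissible decomposition $\vec{x}$ of $x$, the bound $\nrm{x}_{X_0+X_1}\le C_\theta \nrm{\vec{x}}_{\mf{S}_0(\ee^{-\theta})\cap \mf{S}_1(\ee^{1-\theta})}$ with $C_\theta := \sum_{k\le 0}\ee^{k\theta}+\sum_{k>0}\ee^{-k(1-\theta)}$; taking the infimum gives the embedding. For $X_0\cap X_1 \hookrightarrow (\mc{X}_0,\mc{X}_1)_\theta$, given $x \in X_0\cap X_1$ I would use the trivial decomposition concentrated at $k=0$, i.e.\ $x_0=x$ and $x_k=0$ otherwise; by \eqref{eq:sequence1} and the definition of the weighted space, $\nrm{\vec{x}}_{\mf{S}_j(\ee^{j-\theta})} = \nrm{x}_{X_j}$ for $j=0,1$, so $\nrm{x}_{(\mc{X}_0,\mc{X}_1)_\theta}\le \nrm{x}_{X_0\cap X_1}$. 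Injectivity of the norm is a consequence of the embedding into $X_0+X_1$, and $\nrm{\cdot}_{(\mc{X}_0,\mc{X}_1)_\theta}$ is clearly homogeneous and subadditive, so it is a genuine norm.

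For completeness I would use the standard criterion that a normed space is Banach iff every absolutely summable series converges. Given $(y^{(n)})_{n\in \N}$ with $M := \sum_{n} \nrm{y^{(n)}}_{(\mc{X}_0,\mc{X}_1)_\theta} < \infty$, I select decompositions $y^{(n)}=\sum_k y^{(n)}_k$ with
\[
 \nrmb{\vec{y}^{(n)}}_{\mf{S}_0(\ee^{-\theta})\cap \mf{S}_1(\ee^{1-\theta})} \le \nrmb{y^{(n)}}_{(\mc{X}_0,\mc{X}_1)_\theta}+2^{-n}.
\]
Fix $k\in \Z$ and $j\in\{0,1\}$. By \eqref{eq:sequence3} applied to $\mf{S}_j$, $\nrm{y^{(n)}_k}_{X_j}\le \ee^{-k(j-\theta)}\nrm{\vec{y}^{(n)}}_{\mf{S}_j(\ee^{j-\theta})}$, and summing in $n$ shows $y_k:=\sum_n y^{(n)}_k$ converges absolutely in both $X_0$ and $X_1$, hence in $X_0\cap X_1$. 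Next, since $\mf{S}_j(\ee^{j-\theta})$ is a Banach space (it is isometrically isomorphic to $\mf{S}_j$ via the weight map), the series $\sum_n \vec{y}^{(n)}$ is absolutely convergent and thus has a limit $\vec{z}\in\mf{S}_j(\ee^{j-\theta})$ with $\nrm{\vec{z}}_{\mf{S}_j(\ee^{j-\theta})}\le M+1$. The $\ell^\infty$-embedding from \eqref{eq:ell_1_infty_sandwich} forces $\vec{z}=\vec{y}$ pointwise, so $\vec{y}\in \mf{S}_0(\ee^{-\theta})\cap \mf{S}_1(\ee^{1-\theta})$.

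It remains to verify $\sum_k y_k=y:=\sum_n y^{(n)}$ in $X_0+X_1$ and to establish the tail estimate. Both follow from a routine Fubini-type interchange which is legitimate because Remark \ref{remark:automaticconvergenceX0X1} applied to each $\vec{y}^{(n)}$ gives $\sum_{n,k}\nrm{y^{(n)}_k}_{X_0+X_1}<\infty$. From this $y\in(\mc{X}_0,\mc{X}_1)_\theta$, and the same argument applied to $(y^{(n)})_{n>N}$ yields $\nrm{y-\sum_{n\le N}y^{(n)}}_{(\mc{X}_0,\mc{X}_1)_\theta}\to 0$. The only step requiring mild care is the identification of the pointwise limit with the $\mf{S}_j$-limit; the $\ell^\infty$-sandwich takes care of this cleanly, so I do not expect any genuine obstacle.
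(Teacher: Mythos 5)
Your proof is correct. The two embeddings are handled exactly as in the paper (the singleton decomposition at $k=0$ for the first, Remark~\ref{remark:automaticconvergenceX0X1} plus taking the infimum for the second), but your completeness argument takes a genuinely different route. The paper observes that the summation map $T\colon \mf{S}_0(\ee^{-\theta})\cap\mf{S}_1(\ee^{1-\theta})\to X_0+X_1$ is bounded with range $(\mc{X}_0,\mc{X}_1)_\theta$, so that $(\mc{X}_0,\mc{X}_1)_\theta$ is isometrically the quotient of a Banach space by the closed subspace $\ker(T)$, and completeness is immediate. You instead verify the absolutely-summable-series criterion by hand: choosing near-optimal decompositions $\vec{y}^{(n)}$, summing them in the Banach space $\mf{S}_0(\ee^{-\theta})\cap\mf{S}_1(\ee^{1-\theta})$, identifying the limit coordinatewise via the $\ell^\infty$-sandwich, and justifying the interchange $\sum_k\sum_n=\sum_n\sum_k$ in $X_0+X_1$ through the absolute bound from Remark~\ref{remark:automaticconvergenceX0X1}. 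This is essentially the standard proof that a quotient of a Banach space by a closed subspace is complete, unwound in this particular instance; it is longer but entirely self-contained. What the paper's formulation buys, beyond brevity, is the explicit quotient realization recorded in Remark~\ref{rmk:proposition:Banachembeddings}, which is reused later (e.g.\ in the duality argument of Proposition~\ref{proposition:duality}); your argument does not produce that byproduct, but for the proposition itself nothing is missing.
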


The key ingredient in the proof of Proposition~\ref{proposition:Banachembeddings} is the observation that $(\mc{X}_0,\mc{X}_1)_\theta$ can be realized as a quotient, which for future reference will be convenient to record as the following remark.

\begin{remark}\label{rmk:proposition:Banachembeddings}
Let $(\mc{X}_0,\mc{X}_1)$ be a compatible couple of sequentially structured Banach spaces and let $\theta \in (0,1)$. By Remark~\ref{remark:automaticconvergenceX0X1},
\begin{equation*}
T \colon \mf{S}_0(\ee^{-\theta}) \cap \mf{S}_1(\ee^{1-\theta}) \to X_{0}+X_1,
\end{equation*}
given by $\vec{x} \mapsto \sum_{k \in \Z}x_{k}$ is a well-defined, bounded linear operator.
Note that $\mrm{ran}(T) = (\mc{X}_0,\mc{X}_1)_\theta$ and that $T$ induces a well-defined isometric linear isomorphism
\begin{equation*}
\widetilde{T}:\faktor{\mf{S}_0(\ee^{-\theta}) \cap \mf{S}_1(\ee^{1-\theta})}{\ker(T)} \to (\mc{X}_0,\mc{X}_1)_\theta
\end{equation*}
given by $\brac{\vec{x}} \mapsto T\vec{x}$.
\end{remark}

\begin{proof}[Proof of Proposition~\ref{proposition:Banachembeddings}]
Let $T$ and $\widetilde{T}$ be as in Remark \ref{rmk:proposition:Banachembeddings}. Then $\ker(T)$ is a closed linear subspace of the Banach space $\mf{S}_0(\ee^{-\theta}) \cap \mf{S}_1(\ee^{1-\theta})$.
Therefore, the quotient space $\brac{\mf{S}_0(\ee^{-\theta}) \cap \mf{S}_1(\ee^{1-\theta})}/\ker(T)$, which is isometrically isomorphic to $(\mc{X}_0,\mc{X}_1)_\theta$ through $\widetilde{T}$, is a Banach space.

We will now check the continuous embeddings. Take $x \in X_0\cap X_1$, then we have by \eqref{eq:sequence1}
\begin{equation*}
  \nrm{x}_{(\mc{X}_0,\mc{X}_1)_\theta} \leq \max_{j=0,1}\, \nrm{(\ldots,0,x,0,\ldots)}_{\mf{S}_j} \leq   \nrm{x}_{X_0\cap X_1}.
\end{equation*}
For the second embedding let $x \in (\mc{X}_0,\mc{X}_1)_\theta$ and take a sequence  $\vec{x} \in \mf{S}_0(\ee^{-\theta}) \cap \mf{S}_1(\ee^{1-\theta})$ such that $T\vec{x}=\sum_{k \in \Z} x_k = x$.
Then, by the boundedness of $T$, we have
\begin{equation*}
\nrm{x}_{X_0+X_1} = \nrm{T\vec{x}}_{X_0+X_1} \lesssim \nrm{\vec{x}}_{\mf{S}_0(\ee^{-\theta}) \cap \mf{S}_1(\ee^{1-\theta})}.
\end{equation*}
The second embedding now follows by taking the infimum over all such $\vec{x}$.
\end{proof}

In Definition \ref{definition:sequenceinterpolation} we can actually improve the maximum of the $\mf{S}_j$-norms to a log-convex combination of the $\mf{S}_j$-norms, as we will show next.

\begin{lemma}\label{lemma:logconvex}
  Let $(\mc{X}_0,\mc{X}_1)$ be a compatible couple of sequentially structured Banach spaces and let $\theta \in (0,1)$.
For $x \in (\mc{X}_0,\mc{X}_1)_\theta$ we have
\begin{align*}
  \nrm{x}_{(\mc{X}_0,\mc{X}_1)_\theta} \eqsim \inf\,\nrm{\vec{x}}_{\mf{S}_0(\ee^{-\theta})}^{1-\theta}\cdot \nrm{\vec{x}}_{\mf{S}_1(\ee^{1-\theta})}^\theta,
\end{align*}
where the infimum is taken over all  sequences $\vec{x} \in \mf{S}_0(\ee^{-\theta}) \cap \mf{S}_1(\ee^{1-\theta})$ such that $\sum_{k \in \Z} x_k = x$ with convergence in $X_0+X_1$.
In particular,
\begin{equation*}
  \nrm{x}_{(\mc{X}_0,\mc{X}_1)_\theta} \lesssim_\theta \nrm{x}_{X_0}^{1-\theta} \nrm{x}_{X_1}^\theta, \qquad x \in X_0 \cap X_1.
\end{equation*}
\end{lemma}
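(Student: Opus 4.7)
The plan is to prove the equivalence via two essentially independent arguments. The direction $\inf_{\vec{x}} \nrm{\vec{x}}_{\mf{S}_0(\ee^{-\theta})}^{1-\theta} \nrm{\vec{x}}_{\mf{S}_1(\ee^{1-\theta})}^\theta \leq \nrm{x}_{(\mc{X}_0,\mc{X}_1)_\theta}$ is immediate from the scalar inequality $a^{1-\theta} b^\theta \leq \max(a,b)$ for $a,b \geq 0$: for any admissible decomposition $\vec{x}$ of $x$ this gives $\nrm{\vec{x}}_{\mf{S}_0(\ee^{-\theta})}^{1-\theta} \nrm{\vec{x}}_{\mf{S}_1(\ee^{1-\theta})}^\theta \leq \nrm{\vec{x}}_{\mf{S}_0(\ee^{-\theta}) \cap \mf{S}_1(\ee^{1-\theta})}$, and passing to the infimum over $\vec{x}$ yields the bound.

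For the opposite inequality the main tool is the shift trick. Given any admissible $\vec{x}$ summing to $x$, write $A := \nrm{\vec{x}}_{\mf{S}_0(\ee^{-\theta})}$ and $B := \nrm{\vec{x}}_{\mf{S}_1(\ee^{1-\theta})}$ and assume both are strictly positive (the degenerate case is trivial). For each $n \in \Z$ consider the shifted sequence $\vec{y}$ defined by $y_k := x_{k+n}$, which still satisfies $\sum_k y_k = x$ in $X_0 + X_1$ by the absolute convergence from Remark \ref{remark:automaticconvergenceX0X1}. Translation invariance \eqref{eq:sequence2} combined with the scalar factor one picks up from shifting the exponential weight yields
\begin{align*}
\nrm{\vec{y}}_{\mf{S}_0(\ee^{-\theta})} = \ee^{n\theta} A, \qquad \nrm{\vec{y}}_{\mf{S}_1(\ee^{1-\theta})} = \ee^{-n(1-\theta)} B.
\end{align*}
Now I would choose $n \in \Z$ to essentially solve $\ee^n = B/A$, concretely $n = \lfloor \log(B/A) \rfloor$. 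Both norms above then satisfy $\max_{j=0,1} \nrm{\vec{y}}_{\mf{S}_j(\ee^{j-\theta})} \leq \ee \cdot A^{1-\theta} B^\theta$. Using $\vec{y}$ as a decomposition of $x$ bounds $\nrm{x}_{(\mc{X}_0,\mc{X}_1)_\theta}$ by this maximum, and taking the infimum over $\vec{x}$ finishes the second half.

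The ``In particular'' statement follows as a direct corollary by plugging the one-term sequence $\vec{x} = (\ldots, 0, x, 0, \ldots)$ with $x$ in position $k = 0$ into the log-convex formula just proved: property \eqref{eq:sequence1} together with the fact that the weights $\ee^{-k\theta}$ and $\ee^{k(1-\theta)}$ are both equal to $1$ at $k = 0$ gives $\nrm{\vec{x}}_{\mf{S}_j(\ee^{j-\theta})} = \nrm{x}_{X_j}$ for $j = 0, 1$, so that the infimum is at most $\nrm{x}_{X_0}^{1-\theta} \nrm{x}_{X_1}^\theta$. The main obstacle is the bookkeeping in the shift identities: one has to track carefully how shifting indices and re-weighting interact to produce the precise factors $\ee^{n\theta}$ and $\ee^{-n(1-\theta)}$. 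Once these identities are in place, the optimal choice of $n$ and all constants fall out routinely.
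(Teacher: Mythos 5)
Your proposal is correct and follows essentially the same route as the paper: the ``$\leq$'' direction via $a^{1-\theta}b^\theta \leq \max(a,b)$, and the ``$\gtrsim$'' direction via translating the sequence by an integer $n$ chosen so that $\ee^n$ approximately equals the ratio of the two weighted norms, with translation invariance \eqref{eq:sequence2} producing the factors $\ee^{n\theta}$ and $\ee^{-n(1-\theta)}$. The only differences are the sign convention for the shift and the resulting absolute constant ($\ee$ versus the paper's $\ee^\theta$), which are immaterial.
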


\begin{proof}
We only need to  show the inequality ``$\lesssim$''.
Let $\vec{x} \in \mf{S}_0(\ee^{-\theta}) \cap \mf{S}_1(\ee^{1-\theta})$ such that $\sum_{k \in \Z} x_k = x$ with convergence in $X_0+X_1$, set
$
  a_j:= \nrm{\vec{x}}_{\mf{S}_j(\ee^{j-\theta})}$ for $j =0,1
$
and let $n \in \Z$ such that $\ee^n\leq \frac{a_0}{a_1} \leq \ee^{n+1}$. Then the sequence $\vec{y}:=(x_{k-n})_{k \in \Z}$ satisfies $\sum_{k \in \Z} y_k = x$ and by \eqref{eq:sequence2} we have
\begin{equation*}
 \nrm{\vec{y}}_{\mf{S}_j(\ee^{j-\theta})} =  \nrmb{(\ee^{k(j-\theta)}x_{k-n})_{k \in \Z}}_{\mf{S}_j} = \ee^{n(j-\theta)} a_j, \qquad j=0,1.
\end{equation*}
Therefore
\begin{align*}
  \nrm{x}_{(\mc{X}_0,\mc{X}_1)_\theta} &\leq \max_{j=0,1} \, \ee^{n(j-\theta)} a_j= a_0^{1-\theta}a_1^\theta \cdot  \max_{j=0,1}\, \hab{\ee^{-n}\frac{ a_0}{a_1}}^{\theta-j}\leq \ee \cdot a_0^{1-\theta}a_1^\theta.
\end{align*}
Taking the infimum over all such $\vec{x}$ concludes the proof of the equivalence.
\end{proof}

As a consequence of Lemma \ref{lemma:logconvex}, we obtain that $X_0\cap X_1$ is dense in $(\mc{X}_0,\mc{X}_1)_\theta$ if either $\mf{S}_0$ or $\mf{S}_1$ is Ces\`aro convergent, which without the aid of Lemma \ref{lemma:logconvex} would require a Ces\`aro convergence assumption on both $\mf{S}_0$ and $\mf{S}_1$.

\begin{corollary}\label{corollary:denseX0X1}
  Let $(\mc{X}_0,\mc{X}_1)$ be a compatible couple of Ces\`aro bounded sequentially structured Banach spaces and let $\theta \in (0,1)$. If either $\mf{S}_0$ or $\mf{S}_1$ is Ces\`aro convergent, then $X_0\cap X_1$ is dense in $(\mc{X}_0,\mc{X}_1)_\theta$.
\end{corollary}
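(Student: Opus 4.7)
The plan is to approximate any $x\in(\mc{X}_0,\mc{X}_1)_\theta$ by Cesàro means of an admissible decomposition. Without loss of generality, assume $\mf{S}_0$ is the $c_0$-sequence structure. Given $x\in(\mc{X}_0,\mc{X}_1)_\theta$, pick an admissible sequence $\vec{x}\in\mf{S}_0(\ee^{-\theta})\cap\mf{S}_1(\ee^{1-\theta})$ with $\sum_{k\in\Z}x_k=x$. The candidate approximants are
\[
y_n := T(\CO_n\vec{x}) \;=\; \sum_{k=-n}^{n}\Bigl(1-\tfrac{|k|}{n+1}\Bigr)\,x_k,
\]
where $T$ is the summation map of Remark~\ref{rmk:proposition:Banachembeddings}. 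Since $\CO_n\vec{x}$ is finitely supported with entries in $X_0\cap X_1$, each $y_n$ lies in $X_0\cap X_1$.

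Using the admissible decomposition $\vec{x}-\CO_n\vec{x}$ of $x-y_n$ together with Lemma~\ref{lemma:logconvex} would then yield
\[
\nrm{x-y_n}_{(\mc{X}_0,\mc{X}_1)_\theta} \;\lesssim_\theta\; \nrm{\vec{x}-\CO_n\vec{x}}_{\mf{S}_0(\ee^{-\theta})}^{1-\theta}\cdot\nrm{\vec{x}-\CO_n\vec{x}}_{\mf{S}_1(\ee^{1-\theta})}^{\theta}.
\]
A short direct computation shows that the Cesàro operator $\CO_n$ intertwines with the weighting isometry $\vec{x}\mapsto(a^{k}x_k)_{k}$ between $\mf{S}(a)$ and $\mf{S}$, so the Cesàro convergence of the $c_0$-structure $\mf{S}_0$ transfers to $\mf{S}_0(\ee^{-\theta})$, forcing the first factor to tend to $0$. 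The same intertwining turns Cesàro boundedness on $\mf{S}_1$ into Cesàro boundedness on $\mf{S}_1(\ee^{1-\theta})$, and together with the triangle inequality gives the uniform bound $\nrm{\vec{x}-\CO_n\vec{x}}_{\mf{S}_1(\ee^{1-\theta})}\leq 2\nrm{\vec{x}}_{\mf{S}_1(\ee^{1-\theta})}$ for the second factor.

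Combining the two estimates yields $\nrm{x-y_n}_{(\mc{X}_0,\mc{X}_1)_\theta}\to 0$, hence $y_n\to x$ in $(\mc{X}_0,\mc{X}_1)_\theta$ with $y_n\in X_0\cap X_1$, proving density. The main point — and the reason why a one-sided $c_0$-assumption suffices — is Lemma~\ref{lemma:logconvex}: it turns the ``$\max$'' of Definition~\ref{definition:sequenceinterpolation} into a log-convex product, allowing convergence on the $\mf{S}_0$-side to be paired with mere boundedness on the $\mf{S}_1$-side. Using the raw definition would instead force the second factor to vanish as well, which would require Cesàro convergence, and therefore a $c_0$-structure, on both $\mf{S}_0$ and $\mf{S}_1$. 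The only delicate step in the write-up is checking the interplay between $\CO_n$ and the exponential weights, which reduces to the pointwise identity $\CO_n\vec{x}(k)=\max(1-|k|/(n+1),0)\,x_k$.
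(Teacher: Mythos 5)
Your proof is, in structure, exactly the paper's: approximate by the Cesàro means $y^n=T(\CO_n\vec{x})$, use Lemma~\ref{lemma:logconvex} on the decomposition $\vec{x}-\CO_n\vec{x}$ of $x-y^n$, and let the Cesàro convergence of the $c_0$-structure kill one factor of the log-convex product while the other stays bounded. Your observation that $\CO_n$ is the diagonal multiplier $\CO_n\vec{x}(k)=\max\bigl(1-\abs{k}/(n+1),0\bigr)x_k$ and therefore commutes with the weighting isometries is also exactly how the paper transfers \eqref{eq:sequence4b} and \eqref{eq:sequence4} from $\mf{S}$ to $\mf{S}(a)$.

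The one place where you deviate is in justifying the uniform bound on the second factor: you invoke ``Cesàro boundedness on $\mf{S}_1$'', i.e.\ \eqref{eq:sequence4b} for $\mf{S}_1$, to get $\nrm{\vec{x}-\CO_n\vec{x}}_{\mf{S}_1(\ee^{1-\theta})}\leq 2\nrm{\vec{x}}_{\mf{S}_1(\ee^{1-\theta})}$. That hypothesis is not in the statement of the corollary: only one of the two structures is assumed to be a $c_0$-sequence structure (hence in particular Cesàro bounded), and nothing beyond the bare sequence-structure axioms \eqref{eq:sequence1}--\eqref{eq:sequence3} is assumed about the other. Under those bare axioms the best generic bound is $\nrm{\CO_n\vec{x}}_{\mf{S}_1(\ee^{1-\theta})}\leq(n+1)\nrm{\vec{x}}_{\mf{S}_1(\ee^{1-\theta})}$ (sum the truncations coordinatewise through $\ell^1\hookrightarrow\mf{S}_1$), and since the Cesàro convergence in $\mf{S}_0(\ee^{-\theta})$ comes with no rate, the product $a_0^{1-\theta}a_1^{\theta}$ is not forced to vanish. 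To be fair, the paper's own one-line proof is silent on exactly this point (it only notes that one factor tends to zero), so you have surfaced a real subtlety rather than invented a problem; but as a proof of the corollary as stated, the step where the second factor is bounded uniformly in $n$ is the step that needs either the extra $\ell^\infty$-assumption you are tacitly importing or a different argument.
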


\begin{proof}
Fix $x \in (\mc{X}_0,\mc{X}_1)_\theta$ and $\vec{x} \in \mf{S}_0(\ee^{-\theta}) \cap \mf{S}_1(\ee^{1-\theta})$ such that $\sum_{k \in \Z} x_k = x$ with convergence in $X_0+X_1$.
For $n \in \N$ we define $\vec{y}^n := \CO_n\vec{x}$ and $y^n := \sum_{k \in \Z} y_k^n$, where $\CO_n$ is the Ces\`aro operator given in \eqref{eq:Cesaro_operator}.
Then $y^n \in X_0\cap X_1$ and by Lemma \ref{lemma:logconvex}
\begin{equation*}
  \nrm{x-y^n}_{(\mc{X}_0,\mc{X}_1)_\theta} \lesssim \nrmb{\vec{x}-\vec{y}^n}_{\mf{S}_0(e^{-\theta})}^{1-\theta}\cdot \nrmb{\vec{x}-\vec{y}^n}_{\mf{S}_1(e^{1-\theta})}^\theta.
\end{equation*}
Now, by \eqref{eq:sequence4} and \eqref{eq:sequence4}, one of the terms on the right-hand side converges to $0$ as $n \to \infty$ and the other stays bounded, finishing the proof.
\end{proof}

\subsection{Finite approximation}
Using Corollary \ref{corollary:denseX0X1}, we can often reduce considerations to elements of $X_0\cap X_1$. For such elements, we can simplify our arguments even further by only considering finitely nonzero sequences in Definition \ref{definition:sequenceinterpolation}. Moreover, in applications it is sometimes useful to restrict arguments to a dense subspace of $X_0\cap X_1$, for example the space of simple functions when $X_0$ and $X_1$ are $L^p$-spaces.

The following approximation lemma will play a key role in many of our subsequent results. In the setting of the interpolation framework in \cite{CKMR02}, a related result was obtained in \cite{Iv12}.

\begin{lemma}\label{lemma:finiterep}
  Let $(\mc{X}_0,\mc{X}_1)$ be a compatible couple of Ces\`aro bounded sequentially structured Banach spaces, let $\breve{X}$ be a dense subspace of $X_0\cap X_1$ and let $\theta \in (0,1)$.
For $x \in \breve{X}$ we have
\begin{align*}
  \nrm{x}_{(\mc{X}_0,\mc{X}_1)_\theta}
  & \eqsim \inf \,\nrm{\vec{x}}_{\mf{S}_0(\ee^{-\theta})\cap \mf{S}_1(\ee^{1-\theta})},
\end{align*}
where the infimum is taken over all $\vec{x} \in c_{00}(\Z;\breve{X})$ such that $\sum_{k \in \Z} x_k = x$.
\end{lemma}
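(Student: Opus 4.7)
The inequality $\nrm{x}_{(\mc{X}_0,\mc{X}_1)_\theta}\le \inf\nrm{\vec{x}}_E$, with the infimum over $\vec{x}\in c_{00}(\Z;\breve{X})$ summing to $x$ and $E:=\mf{S}_0(\ee^{-\theta})\cap\mf{S}_1(\ee^{1-\theta})$, is immediate from Definition~\ref{definition:sequenceinterpolation}: every such $\vec{x}$ is an admissible competitor there. For the reverse direction, fix $x\in\breve{X}$ and $\varepsilon>0$ and pick a near-minimizer $\vec{x}\in E$ with $T\vec{x}=x$ and $\nrm{\vec{x}}_E\le(1+\varepsilon)\nrm{x}_{(\mc{X}_0,\mc{X}_1)_\theta}$, where $T$ is the summation map from Remark~\ref{rmk:proposition:Banachembeddings}. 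The plan is to modify $\vec{x}$ into a finitely supported sequence in $\breve{X}$ summing to $x$ whose $E$-norm is controlled by a $\theta$-dependent constant times $\nrm{\vec{x}}_E$.

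The main tool is the Cesàro operator $\CO_n$, which acts coordinatewise as multiplication by $K_n(k):=\max(n-|k|+1,0)/(n+1)$. Since this factor depends only on $|k|$, $\CO_n$ commutes with multiplication by the weights $(\ee^{k(j-\theta)})_k$, and the $\ell^\infty$-sequence-structure assumption transfers verbatim to the weighted spaces (as already noted in the paragraph following the definition of $\mf{S}(a)$), giving $\nrm{\CO_n\vec{y}}_{\mf{S}_j(\ee^{j-\theta})}\le\nrm{\vec{y}}_{\mf{S}_j(\ee^{j-\theta})}$ for $j=0,1$. Setting $\vec{y}^n:=\CO_n\vec{x}$ yields a finitely supported sequence whose coordinates lie in $X_0\cap X_1$ (via the embedding $\mf{S}_j\hookrightarrow\ell^\infty(\Z;X_j)$), with deficit $w^n:=x-T\vec{y}^n$ again in $X_0\cap X_1$ and tending to $0$ in $X_0+X_1$, since the estimates $\nrm{x_k}_{X_0+X_1}\le\min(\ee^{k\theta}\nrm{\vec{x}}_{\mf{S}_0(\ee^{-\theta})},\,\ee^{-k(1-\theta)}\nrm{\vec{x}}_{\mf{S}_1(\ee^{1-\theta})})$ imply $\sum_k|k|\nrm{x_k}_{X_0+X_1}<\infty$. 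Crucially, the deficit $w^n$ admits its own $E$-representation $\vec{\ell}^n:=\vec{x}-\vec{y}^n$ with $\nrm{\vec{\ell}^n}_E\le 2\nrm{\vec{x}}_E$, opening the door to iteration.

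I would iterate as follows: set $\vec{\ell}^{(0)}:=\vec{x}$ and $\vec{\ell}^{(m+1)}:=\vec{\ell}^{(m)}-\CO_{n_m}\vec{\ell}^{(m)}$ for a sequence of truncation levels $n_0<n_1<\cdots$ chosen to grow fast; the finitely supported pieces $\vec{v}^{(m)}:=\CO_{n_m}\vec{\ell}^{(m)}$ combine, after finitely many steps, into a sequence $\vec{z}\in c_{00}(\Z;X_0\cap X_1)$ with $T\vec{z}=x$ and $\nrm{\vec{z}}_E\lesssim\nrm{\vec{x}}_E$, the remaining (arbitrarily small) $X_0+X_1$-residual being absorbed into a single coordinate. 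To finally land in $c_{00}(\Z;\breve{X})$, approximate each of the finitely many $z_k\in X_0\cap X_1$ by $\tilde z_k\in\breve{X}$ with $\max_j\nrm{z_k-\tilde z_k}_{X_j}$ arbitrarily small, absorb the finitely many (still-in-$\breve{X}$ by linearity) errors into a designated coordinate, and use the identity $\nrm{u\delta_k}_{\mf{S}_j(\ee^{j-\theta})}=\ee^{k(j-\theta)}\nrm{u}_{X_j}$, valid by \eqref{eq:sequence1} and translation invariance, to bound the change in $E$-norm.

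The crucial difficulty is calibrating the iteration: a naive application doubles $\nrm{\vec{\ell}^{(m)}}_E$ at each step while shrinking the $X_0+X_1$-norm of the deficit only by a factor $\sim 1/n_m$, so the $n_m$ must be chosen to grow fast enough that the cumulative $E$-norm of the $\vec{v}^{(m)}$'s remains bounded by a $\theta$-dependent constant. Because we work only with the $\ell^\infty$-sequence-structure hypothesis---lacking the stronger Cesàro convergence \eqref{eq:sequence4}---there is no analogue of the density argument from Corollary~\ref{corollary:denseX0X1} available, and the finite approximation cannot be obtained by a single truncation; handling this telescoping is the technical heart of the proof, in the spirit of the related argument in~\cite{Iv12} for the pseudo-$\Z$-lattice framework of~\cite{CKMR02}.
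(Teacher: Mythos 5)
Your easy inequality and your choice of tool (a single Ces\`aro truncation of a near-minimizer) are right, but the iteration you propose to eliminate the deficit does not close, and the difficulty you flag at the end is not resolvable by choosing the levels $n_m$ large. Writing $E:=\mf{S}_0(\ee^{-\theta})\cap\mf{S}_1(\ee^{1-\theta})$ as you do: the only bound available for the pieces is $\nrm{\vec{v}^{(m)}}_{E}\leq\nrm{\vec{\ell}^{(m)}}_{E}\leq 2^{m}\nrm{\vec{x}}_{E}$, and this doubling is completely insensitive to $n_m$ --- enlarging $n_m$ shrinks the $X_0+X_1$-norm of $T\vec{\ell}^{(m+1)}$ but does nothing to $\nrm{\vec{\ell}^{(m)}-\CO_{n_m}\vec{\ell}^{(m)}}_{E}$, since under the $\ell^\infty$-hypothesis alone there is no Ces\`aro convergence \eqref{eq:sequence4}. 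So either the iteration stops after a bounded number of steps or the constant blows up. And if it stops, the terminal residual $r=x-\sum_{m}T\vec{v}^{(m)}$ is small only in $X_0+X_1$; to place it in a single coordinate of a sequence in $E$ you must bound $\max_{j}\ee^{k(j-\theta)}\nrm{r}_{X_j}$, i.e.\ control $r$ in $X_0\cap X_1$, which smallness in $X_0+X_1$ does not give (indeed $\nrm{T\vec{v}^{(m)}}_{X_j}$ can be of order $\ee^{n_m\theta}\nrm{\vec{x}}_{E}$). Iterating on $r$ only reproduces the same two problems.

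The paper's proof avoids iteration entirely: after the single truncation $\vec{z}^n=\CO_n\vec{y}$ it writes the deficit \emph{exactly} as $2n+2$ tail pieces $z^{n,\pm}_m=\frac{1}{n+1}\bigl(x-\sum y_k\bigr)$, distributes them over the coordinates $\abs{m}\leq n$, and estimates each resulting correction sequence through the contractive embedding $\ell^1(\Z;X_j)\hookrightarrow\mf{S}_j$ together with \eqref{eq:sequence3}. Each such estimate splits as $C_\theta\nrm{x}_{(\mc{X}_0,\mc{X}_1)_\theta}+\frac{C_\theta}{n+1}\nrm{x}_{X_j}$, and the second term --- the only place the uncontrolled norms $\nrm{x}_{X_j}$ enter --- is killed by taking $n+1\geq\nrm{x}_{X_j}/\nrm{x}_{(\mc{X}_0,\mc{X}_1)_\theta}$. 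This damping factor $\frac{1}{n+1}$, obtained by spreading the deficit over all active coordinates rather than concentrating it or iterating it away, is the mechanism your argument is missing. Your final reduction from $X_0\cap X_1$ to $\breve{X}$ (approximating the finitely many coordinates in the $X_0\cap X_1$-norm and absorbing the error, which remains in $\breve{X}$, into one coordinate) does match the paper's Step 2 and is fine.
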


\begin{proof} Since we are taking the infimum over a smaller collection of sequences than the collection of sequences in Definition \ref{definition:sequenceinterpolation}, we only need to show that the infimum can be dominated by $\nrm{x}_{(\mc{X}_0,\mc{X}_1)_\theta}$.

\textbf{Step 1:} We will first consider the special case that $\breve{X}=X_0 \cap X_1$.
Let $x \in X_0 \cap X_1$ and take $\vec{y} \in \mf{S}_0(\ee^{-\theta}) \cap \mf{S}_1(\ee^{1-\theta})$ such that
$\sum_{k \in \Z} y_k = x$ with convergence in $X_0+X_1$ and
\begin{equation}\label{eq:finiterepnorm}
  \nrm{\vec{y}}_{\mf{S}_0(\ee^{-\theta})\cap \mf{S}_1(\ee^{1-\theta})} \leq 2 \, \nrm{x}_{(\mc{X}_0,\mc{X}_1)_\theta}.
\end{equation}
For $n \in \N$ let $C_n$ be the Ces\'aro operator as in \eqref{eq:Cesaro_operator} and define $ \vec{z}^n := \CO_n\vec{y}$.
For $m \in \Z$ set
  \begin{align*}
    {z}^{n,+}_m &:= \frac{1}{n+1}\sum_{k={m+1}}^\infty y_{k} =  \frac{1}{n+1} \has{x- \sum_{k=-\infty}^m y_{k}},&& \\
    z^{n,-}_m &:= \frac{1}{n+1} \sum_{k=-\infty}^{-m-1} y_{k} = \frac{1}{n+1} \has{x- \sum_{k=m}^\infty y_{k}},
  \end{align*}
  which converge in $X_0+X_1$. Note that for $M \in \Z$ we have by \eqref{eq:sequence3}
  \begin{align*}
    \sum_{k =-\infty}^M \nrm{y_k}_{X_0} &\leq   \nrmb{(\ee^{-k\theta}y_k)_{k \in \Z}}_{\mf{S}_0} \cdot \sum_{k =-\infty}^M \ee^{k\theta} <\infty,\\
     \sum_{k =M}^\infty \nrm{y_k}_{X_1} &\leq   \nrmb{(\ee^{k(1-\theta)}y_k)_{k \in \Z}}_{\mf{S}_1}\cdot \sum_{k =M}^\infty  \ee^{-k(1-\theta)} <\infty.
  \end{align*}
  Since $x \in X_0\cap X_1$, we deduce that $z^{n,+}_m, z^{n,-}_m \in X_0\cap X_1$ for all $m \in \Z$. Thus, defining $ \vec{w}^n := \vec{z}^n + \vec{w}^{n,+}+\vec{w}^{n,-}$
with
  \begin{equation*}
    {w}^{n,\pm}_m := \begin{cases}
      z^{n,\pm}_m &\text{if }m=\pm 0,\cdots,\pm n\\
      0&\text{otherwise}
    \end{cases}, \qquad m \in \Z,
  \end{equation*}
we have that $\vec{w}^n \in c_{00}(\Z;X_0\cap X_1)$ and $\sum_{k \in \Z} w_k^n = x$ with convergence in $X_0+X_1$. Since $\mf{S}_0$ and $\mf{S}_1$ are Ces\`aro bounded, using \eqref{eq:finiterepnorm} we find
\begin{equation*}
   \nrm{\vec{z}^n}_{\mf{S}_0(\ee^{-\theta})\cap \mf{S}_1(\ee^{1-\theta})} \leq \nrm{\vec{y}}_{\mf{S}_0(\ee^{-\theta})\cap \mf{S}_1(\ee^{1-\theta})} \leq 2 \, \nrm{x}_{(\mc{X}_0,\mc{X}_1)_\theta}.
\end{equation*}
Moreover, using \eqref{eq:sequence1}, \eqref{eq:sequence3} and \eqref{eq:finiterepnorm}, we have
  \begin{align*}
    \nrm{\vec{w}^{n,+}}_{\mf{S}_0(\ee^{-\theta})} 
    &\leq \frac{1}{n+1} \sum_{m=0}^n \ee^{-m\theta} \has{\nrm{x}_{X_0} +  \sum_{k=-\infty}^m  \nrm{y_k}_{X_0}}\\
    &\leq \frac{1}{n+1} \cdot \has{ \frac{\ee^\theta\,\nrm{x}_{X_0}}{\ee^\theta-1} + \sum_{m=0}^n \ee^{-m\theta} \sum_{k=-\infty}^m \ee^{k\theta}  \nrm{(\ee^{-\ell\theta}y_\ell)_{\ell \in \Z}}_{\mf{S}_0}}\\
    &\leq \frac{1}{n+1} \cdot \frac{\ee^\theta\,\nrm{x}_{X_0}}{\ee^\theta-1}  +  \frac{2\ee^{\theta}}{\ee^\theta-1} \cdot \nrm{x}_{(\mc{X}_0,\mc{X}_1)_\theta}
  \end{align*}
  and similarly
   \begin{align*}
    \nrm{\vec{w}^{n,-}}_{\mf{S}_0(\ee^{-\theta})}
    &\leq \frac{1}{n+1} \sum_{m=0}^{n} \ee^{m\theta} {\sum_{k=-\infty}^{-m-1}  \nrm{y_k}_{X_0}}\\
    &\leq \frac{1}{n+1} \sum_{m=0}^n \ee^{m\theta} \sum_{k=-\infty}^{-m-1} \ee^{k\theta}  \nrm{(\ee^{-\ell\theta}y_\ell)_{\ell \in \Z}}_{\mf{S}_0}\\
    &\leq \frac{2}{\ee^\theta-1} \cdot \nrm{x}_{(\mc{X}_0,\mc{X}_1)_\theta}.
  \end{align*}
Doing similar computations for $\mf{S}_1$, one also obtains
\begin{align*}
   \nrm{\vec{w}^{n,+}}_{\mf{S}_1(\ee^{1-\theta})} &\leq \frac{2}{\ee^{1-\theta}-1} \cdot \nrm{x}_{(\mc{X}_0,\mc{X}_1)_\theta}\\
   \nrm{\vec{w}^{n,-}}_{\mf{S}_1(\ee^{1-\theta})} &\leq \frac{1}{n+1} \cdot \frac{\ee^{1-\theta}\,\nrm{x}_{X_1}}{\ee^{1-\theta}-1}  +  \frac{2\ee^{1-\theta}}{\ee^{1-\theta}-1} \cdot \nrm{x}_{(\mc{X}_0,\mc{X}_1)_\theta}
\end{align*}
  Thus, taking
  \begin{equation*}
    n +1\geq \max \cbraces{\frac{\nrm{x}_{X_0}}{\nrm{x}_{(\mc{X}_0,\mc{X}_1)_\theta}}, \frac{\nrm{x}_{X_1}}{\nrm{x}_{(\mc{X}_0,\mc{X}_1)_\theta}}}
  \end{equation*}
  we obtain
\begin{equation*}
\max_{j=0,1}\, \nrm{\vec{w}^n}_{\mf{S}_j(\ee^{j-\theta})}\lesssim_\theta \nrm{x}_{(\mc{X}_0,\mc{X}_1)_\theta}.
\end{equation*}

\textbf{Step 2:} We derive the general case, for which we fix $x \in \breve{X}$. Then, in particular, $x \in X_0 \cap X_1$ and by Step 1 there thus exists $\vec{w} \in c_{00}(\Z;X_0 \cap X_1)$ with $\sum_{k \in \Z}w_k=x$ and
\begin{equation}\label{eq:ell1approxdense;first_case}
\max_{j=0,1}\, \nrm{\vec{w}}_{\mf{S}_j(\ee^{j-\theta})}\lesssim_\theta \nrm{x}_{(\mc{X}_0,\mc{X}_1)_\theta}.
\end{equation}
Now take a $\vec{v}\in c_{00}(\Z;\breve{X})$ such that
\begin{equation}\label{eq:ell1approxdense}
  \sum_{k \in \Z} \max\cbrace{\ee^{-k\theta}, \ee^{k(1-\theta)}}\nrm{v_k-w_k}_{X_0\cap X_1} \leq \nrm{x}_{(\mc{X}_0,\mc{X}_1)_\theta}.
\end{equation}
Then $\sum_{m\in \Z} w_m - v_m   = x-\sum_{m \in \Z}v_m \in \breve{X}$, thus setting
\begin{equation*}
  x_k := \begin{cases}
    v_k, &k \in \Z \setminus\cbrace{0},\\
    v_0 + \sum_{m\in \Z}  w_m- v_m, &k=0,
  \end{cases}
\end{equation*}
we have that  $\vec{x} \in c_{00}(\Z;\breve{X})$, $\sum_{k \in \Z} x_k = x$ and, using \eqref{eq:sequence1}, \eqref{eq:ell1approxdense;first_case} and \eqref{eq:ell1approxdense}, we have
\begin{align*}
  \max_{j=0,1}\, \nrm{\vec{x}}_{\mf{S}_j(\ee^{j-\theta})}&\leq \max_{j=0,1}\, \nrm{\vec{v} - \vec{w}}_{\mf{S}_j(\ee^{j-\theta})} +\max_{j=0,1} \, \nrm{\vec{w}}_{\mf{S}_j(\ee^{j-\theta})} \\&\hspace{1cm} + \nrmb{\sum_{m\in \Z}  w_m- v_m}_{X_0\cap X_1}\\
  &\lesssim_\theta \nrm{x}_{(\mc{X}_0,\mc{X}_1)_\theta},
\end{align*}
which proves the lemma.
\end{proof}

Let $(\mc{X}_0,\mc{X}_1)$ be a couple of sequentially structured Banach spaces. Recall that for $j=0,1$ we defined $X_j^\circ$ and $(\mc{X}_0,\mc{X}_1)_\theta^\circ$ as the closure of $X_0\cap X_1$ in $X_j$ and $(\mc{X}_0,\mc{X}_1)_\theta$ respectively. Let $\mf{S}_j^\circ$ denote the closure of $c_{00}(\Z;X^\circ_j)$ in $\mf{S}_j$. Then we note that
$$
(\mc{X}_0^\circ,\mc{X}_1^\circ) := ([X_0^\circ,\mf{S}_0^\circ],[X_1^\circ,\mf{S}_1^\circ])
$$
is a compatible couple of sequentially structured Banach spaces.

\begin{proposition}\label{proposition:Xcirc}
  Let $(\mc{X}_0,\mc{X}_1)$ be a compatible couple of Ces\`aro bounded sequentially structured Banach spaces and let $\theta \in (0,1)$. Then we have $(\mc{X}_0,\mc{X}_1)_\theta^\circ = (\mc{X}_0^\circ,\mc{X}_1^\circ)_\theta.$ If additionally either $\mf{S}_0$ or $\mf{S}_1$ is  Ces\`aro convergent, then $(\mc{X}_0,\mc{X}_1)_\theta = (\mc{X}_0^\circ,\mc{X}_1^\circ)_\theta.$
\end{proposition}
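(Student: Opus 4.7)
My plan is to reduce both statements to the fact that the restricted sequence structure $\mf{S}_j^\circ$ is itself a well-behaved sequence structure on $X_j^\circ$. Specifically, since $\mf{S}_j^\circ$ is a closed subspace of the $\ell^\infty$-sequence structure $\mf{S}_j$ and $\ell^\infty$-sequence structures are preserved under passage to closed, coordinate-projection-invariant subspaces, $\mf{S}_j^\circ$ is an $\ell^\infty$-sequence structure on $X_j^\circ$; since $c_{00}(\Z;X_j^\circ)$ is dense in it by construction, Lemma \ref{lemma:c00dense} upgrades it to a $c_0$-sequence structure. Consequently Corollary \ref{corollary:denseX0X1} applies to the couple $(\mc{X}_0^\circ,\mc{X}_1^\circ)$ and shows that $X_0^\circ\cap X_1^\circ=X_0\cap X_1$ is dense in $(\mc{X}_0^\circ,\mc{X}_1^\circ)_\theta$.

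For the embedding $(\mc{X}_0^\circ,\mc{X}_1^\circ)_\theta\hookrightarrow(\mc{X}_0,\mc{X}_1)_\theta^\circ$, I would argue that any admissible sequence $\vec{x}$ in the definition of $(\mc{X}_0^\circ,\mc{X}_1^\circ)_\theta$ is automatically admissible for $(\mc{X}_0,\mc{X}_1)_\theta$ with the same norm, because $\mf{S}_j^\circ\hookrightarrow\mf{S}_j$ isometrically and $X_j^\circ\hookrightarrow X_j$ contractively; this gives the norm inequality $\nrm{x}_{(\mc{X}_0,\mc{X}_1)_\theta}\leq\nrm{x}_{(\mc{X}_0^\circ,\mc{X}_1^\circ)_\theta}$. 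Combining this with the density of $X_0\cap X_1$ in $(\mc{X}_0^\circ,\mc{X}_1^\circ)_\theta$ obtained in the first paragraph and the closedness of $(\mc{X}_0,\mc{X}_1)_\theta^\circ$ in $(\mc{X}_0,\mc{X}_1)_\theta$, every element of $(\mc{X}_0^\circ,\mc{X}_1^\circ)_\theta$ is a $(\mc{X}_0,\mc{X}_1)_\theta$-limit of elements of $X_0\cap X_1$ and thus lies in $(\mc{X}_0,\mc{X}_1)_\theta^\circ$.

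The reverse embedding is the main step. I would apply Lemma \ref{lemma:finiterep} with $\breve{X}=X_0\cap X_1$ to both couples $(\mc{X}_0,\mc{X}_1)$ and $(\mc{X}_0^\circ,\mc{X}_1^\circ)$ (noting that $X_0^\circ\cap X_1^\circ=X_0\cap X_1$, so $\breve{X}$ is trivially dense in the intersection of the latter). For any $\vec{x}\in c_{00}(\Z;X_0\cap X_1)$ one has $\vec{x}\in c_{00}(\Z;X_j^\circ)\subseteq\mf{S}_j^\circ$ with $\nrm{\vec{x}}_{\mf{S}_j^\circ}=\nrm{\vec{x}}_{\mf{S}_j}$, so the two finitely-nonzero infima in Lemma \ref{lemma:finiterep} coincide, yielding the equivalence
\[
\nrm{x}_{(\mc{X}_0^\circ,\mc{X}_1^\circ)_\theta}\eqsim\nrm{x}_{(\mc{X}_0,\mc{X}_1)_\theta},\qquad x\in X_0\cap X_1.
\]
To pass from $X_0\cap X_1$ to its closure $(\mc{X}_0,\mc{X}_1)_\theta^\circ$, I would take a Cauchy sequence in $X_0\cap X_1$ converging in $(\mc{X}_0,\mc{X}_1)_\theta$ to $x$; the equivalence makes it Cauchy in $(\mc{X}_0^\circ,\mc{X}_1^\circ)_\theta$ as well, and the two limits in the respective spaces must coincide through the common continuous embedding into $X_0+X_1$ (which holds by Proposition \ref{proposition:Banachembeddings} for both couples, since $X_j^\circ+X_j^\circ\hookrightarrow X_0+X_1$).

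The second assertion then follows at once: if $\mf{S}_0$ or $\mf{S}_1$ is a $c_0$-sequence structure, Corollary \ref{corollary:denseX0X1} gives that $X_0\cap X_1$ is already dense in $(\mc{X}_0,\mc{X}_1)_\theta$, so $(\mc{X}_0,\mc{X}_1)_\theta^\circ=(\mc{X}_0,\mc{X}_1)_\theta$, and combined with the first equality we obtain the desired identification. The main conceptual obstacle is verifying that $\mf{S}_j^\circ$ is $c_0$, which is what unlocks the application of Corollary \ref{corollary:denseX0X1} to the inner couple and makes Lemma \ref{lemma:finiterep} simultaneously available for both couples with the same distinguished subspace $\breve{X}=X_0\cap X_1$.
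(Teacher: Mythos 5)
Your proof is correct and follows essentially the same route as the paper: density of $X_0\cap X_1$ in $(\mc{X}_0^\circ,\mc{X}_1^\circ)_\theta$ via Lemma \ref{lemma:c00dense} and Corollary \ref{corollary:denseX0X1}, the norm equivalence on $X_0\cap X_1$ via Lemma \ref{lemma:finiterep} (where the finitely supported infima for the two couples coincide), and the second claim from Corollary \ref{corollary:denseX0X1}. Your write-up merely makes explicit the verification that $\mf{S}_j^\circ$ is a $c_0$-sequence structure and the density/completeness bookkeeping that the paper leaves implicit.
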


\begin{proof}
Since $\mc{X}_0^\circ$ is Ces\`aro convergent by Lemma \ref{lemma:c00dense}, we note that $X_0\cap X_1$ is dense in $(\mc{X}_0^\circ,\mc{X}_1^\circ)_\theta$ by Corollary~\ref{corollary:denseX0X1}. Thus, for the first claim it suffices to prove a norm equivalence for $x \in X_0\cap X_1$. This norm equivalence is a direct consequence of Lemma \ref{lemma:finiterep}. The second claim follows from  Corollary \ref{corollary:denseX0X1}.
\end{proof}

\subsection{Alternative mean method formulation}
 We based the sequentially structured interpolation method on the discrete second Lions--Peetre mean method \cite{LP64}. In this subsection we will consider a formulation of the sequentially structured interpolation method based on the first discrete mean method of Lions--Peetre. In the framework of Cwikel, Kalton, Milman and Rochberg, a related theorem was proven in \cite[Section 8]{CKMR02}

\begin{theorem}\label{theorem:meanmethod}
Let $(\mc{X}_0,\mc{X}_1)$ be a compatible couple of sequentially structured Banach spaces and let $\theta \in (0,1)$.
For $x \in X_0+X_1$ we have
\begin{align*}
  \nrm{x}_{(\mc{X}_0,\mc{X}_1)_\theta} &\eqsim_\theta \inf{\sum_{j=0,1}\,\nrm{\vec{x}^j}_{ \mf{S}_j(\ee^{j-\theta})}}=\nrm{(\ldots,x,x,x,\ldots)}_{\mf{S}_0(\ee^{-\theta})+ \mf{S}_1(\ee^{1-\theta})},
\end{align*}
where the infimum is taken over all  $\vec{x}^0 \in \mf{S}_0(\ee^{-\theta})$ and $\vec{x}^1 \in \mf{S}_1(\ee^{1-\theta})$ such that $x=x_k^0+x_k^1$ for all $k \in \Z$.
\end{theorem}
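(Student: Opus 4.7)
My plan is to prove the equivalence $\|x\|_{(\mc{X}_0,\mc{X}_1)_\theta^{(m)}} \eqsim_\theta \|x\|_{(\mc{X}_0,\mc{X}_1)_\theta}$ by the standard passage between J-type and K-type representations, via partial summation in one direction and taking first differences in the other. The second equality in the displayed formula of the theorem is just the defining formula for the norm on a sum of Banach spaces applied to the constant sequence $(\ldots,x,x,x,\ldots)$, so it requires only a one-line remark.

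\textbf{From sum to decomposition.} Given $\vec{x} \in \mf{S}_0(\ee^{-\theta}) \cap \mf{S}_1(\ee^{1-\theta})$ with $\sum_{k \in \Z} x_k = x$, I would set
\[
x_k^0 := \sum_{j \leq k} x_j, \qquad x_k^1 := \sum_{j > k} x_j, \qquad k \in \Z.
\]
By \eqref{eq:sequence3} we have $\|x_j\|_{X_0} \leq \ee^{j\theta}\|\vec{x}\|_{\mf{S}_0(\ee^{-\theta})}$ and $\|x_j\|_{X_1} \leq \ee^{-j(1-\theta)}\|\vec{x}\|_{\mf{S}_1(\ee^{1-\theta})}$, so these partial sums converge absolutely in $X_0$ and $X_1$ respectively, and $x_k^0 + x_k^1 = x$ for every $k$. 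Writing $u_j := \ee^{-j\theta} x_j$ with $\|\vec{u}\|_{\mf{S}_0} = \|\vec{x}\|_{\mf{S}_0(\ee^{-\theta})}$, the weighted sequence is $\ee^{-k\theta} x_k^0 = \sum_{m \geq 0} \ee^{-m\theta} u_{k-m}$. The translation invariance \eqref{eq:sequence2} together with the triangle inequality then yields
\[
\|\vec{x}^0\|_{\mf{S}_0(\ee^{-\theta})} \leq \frac{1}{1-\ee^{-\theta}}\|\vec{x}\|_{\mf{S}_0(\ee^{-\theta})},
\]
and an analogous argument gives $\|\vec{x}^1\|_{\mf{S}_1(\ee^{1-\theta})} \leq (\ee^{1-\theta}-1)^{-1}\|\vec{x}\|_{\mf{S}_1(\ee^{1-\theta})}$. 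Taking the infimum over $\vec{x}$ produces $\|x\|_{(\mc{X}_0,\mc{X}_1)_\theta^{(m)}} \lesssim_\theta \|x\|_{(\mc{X}_0,\mc{X}_1)_\theta}$.

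\textbf{From decomposition to sum.} Given $\vec{x}^0 \in \mf{S}_0(\ee^{-\theta})$ and $\vec{x}^1 \in \mf{S}_1(\ee^{1-\theta})$ with $x = x_k^0 + x_k^1$ for every $k$, I would take first differences $x_k := x_k^0 - x_{k-1}^0 = x_{k-1}^1 - x_k^1$. The telescoping sum gives $\sum_{k=-N}^M x_k = x - x_M^1 - x_{-N-1}^0$, and \eqref{eq:sequence3} applied to $\vec{x}^0$ and $\vec{x}^1$ shows that $\|x_M^1\|_{X_1} \to 0$ and $\|x_{-N-1}^0\|_{X_0} \to 0$, so $\sum_{k \in \Z} x_k = x$ with convergence in $X_0+X_1$. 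Writing $\ee^{-k\theta} x_k = \ee^{-k\theta} x_k^0 - \ee^{-\theta}(\ee^{-(k-1)\theta} x_{k-1}^0)$ and using translation invariance gives
\[
\|\vec{x}\|_{\mf{S}_0(\ee^{-\theta})} \leq (1+\ee^{-\theta})\|\vec{x}^0\|_{\mf{S}_0(\ee^{-\theta})},
\]
and symmetrically $\|\vec{x}\|_{\mf{S}_1(\ee^{1-\theta})} \leq (1+\ee^{1-\theta})\|\vec{x}^1\|_{\mf{S}_1(\ee^{1-\theta})}$. Taking the infimum over admissible decompositions yields $\|x\|_{(\mc{X}_0,\mc{X}_1)_\theta} \lesssim_\theta \|x\|_{(\mc{X}_0,\mc{X}_1)_\theta^{(m)}}$.

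\textbf{Main obstacle.} The only real subtlety I foresee is bookkeeping of convergence: one must use the coordinate bound \eqref{eq:sequence3} (rather than working purely formally with translation invariance) to ensure that the partial sums in both directions converge in the correct component spaces, and in the decomposition-to-sum step to verify that $\sum_{k \in \Z} x_k = x$ holds in $X_0+X_1$. Everything else is an elementary application of \eqref{eq:sequence2} and the triangle inequality.
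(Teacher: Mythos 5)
Your proposal is correct and follows essentially the same route as the paper's proof: partial sums $x_k^0=\sum_{j\le k}x_j$, $x_k^1=\sum_{j>k}x_j$ in one direction and first differences with a telescoping argument in the other, with the norm estimates obtained from translation invariance \eqref{eq:sequence2} and the coordinate bound \eqref{eq:sequence3} exactly as in the paper (up to an immaterial index shift in the constants). The convergence bookkeeping you flag is handled in the paper in precisely the way you describe.
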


\begin{proof}
  Take $x \in (\mc{X}_0,\mc{X}_1)_\theta$ and choose an $\vec{x} \in \mf{S}_0(\ee^{-\theta})\cap \mf{S}_1(\ee^{1-\theta})$  such that $\sum_{k\in \Z}x_k =x$ with convergence in $X_0 +X_1$  and
  \begin{equation}\label{eq:j = 0estimate}
   \nrm{\vec{x}}_{\mf{S}_0(\ee^{-\theta})\cap \mf{S}_1(\ee^{1-\theta})} \leq 2 \,\nrm{x}_{(\mc{X}_0,\mc{X}_1)_\theta}.
     \end{equation} For $n \in \Z$ define $y^0_n:= \sum_{k=-\infty}^n x_k$ and $y^1_n:= \sum_{k=n+1}^\infty x_k$. Then it is clear that $y^0_k+y_1^k = x$ for all $k \in \Z$. Moreover, for $j = 0,1$ we have by \eqref{eq:sequence1} and \eqref{eq:sequence3} that for any $n_1,n_2 \in \Z$ with $n_1<n_2$
    \begin{align*}
      \nrmb{\sum_{k=n_1}^{n_2} x_k}_{X_j} &\leq \sum_{k=n_1}^{n_2} \ee^{-k(j-\theta)}\nrm{\ee^{k(j-\theta)}x_k}_{X_j}\\
      &\leq\nrmb{(\ee^{k(j-\theta)}x_k)_{k \in \Z }}_{\mf{S}_j} \sum_{k=n_1}^{n_2} \ee^{-k(j-\theta)}\\
      &\leq 2 \,\nrm{x}_{(\mc{X}_0,\mc{X}_1)_\theta}\sum_{k=n_1}^{n_2} \ee^{-k(j-\theta)}.
    \end{align*}
For $j = 0$ this tends to zero when $n_1,n_2 \to -\infty$ and for $j = 1$ this tends to zero when $n_1,n_2 \to \infty$, which implies that $y^0_k \in X_0$ and $y^1_k \in X_1$ for all $k \in \Z$.
Using \eqref{eq:sequence2}, we furthermore have
\begin{align*}
 \nrm{\vec{y}^0}_{\mf{S}_0(\ee^{-\theta})}
 &=\nrms{  \hab{\sum_{k=-\infty}^n \ee^{-n\theta}x_{k} }_{n \in \Z}}_{\mf{S}_0}\\
  &=\nrms{  \hab{\sum_{k\in \Z_{\leq 0}}\ee^{k\theta} \ee^{-(n+k)\theta}x_{n+k} }_{n \in \Z}}_{\mf{S}_0}\\
 &\leq \sum_{k \in \Z_{\leq 0}}\ee^{k\theta} \,\nrmb{(\ee^{-(n+k)\theta}x_{n+k})_{n \in \Z}}_{\mf{S}_0}
 = \frac{\ee^\theta}{\ee^\theta-1} \,\nrm{\vec{x}}_{\mf{S}_0(\ee^{-\theta})}.
\end{align*}
Combined with a similar estimate in $\mf{S}_1(\ee^{1-\theta})$ and \eqref{eq:j = 0estimate}, this proves that
\begin{equation*}
   \inf{\sum_{j=0,1}\,\nrm{\vec{x}^j}_{ \mf{S}_j(\ee^{j-\theta})}}\lesssim_\theta \nrm{x}_{(\mc{X}_0,\mc{X}_1)_\theta}.
\end{equation*}

Conversely, take $x \in X_0 + X_1$ such that the right-hand side is less than some $C>0$. Choose $\vec{x}^0 \in \mf{S}_0(\ee^{-\theta})$ and $\vec{x}^1 \in \mf{S}_1(\ee^{1-\theta})$ such that $x_k^0+x_k^1 = x$ for all $k \in \Z$ and
 \begin{align}\label{eq:sequencemean}
\sum_{j=0,1}\,\nrm{\vec{x}^j}_{ \mf{S}_j(\ee^{j-\theta})} \leq 2\, C.
\end{align}
For $n \in \Z$ define $y_n := x^0_{n+1}-x^0_n =-(x^1_{n+1}-x^1_n) \in X_0\cap X_1$. Then for $k_1,k_2 \in \Z$ with $k_1<k_2$ we have
\begin{equation*}
  \sum_{n=k_1}^{k_2} y_n = x^0_{k_2+1}-x_{k_1}^0= -( x^1_{k_2+1}-x_{k_1}^1).
\end{equation*}
 Moreover, by \eqref{eq:sequence3} and \eqref{eq:sequencemean}, we have for $j = 0,1$
 \begin{equation*}
   \nrm{x^j_k}_{X_j} \leq 2\cdot \ee^{-k(j-\theta)} \,C, \qquad k \in \Z,
 \end{equation*}
 and therefore
 $$x = x-\lim_{k_1 \to -\infty} x^0_{k_1} - \lim_{k_2 \to \infty} x^1_{k_2+1} = \lim_{k_1,k_2 \to \infty}\sum_{n =-k_1}^{k_2} y_n = \sum_{n \in \Z} y_n$$
 with convergence in $X_0+X_1$. Using  \eqref{eq:sequencemean} we furthermore have
 \begin{align*}
   \nrm{x}_{(\mc{X}_0,\mc{X}_1)_\theta}&\leq\max_{j=0,1}\,\nrmb{(\ee^{n(j-\theta)}y_n)_{n \in \Z}}_{\mf{S}_j} \\&\leq \max_{j=0,1}\,(1+\ee^{-(j-\theta)})\nrmb{(\ee^{k(j-\theta)}x_k^j)_{k \in \Z}}_{\mf{S}_j} \leq 2(1+\ee) C,
 \end{align*}
 which finishes the proof.
\end{proof}

\subsection{Duality}\label{subsection:duality}
 For a sequentially structured Banach space $\mc{X}= [X,\mf{S}]$ we will write $\mc{X^*}:= [X^*,\mf{S}^*]$. As a preparation for the duality of the sequentially structured interpolation method, we note the following lemma.

\begin{lemma}\label{lemma:dualstructure}
  Let $\mc{X}$ be a Ces\`aro convergent sequentially structured Banach space. Then $\mc{X}^*$ is a Ces\`aro bounded sequentially structured Banach space.
\end{lemma}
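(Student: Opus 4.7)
The plan is to first realize $\mf{S}^*$ as a subspace of $\ell^\infty(\Z;X^*)$ so that the sequence structure axioms and the Cesàro condition make sense for $\mf{S}^*$, and then to transfer Cesàro boundedness from $\mf{S}$ to $\mf{S}^*$ by a standard duality computation.

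\textbf{Identifying $\mf{S}^*$ inside $\ell^\infty(\Z;X^*)$.} For $n\in\Z$ and $x\in X$ write $e_n\otimes x := (\ldots,0,x,0,\ldots)$, with $x$ in position $n$. By \eqref{eq:sequence1} and \eqref{eq:sequence2}, $\nrm{e_n\otimes x}_{\mf{S}}=\nrm{x}_X$. Thus each $\phi\in\mf{S}^*$ gives rise to $\phi_n\in X^*$ defined by $\phi_n(x):=\phi(e_n\otimes x)$, with $\nrm{\phi_n}_{X^*}\leq \nrm{\phi}_{\mf{S}^*}$; the assignment $\phi \mapsto \vec{\phi}=(\phi_n)_{n\in\Z}$ is therefore a contraction $\mf{S}^*\to \ell^\infty(\Z;X^*)$, and it is injective because $c_{00}(\Z;X)$ is dense in $\mf{S}$ (which holds since $\mf{S}$ is a $c_0$-sequence structure). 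In the other direction, for $\vec{\psi}\in\ell^1(\Z;X^*)$ the pairing $\vec{x}\mapsto\sum_n\psi_n(x_n)$ is controlled by $\nrm{\vec{\psi}}_{\ell^1(\Z;X^*)}\nrm{\vec{x}}_{\mf{S}}$ in view of \eqref{eq:sequence3}, so $\ell^1(\Z;X^*)\hookrightarrow \mf{S}^*$ contractively. A direct check shows that the resulting composition $\ell^1(\Z;X^*)\to\mf{S}^*\hookrightarrow\ell^\infty(\Z;X^*)$ is the identity, so we obtain the contractive sandwich required in \eqref{eq:ell_1_infty_sandwich} for $\mf{S}^*$.

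\textbf{Sequence structure axioms.} Property \eqref{eq:sequence3} for $\mf{S}^*$ is the inequality $\nrm{\phi_n}_{X^*}\leq \nrm{\phi}_{\mf{S}^*}$ just noted. Property \eqref{eq:sequence1} follows by testing: given $\phi\in X^*$ and $\varepsilon>0$, choose $x\in X$ with $\nrm{x}_X=1$ and $\phi(x)\geq\nrm{\phi}_{X^*}-\varepsilon$; then evaluation on $e_0\otimes x\in\mf{S}$ shows $\nrm{e_0\otimes\phi}_{\mf{S}^*}\geq\nrm{\phi}_{X^*}$, while the reverse inequality comes from the $\ell^1$ embedding. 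Finally, translation invariance \eqref{eq:sequence2} for $\mf{S}^*$ is inherited by duality from the isometric translation invariance \eqref{eq:sequence2} of $\mf{S}$.

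\textbf{Cesàro boundedness.} A direct computation shows $(\CO_n\vec{x})_k=\lambda_k^{(n)} x_k$ with triangular weights $\lambda_k^{(n)}=\max\{0,1-\abs{k}/(n+1)\}$, so $\CO_n$ is self-adjoint on $c_{00}$: for $\vec{\phi}\in\mf{S}^*$ and $\vec{x}\in c_{00}(\Z;X)$,
\begin{equation*}
\sum_{k\in\Z}(\CO_n\vec{\phi})_k(x_k)=\sum_{k\in\Z}\phi_k\big((\CO_n\vec{x})_k\big)=\phi(\CO_n\vec{x}).
\end{equation*}
Since $\CO_n\vec{\phi}$ has finite support it already lies in $\ell^1(\Z;X^*)\subset\mf{S}^*$ via the first paragraph, and by \eqref{eq:sequence4b} applied to $\mf{S}$ itself we have $\nrm{\CO_n\vec{x}}_{\mf{S}}\leq\nrm{\vec{x}}_{\mf{S}}$, so the above identity yields $|(\CO_n\vec{\phi})(\vec{x})|\leq\nrm{\vec{\phi}}_{\mf{S}^*}\nrm{\vec{x}}_{\mf{S}}$ for all $\vec{x}\in c_{00}(\Z;X)$. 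Density of $c_{00}(\Z;X)$ in $\mf{S}$ then gives $\nrm{\CO_n\vec{\phi}}_{\mf{S}^*}\leq\nrm{\vec{\phi}}_{\mf{S}^*}$, which is \eqref{eq:sequence4b} for $\mf{S}^*$.

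The only delicate point is the first step: we must be sure that the dual Banach space $\mf{S}^*$ really embeds into $\ell^0(\Z;X^*)$ in a compatible way, and that the Cesàro operator on coordinates of $\mf{S}^*$ agrees with the dual of $\CO_n$ on $\mf{S}$. Both rely critically on the $c_0$ hypothesis through the density of $c_{00}(\Z;X)$ in $\mf{S}$; without it one would only get $\mf{S}^*$ as an abstract dual without a natural coordinate realization.
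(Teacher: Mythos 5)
Your proof is correct and follows essentially the same route as the paper: the contractive sandwich $\ell^1(\Z;X^*)\hookrightarrow\mf{S}^*\hookrightarrow\ell^\infty(\Z;X^*)$ obtained by dualizing $\ell^1(\Z;X)\hookrightarrow\mf{S}\hookrightarrow c_0(\Z;X)$ (which the paper gets by citing \cite[Proposition 1.3.3]{HNVW16} where you construct the coordinate functionals by hand), translation invariance by duality, and Ces\`aro boundedness of $\mf{S}^*$ from the self-adjointness of $\CO_n$ together with \eqref{eq:sequence4b} for $\mf{S}$. Your explicit triangular-weights computation just spells out the identity $\ip{\CO_n\vec{x},\vec{x}^*}=\ip{\vec{x},\CO_n\vec{x}^*}$ that the paper uses implicitly.
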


\begin{proof}
 Since $\mf{S}$ is a Ces\`aro convergent sequence structure, we have $\ell^1(\Z;X) \hookrightarrow \mf{S} \hookrightarrow c_0(\Z;X)$ densely and contractively. Therefore, by \cite[Proposition 1.3.3]{HNVW16} we have $\ell^1(\Z;X^*) \hookrightarrow \mf{S}^* \hookrightarrow \ell^\infty(\Z;X^*)$ contractively. Moreover for $\vec{x}^* \in \mf{S}^*$ we have, by the  translation invariance of $\mf{S}$, that
 \begin{equation*}
   \nrm{(x_{k+n}^*)_{k \in \Z}}_{\mf{S}^*} = \sup_{\nrm{\vec{x}}_{\mf{S}}\leq 1} \, \abss{\sum_{k \in \Z} \ip{x_{k},x_{ k+n}^*}}= \sup_{\nrm{\vec{x}}_{\mf{S}}\leq 1} \, \abss{\sum_{k \in \Z} \ip{x_{ k- n},x_{k}^*}} = \nrm{\vec{x}^*}_{\mf{S}^*}
 \end{equation*}
 for all $n \in \Z$, which proves that $\mf{S}^*$ is a sequence structure. Finally, for $n \in \N$ let $C_n$ denote the Ces\`aro operator as in \eqref{eq:Cesaro_operator} and note that for $\vec{x}^* \in \mf{S}^*$ we have, using that $\mf{S}$ is Ces\`aro bounded, that
 \begin{align*}
   \nrm{C_n\vec{x}^* }_{\mf{S}^*} &= \sup_{\nrm{\vec{x}}_{\mf{S}}\leq 1} \abss{\ip{C_n\vec{x},\vec{x}^*}}\leq \nrm{\vec{x}^*}_{\mf{S}^*}.
 \end{align*}
 Therefore, $\mf{S}^*$ is Ces\`aro bounded.
\end{proof}

When $(X_0,X_1)$ is a compatible couple of Banach spaces such that $X_0\cap X_1$ is dense in $X_0$ and $X_1$, the dual pair  $(X_0^*,X^*_1)$ is also a compatible couple of Banach spaces and we have
\begin{equation}\label{eq:proposition:duality;dual_sum}
\begin{aligned}
  (X_0\cap X_1)^* = X_0^*+X_1^*,\\
  (X_0+ X_1)^* = X_0^*\cap X_1^*,
\end{aligned}
\end{equation}
see e.g. \cite[Theorem 2.7.1]{BL76}.

Furthermore, note that for a Ces\`aro convergent sequentially structured Banach space $\mc{X}$ we have
\begin{equation}\label{eq:weightedseqdual}
\mc{X}(a)^* = \mc{X}^*(a^{-1}),\qquad a \in (0,\infty)
\end{equation}
which can be seen easily after observing that $\ell^1(\Z;X_j)(a) \hookrightarrow \mf{S}_j(a)$ densely for $j=0,1$ and thus $\mf{S}_j(a)^* \hookrightarrow \ell^\infty(\Z;X_j)(a^{-1})$.

Using Lemma \ref{lemma:dualstructure} and the alternative formulation of $(\mc{X}_0,\mc{X}_1)_\theta$ in Theorem~\ref{theorem:meanmethod}, we can now prove a duality result for the sequentially structured interpolation method. Let us note that in the framework of \cite{CKMR02} this was posed as an open problem, see \cite[p.662]{Ka16b}. In the following proposition we call a sequentially structured Banach space $\mc{X}$ reflection invariant if
\begin{equation*}
  \nrm{\vec{x}}_{\mf{S}} = \nrm{(x_{-k})_{k \in \Z}}_{\mf{S}}, \qquad \vec{x} \in \mf{S}.
\end{equation*}
In this case the reflection operator defines a isometry from $\mc{X}(a)$ to $\mc{X}(a^{-1})$.

\begin{proposition}\label{proposition:duality}
  Let $(\mc{X}_0,\mc{X}_1)$ be a compatible couple of reflection invariant Ces\`aro convergent sequentially structured Banach spaces and $\theta \in (0,1)$. Suppose that $X_0\cap X_1$ is dense in $X_0$ and $X_1$.
Then $(\mc{X}_0,\mc{X}_1)_\theta^* = (\mc{X}_0^*,\mc{X}_1^*)_\theta.$
\end{proposition}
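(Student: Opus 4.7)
The plan is to realize $(\mc{X}_0,\mc{X}_1)_\theta$ as an isometric quotient, identify the dual of the ambient intersection space, and match the annihilator of the quotient kernel with $(\mc{X}_0^*,\mc{X}_1^*)_\theta$ via the mean-method formulation. By Remark~\ref{rmk:proposition:Banachembeddings}, the summation operator $T \colon \mf{S}_0(\ee^{-\theta}) \cap \mf{S}_1(\ee^{1-\theta}) \to (\mc{X}_0,\mc{X}_1)_\theta$, $\vec{x} \mapsto \sum_{k\in \Z} x_k$, is an isometric quotient, so $(\mc{X}_0,\mc{X}_1)_\theta^*$ is identified isometrically with $\ker(T)^\perp$ inside $\bracb{\mf{S}_0(\ee^{-\theta}) \cap \mf{S}_1(\ee^{1-\theta})}^*$. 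Density of $X_0 \cap X_1$ in each $X_j$, together with Lemma~\ref{lemma:c00dense}, shows that $c_{00}(\Z; X_0\cap X_1)$, and hence the intersection itself, is dense in each $\mf{S}_j(\ee^{j-\theta})$; combined with the standard duality of intersections \cite[Theorem~2.7.1]{BL76}, Lemma~\ref{lemma:dualstructure}, and~\eqref{eq:weightedseqdual}, this yields
\begin{equation*}
\bracb{\mf{S}_0(\ee^{-\theta}) \cap \mf{S}_1(\ee^{1-\theta})}^* = \mf{S}_0^*(\ee^\theta) + \mf{S}_1^*(\ee^{\theta-1}).
\end{equation*}

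Next I would characterize $\ker(T)^\perp$ as the ``constant sequences''. Because $\ell^1(\Z; X_j) \hookrightarrow \mf{S}_j$ densely, every $\phi$ in the dual above has a unique pointwise representative $\vec y^* \in \ell^\infty(\Z; X_0^* + X_1^*)$ acting by $\phi(\vec x) = \sum_k \ip{x_k, y_k^*}$ on $\vec x \in c_{00}(\Z; X_0\cap X_1)$, where $y_k^* \in X_0^* + X_1^* = (X_0\cap X_1)^*$ via~\eqref{eq:proposition:duality;dual_sum}. Testing $\phi$ on the kernel elements placing $x \in X_0 \cap X_1$ at index $k_1$ and $-x$ at index $k_2$ forces $y_{k_1}^* = y_{k_2}^*$, so $\vec y^* = \Delta \bar z^*$ is constant with value $\bar z^* \in X_0^* + X_1^*$. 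Conversely, for such $\bar z^*$ one recovers $\phi(\vec x) = \ip{T\vec x, \bar z^*}$ on $c_{00}(\Z; X_0\cap X_1)$; Lemma~\ref{lemma:finiterep} then bounds $\bar z^*|_{X_0\cap X_1}$ by $\nrm{\phi} \cdot \nrm{\,\cdot\,}_{(\mc{X}_0,\mc{X}_1)_\theta}$, and the density from Corollary~\ref{corollary:denseX0X1} extends $\bar z^*$ to a unique functional on $(\mc{X}_0,\mc{X}_1)_\theta$, giving the bijection $\phi \leftrightarrow \bar z^*$.

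For the norm identification, the quotient realization yields $\nrm{\bar z^*}_{(\mc{X}_0,\mc{X}_1)_\theta^*} = \nrm{\Delta \bar z^*}_{\mf{S}_0^*(\ee^\theta) + \mf{S}_1^*(\ee^{\theta-1})}$. Reflection invariance of $\mf{S}_j$ transfers to $\mf{S}_j^*$, so $(R\vec y)_k := y_{-k}$ defines an isometric isomorphism $\mf{S}_j^*(a) \to \mf{S}_j^*(a^{-1})$ for every $a > 0$. Since $R$ fixes the constant sequence, applying $R$ to every decomposition gives
\begin{equation*}
\nrm{\Delta \bar z^*}_{\mf{S}_0^*(\ee^\theta) + \mf{S}_1^*(\ee^{\theta-1})} = \nrm{\Delta \bar z^*}_{\mf{S}_0^*(\ee^{-\theta}) + \mf{S}_1^*(\ee^{1-\theta})},
\end{equation*}
and Theorem~\ref{theorem:meanmethod} applied to $(\mc{X}_0^*, \mc{X}_1^*)$ identifies the right-hand side with $\nrm{\bar z^*}_{(\mc{X}_0^*, \mc{X}_1^*)_\theta^{(m)}} \eqsim \nrm{\bar z^*}_{(\mc{X}_0^*, \mc{X}_1^*)_\theta}$. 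The main obstacle I anticipate is the annihilator characterization: one must carefully pin down the pointwise-constant structure of $\vec y^*$ living in $X_0^* + X_1^* = (X_0\cap X_1)^*$ and then upgrade $\bar z^*$ to a bounded functional on the full space $(\mc{X}_0,\mc{X}_1)_\theta$, which crucially uses Lemma~\ref{lemma:finiterep} together with Corollary~\ref{corollary:denseX0X1}.
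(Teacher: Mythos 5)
Your proof is correct, but it travels the mirror image of the paper's route. The paper starts from the \emph{subspace} realization: by Theorem~\ref{theorem:meanmethod} and reflection invariance, $S \colon x \mapsto (\ldots,x,x,x,\ldots)$ embeds $(\mc{X}_0,\mc{X}_1)_\theta$ into the sum $\mf{S}_0(\ee^{\theta})+\mf{S}_1(\ee^{\theta-1})$; dualizing gives $(\mc{X}_0,\mc{X}_1)_\theta^*$ as a \emph{quotient} of the intersection $\mf{S}_0^*(\ee^{-\theta})\cap\mf{S}_1^*(\ee^{1-\theta})$ by $\ker(S^*)$, and after computing $S^*\vec{x}^*=\sum_k x_k^*$ this quotient is literally the definition of $(\mc{X}_0^*,\mc{X}_1^*)_\theta$ via Remark~\ref{rmk:proposition:Banachembeddings} — no further identification is needed. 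You instead dualize the \emph{quotient} realization $T\colon \vec{x}\mapsto\sum_k x_k$, land in the annihilator $\ker(T)^\perp$ inside the sum $\mf{S}_0^*(\ee^{\theta})+\mf{S}_1^*(\ee^{\theta-1})$, and must then do two extra pieces of work: (a) identify $\ker(T)^\perp$ with the constant sequences, where the forward inclusion is the testing argument and the converse is handled — correctly and somewhat cleverly — not by showing that constant sequences annihilate all of $\ker(T)$ (which is not obvious) but by bounding the induced functional on $X_0\cap X_1$ through Lemma~\ref{lemma:finiterep} and extending by the density of Corollary~\ref{corollary:denseX0X1}; and (b) invoke Theorem~\ref{theorem:meanmethod} \emph{for the dual couple} together with reflection invariance on the dual side to recognize $\nrm{\Delta\bar{z}^*}_{\mf{S}_0^*(\ee^{\theta})+\mf{S}_1^*(\ee^{\theta-1})}$ as $\nrm{\bar{z}^*}_{(\mc{X}_0^*,\mc{X}_1^*)_\theta}$, whereas the paper applies reflection invariance on the primal side before dualizing. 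Both arguments rest on the same ingredients (\eqref{eq:weightedseqdual}, \eqref{eq:proposition:duality;dual_sum}, Lemma~\ref{lemma:dualstructure}, and one of the two equivalent formulations), but the paper's is shorter because the object it obtains after dualizing is the definition of the target space, while yours requires the extra annihilator characterization. One small imprecision: the pointwise representative $\vec{y}^*$ lies in $\ell^0(\Z;X_0^*+X_1^*)$ with weighted, not uniform, bounds, so it is not in $\ell^\infty(\Z;X_0^*+X_1^*)$ in general; this does not affect the argument, since only the existence and uniqueness of the pointwise representative is used.
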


\begin{proof}
Note that, using the reflection invariance of the sequence structures, the norm equivalence from Theorem~\ref{theorem:meanmethod} means that
$$
S:(\mc{X}_0,\mc{X}_1)_\theta \to \mf{S}_0(\ee^{\theta}) + \mf{S}_0(\ee^{\theta-1}),
$$
given by $x \mapsto (\ldots,x,x,x,\ldots)$ is a topological linear embedding.
Therefore, using \eqref{eq:proposition:duality;dual_sum} and \eqref{eq:weightedseqdual}, we have (see e.g.\ \cite[Theorem~III.10.1 and Proposition~VI.1.8]{Co90})
$$
\mathrm{ran}(S)^* = \faktor{\mf{S}_0^*(\ee^{-\theta}) \cap \mf{S}_1^*(\ee^{1-\theta})}{\mathrm{ran}(S)^{\perp}} = \faktor{\mf{S}_0^*(\ee^{-\theta}) \cap \mf{S}_1^*(\ee^{1-\theta})}{\ker(S^*)}.
$$
So the adjoint
\begin{equation*}\label{eq:proposition:duality;S*}
S^*:\mf{S}_0^*(\ee^{-\theta}) \cap \mf{S}_1^*(\ee^{1-\theta}) \to (\mc{X}_0,\mc{X}_1)_\theta^*
\end{equation*}
induces a topological linear isomorphism
\begin{equation}\label{eq:proposition:duality;tilde_S*}
\widetilde{S}^*: \faktor{\mf{S}_0^*(\ee^{-\theta}) \cap \mf{S}_1^*(\ee^{1-\theta})}{\ker(S^*)} \to (\mc{X}_0,\mc{X}_1)_\theta^*,\, \brac{\vec{x}^*} \mapsto S^*\vec{x}^*.
\end{equation}

In order to compute $S^*$, let $\vec{x}^* \in \mf{S}_0^*(\ee^{-\theta}) \cap \mf{S}_1^*(\ee^{1-\theta})$ and $x \in X_0 \cap X_1$.
Then, by Remark~\ref{remark:automaticconvergenceX0X1} and \eqref{eq:proposition:duality;dual_sum}, we have $\vec{x}^* \in \ell^1(\Z;(X_0\cap X_1)^*)$, so
\begin{align*}
\ip{x,S^*\vec{x}^*} = \ip{Sx,\vec{x}^*} = \ip{(\ldots,x,x,x,\ldots),\vec{x}^*} =
\sum_{k \in \Z}\ip{x,x^*_k} = \ipb{x,\sum_{k \in \Z}x^*_k}.
\end{align*}
Since we know that $(\mc{X}_0,\mc{X}_1)_\theta^* \hookrightarrow (X_0 \cap X_1)^*$ by Corollary~\ref{corollary:denseX0X1}, we deduce that
$S^*\vec{x}^* = \sum_{k \in \Z}x^*_k$.
In view of Remark~\ref{rmk:proposition:Banachembeddings}, we may thus conclude that $(\mc{X}_0,\mc{X}_1)_\theta^* = (\mc{X}_0^*,\mc{X}_1^*)_\theta$.
\end{proof}

Using Proposition \ref{proposition:duality} we immediately obtain duality results for the methods introduced in Example \ref{example:interpolationmethods}. In the following example we will use some Banach space geometry.
For an introduction to the \emph{Radon--N\'ykodim property}, or $\RNP$, we refer to \cite[Chapter 2]{Pi16} and for nontrivial type we refer to \cite[Chapter 7]{HNVW17}. Finally, for an introduction to \emph{Kantorovich--Banach spaces}, or $\KB$-spaces, we refer to \cite[Section 2.4]{MN91}

\begin{example}\label{example:dual}
Let $(X_0,X_1)$ be a compatible couple of Banach spaces and let $\theta \in (0,1)$. Suppose that $X_0\cap X_1$ is dense in $X_0$ and $X_1$.
\begin{enumerate}[(i)]
  \item \label{it:dualreal} For $p_0,p_1 \in [1,\infty)$ we have
  \begin{equation*}
    (X_0,X_1)_{\theta,p_0,p_1}^* = (X_0^*,X_1^*)_{\theta,p_0',p_1'}.
  \end{equation*}
\item \label{it:dualcomplex} We have
  \begin{equation*}
      \brac{X_0,X_1}_{\theta}^* = \brac{X_0^*,X_1^*}^\theta
  \end{equation*}
and if either $X_0^*$ or $X_1^*$ has $\RNP$, then
\begin{equation*}
      \brac{X_0,X_1}_{\theta}^* = \brac{X_0^*,X_1^*}_\theta.
  \end{equation*}
\item \label{it:dualrademachergauss}If $X_0$ and $X_1$ have nontrivial type, we have
\begin{align*}
  \hab{(X_0,X_1)_{\theta,\varepsilon}}^* &= (X_0^*,X_1^*)_{\theta,\varepsilon},\\
  \hab{(X_0,X_1)_{\theta,\gamma}}^* &= (X_0^*,X_1^*)_{\theta,\gamma}.
\end{align*}
\item \label{it:duallattice}If $X_0$ and $X_1$ are Banach lattices, we have for $p_0,p_1 \in [1,\infty)$
 \begin{equation*}
\hab{(X_0,X_1)_{\theta,\ell^{p_0},\ell^{p_1}}}^* = (X_0^*,X_1^*)_{\theta,\ell^{p_0'}_+,\ell^{p_1'}_+}.
\end{equation*}
and if either $X_0$ or $X_1$ is a $\KB$-space, then
 \begin{equation*}
\hab{(X_0,X_1)_{\theta,\ell^{p_0},\ell^{p_1}}}^* = (X_0^*,X_1^*)_{\theta,\ell^{p_0'},\ell^{p_1'}}.
\end{equation*}
\end{enumerate}
\end{example}

Note that the statements for the real and complex interpolation methods are well-known, the statements for the Rademacher and $\gamma$-interpolation methods could be extracted from \cite[Section 3.3]{KLW19} and the statement for the $\ell^q$-interpolation method is new.  All statements follow in a unified way from Example \ref{example:interpolationmethods}, Proposition \ref{proposition:duality} and suitable identifications of dual sequence space structures.

\begin{proof}[Proof of Example \ref{example:dual}]
     \ref{it:dualreal} is a consequence of $\ell^{p_j}(\Z;X_j)^* = \ell^{p_j'}(\Z;X_j^*)$ for $j=0,1$ (see \cite[Proposition 1.3.3]{HNVW16}). The first statement in \ref{it:dualcomplex} follows from the duality
    \begin{align*}
        L^2(\T;X_j)^* = \Lambda^2(\T;X_j^*), \qquad j=0,1,
    \end{align*}
    see \cite[Chapter 2]{Pi16}. For the second statement in \ref{it:dualcomplex}, assume without loss of generality that $X_0$ has $\RNP$. Then ${L}^2(\T;X_0) = {\Lambda}^2(\T;X_0)$, so ${\Lambda}^2(\T;X_0)$ is Ces\`aro convergent. Thus, since $[X_1,\widehat{\Lambda}^2(\T;X_1)]^\circ = [X_1,\widehat{L}^2(\T;X_1)]$, the statement follows from Proposition \ref{proposition:Xcirc}. For \ref{it:dualrademachergauss} note that $X_0$ and $X_1$ are $K$-convex by \cite[Theorem 7.4.23]{HNVW17}. Therefore we have
    \begin{align*}
      \varepsilon^2(\Z;X_j^*) &=  \varepsilon^2(\Z;X_j)^* ,  && j=0,1,\\
       \gamma^2(\Z;X_j^*) &=  \gamma^2(\Z;X_j)^* ,  && j=0,1,
    \end{align*}
     by \cite[Theorem 7.4.14 and 9.1.24]{HNVW17}, from which \ref{it:dualrademachergauss} follows. For the first identity in \ref{it:duallattice} note that $X(\ell^{p_j}(\Z))^* = X^*(\ell^{p_j'}_+(\Z))$ by \cite[Section 1.d.]{LT79}. For the final identity note that  $\ell^{p_j}(\Z;X_j) = \ell^{p_j}_+(\Z;X_j)$ for either $j=0$ or $j=1$, from which the embedding follows by Proposition \ref{proposition:Xcirc} as before.
\end{proof}

When either $\mf{S}_0^*$ or $\mf{S}_1^*$  is Ces\`aro convergent, it follows from Proposition \ref{proposition:duality} and Corollary \ref{corollary:denseX0X1} that $X_0^*\cap X_1^*$ is norming for $(\mc{X}_0,\mc{X}_1)_\theta$.
Using the description \eqref{eq:proposition:duality;tilde_S*} from the proof Proposition \ref{proposition:duality}, we can actually deduce that $X_0^*\cap X_1^*$ is norming for $(\mc{X}_0,\mc{X}_1)_\theta$ without any assumptions on $\mf{S}_0^*$ or $\mf{S}_1^*$ .

The key step in the proof of this norming result will be the following lemma.

\begin{lemma}\label{lem:proposition:dualitynorming}
 Let $(\mc{X}_0,\mc{X}_1)$ be a compatible couple of Ces\`aro convergent sequentially structured Banach spaces and let $\theta \in (0,1)$. Suppose that $X_0\cap X_1$ is dense in $X_0$ and $X_1$.
 Then $$c_{00}(\Z;X_0^* \cap X_1^*) \subseteq \mf{S}_0^*(\ee^{-\theta}) \cap \mf{S}_1^*(\ee^{1-\theta})$$ is norming for $\mf{S}_0(\ee^{\theta}) + \mf{S}_1(\ee^{\theta-1})$.
\end{lemma}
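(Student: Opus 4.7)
The plan is to combine Hahn--Banach duality on the sum with a Ces\`aro truncation of the resulting functional. As a first step, I would identify the dual of $\mf{S}_0(\ee^\theta) + \mf{S}_1(\ee^{\theta-1})$ with $\mf{S}_0^*(\ee^{-\theta}) \cap \mf{S}_1^*(\ee^{1-\theta})$ using \eqref{eq:proposition:duality;dual_sum} applied to the sequence structures together with \eqref{eq:weightedseqdual}. The hypothesis of \eqref{eq:proposition:duality;dual_sum} is verified by noting that $c_{00}(\Z; X_0 \cap X_1)$ is dense in each $\mf{S}_j(\ee^{j-(1-\theta)})$, because $\mf{S}_j$ is a $c_0$-sequence structure and $X_0 \cap X_1$ is dense in $X_j$, so in particular the intersection is dense in each summand.

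Given $\vec{x}$ in the sum and $\varepsilon > 0$, Hahn--Banach then produces $\vec{y}^*$ in this intersection with $\|\vec{y}^*\| \leq 1$ and $|\langle \vec{x}, \vec{y}^* \rangle| > \|\vec{x}\| - \varepsilon$. By Lemma \ref{lemma:dualstructure}, $\mf{S}_j^*$ is an $\ell^\infty$-sequence structure, and this property persists under weighting, so the Ces\`aro operator satisfies $\|C_n \vec{y}^*\|_{\mf{S}_j^*(\ee^{j-(1-\theta)})^{-1}} \leq \|\vec{y}^*\|_{\mf{S}_j^*(\ee^{j-(1-\theta)})^{-1}}$ for both $j$. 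The truncation $C_n \vec{y}^*$ is finitely supported, and each of its entries lies in both $X_0^*$ and $X_1^*$ by \eqref{eq:sequence3}, hence $C_n \vec{y}^* \in c_{00}(\Z; X_0^* \cap X_1^*)$ with norm at most one in $\mf{S}_0^*(\ee^{-\theta}) \cap \mf{S}_1^*(\ee^{1-\theta})$.

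The main technical step is to verify that $\langle \vec{x}, C_n \vec{y}^* \rangle \to \langle \vec{x}, \vec{y}^* \rangle$ as $n \to \infty$, since on the dual side we only have Ces\`aro \emph{boundedness}. The trick is that $C_n$ acts diagonally on each entry, so it is self-adjoint with respect to the pairing, giving $\langle \vec{x}, C_n \vec{y}^* \rangle = \langle C_n \vec{x}, \vec{y}^* \rangle$. Decomposing $\vec{x} = \vec{x}^0 + \vec{x}^1$ with $\vec{x}^0 \in \mf{S}_0(\ee^\theta)$ and $\vec{x}^1 \in \mf{S}_1(\ee^{\theta-1})$, the $c_0$-sequence structure property \eqref{eq:sequence4}, which also transfers to weighted versions, yields $C_n \vec{x}^j \to \vec{x}^j$ in $\mf{S}_j(\ee^{j-(1-\theta)})$, hence $C_n \vec{x} \to \vec{x}$ in the sum. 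Continuity of $\vec{y}^*$ then gives convergence of the pairing, so for $n$ large enough $|\langle \vec{x}, C_n \vec{y}^* \rangle| > \|\vec{x}\| - 2\varepsilon$, proving the norming property. The main obstacle is precisely this self-adjoint bookkeeping step, which transfers Ces\`aro convergence from the dual side (where only boundedness is available) to the primal side (where the $c_0$-sequence structure assumption delivers convergence).
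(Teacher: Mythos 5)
Your proof is correct and follows essentially the same route as the paper: obtain a norm-one functional in $\mf{S}_0^*(\ee^{-\theta}) \cap \mf{S}_1^*(\ee^{1-\theta})$ from the duality of the sum, truncate it with the Ces\`aro operator (bounded by Lemma \ref{lemma:dualstructure}), and use the self-adjointness $\ip{\vec{x},C_n\vec{y}^*}=\ip{C_n\vec{x},\vec{y}^*}$ to transfer Ces\`aro convergence to the primal side where the $c_0$-property applies. Only a cosmetic slip: your weight notation $\mf{S}_j(\ee^{j-(1-\theta)})$ swaps the two exponents; it should read $\mf{S}_j(\ee^{\theta-j})$ to match $\mf{S}_0(\ee^{\theta})+\mf{S}_1(\ee^{\theta-1})$.
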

\begin{proof}
Let $\vec{x} \in \mf{S}_0(\ee^{\theta}) + \mf{S}_1(\ee^{\theta-1})$ and let $\varepsilon > 0$. As a consequence of \eqref{eq:proposition:duality;dual_sum}, there exists an $\vec{x}^* \in \mf{S}_0^*(\ee^{-\theta}) \cap \mf{S}_1^*(\ee^{1-\theta})$ of norm $1$ such that $$\nrm{\vec{x}}_{\mf{S}_0(\ee^{\theta}) + \mf{S}_1(\ee^{\theta-1})} \leq \abs{\ip{\vec{x},\vec{x}^*}}+\frac{\varepsilon}{2}.$$
For $n \in \N$ we define $\vec{x}^n := \CO_n\vec{x}$ and $\vec{y}^n := \CO_n\vec{x}^*$, where $\CO_n$ is the Ces\`aro operator given in \eqref{eq:Cesaro_operator}.
Note that $\ip{\vec{x}^n,\vec{x}^*} = \ip{\vec{x},\vec{y}^n}$.
As $\mc{X}_j$ is Ces\`aro convergent, we have $\vec{x} = \lim_{n \to \infty }\vec{x}^n$ in $\mf{S}_0(\ee^{\theta}) + \mf{S}_1(\ee^{\theta-1})$, so that
\begin{align*}
\nrm{\vec{x}}_{\mf{S}_0(\ee^{\theta}) + \mf{S}_1(\ee^{\theta-1})} &\leq \abs{\ip{\vec{x},\vec{x}^*}}+\frac{\varepsilon}{2}
= \lim_{n \to \infty} \abs{\ip{\vec{x}^n,\vec{x}^*}}+\frac{\varepsilon}{2} \\
&= \lim_{n \to \infty} \abs{\ip{\vec{x},\vec{y}^n}}+\frac{\varepsilon}{2}.
\end{align*}
There thus exists $n \in \N$ such that $\nrm{\vec{x}}_{\mf{S}_0(\ee^{\theta}) + \mf{S}_1(\ee^{\theta-1})} \leq \abs{\ip{\vec{x},\vec{y}^n}}+\varepsilon$.
As $\mc{X}_j^*$ is Ces\`aro bounded by Lemma~\ref{lemma:dualstructure}, we have
\begin{equation*}
\nrm{\vec{y}^n}_{\mf{S}_0^*(\ee^{-\theta})\cap \mf{S}_1^*(\ee^{1-\theta})} \leq \nrm{\vec{x}^*}_{\mf{S}_0^*(\ee^{-\theta})\cap \mf{S}_1^*(\ee^{1-\theta})} = 1.
\end{equation*}
The observation that $\vec{y}^n \in c_{00}(\Z;X_0^* \cap X_1^*)$ finishes the proof.
\end{proof}

We can now prove the announced proposition.

\begin{proposition}\label{proposition:dualitynorming}
  Let $(\mc{X}_0,\mc{X}_1)$ be a compatible couple of reflection invariant Ces\`aro convergent sequentially structured Banach spaces and let $\theta \in (0,1)$. Suppose that $X_0\cap X_1$ is dense in $X_0$ and $X_1$. Then for $x \in (\mc{X}_0,\mc{X}_1)_\theta$  we have
  \begin{equation}\label{eq:proposition:dualitynorming;1}
  \nrm{x}_{(\mc{X}_0,\mc{X}_1)_\theta} \eqsim_\theta \sup\, \cbraceb{ \abs{\ip{x,x^*}} : x^* \in X_0^*\cap X_1^*,\, \nrm{x^*}_{(\mc{X}_0^*,\mc{X}_1^*)_\theta} \leq 1}.
  \end{equation}
In particular, setting $\mc{Y}_j:= (\mc{X}_j^*)^\circ$ for $j=0,1$, we have
\begin{equation}\label{eq:proposition:dualitynorming;2}
\nrm{x}_{(\mc{X}_0,\mc{X}_1)_\theta}
\eqsim_\theta \sup\, \cbraceb{ \abs{\ip{x,x^*}} : x^* \in X_0^*\cap X_1^*,\, \nrm{x^*}_{(\mc{Y}_0,\mc{Y}_1)_\theta} \leq 1}.
\end{equation}
\end{proposition}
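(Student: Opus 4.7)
The plan is to express $\nrm{x}_{(\mc{X}_0,\mc{X}_1)_\theta}$ as the norm of the constant embedding $Sx := (\ldots,x,x,x,\ldots)$ in a weighted sum space, and then dualize using Lemma \ref{lem:proposition:dualitynorming}. First I would combine Theorem \ref{theorem:meanmethod} with reflection invariance to obtain
\begin{equation*}
\nrm{x}_{(\mc{X}_0,\mc{X}_1)_\theta} \eqsim_\theta \nrm{Sx}_{\mf{S}_0(\ee^{-\theta}) + \mf{S}_1(\ee^{1-\theta})} = \nrm{Sx}_{\mf{S}_0(\ee^{\theta}) + \mf{S}_1(\ee^{\theta-1})},
\end{equation*}
where the second equality holds since reflection sends $\mf{S}_j(a)$ isometrically onto $\mf{S}_j(a^{-1})$ and fixes constant sequences. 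This weight swap is essential: by \eqref{eq:weightedseqdual} and \eqref{eq:proposition:duality;dual_sum}, the dual of $\mf{S}_0(\ee^\theta) + \mf{S}_1(\ee^{\theta-1})$ is exactly $\mf{S}_0^*(\ee^{-\theta}) \cap \mf{S}_1^*(\ee^{1-\theta})$, which matches the weights in the definition of $(\mc{X}_0^*,\mc{X}_1^*)_\theta$.

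Next I would apply Lemma \ref{lem:proposition:dualitynorming} to express $\nrm{Sx}_{\mf{S}_0(\ee^{\theta}) + \mf{S}_1(\ee^{\theta-1})}$ as the supremum of $\abs{\ip{Sx,\vec{y}^*}}$ over $\vec{y}^* \in c_{00}(\Z; X_0^* \cap X_1^*)$ with $\nrm{\vec{y}^*}_{\mf{S}_0^*(\ee^{-\theta}) \cap \mf{S}_1^*(\ee^{1-\theta})} \leq 1$. For such a finitely supported $\vec{y}^*$ the pairing collapses to
\begin{equation*}
\ip{Sx,\vec{y}^*} = \sum_{k \in \Z}\ip{x, y_k^*} = \ip{x, y^*}, \qquad y^* := \sum_{k \in \Z} y_k^*,
\end{equation*}
and $y^*$ is a finite sum, hence lies in $X_0^* \cap X_1^*$. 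Since $\vec{y}^*$ is itself admissible in the infimum defining $\nrm{y^*}_{(\mc{X}_0^*,\mc{X}_1^*)_\theta}$, the bound $\nrm{y^*}_{(\mc{X}_0^*,\mc{X}_1^*)_\theta} \leq \nrm{\vec{y}^*}_{\mf{S}_0^*(\ee^{-\theta}) \cap \mf{S}_1^*(\ee^{1-\theta})} \leq 1$ is automatic, which yields ``$\lesssim_\theta$'' in \eqref{eq:proposition:dualitynorming;1}. The reverse inequality is immediate from Proposition \ref{proposition:duality}: any $x^* \in X_0^* \cap X_1^*$ with $\nrm{x^*}_{(\mc{X}_0^*,\mc{X}_1^*)_\theta} \leq 1$ defines a functional on $(\mc{X}_0,\mc{X}_1)_\theta$ of norm $\lesssim_\theta 1$ under the identification $(\mc{X}_0,\mc{X}_1)_\theta^* \eqsim_\theta (\mc{X}_0^*,\mc{X}_1^*)_\theta$.

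For \eqref{eq:proposition:dualitynorming;2} it only remains to compare the two norms on $X_0^* \cap X_1^*$. By Lemma \ref{lemma:dualstructure} the couple $(\mc{X}_0^*,\mc{X}_1^*)$ is $\ell^\infty$-sequentially structured, so Proposition \ref{proposition:Xcirc} applies and yields $(\mc{X}_0^*,\mc{X}_1^*)_\theta^\circ = (\mc{Y}_0,\mc{Y}_1)_\theta$ with equivalent norms. Since the left-hand side inherits the norm of $(\mc{X}_0^*,\mc{X}_1^*)_\theta$ on $X_0^* \cap X_1^*$, the norms $\nrm{\cdot}_{(\mc{X}_0^*,\mc{X}_1^*)_\theta}$ and $\nrm{\cdot}_{(\mc{Y}_0,\mc{Y}_1)_\theta}$ are equivalent there, so the two suprema are equivalent as well. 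The main delicate point is the bookkeeping of the weights: without reflection invariance, Theorem \ref{theorem:meanmethod} would only deliver the sum space with weights $(\ee^{-\theta},\ee^{1-\theta})$, whose dual does not match the weights defining $(\mc{X}_0^*,\mc{X}_1^*)_\theta$, so the test functionals produced by Lemma \ref{lem:proposition:dualitynorming} would not sum to usable elements of $X_0^* \cap X_1^*$.
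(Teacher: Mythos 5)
Your argument is correct and follows essentially the same route as the paper's proof: Theorem \ref{theorem:meanmethod} plus reflection invariance to realize $\nrm{x}_{(\mc{X}_0,\mc{X}_1)_\theta}$ as $\nrm{Sx}_{\mf{S}_0(\ee^{\theta})+\mf{S}_1(\ee^{\theta-1})}$, Lemma \ref{lem:proposition:dualitynorming} to produce a finitely supported norming functional whose sum lands in $X_0^*\cap X_1^*$ with controlled $(\mc{X}_0^*,\mc{X}_1^*)_\theta$-norm, Proposition \ref{proposition:duality} for the reverse inequality, and Proposition \ref{proposition:Xcirc} with Lemma \ref{lemma:dualstructure} to pass to \eqref{eq:proposition:dualitynorming;2}. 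The only cosmetic difference is that you compute the pairing $\ip{Sx,\vec{y}^*}$ directly rather than invoking the adjoint $S^*$ from the proof of Proposition \ref{proposition:duality}.
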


\begin{proof}
Using Proposition~\ref{proposition:Xcirc} and Lemma~\ref{lemma:dualstructure}, we know that the right-hand sides of \eqref{eq:proposition:dualitynorming;1} and \eqref{eq:proposition:dualitynorming;2} are equivalent.
Note that the inequality ``$\gtrsim$'' in \eqref{eq:proposition:dualitynorming;1} follows directly from Proposition \ref{proposition:duality}. For
the inequality ``$\lesssim$'' in \eqref{eq:proposition:dualitynorming;1} fix $x \in (\mc{X}_0,\mc{X}_1)_\theta$.
By Theorem~\ref{theorem:meanmethod} and the reflection invariance of the sequence structures, we have $(\ldots,x,x,x,\ldots) \in \mf{S}_0(\ee^{\theta}) + \mf{S}_1(\ee^{\theta-1}) $ with $$\nrm{x}_{(\mc{X}_0,\mc{X}_1)_\theta} \lesssim_\theta \nrm{(\ldots,x,x,x,\ldots) }_{\mf{S}_0(\ee^{\theta}) + \mf{S}_1(\ee^{\theta-1})}.$$
Combing this with Lemma~\ref{lem:proposition:dualitynorming}, we see that there exists $\vec{x}^* \in c_{00}(\Z;X_0^* \cap X_1^*)$ of norm $1$ such that $\nrm{x}_{(\mc{X}_0,\mc{X}_1)_\theta} \lesssim_\theta \abs{\ip{(\ldots,x,x,x,\ldots),\vec{x}^*}}$.
Now note that $$\ip{(\ldots,x,x,x,\ldots),\vec{x}^*} = \ip{x,S^*\vec{x}^*}$$ in the notation of the proof of Proposition~\ref{proposition:duality}.
Therefore, $$x^*:= S\vec{x}^* = \sum_{k \in \Z}x^*_k \in X_0^* \cap X_1^*$$ satisfies $\nrm{x}_{(\mc{X}_0,\mc{X}_1)_\theta} \lesssim_\theta \abs{\ip{x,x^*}}$ and $$\nrm{x^*}_{(\mc{X}_0^*,\mc{X}_1^*)_\theta} \leq \nrm{\vec{x}^*}_{(\mf{S}_0^*)^{\circ}(\ee^{-\theta}) \cap (\mf{S}_1^*)^{\circ}(\ee^{1-\theta})}=1.$$
This proves the inequality ``$\lesssim$'' in \eqref{eq:proposition:dualitynorming;1}, as desired.
\end{proof}

Concerning Example~\ref{example:dual}\ref{it:dualcomplex} on the duality for the complex interpolation method,  Proposition~\ref{proposition:dualitynorming} allows one to avoid the upper complex method in certain duality arguments without assuming RNP. We record this observation for future reference.

\begin{example}\label{ex:proposition:dualitynorming;complex}
Let $(X_0,X_1)$ be a compatible couple of Banach spaces and let $\theta \in (0,1)$. Suppose that $X_0\cap X_1$ is dense in $X_0$ and $X_1$. Then for $x \in \brac{X_0,X_1}_\theta$ we have
\begin{equation*}
  \nrm{x}_{\brac{X_0,X_1}_\theta} \eqsim_\theta \sup\, \cbraceb{ \abs{\ip{x,x^*}} : x^* \in X_0^*\cap X_1^*,\, \nrm{x^*}_{\brac{X_0^*,X_1^*}_\theta} \leq 1}.
  \end{equation*}
\end{example}

\subsection{Embeddings}
We can obtain embeddings between various interpolation methods from embeddings between the corresponding sequence structures, using the following proposition.

\begin{proposition}\label{proposition:embeddings}
  Let $(\mc{X}_0,\mc{X}_1)$ and $(\mc{Y}_0,\mc{Y}_1)$ be compatible couples of sequentially structured Banach spaces. Suppose that $X_0+X_1 \subseteq Y_0+Y_1$ and $\mf{S}_j \hookrightarrow \mf{T}_j$ for $j=0,1$. Then $ (\mc{X}_0,\mc{X}_1)_\theta \hookrightarrow (\mc{Y}_0,\mc{Y}_1)_\theta$ for $\theta \in (0,1)$.
\end{proposition}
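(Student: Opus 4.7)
The proof should be a direct monotonicity argument: a representing sequence for $x$ in the $\mf{S}$-structures is also a representing sequence in the $\mf{T}$-structures, with controlled norm.

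First I would unpack what the embedding $\mf{S}_j \hookrightarrow \mf{T}_j$ gives. Applied to singletons $(\ldots,0,x,0,\ldots)$, it yields $\nrm{x}_{Y_j} \leq \nrm{x}_{\mf{T}_j} \leq C_j\,\nrm{x}_{\mf{S}_j}=C_j\,\nrm{x}_{X_j}$ for $x \in X_j$, so $X_j \hookrightarrow Y_j$ continuously, and therefore the inclusion $X_0+X_1 \subseteq Y_0+Y_1$ is in fact continuous. In particular, together with the compatibility of $(Y_0,Y_1)$, this shows that $(X_0,X_1)$ and $(Y_0,Y_1)$ may be realised inside a common Hausdorff topological vector space, so the statement $(\mc{X}_0,\mc{X}_1)_\theta \hookrightarrow (\mc{Y}_0,\mc{Y}_1)_\theta$ makes sense.

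Next I would note the trivial but crucial observation that $\mf{S}_j(a) \hookrightarrow \mf{T}_j(a)$ for every $a \in (0,\infty)$ with the same embedding constant $C_j$, since the weight $a^k$ is applied before the sequence norm in both spaces. Hence
\[
\mf{S}_0(\ee^{-\theta}) \cap \mf{S}_1(\ee^{1-\theta}) \;\hookrightarrow\; \mf{T}_0(\ee^{-\theta}) \cap \mf{T}_1(\ee^{1-\theta})
\]
with constant $\max(C_0,C_1)$.

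Now fix $x \in (\mc{X}_0,\mc{X}_1)_\theta$ and pick any $\vec{x} \in \mf{S}_0(\ee^{-\theta}) \cap \mf{S}_1(\ee^{1-\theta})$ with $\sum_{k\in\Z}x_k=x$ in $X_0+X_1$. By the previous paragraph, $\vec{x}$ lies in $\mf{T}_0(\ee^{-\theta}) \cap \mf{T}_1(\ee^{1-\theta})$. Since convergence in $X_0+X_1$ implies convergence in $Y_0+Y_1$ (alternatively, the convergence in $Y_0+Y_1$ is automatic by Remark~\ref{remark:automaticconvergenceX0X1} applied inside $(\mc{Y}_0,\mc{Y}_1)$), the sequence $\vec{x}$ is also an admissible representation of $x$ in the $(\mc{Y}_0,\mc{Y}_1)_\theta$ sense, and hence
\[
\nrm{x}_{(\mc{Y}_0,\mc{Y}_1)_\theta} \leq \nrm{\vec{x}}_{\mf{T}_0(\ee^{-\theta}) \cap \mf{T}_1(\ee^{1-\theta})} \leq \max(C_0,C_1)\,\nrm{\vec{x}}_{\mf{S}_0(\ee^{-\theta}) \cap \mf{S}_1(\ee^{1-\theta})}.
\]
Taking the infimum over all such $\vec{x}$ yields $\nrm{x}_{(\mc{Y}_0,\mc{Y}_1)_\theta} \leq \max(C_0,C_1)\,\nrm{x}_{(\mc{X}_0,\mc{X}_1)_\theta}$, which is the desired continuous embedding. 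There is no real obstacle here; the only minor care needed is the consistency check that the two ambient TVSs can be identified, which is handled by the embedding $X_j \hookrightarrow Y_j$ extracted from the hypothesis.
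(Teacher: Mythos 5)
Your proof is correct and is precisely the direct monotonicity argument the paper has in mind: the paper gives no written proof, stating only that the proposition ``follows directly from the definition,'' and your write-up supplies exactly that argument (weighted embeddings of the sequence structures, transfer of representing sequences, and the consistency check on the ambient spaces).
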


Proposition \ref{proposition:embeddings} follows directly from the definition of our sequentially structured interpolation method. In view of \eqref{eq:ell_1_infty_sandwich} and Example~\ref{example:interpolationmethods}\ref{it:ssireal}, it has the following direct consequence for embeddings for the real interpolation method:
Let $(\mc{X}_0,\mc{X}_1)$ be a compatible couple of sequentially structured Banach spaces and let $\theta \in (0,1)$. Then we have
\begin{equation}\label{eq:proposition:embeddings;embd_real_int}
(X_0,X_1)_{\theta,1} \hookrightarrow  (\mc{X}_0,\mc{X}_1)_\theta \hookrightarrow (X_0,X_1)_{\theta,\infty}.
\end{equation}
Moreover, it directly implies embeddings like $(X_0,X_1)_{\theta,p} \hookrightarrow (X_0,X_1)_{\theta,q}$
for $1 \leq p \leq q \leq \infty$ by the corresponding embeddings for the sequence spaces $\ell^p(\Z;X_j)$ for $j=0,1$.

Let us next apply Proposition \ref{proposition:embeddings} to combinations of the interpolation methods introduced in Example \ref{example:interpolationmethods}.
In the following example we will again use some Banach space geometry. For an introduction to type and cotype we refer to \cite[Chapter 7]{HNVW17}, for an introduction to Fourier type to \cite{GKKKT98} and for $p$-convexity and $p$-concavity to \cite[Section 1.d]{LT79}.

\begin{example}\label{example:embeddings}
  Let $(X_0,X_1)$ be a compatible couple of Banach spaces and let $\theta \in (0,1)$.
  \begin{enumerate}[(i)]
      \item\label{it:embedding6}  Suppose $X_j$ has Fourier type $p_j \in [1,2]$ for $j=0,1$. Then we have
    \begin{equation*}
      (X_0,X_1)_{\theta, p_0,p_1} \hookrightarrow [X_0,X_1]_\theta \hookrightarrow (X_0,X_1)_{\theta, p_0',p_1'}.
    \end{equation*}
        \item\label{it:embedding4}  Suppose $X_j$ has type $p_j \in [1,2]$ and cotype $q_j \in [2,\infty]$ for $j=0,1$. Then we have
    \begin{equation*}
      (X_0,X_1)_{\theta, p_0,p_1} \hookrightarrow (X_0,X_1)_{\theta,\gamma} \hookrightarrow (X_0,X_1)_{\theta, q_0,q_1}.
    \end{equation*}
    \item\label{it:embedding5} If $X_0$ and $X_1$ have type $2$, then
    \begin{equation*}
      [X_0,X_1]_{\theta} \hookrightarrow (X_0,X_1)_{\theta,\gamma}
    \end{equation*}
    and if $X_0$ and $X_1$ have cotype $2$, then
    \begin{equation*}
      (X_0,X_1)_{\theta,\gamma} \hookrightarrow [X_0,X_1]_{\theta}.
    \end{equation*}
     \item \label{it:latticeembeddingconvexconcave} Suppose $X_0$ and $X_1$ are Banach lattices and let $p_0,p_1 \in [1,\infty]$. If $X_j$ is  $p_j$-convex for $j=0,1$, then
        \begin{equation*}
      (X_0,X_1)_{\theta,p_0,p_1} \hookrightarrow (X_0,X_1)_{\theta,\ell^{p_0},\ell^{p_1}}
    \end{equation*}
    and if $X_j$ is  $p_j$-concave for $j=0,1$, then
    \begin{equation*}
     (X_0,X_1)_{\theta,\ell^{p_0},\ell^{p_1}} \hookrightarrow (X_0,X_1)_{\theta,p_0,p_1}.
    \end{equation*}
  \end{enumerate}
\end{example}

 We note that the embeddings for the $\ell^q$-interpolation method in \ref{it:latticeembeddingconvexconcave} are new, whereas the embeddings for the $\gamma$-interpolation method in \ref{it:embedding4}  and \ref{it:embedding5} can be found in \cite[Section 3.4]{KLW19} and the embedding between real and complex interpolation in \ref{it:embedding6} was first proven in \cite{Pe69}.
 All embeddings will follow in a unified way from Example \ref{example:interpolationmethods}, Proposition \ref{proposition:embeddings} and suitable embeddings of sequence structures.

  \begin{proof}[Proof of Example \ref{example:embeddings}]
     The embeddings in \ref{it:embedding6} follow directly from the equivalence of Fourier type and periodic Fourier type (see \cite[Theorem 6.6]{GKKKT98}) and
   \ref{it:embedding4} is a consequence of the equivalence of (co)type and Gaussian (co)type (see \cite[Proposition 7.1.18 and Corollary 7.2.11]{HNVW17}).

  For \ref{it:embedding5} take $\vec{x} \in \widehat{L}^2(\T;X)$ and let $f \in L^2(\T;X)$ be such that $\widehat{f} = \vec{x}$. Since
  \begin{equation*}
    h \mapsto \has{\frac{1}{2 \pi} \int_{\T} h(t) \ee^{-ikt}\dd t}_{k \in \Z}
  \end{equation*}
  is an isometry from $L^2(\T)$ to $\ell^2(\Z)$, we have by \cite[Theorem 9.1.10 and 9.2.10]{HNVW17}
  \begin{equation*}
    \nrm{\vec{x}}_{\gamma(\Z;X)} \leq \frac{1}{(2\pi)^{1/2}} \nrm{f}_{\gamma(\T;X)} \lesssim  \frac{1}{(2\pi)^{1/2}} \nrm{f}_{L^2(\T;X)} =   \nrm{\vec{x}}_{\widehat{L}^2(\T;X)}.
  \end{equation*}
  The second embedding  in \ref{it:embedding5} can be proven similarly. Finally the embeddings in \ref{it:latticeembeddingconvexconcave} follow directly from the definition of $p_j$-convexity.
  \end{proof}

 \subsection{Interpolation of Banach function spaces}
 In this subsection we will study the interpolation spaces $(\mc{X}_0,\mc{X}_1)_\theta$ in the case that $X_0$ and $X_1$ are Banach \emph{function} spaces.

 Let $(S,\Sigma,\mu)$ be a measure space. A Banach space $X$ consisting of measurable functions $x \colon S \to \C$ is called a \emph{Banach function space} if it satisfies
\begin{enumerate}[(i)]
\item If $x \in X$ and $y\colon S \to \C$ is measurable with $\abs{y}\leq \abs{x}$, then $y \in X$ with $\nrm{y}_X\leq \nrm{x}_X$.
  \item  There is an $x \in X$ with $x > 0$ a.e.
  \item If $0\leq x_n \uparrow x$ for $(x_n)_{n=1}^\infty$ in $X$ and $\sup_{n\in \N}\nrm{x_n}_X<\infty$, then $x \in X$ and $\nrm{x}_X=\sup_{n\in\N}\nrm{x_n}_X$.
\end{enumerate}
We say that $X$ is \emph{order-continuous} if for any $0 \leq x_n \uparrow x$ with $(x_n)_{n=1}^\infty$ in $X$ and $x \in X$  we have $\nrm{x_n-x}_X \to 0$. For an introduction to Banach function spaces we refer to \cite{LN23,Za67}.

For two Banach function spaces $X_0$ and $X_1$ over the same measure space $(S,\mu)$ and $\theta \in (0,1)$ we can define the \emph{Calder\'on--Lozanovskii} product (see \cite{Ca64,Lo69}) as
\begin{equation*}
  X_0^{1-\theta}X_1^\theta:= \cbraceb{x \in L^0(S): \abs{x} \leq \abs{x_0}^{1-\theta} \abs{x_1}^\theta, \, x_0 \in X_0,\, x_1 \in X_1},
\end{equation*}
 which we equip with the norm
\begin{equation*}
  \nrm{x}_{X_0^{1-\theta}X_1^\theta}:= \inf_{\abs{x} \leq \abs{x_0}^{1-\theta}\abs{x_1}^\theta} \nrm{x_0}_{X_0}^{1-\theta} \nrm{x_1}_{X_1}^\theta.
\end{equation*}
 Then $ X_0^{1-\theta}X_1^\theta$ is a Banach function space with
 \begin{equation*}
   X_0\cap X_1 \hookrightarrow X_0^{1-\theta}X_1^\theta \hookrightarrow X_0+X_1,
 \end{equation*}
 i.e.\ $X_0^{1-\theta}X_1^\theta$ is an intermediate space with respect to $(X_0,X_1)$.

 Since a Banach function space $X$ is in particular a Banach lattice, it admits the sequence structure $X(\ell^q(\Z))$ introduced in Example~\ref{example:sequencestructures}\ref{it:sslattice}. Thus  we can apply the $\ell^q$-interpolation method from Example \ref{example:interpolationmethods}\ref{it:ssilattice} to a couple of Banach function spaces $(X_0,X_1)$ over the same measure space $(S,\Sigma, \mu)$. It turns out that the resulting intermediate spaces are just complex interpolation spaces.

 \begin{example}\label{example:BFSembeddings} Let $(X_0,X_1)$ be a compatible couple of Banach function spaces over a measure space $(S,\Sigma,\mu)$ and let $\theta \in (0,1)$.
For $q_0,q_1 \in [1,\infty)$ we have
   \begin{align*}
     [X_0,X_1]_\theta &= (X_0,X_1)_{\theta,\ell^{q_0},\ell^{q_1}} =  (X_0,X_1)_{\theta,c_0,c_0}.
      \end{align*}
 \end{example}

\begin{proof}
We start by considering the case $q_0=q_1=1$. By a Poisson integral argument (see \cite[Section 13.6]{Ca64}), we have $[X_0,X_1]_\theta \hookrightarrow (X_0^{1-\theta}X_1^\theta)^\circ$. We will prove that $(X_0^{1-\theta}X_1^\theta)^\circ \hookrightarrow (X_0,X_1)_{\theta,\ell^{1},\ell^{1}} $. Take $x \in X_0\cap X_1$ and assume without loss of generality $x \geq 0$. Let $x_0 \in X_0$ and $x_1 \in X_1$ such that $x = \abs{x_0}^{1-\theta}\abs{x_1}^\theta$ and $\nrm{x_0}_{X_0}^{1-\theta} \nrm{x_1}_{X_1}^{\theta}\leq 2 \, \nrm{x}_{(X_0^{1-\theta}X_1^\theta)^\circ}$. Define
  \begin{align*}
  E&:=\cbrace{x\in S: x(s) \neq 0}\\
    E_k &:= \cbraceb{s \in E: \ee^{k} \leq \frac{\abs{x_1}}{\abs{x_0}} \leq \ee^{k+1}}, \qquad k \in \Z.
  \end{align*}
  Take $n \in \N$ such that $\nrm{x}_{X_0} \leq \ee^{n\theta}\nrm{x}_{X_0}$ and $\nrm{x}_{X_1} \leq \ee^{n(1-\theta)}\nrm{x}_{X_1}$. Define
  \begin{equation*}
    y_k:= \begin{cases}
      x\ind_{E_k}, &\text{if $\abs{k} \leq n$}\\
      x\ind_{\bigcup_{m=n+1}^{\infty} E_m}, &\text{if $k=n+1$}\\
      x\ind_{\bigcup_{m=-n-1}^{-\infty} E_m}, &\text{if $k =-n-1$}\\
      0, &\text{otherwise}.
    \end{cases}
  \end{equation*}
  Then $(\ee^{k(j-\theta)}y_k)_{k\in \Z}\in X_j(\ell^{1}(\Z))$ for $j=0,1$ and $\sum_{k \in \Z} y_k = x$. Moreover, we have
  \begin{align*}
    \nrm{(\ee^{-k\theta} y_k)_{k \in \Z}}_{X_0(\ell^{1}(\Z))} &\leq \nrms{\sum_{k =-n}^n \ee^{-k\theta} \abs{x_0}^{1-\theta}\abs{x_1}^\theta \ind_{E_k}}_{X_0} \\&\hspace{1cm}+ \nrm{\ee^{-(n+1)\theta}x\ind_{\bigcup_{m=n+1}^\infty E_m}}_{X_0} \\&\hspace{1cm}+ \nrms{\sum_{m=-n-1}^{-\infty} \ee^{(n+1)\theta} \abs{x_0}^{1-\theta}\abs{x_1}^\theta \ind_{E_m}}_{X_0}
    \\ &\leq \ee^\theta\,
    \nrms{\sum_{k =-n}^n   \abs{x_0}  \ind_{E_k}}_{X_0} +\ee^{-(n+1)\theta} \nrm{x}_{X_0} \\&\hspace{1cm}+  \ee^\theta\,\nrms{\sum_{m=-n-1}^{-\infty} \ee^{(m+n)\theta}  \abs{x_0}  \ind_{E_m}}_{X_0}\\
    &\leq \hab{\ee^\theta + \ee^{-\theta} + \frac{\ee^\theta}{\ee^\theta-1}} \,\nrm{x}_{X_0}.
  \end{align*}
Combined with a similar estimate for $(\ee^{k(1-\theta)}y_k)_{k\in \Z}$ in $X_1(\ell^{1}(\Z))$  and Lemma \ref{lemma:logconvex}, we obtain
\begin{equation*}
  \nrm{x}_{(X_0,X_1)_{\theta,\ell^{1},\ell^{1}}} \lesssim_\theta \nrm{x}_{(X_0^{1-\theta}X_1^\theta)^\circ}.
\end{equation*}
This proves the embedding by the density of $X_0\cap X_1$ in $(X_0^{1-\theta}X_1^\theta)^\circ$, finishing the proof of the embedding $[X_0,X_1]_\theta \hookrightarrow (X_0,X_1)_{\theta,\ell^{1},\ell^{1}}$

For the converse embedding
take $\vec{x} \in \ell^{1}(\Z;X_j)$ and note that
  \begin{equation*}
    f(t) := \sum_{k\in \Z}  \ee^{ik t}x_k, \qquad t \in \T,
  \end{equation*}
  converges by assumption. Therefore
  \begin{equation*}
    \nrm{\vec{x}}_{\widehat{L}^1(\T;X_j)} = \frac{1}{2 \pi} \nrm{f}_{L^1(\T;X_j)} \leq\sup_{t \in \T}\, \nrmb{\sum_{k\in \Z} \ee^{ik t}x_k}_{X_j} \leq \nrm{\vec{x}}_{\ell^1(\Z;X_j)},
  \end{equation*}
  i.e. $\ell^{1}(\Z;X_j)\hookrightarrow \widehat{L}^1(\T;X_j)$ for $j=0,1$. This proves the embedding $
  (X_0,X_1)_{\theta,\ell^{1},\ell^{1}}  \hookrightarrow [X_0,X_1]_\theta
$ by Proposition \ref{proposition:Banachembeddings}, concluding the proof in the case $q_0=q_1=1$.

For the remaining cases, we note that, by Proposition \ref{proposition:embeddings} and the embeddings
$$\ell^{1}(\Z;X_j)\hookrightarrow \ell^{q_j}(\Z;X_j) \hookrightarrow c_0(\Z;X_j),$$
it suffices to show $(X_0,X_1)_{\theta,c_0,c_0} \hookrightarrow [X_0,X_1]_\theta$.
By Proposition \ref{proposition:Xcirc} we may without loss of generality assume that $X_0\cap X_1$ is dense in $X_0$ and $X_1$.
Fix $x \in X_0 \cap X_1$, so in particular $x \in [X_0,X_1]_\theta$. By Example~\ref{ex:proposition:dualitynorming;complex} there  exists $x^* \in [X_0^*,X_1^*]_\theta$ with norm $1$ such that
\begin{equation*}
  \nrm{x}_{[X_0,X_1]_\theta} \lesssim_\theta \abs{\ip{x,x^*}}.
\end{equation*}
Then, using embedding $[X_0,X_1]_\theta \hookrightarrow (X_0,X_1)_{\theta, \ell^1,\ell^1}\hookrightarrow (X_0,X_1)_{\theta, \ell^1_+,\ell^1_+}$  from the first part of this proof,
we obtain by Proposition \ref{proposition:duality}
\begin{align*}
  \nrm{x}_{[X_0,X_1]_\theta} \lesssim \abs{\ip{x,x^*}} &\leq \nrm{x}_{(X_0,X_1)_{{\theta,c_0,c_0}} } \nrm{x^*}_{(X_0^*,X^*_1)_{\theta, \ell^1_+,\ell^1_+}}\\
  &\lesssim_\theta \nrm{x}_{(X_0,X_1)_{{\theta,c_0,c_0}}}.
\end{align*}
Since $X_0 \cap X_1$ is dense in $(X_0,X_1)_{{\theta,c_0,c_0}}$ by Corollary~\ref{corollary:denseX0X1},
this finishes the proof.
\end{proof}

\begin{remark}
Example \ref{example:BFSembeddings} also yields that, if $X_0$ and $X_1$ have finite cotype, we have
\begin{align*}
      [X_0,X_1]_\theta &= (X_0,X_1)_{\theta,\varepsilon} = (X_0,X_1)_{\theta,\gamma},
   \end{align*}
which was already observed in \cite[Section 3.4]{KLW19}.
Indeed, this follows directly from the comparability of Rademacher, Gaussian and $\ell^2$-sums (see \cite[Corollary 7.2.10 and Theorem 7.2.13]{HNVW17}) and Proposition \ref{proposition:Banachembeddings}.

Moreover, combined with Example \ref{example:embeddings}\ref{it:latticeembeddingconvexconcave}, we see that for $p_0,p_1 \in [1,\infty]$ we have
        \begin{equation*}
      (X_0,X_1)_{\theta,p_0,p_1} \hookrightarrow [X_0,X_1]_{\theta}
    \end{equation*}
    if $X_j$ is $p_j$-convex for $j=0,1$ and
    \begin{equation*}
     [X_0,X_1]_{\theta} \hookrightarrow (X_0,X_1)_{\theta,p_0,p_1}.
    \end{equation*}
if $X_j$ is $p_j$-concave for $j=0,1$.
\end{remark}

When $E$ is a Banach function space and $[X,\mf{S}]$ is a sequentially structured Banach space, the pair
$[E(X),E(\mf{S})]$ is a sequentially structured Banach space as well. In the following proposition we will relate the associated sequentially structured interpolation spaces.

\begin{proposition}\label{proposition:BFSsequenceinterpolation}
Let $E$ be an order-continuous Banach function space and let $(\mc{X}_0,\mc{X}_1)$ be a compatible couple of Ces\`aro bounded sequentially structured Banach spaces. Assume that either $\mf{S}_0$ or $\mf{S}_1$ is a Ces\`aro convergent. Define
\begin{equation*}
  \mc{E}(\mc{X}_j) := [E(X_j), E(\mf{S}_j)], \qquad j = 0,1.
\end{equation*}
 Then we have for $\theta \in (0,1)$
\begin{equation*}
  \hab{\mc{E}(\mc{X}_0),\mc{E}(\mc{X}_1)}_\theta = E((\mc{X}_0,\mc{X}_1)_\theta).
\end{equation*}
\end{proposition}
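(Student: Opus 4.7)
The plan is to realize both sides of the claimed identity as images of the pointwise lift of a single summation operator, and then to reduce the nontrivial inclusion to the case of simple functions via a density argument based on order-continuity of $E$. Concretely, by Remark~\ref{rmk:proposition:Banachembeddings} the summation map
\[
T \colon \mf{S}_0(\ee^{-\theta}) \cap \mf{S}_1(\ee^{1-\theta}) \to X_0 + X_1, \qquad T\vec{x} := \sum_{k \in \Z} x_k,
\]
has range equal to $(\mc{X}_0, \mc{X}_1)_\theta$ and induces an isometric isomorphism of its quotient onto this range. Applying $T$ pointwise in $s \in S$ defines a contractive operator
\[
T_E \colon E(\mf{S}_0)(\ee^{-\theta}) \cap E(\mf{S}_1)(\ee^{1-\theta}) \to E(X_0 + X_1),
\]
whose range lies in $E((\mc{X}_0, \mc{X}_1)_\theta)$, via the obvious identification $E(\mf{S}_0(\ee^{-\theta}) \cap \mf{S}_1(\ee^{1-\theta})) = E(\mf{S}_0)(\ee^{-\theta}) \cap E(\mf{S}_1)(\ee^{1-\theta})$. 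Pointwise application of the defining norm of $(\mc{X}_0, \mc{X}_1)_\theta$, followed by taking $E$-norm and infimum over decompositions, then immediately yields the easy embedding $(\mc{E}(\mc{X}_0), \mc{E}(\mc{X}_1))_\theta \hookrightarrow E((\mc{X}_0, \mc{X}_1)_\theta)$.

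The main obstacle is the reverse embedding, which requires a \emph{measurable} choice of near-optimal decompositions. I would first handle this for simple functions. Given $f = \sum_{i=1}^N \ind_{A_i}\, x_i$ with $x_i \in (\mc{X}_0, \mc{X}_1)_\theta$ and disjoint measurable $A_i$, pick for each $i$ a preimage $\vec{x}_i \in \mf{S}_0(\ee^{-\theta}) \cap \mf{S}_1(\ee^{1-\theta})$ under $T$ with $T\vec{x}_i = x_i$ and
\[
\max_{j=0,1}\, \nrm{\vec{x}_i}_{\mf{S}_j(\ee^{j-\theta})} \leq 2\,\nrm{x_i}_{(\mc{X}_0, \mc{X}_1)_\theta}.
\]
Then $\vec{f} := \sum_{i=1}^N \ind_{A_i}\, \vec{x}_i$ is a simple function with values in $\mf{S}_0(\ee^{-\theta}) \cap \mf{S}_1(\ee^{1-\theta})$ satisfying $T_E\vec{f} = f$, and disjointness of the $A_i$ gives the pointwise identity $\nrm{\vec{f}(s)}_{\mf{S}_j(\ee^{j-\theta})} = \sum_{i=1}^N \ind_{A_i}(s)\, \nrm{\vec{x}_i}_{\mf{S}_j(\ee^{j-\theta})}$. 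Taking $E$-norms bounds $\nrm{\vec{f}}_{E(\mf{S}_j)(\ee^{j-\theta})}$ by $2\,\nrm{f}_{E((\mc{X}_0, \mc{X}_1)_\theta)}$, which is the required estimate on simple functions.

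To pass to general $f \in E((\mc{X}_0, \mc{X}_1)_\theta)$ I would use a standard telescoping argument. Order-continuity of $E$ implies that simple $(\mc{X}_0, \mc{X}_1)_\theta$-valued functions are dense in $E((\mc{X}_0, \mc{X}_1)_\theta)$, so I choose simple $g_n$ with $\nrm{f - g_n}_{E((\mc{X}_0, \mc{X}_1)_\theta)} \leq 2^{-n}$. Applying the previous step to $g_1$ and to each difference $g_{n+1} - g_n$ produces preimages whose $E(\mf{S}_0)(\ee^{-\theta}) \cap E(\mf{S}_1)(\ee^{1-\theta})$-norms are absolutely summable; summing them yields a $\vec{f}$ in this space with $T_E \vec{f} = f$ by continuity of $T_E$, together with the desired norm bound. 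The assumption that at least one of $\mf{S}_0, \mf{S}_1$ is a $c_0$-sequence structure is used implicitly: together with order-continuity of $E$ it ensures that the corresponding $E(\mf{S}_j)$ is itself a $c_0$-sequence structure, so that Corollary~\ref{corollary:denseX0X1} applies both to $(\mc{X}_0, \mc{X}_1)$ and to $(\mc{E}(\mc{X}_0), \mc{E}(\mc{X}_1))$, keeping the two sides of the claimed identity compatible. The hard part is really the measurable selection in the simple-function case; everything else is routine.
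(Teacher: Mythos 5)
Your proposal is correct and follows essentially the same route as the paper: a pointwise (fiberwise) norm estimate for the embedding $(\mc{E}(\mc{X}_0),\mc{E}(\mc{X}_1))_\theta \hookrightarrow E((\mc{X}_0,\mc{X}_1)_\theta)$ — the paper runs this through the mean-method reformulation of Theorem~\ref{theorem:meanmethod} where you use the summation operator of Remark~\ref{rmk:proposition:Banachembeddings}, but the two are interchangeable here — followed, for the converse, by exactly your measurable selection on simple functions with disjoint supports and a density argument. The only cosmetic differences are that the paper's simple functions take values in $X_0\cap X_1$ rather than in $(\mc{X}_0,\mc{X}_1)_\theta$, which is where the $c_0$-hypothesis genuinely enters its argument (your version, using only density of $(\mc{X}_0,\mc{X}_1)_\theta$-valued simple functions via order-continuity, needs it less), and that your telescoping series spells out the density/completeness step the paper leaves implicit.
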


\begin{proof}
Let $(S,\Sigma,\mu)$ be the measure space on which $E$ is defined. We start by showing
\begin{equation*}
\hab{\mc{E}(\mc{X}_0),\mc{E}(\mc{X}_1)}_\theta \hookrightarrow E((\mc{X}_0,\mc{X}_1)_\theta).
\end{equation*}
Take $x \in \hab{\mc{E}(\mc{X}_0),\mc{E}(\mc{X}_1)}_\theta$ and let $\vec{x}^j \in E(\mf{S}_j)(\ee^{j-\theta})$ for $j=0,1$ be such that $x_k^0+x_k^1 = x$ for all $k \in \Z$. Then we obtain by Theorem \ref{theorem:meanmethod} that
\begin{align*}
  \nrm{x}_{E((\mc{X}_0,\mc{X}_1)_\theta)}
  &\lesssim_\theta \nrmb{s \mapsto \sum_{j = 0,1}\, \nrm{\vec{x}^j(s)}_{\mf{S}_j(\ee^{j-\theta})}}_E\\
  &\leq \sum_{j = 0,1} \, \nrm{\vec{x}^j}_{E(\mf{S}_j)(\ee^{j-\theta})}.
\end{align*}
So, taking the infimum over all decompositions of $x$, the embedding follows by another application of Theorem \ref{theorem:meanmethod}.

For the converse embedding, note that, by the order-continuity of $E$, either $E(\mf{S}_0)$ or $E(\mf{S}_1)$ is Ces\`aro convergent, so
$E(X_0) \cap E(X_1)$ is dense in $\ha{\mc{E}(\mc{X}_0),\mc{E}(\mc{X}_1)}_\theta$ by Corollary \ref{corollary:denseX0X1}.
Since any $f \in E(X_0) \cap E(X_1)$ is strongly measurable as an $X_0\cap X_1$-valued function by the Pettis measurability theorem, we have, using the order-continuity of $E$ and \cite[Corollary 1.1.21]{HNVW16}, that the simple functions $f\colon S \to X_0\cap X_1$ are dense in $\ha{\mc{E}(\mc{X}_0),\mc{E}(\mc{X}_1)}_\theta$. Fix such an $f$ and write
$$f(s) = \sum_{n=1}^N \ind_{A_n}(s) x_n, \qquad s \in S,$$
 with $A_1,\cdots,A_N \in \Sigma$ pairwise disjoint and $x_1,\cdots,x_N \in X_0\cap X_1$. For $1\leq n \leq N$ let $\vec{x}^n \in \mf{S}_0(\ee^{-\theta})\cap \mf{S}_1(\ee^{1-\theta})$ be such that $\sum_{k \in \Z}x^n_{k}= x_n$ and
\begin{equation*}
  \nrmb{\vec{x}^n}_{\mf{S}_j(\ee^{j-\theta})}\leq 2 \, \nrm{x_n}_{\ha{\mc{X}_0,\mc{X}_1}_{\theta}}, \qquad j = 0,1.
\end{equation*}
Define $f_k(s) := \sum_{n=1}^N \ind_{A_n}x_{n}^k$ for $k \in \Z$. Then we have
\begin{align*}
 \nrm{f}_{\ha{\mc{E}(\mc{Y}_0),\mc{E}(\mc{Y}_1)}_{\theta}} &\leq \max_{j = 0,1} \, \nrmb{(f_k)_{k \in \Z}}_{E(\mf{S}_j)(\ee^{j-\theta})}\\
 &=\max_{j = 0,1} \,
 \nrms{\sum_{n=1}^N \ind_{A_n} \cdot  \nrm{\vec{x}^n}_{\mf{S}_j(\ee^{j-\theta})}}_{E}\\
 &\leq 2\, \nrms{\sum_{n=1}^N \ind_{A_n} \cdot \nrm{x_n}_{\ha{\mc{X}_0,\mc{X}_1}_{\theta}} }_{E} = 2\, \nrm{f}_{{E}\ha{\ha{\mc{X}_0,\mc{X}_1}_{\theta}}}.
\end{align*}
This implies $E((\mc{X}_0,\mc{X}_1)_\theta) \hookrightarrow \ha{\mc{E}(\mc{X}_0),\mc{E}(\mc{X}_1)}_\theta$ by density.
\end{proof}

\subsection{Changing the base number}\label{subsec:change_base_number}
The choice of the base number $\ee$ in the sequentially structured interpolation method is rather arbitrary. All of the theory up to this point could equally well have been established for $b \in (1,\infty)$.
However,  the version of the sequentially structured interpolation method with base $b=\ee$ is sufficient for most of our purposes. For simplicity of exposition we have therefore chosen to only treat that case.

An important exception is  our discussion of reiteration in Section \ref{section:reiteration}, for which we cannot avoid using the sequentially structured interpolation for a general base number $b$. This motivates us to discuss the more general base $b\in (1,\infty)$ in this subsection.

\begin{definition}\label{definition:sequenceinterpolation;basis}
Let $(\mc{X}_0,\mc{X}_1)$ be a compatible couple of sequentially structured Banach spaces, let $b \in (1,\infty)$ and let $\theta \in (0,1)$.
For $x \in X_0+X_1$ we set
\begin{align*}
  \nrm{x}_{(\mc{X}_0,\mc{X}_1)_{\theta;b}}
  & := \inf \,\nrm{\vec{x}}_{\mf{S}_0(b^{-\theta})\cap \mf{S}_1(b^{1-\theta})}
\end{align*}
where the infimum is taken over all  sequences $\vec{x} \in \mf{S}_0(b^{-\theta}) \cap \mf{S}_1(b^{1-\theta})$ such that $\sum_{k \in \Z} x_k = x$ with convergence in $X_0+X_1$.
We define
\begin{equation*}
  (\mc{X}_0,\mc{X}_1)_{\theta;b} := \cbraceb{x \in X_0+X_1:\nrm{x}_{(\mc{X}_0,\mc{X}_1)_{\theta;b}}< \infty},
\end{equation*}
with norm $\nrm{\,\cdot\,}_{(\mc{X}_0,\mc{X}_1)_{\theta;b}}$.
\end{definition}

In  the next proposition we will show that $(\mc{X}_0,\mc{X}_1)_{\theta;b}$ is independent of $b$ under suitable assumptions on $\mc{X}_0$ and $\mc{X}_1$. Loosely speaking these assumptions will be stability under multipliers $\vec{a} \in \ell^\infty(\Z)$ and stability under inserting or removing zeros in a sequence $\vec{x}$. In order to formulate this second assumption, it will be convenient to introduce the following notation.
We denote by $\pi(\Z)$ the set of all mappings $\sigma:\Z \to \Z \cup \{*\}$ with the property that $\#\sigma^{-1}(\cbrace{n}) \leq 1$ for all $n \in \Z$.
Given a Banach space $X$ and $\sigma \in \pi(\Z)$,
we define the linear operator $T_\sigma:\ell^0(\Z;X) \to \ell^0(\Z;X)$ by
\begin{equation*}
T_\sigma\vec{x}(k) :=  \begin{cases}
0, & \sigma(k)=*,\\
x_{\sigma(k)}, & \sigma(k) \in \Z.
\end{cases}
\end{equation*}

\begin{proposition}\label{prop:change_basis}
Let $(\mc{X}_0,\mc{X}_1)$ be a compatible couple of sequentially structured Banach spaces such that for $j=0,1$
\begin{align}
\label{eq:prop:change_basis;1}   \nrm{(\alpha_k x_k)_{k \in \Z}}_{\mf{S}_j} &
\leq \nrm{\vec{\alpha}}_{\ell^\infty(\Z)}\nrm{\vec{x}}_{\mf{S}_j}, && \vec{\alpha} \in \ell^\infty(\Z),\, \vec{x} \in \mf{S}_j,\\
\label{eq:prop:change_basis;2}  \nrm{T_\sigma\vec{x}}_{\mf{S}_j} &\leq \nrm{\vec{x}}_{\mf{S}_j}, && \vec{x} \in \mf{S}_j, \, \sigma \in \pi(\Z).
\end{align}
Then, for all $a,b \in (1,\infty)$ and $\theta \in (0,1)$, we have
\begin{equation*}
(\mc{X}_0,\mc{X}_1)_{\theta;a} = (\mc{X}_0,\mc{X}_1)_{\theta;b}.
\end{equation*}
\end{proposition}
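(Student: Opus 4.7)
The plan is to construct, for each decomposition $\vec{x}$ in base $a$, a corresponding decomposition $\vec{y}$ in base $b$ by regrouping the terms of $\vec{x}$ along a suitable partition of $\Z$. By symmetry it suffices to show $(\mc{X}_0,\mc{X}_1)_{\theta;a} \hookrightarrow (\mc{X}_0,\mc{X}_1)_{\theta;b}$, since the roles of $a$ and $b$ are interchangeable.

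Write $\alpha := \log a$, $\beta := \log b$, and set $E_k := \cbrace{m \in \Z : k\beta/\alpha \leq m < (k+1)\beta/\alpha}$ for $k \in \Z$. Then $(E_k)_{k \in \Z}$ is a partition of $\Z$, the cardinality $\abs{E_k}$ is bounded by $N := \lceil \beta/\alpha\rceil + 1$, and for $m \in E_k$ one has $b^k \leq a^m < b^{k+1}$. Given $\vec{x} \in \mf{S}_0(a^{-\theta}) \cap \mf{S}_1(a^{1-\theta})$ with $\sum_{k \in \Z} x_k = x$ in $X_0 + X_1$, I define $y_k := \sum_{m \in E_k} x_m$, which is a finite sum in $X_0 + X_1$. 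For each $k$ I enumerate $E_k$ in increasing order as $(m_{k,\ell})_{\ell = 0}^{\abs{E_k}-1}$ and define $\sigma_\ell(k) := m_{k,\ell}$ if $\ell < \abs{E_k}$ and $\sigma_\ell(k) := *$ otherwise; since the $E_k$ are pairwise disjoint, $\sigma_\ell \in \pi(\Z)$.

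The key identity is then: setting $z^{(j)}_m := a^{m(j-\theta)} x_m$ so that $\nrm{\vec{z}^{(j)}}_{\mf{S}_j} = \nrm{\vec{x}}_{\mf{S}_j(a^{j-\theta})}$, one computes
\begin{equation*}
b^{k(j-\theta)} y_k = \sum_{\ell = 0}^{N-1} \mu^{(j,\ell)}_k \cdot T_{\sigma_\ell}\vec{z}^{(j)}(k), \qquad j=0,1,
\end{equation*}
where $\mu^{(j,\ell)}_k := b^{k(j-\theta)} a^{-\sigma_\ell(k)(j-\theta)}$ when $\sigma_\ell(k) \neq *$ and zero otherwise. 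The sandwich $b^k \leq a^{\sigma_\ell(k)} < b^{k+1}$ yields a uniform bound $\nrm{\vec{\mu}^{(j,\ell)}}_{\ell^\infty(\Z)} \leq b^{\abs{\theta}+\abs{1-\theta}} \leq b$, so $\vec{\mu}^{(j,\ell)} \in \ell^\infty(\Z)$ with norm controlled by a constant $C(b,\theta)$.

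Applying the triangle inequality together with the hypotheses \eqref{eq:prop:change_basis;1} and \eqref{eq:prop:change_basis;2} gives
\begin{equation*}
\nrm{\vec{y}}_{\mf{S}_j(b^{j-\theta})} \leq \sum_{\ell=0}^{N-1} \nrm{\vec{\mu}^{(j,\ell)}}_{\ell^\infty(\Z)} \, \nrm{T_{\sigma_\ell}\vec{z}^{(j)}}_{\mf{S}_j} \leq N\, C(b,\theta) \, \nrm{\vec{x}}_{\mf{S}_j(a^{j-\theta})},
\end{equation*}
so $\vec{y} \in \mf{S}_0(b^{-\theta}) \cap \mf{S}_1(b^{1-\theta})$. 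By Remark~\ref{remark:automaticconvergenceX0X1} the sum $\sum_k y_k$ converges absolutely in $X_0 + X_1$, and by absolute rearrangement it equals $\sum_m x_m = x$. Taking the infimum over admissible $\vec{x}$ yields $\nrm{x}_{(\mc{X}_0,\mc{X}_1)_{\theta;b}} \lesssim_{a,b,\theta} \nrm{x}_{(\mc{X}_0,\mc{X}_1)_{\theta;a}}$. The only real obstacle is the bookkeeping needed to verify $\sigma_\ell \in \pi(\Z)$ and to extract a uniform $\ell^\infty$-bound on the ratio multipliers $\mu^{(j,\ell)}$; both are routine once the partition $(E_k)$ is chosen to align $a^m$ with $b^k$ up to a factor of $b$.
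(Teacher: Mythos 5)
Your proof is correct and follows essentially the same route as the paper's: your partition $E_k=\{m: k\beta/\alpha\le m<(k+1)\beta/\alpha\}$ is exactly the paper's level sets of $m\mapsto\lfloor m/\delta\rfloor$ with $\delta=\log b/\log a$, your $\sigma_\ell$ are its maps $\sigma_m\in\pi(\Z)$, and the estimate combines the same $\ell^\infty$-multiplier bound from the sandwich $b^k\le a^m<b^{k+1}$ with hypotheses \eqref{eq:prop:change_basis;1} and \eqref{eq:prop:change_basis;2}. The only cosmetic difference is that you treat the two cases $b<a$ (where each $E_k$ has at most one element, so zeros are inserted) and $b>a$ (where terms are regrouped) uniformly, whereas the paper splits them.
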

\begin{proof}
Assume without loss of generality $a \neq b$ and write $b=a^\delta$ with $\delta \in (0,\infty) \setminus \{1\}$.
First assume that $\delta < 1$.
Then $h(k):=\lfloor\frac{k}{\delta}\rfloor$ defines a strictly increasing function $h:\Z \to \Z$, where $\lfloor t \rfloor$ denotes the least integer part of $t \in \R$.
We can thus define $\sigma \in \pi(\Z)$ by
$$
\sigma(n) := \begin{cases}
*, & n \notin h(\Z),\\
k, & n=h(k), k \in \Z.
\end{cases}
$$
Now let $x \in (\mc{X}_0,\mc{X}_1)_{\theta;a}$. Take $\vec{x} \in \mf{S}_0(a^{-\theta}) \cap \mf{S}_1(a^{1-\theta})$ such that $x=\sum_{k \in \Z}x_k$ in $X_0+X_1$ and set $\vec{y} :=T_\sigma\vec{x}$.
Then $x = \sum_{k \in \Z}y_k$ in $X_0+X_1$.
Note that, for $n=h(k)$ with $k \in \Z$ and $j=0,1$, we have
$$
b^{(j-\theta)n} = a^{(j-\theta)\delta n} =
a^{(j-\theta)(k-\delta\cdot \mrm{frac}(\frac{k}{\delta}))} = b^{-(j-\theta)\cdot \mrm{frac}(\frac{k}{\delta})} a^{(j-\theta)k},
$$
where $\mrm{frac}(t)=t-\lfloor t \rfloor \in [0,1)$ denotes the fractional part of $t \in \R$. We thus have
$$
(b^{(j-\theta)n}y_n)_{n \in \Z} = T_{\sigma}\big(b^{-(j-\theta)\cdot \mrm{frac}(\frac{k}{\delta})} a^{(j-\theta)k}x_k\big)_{k \in \Z},
$$
so that
\begin{align*}
\nrm{\vec{y}}_{\mf{S}_j(b^{j-\theta})}
&= \nrm{T_{\sigma}(b^{-(j-\theta)\cdot \mrm{frac}(\frac{k}{\delta})} a^{(j-\theta)k}x_k)_{k \in \Z}}_{\mf{S}_j} \\
&\stackrel{\eqref{eq:prop:change_basis;2}}{\leq} \nrm{(b^{-(j-\theta)\cdot\mrm{frac}(\frac{k}{\delta})}a^{(j-\theta)k}x_k)_{k \in \Z}}_{\mf{S}_j} \\
&\stackrel{\eqref{eq:prop:change_basis;1}}{\lesssim}_{\theta,b} \nrm{(a^{(j-\theta)k}x_k)_{k \in \Z}}_{\mf{S}_j} = \nrm{\vec{x}}_{\mf{S}_j(a^{j-\theta})}.
\end{align*}
Therefore,
$$
\nrm{x}_{(\mc{X}_0,\mc{X}_1)_{\theta;b}} \leq \nrm{\vec{y}}_{\mf{S}_0(b^{-\theta}) \cap \mf{S}_1(b^{1-\theta})} \lesssim_{\theta,a,b} \nrm{\vec{x}}_{\mf{S}_0(a^{-\theta}) \cap \mf{S}_1(a^{1-\theta})}.
$$
Taking the infimum over all $\vec{x}$ as above we obtain that $\nrm{x}_{(\mc{X}_0,\mc{X}_1)_{\theta;b}} \lesssim_{\theta,a,b} \nrm{x}_{(\mc{X}_0,\mc{X}_1)_{\theta;a}}$.

Next we assume that $\delta > 1$. Then $g(k) := \lfloor\frac{k}{\delta}\rfloor$ defines a surjection $g:\Z \to \Z$ with
$$
k_{n} := \#g^{-1}(\cbrace{n}) \in \{\lfloor \delta \rfloor,\lfloor \delta \rfloor+1\}, \qquad n \in \Z.
$$
Let us write $\#g^{-1}(\cbrace{n}) = \{i_{n,1},\ldots,i_{n,k_n}\}$ with $i_{n,1} < \ldots < i_{n,k_n}$ and define $\sigma_m \in \pi(\Z)$ for $m=1,\ldots,\lfloor \delta \rfloor+1$ by
$$
\sigma_{m}(n) := \begin{cases}
i_{n,m}, & m \leq k_n, \\
*, &m = k_{n}+1.
\end{cases}
$$
Now let $x \in (\mc{X}_0,\mc{X}_1)_{\theta;a}$. Take $\vec{x} \in \mf{S}_0(a^{-\theta}) \cap \mf{S}_1(a^{1-\theta})$ such that $x=\sum_{k \in \Z}x_k$ in $X_0+X_1$ and set $\vec{y} :=\sum_{m=1}^{\lfloor \delta \rfloor+1}T_{\sigma_m}\vec{x}$.
Then $x = \sum_{k \in \Z}y_k$ in $X_0+X_1$. Note that for $m=1,\ldots,\lfloor \delta \rfloor+1$ and $j=0,1$
$$
b^{(j-\theta)n} = a^{(j-\theta)\delta n} = a^{(j-\theta)(i_{n,m}-\delta\cdot \mrm{frac}(\frac{i_{n,m}}{\delta}))}  = b^{-(j-\theta)\cdot \mrm{frac}(\frac{i_{n,m}}{\delta})}a^{(j-\theta)i_{n,m}}.
$$
We thus have
$$
(b^{(j-\theta)n}y_n)_{n \in \Z}=
 \sum_{m=1}^{\lfloor \delta \rfloor+1}T_{\sigma_m}\big(b^{-(j-\theta)\cdot\mrm{frac}(\frac{k}{\delta})}a^{(j-\theta)k}x_k\big)_{k \in \Z},
$$
so that
\begin{align*}
\nrm{\vec{y}}_{\mf{S}_j(b^{j-\theta})}
&\leq \sum_{m=1}^{\lfloor \delta \rfloor+1}\nrm{T_{\sigma_m}(b^{-(j-\theta)\cdot \mrm{frac} (\frac{k}{\delta})} a^{(j-\theta)k}x_k)_{k \in \Z}}_{\mf{S}_j} \\
&\stackrel{\eqref{eq:prop:change_basis;2}}{\leq} (\lfloor \delta \rfloor+1) \nrm{(b^{-(j-\theta)\cdot \mrm{frac}(\frac{k}{\delta})}a^{(j-\theta)k}x_k)_{k \in \Z}}_{\mf{S}_j} \\
&\stackrel{\eqref{eq:prop:change_basis;1}}{\lesssim}_{\theta,a,b} \nrm{(a^{(j-\theta)k}x_k)_{k \in \Z}}_{\mf{S}_j} = \nrm{\vec{x}}_{\mf{S}_j(a^{j-\theta})}.
\end{align*}
Therefore,
$$
\nrm{x}_{(\mc{X}_0,\mc{X}_1)_{\theta;b}} \leq \nrm{\vec{y}}_{\mf{S}_0(b^{-\theta}) \cap \mf{S}_1(b^{1-\theta})} \lesssim_{\theta,a,b} \nrm{\vec{x}}_{\mf{S}_0(a^{-\theta}) \cap \mf{S}_1(a^{1-\theta})}.
$$
Taking the infimum over all $\vec{x}$ as above we obtain that $\nrm{x}_{(\mc{X}_0,\mc{X}_1)_{\theta;b}} \lesssim_{\theta,a,b} \nrm{x}_{(\mc{X}_0,\mc{X}_1)_{\theta;a}}$.
\end{proof}

Proposition \ref{prop:change_basis} shows that all the concrete interpolation methods from Example~\ref{example:interpolationmethods}, with the exception of the complex methods in \ref{it:ssicomplex}, can be realized with any base number $b \in (1,\infty)$. In Example~\ref{ex:changing_base_torus_complex_int} we will see that the base number can also be changed for the complex methods.

\section{Complex formulations}\label{sec:complex_formulations}
As we noted in Example \ref{example:interpolationmethods}\ref{it:ssicomplex}, the complex interpolation methods are contained in our framework by using spaces of Fourier coefficients. However, this formulation is not the original one introduced by Calder\'on \cite{Ca64}, which is in terms of analytic functions on the strip
\begin{equation*}
  \mathbb{S}:= \cbrace{z \in \C: 0<\re(z)<1}.
\end{equation*}
One may wonder if there is a relation between our framework and the classical formulation of the complex interpolation methods. It turns out that our framework admits a formulation in terms of analytic functions on $\mbb{S}$, which yields a complex formulation of all interpolation methods in our framework. This in particular means that, from our viewpoint, there is nothing inherently real or complex about the real and complex interpolation methods.
 These interpolation methods are rather living on opposite sides
of the Fourier transform.

The idea to use analytic functions for interpolation methods beyond the complex interpolation methods is not new. Implicitly it goes back to the work of Lions and Peetre \cite[Section 1.4]{LP64} (see also \cite{Pe69}). In a more explicit form, analytic functions were used in the context of the real interpolation method by Zafran \cite{Za80} (see also \cite{CJMR89}) to study the spectrum of bounded linear operators on interpolation spaces. Moreover, analytic functions in the context of the $\gamma$-interpolation method were used by Su\'arez and Weis \cite{SW06} (see also \cite{KLW19}) to interpolate analytic operator families.
Furthermore, Cwikel, Kalton, Milman and Rochberg \cite{CKMR02} made extensive use of analytic functions to prove commutator estimates in their interpolation framework.

\bigskip

Let $X$ be a Banach space and let $\ms{H}(\mathbb{S};X)$ be the space of all analytic functions $f \colon {\mathbb{S}} \to X$. The space $\ms{H}_\pi(\mathbb{S};X)$ is the $2\pi$-periodic subspace of $\ms{H}(\mathbb{S};X)$, consisting of all $f \in \ms{H}(\mathbb{S};X)$ such that $$f(z)=f(z+2\pi i), \qquad z \in {\mathbb{S}}.$$
To $f \in \ms{H}(\mathbb{S};X)$ we associate the functions
$$
f_s(t):= f(s+it),\qquad t \in \R, \, s \in (0,1).
$$

It is sometimes more convenient to work with analytic functions on the annulus
$$
\mbb{A}:= \cbraceb{z \in \C: 1<\abs{z}<\ee},
$$
which is obtained from $\mbb{S}$ by the conformal mapping $z \mapsto \ee^z$. We let $\ms{H}(\mbb{A};X)$ be the space of all analytic functions $f \colon \mbb{A} \to X$. Given $f \in \ms{H}(\mbb{A};X)$, we note that for $g(z):= f(\ee^z)$ we have $g \in \ms{H}_{\pi}(\mbb{S};X)$.
Furthermore, for $s \in (0,1)$ and $k \in \Z$, we can compute the $k$-th Fourier coefficient of $g_s$ as follows
\begin{align*}
  \ee^{-ks} \widehat{g}_s(k) &= \ee^{-ks}  \frac{1}{2\pi}\int_{\T} g(s + it)e^{-ikt}\dd t\\
   &=\frac{1}{2\pi} \int_{\T} f(\ee^{s + it})e^{-k(s-it)}\dd t =\frac{1}{2\pi i}
   \int_{\gamma_s} \frac{f(z)}{z^{k+1}} \dd z
\end{align*}
where $\gamma_s$ is the curve parameterized by $\gamma_s(t) = \ee^{s+it}$ for $t \in [0,1]$. So $\cbrace{\ee^{-ks} \widehat{g}_s(k)}_{k \in \Z}$ coincides with the Laurent series of $f$ around $0$. Since the Laurent series of $f$ around $0$ does not depend on $s$, this yields the following observation:

\begin{lemma}\label{lemma:indeps}
  Let $X$ be a Banach space. For $f \in \ms{H}_\pi(\mathbb{S};X)$ the sequence $(\ee^{-ks} \widehat{f}_s(k))_{k\in \Z} $
  is independent of $s \in (0,1)$.
\end{lemma}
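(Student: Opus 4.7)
The plan is to reduce the claim to Cauchy's theorem, essentially reading the computation just before the lemma statement in reverse. The content of the lemma is that the quantity $\ee^{-ks}\widehat{f}_s(k)$ is precisely the $k$-th Laurent coefficient of the induced function on the annulus, and such coefficients are manifestly independent of the radius on which they are computed.

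First, I would observe that since $f \in \ms{H}_\pi(\mbb{S};X)$ is analytic and $2\pi i$-periodic, it descends through the covering map $z \mapsto \ee^z$ from $\mbb{S}$ onto $\mbb{A}$: there is a unique $F \in \ms{H}(\mbb{A};X)$ with $f(z) = F(\ee^z)$ for all $z \in \mbb{S}$. (Uniqueness is clear from surjectivity of the exponential, and existence follows since the periodicity makes $w \mapsto f(\log w)$ well-defined and locally analytic on $\mbb{A}$.)

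Second, I would repeat the change of variables already performed in the excerpt. Fix $s \in (0,1)$ and $k \in \Z$. Parametrising the circle $\gamma_s(t) = \ee^{s+it}$, $t \in [0, 2\pi]$, we get $\mathrm{d}z = iz \,\mathrm{d}t$ and $\ee^{-k(s+it)} = z^{-k}$, whence
\begin{equation*}
\ee^{-ks}\widehat{f}_s(k) = \frac{1}{2\pi} \int_{\T} f(s+it)\ee^{-k(s+it)}\dd t = \frac{1}{2\pi i}\int_{\gamma_s} \frac{F(z)}{z^{k+1}}\dd z.
\end{equation*}
Since $z \mapsto F(z)/z^{k+1}$ is analytic on $\mbb{A}$, and since any two circles $\gamma_s,\gamma_{s'}$ with $s,s' \in (0,1)$ are homotopic in $\mbb{A}$, Cauchy's theorem (applied in the usual vector-valued form, valid since $F$ is strongly analytic and $X$-valued contour integrals commute with $X^*$) yields
\begin{equation*}
\frac{1}{2\pi i}\int_{\gamma_s} \frac{F(z)}{z^{k+1}}\dd z = \frac{1}{2\pi i}\int_{\gamma_{s'}} \frac{F(z)}{z^{k+1}}\dd z,
\end{equation*}
which is exactly the desired independence of $s$. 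There is no real obstacle here; the mildly delicate point is the passage from $f$ on the strip to $F$ on the annulus, but that is immediate from the periodicity hypothesis built into $\ms{H}_\pi(\mbb{S};X)$.
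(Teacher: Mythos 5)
Your proposal is correct and follows essentially the same route as the paper: the paper's argument is exactly the identification of $\ee^{-ks}\widehat{f}_s(k)$ with the $k$-th Laurent coefficient of the associated function on the annulus $\mbb{A}$ via the change of variables $z\mapsto \ee^z$, with independence of $s$ then coming from the radius-independence of Laurent coefficients (Cauchy's theorem). The only difference is cosmetic — the paper passes from the annulus to the strip while you descend from the strip to the annulus, which is the direction the lemma statement actually requires and which you justify correctly via periodicity.
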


Let $(\mc{X}_0,\mc{X}_1)$ be a compatible couple of sequentially structured Banach spaces and let $s \in (0,1)$. We define ${\ms{H}}_\pi({\mathbb{S}};\mc{X}_0,\mc{X}_1)$ as the space of all $f \in
{\ms{H}}_\pi({\mathbb{S}};X_0+X_1)$ such that
$$
\nrm{f}_{{\ms{H}}_\pi({\mathbb{S}};\mc{X}_0,\mc{X}_1)}:= \max_{j = 0,1}\,\nrm{\widehat{f}_s}_{\mf{S}_j(\ee^{j-s})}<\infty,
$$
which is independent of $s$ by Lemma \ref{lemma:indeps}. Similarly, we define ${\ms{H}}({\mathbb{A}};\mc{X}_0,\mc{X}_1)$ as the space of all $f \in
{\ms{H}}({\mathbb{A}};X_0+X_1)$ such that
$$
\nrm{f}_{{\ms{H}}({\mathbb{A}};\mc{X}_0,\mc{X}_1)}:= \max_{j = 0,1}\,\nrmb{\cbrace{f_k}_{k \in \Z}}_{\mf{S}_j(\ee^{j})}<\infty,
$$
where $\cbrace{f_k}_{k \in \Z}$ denotes the Laurent series of $f$ around $0$. From the discussion preceding Lemma \ref{lemma:indeps} it is clear that ${\ms{H}}_{\pi}({\mathbb{S}};\mc{X}_0,\mc{X}_1)$ and ${\ms{H}}({\mathbb{A}};\mc{X}_0,\mc{X}_1)$ are isometrically isomorphic.
The connection to the sequentially structured interpolation method will become clear in the following lemma.

\begin{lemma}\label{lemma:complexisometric}
Let $(\mc{X}_0,\mc{X}_1)$ be a compatible couple of sequentially structured Banach spaces and let $\theta \in (0,1)$. The map $f \mapsto \widehat{f}_\theta$ from ${\ms{H}}_\pi(\mathbb{S};\mc{X}_0,\mc{X}_1)$ to $\mf{S}_0(e^{-\theta}) \cap \mf{S}_1(e^{1-\theta})$ is an isometric isomorphism with inverse  $$\vec{x} \mapsto \bracb{ z \mapsto \sum_{k \in \Z}e^{k(z-\theta)}x_{k}}.$$
\end{lemma}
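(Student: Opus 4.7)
The isometry in this lemma is essentially tautological: the norm on $\ms{H}_\pi(\mbb{S};\mc{X}_0,\mc{X}_1)$ is defined as $\max_{j=0,1}\|\widehat{f}_s\|_{\mf{S}_j(\ee^{j-s})}$ for any $s \in (0,1)$ (independence of $s$ being Lemma \ref{lemma:indeps}), so the choice $s = \theta$ gives at once
\[
\|f\|_{\ms{H}_\pi(\mbb{S};\mc{X}_0,\mc{X}_1)} = \|\widehat{f}_\theta\|_{\mf{S}_0(\ee^{-\theta}) \cap \mf{S}_1(\ee^{1-\theta})}.
\]
Thus $f \mapsto \widehat{f}_\theta$ is an isometric embedding, and the entire substance of the lemma is bijectivity with the inverse of the stated form.

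The plan is therefore to verify that the proposed inverse map is well-defined and that the two compositions are the identity. For the well-definedness, given $\vec{x} \in \mf{S}_0(\ee^{-\theta}) \cap \mf{S}_1(\ee^{1-\theta})$, I would apply the coordinate bound \eqref{eq:sequence3} to the weighted sequence structures to obtain
\[
\|x_k\|_{X_0} \leq \ee^{k\theta}\|\vec{x}\|_{\mf{S}_0(\ee^{-\theta})}, \qquad \|x_k\|_{X_1} \leq \ee^{-k(1-\theta)}\|\vec{x}\|_{\mf{S}_1(\ee^{1-\theta})}.
\]
For $z = s+it$ with $s \in (0,1)$, these bounds give geometric decay of the $k \to -\infty$ tail of $\sum_{k \in \Z}\ee^{k(z-\theta)}x_k$ in $X_0$ and of the $k \to +\infty$ tail in $X_1$. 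The series then converges absolutely in $X_0+X_1$, uniformly on compact subsets of $\mbb{S}$, so it defines a $2\pi$-periodic analytic function $f \in \ms{H}_\pi(\mbb{S}; X_0 + X_1)$.

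Next I would compute the two compositions. Starting from $\vec{x}$, the series at $z = \theta+it$ reduces to $\sum_k \ee^{ikt}x_k$ with absolutely summable $X_0+X_1$-norms, so dominated convergence justifies termwise integration and yields $\widehat{f}_\theta(k) = x_k$. This simultaneously shows that $f$ actually lies in $\ms{H}_\pi(\mbb{S};\mc{X}_0,\mc{X}_1)$ and that $\vec{x} \mapsto f \mapsto \widehat{f}_\theta$ is the identity. For the opposite composition, given $f \in \ms{H}_\pi(\mbb{S};\mc{X}_0,\mc{X}_1)$, I would transfer to the annulus picture via $g(w) := f(\log w) \in \ms{H}(\mbb{A};X_0+X_1)$; the Laurent-coefficient computation preceding Lemma \ref{lemma:indeps} identifies the Laurent coefficients of $g$ around $0$ as $c_k = \ee^{-k\theta}\widehat{f}_\theta(k)$. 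The convergent Laurent expansion $g(w) = \sum_k c_k w^k$ on $\mbb{A}$, read through $w = \ee^z$, recovers $f(z) = \sum_k \ee^{k(z-\theta)}\widehat{f}_\theta(k)$, which is precisely the stated inverse applied to $\widehat{f}_\theta$.

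I do not anticipate a conceptual obstacle. The main points requiring attention are the geometric tail estimates (they both ensure analyticity of the sum on compacta and license termwise Fourier evaluation), and the recognition in the opposite direction that the Laurent-coefficient formula recorded before Lemma \ref{lemma:indeps} is exactly what closes the round trip.
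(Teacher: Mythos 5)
Your proposal is correct and follows essentially the same route as the paper: the isometry is read off from the definition of the norm on ${\ms{H}}_\pi(\mathbb{S};\mc{X}_0,\mc{X}_1)$ at $s=\theta$, and the coordinate bounds from \eqref{eq:sequence3} give locally uniform convergence of $\sum_k \ee^{k(z-\theta)}x_k$ in $X_0+X_1$, so the stated inverse is well defined with $\widehat{f}_\theta=\vec{x}$. Your extra verification of the round trip $f \mapsto \widehat{f}_\theta \mapsto f$ via the Laurent expansion on the annulus is a point the paper leaves implicit, but it is the same mechanism as the computation preceding Lemma \ref{lemma:indeps}.
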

\begin{proof}
  The map $f \mapsto \widehat{f}_\theta$ is contractive by definition. Conversely, for $\vec{x} \in \mf{S}_0(\ee^{-\theta})\cap \mf{S}_1(\ee^{j-\theta})$  we have by \eqref{eq:sequence3}
$$\nrm{x_k}_{X_0+X_1} \leq  \min\cbrace{\ee^{k\theta}, \ee^{k(\theta-1)}}, \qquad k \in \Z,$$
so we know that
\begin{equation*}
  f(z):= \sum_{k \in \Z}  \ee^{k(z-\theta)}x_k , \qquad z \in {\mathbb{S}},
\end{equation*}
converges uniformly on any compact subset of $\mathbb{S}$ and therefore we have $f \in \ms{H}_\pi(\mathbb{S};X_0+X_1)$. Moreover, we have $ \widehat{f}_\theta= \vec{x} $ and thus  $f \in \ms{H}_\pi(\mathbb{S};\mc{X}_0,\mc{X}_1)$, finishing the proof.
\end{proof}

Using Lemma \ref{lemma:complexisometric}, our first complex formulation of the sequentially structured interpolation method now follows immediately.

\begin{proposition}\label{proposition:complexsimple}
  Let $(\mc{X}_0,\mc{X}_1)$ be a compatible couple of sequentially structured Banach spaces and let $\theta \in (0,1)$.
For $x \in X_0+X_1$ we have
\begin{align*}
  \nrm{x}_{(\mc{X}_0,\mc{X}_1)_\theta} &= \inf\,\cbraces{\nrm{f}_{{\ms{H}}_\pi({\mathbb{S}};\mc{X}_0,\mc{X}_1)}: f \in {\ms{H}}_\pi(\mathbb{S};\mc{X}_0,\mc{X}_1),\, f(\theta) =x}\\
  &= \inf\,\cbraces{\nrm{f}_{{\ms{H}}({\mathbb{A}};\mc{X}_0,\mc{X}_1)}:f \in {\ms{H}}(\mathbb{A};\mc{X}_0,\mc{X}_1),\, f(\ee^\theta) =x}.
\end{align*}
\end{proposition}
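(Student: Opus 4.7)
My plan is to reduce the proposition directly to Lemma~\ref{lemma:complexisometric}, which already provides the isometric isomorphism $f \mapsto \widehat{f}_\theta$ between $\ms{H}_\pi(\mbb{S};\mc{X}_0,\mc{X}_1)$ and $\mf{S}_0(\ee^{-\theta}) \cap \mf{S}_1(\ee^{1-\theta})$. The key translation step, which I would record first, is to show that under this isomorphism the condition $f(\theta) = x$ matches the condition $\sum_{k \in \Z}\widehat{f}_\theta(k) = x$ (convergent in $X_0+X_1$). For this, I apply the inverse formula from Lemma~\ref{lemma:complexisometric} to $\widehat{f}_\theta$ itself to obtain
\begin{equation*}
f(z) \;=\; \sum_{k \in \Z}\ee^{k(z-\theta)}\widehat{f}_\theta(k), \qquad z \in \mbb{S},
\end{equation*}
with convergence in $X_0+X_1$ guaranteed by Remark~\ref{remark:automaticconvergenceX0X1}. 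Evaluating at $z=\theta$ then gives $f(\theta) = \sum_{k \in \Z} \widehat{f}_\theta(k)$, as desired.

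With this observation in hand, the ``$\leq$'' inequality is immediate: given $f \in \ms{H}_\pi(\mbb{S};\mc{X}_0,\mc{X}_1)$ with $f(\theta) = x$, the sequence $\vec{x}:=\widehat{f}_\theta$ lies in $\mf{S}_0(\ee^{-\theta})\cap \mf{S}_1(\ee^{1-\theta})$, sums to $x$ in $X_0+X_1$ by the previous step, and has the same norm as $f$ by Lemma~\ref{lemma:complexisometric}, so it is admissible in the infimum defining $\nrm{x}_{(\mc{X}_0,\mc{X}_1)_\theta}$. Conversely, for the ``$\geq$'' direction, I would take any admissible $\vec{x}$ in the definition of $\nrm{x}_{(\mc{X}_0,\mc{X}_1)_\theta}$ and send it to $f(z):=\sum_{k\in\Z}\ee^{k(z-\theta)}x_k$ via the inverse map of Lemma~\ref{lemma:complexisometric}; then $f \in \ms{H}_\pi(\mbb{S};\mc{X}_0,\mc{X}_1)$ with the same norm and $f(\theta) = \sum_k x_k = x$. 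Taking infima on both sides yields equality.

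For the annulus formulation, I would invoke the conformal change of variables $w = \ee^z$, which induces a bijection between $f \in \ms{H}(\mbb{A};X_0+X_1)$ and $g \in \ms{H}_\pi(\mbb{S};X_0+X_1)$ via $g(z) = f(\ee^z)$. The computation preceding Lemma~\ref{lemma:indeps} shows that the $k$-th Laurent coefficient $f_k$ of $f$ at $0$ equals $\ee^{-ks}\widehat{g}_s(k)$, so that $\nrm{(f_k)_{k \in \Z}}_{\mf{S}_j(\ee^j)} = \nrm{\widehat{g}_s}_{\mf{S}_j(\ee^{j-s})}$ for each $s \in (0,1)$; hence the correspondence is an isometric isomorphism between $\ms{H}(\mbb{A};\mc{X}_0,\mc{X}_1)$ and $\ms{H}_\pi(\mbb{S};\mc{X}_0,\mc{X}_1)$. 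Since $f(\ee^\theta) = g(\theta)$, the infima over $\{f : f(\ee^\theta)=x\}$ and $\{g : g(\theta)=x\}$ coincide, so the annulus identity follows from the strip identity.

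There is no real obstacle here: the proposition is essentially a direct unpacking of Lemma~\ref{lemma:complexisometric}, once one checks the interior evaluation identity $f(\theta) = \sum_{k\in\Z}\widehat{f}_\theta(k)$. That identity is not a subtle statement about pointwise Fourier convergence; it is forced by the explicit representation of the inverse map together with the absolute convergence guaranteed by Remark~\ref{remark:automaticconvergenceX0X1}.
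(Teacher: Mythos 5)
Your argument is correct and is essentially the paper's own proof: the first equality is deduced from Lemma~\ref{lemma:complexisometric} together with the evaluation identity $f(\theta)=\sum_{k\in\Z}\widehat{f}_\theta(k)$, and the second from the isometric isomorphism between ${\ms{H}}_\pi(\mathbb{S};\mc{X}_0,\mc{X}_1)$ and ${\ms{H}}(\mathbb{A};\mc{X}_0,\mc{X}_1)$. Your write-up merely spells out the details (justifying the evaluation identity via the inverse map and Remark~\ref{remark:automaticconvergenceX0X1}) that the paper leaves implicit.
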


\begin{proof}
  The first equality follows directly from Lemma \ref{lemma:complexisometric} and the observation that $f(\theta)=\sum_{k \in \Z} \widehat{f}_\theta$. The second equality follows from the isometric isomorphism between ${\ms{H}}_{\pi}({\mathbb{S}};\mc{X}_0,\mc{X}_1)$ and ${\ms{H}}({\mathbb{A}};\mc{X}_0,\mc{X}_1)$.
\end{proof}

We remark that the formulation in Proposition \ref{proposition:complexsimple} in terms of functions on the annulus $\mbb{A}$ is very close in spirit to the complex formulation of the interpolation framework in \cite{CKMR02}.

\bigskip

Although Proposition \ref{proposition:complexsimple} expresses the norm of the sequentially structured interpolation method in terms of analytic functions, taking the sequence structures as in Example \ref{example:interpolationmethods}\ref{it:ssicomplex} does not yield the classical formulation of the complex interpolation methods of Calder\'on \cite{Ca64}. Indeed, the main culprit is that the norm of e.g. ${\ms{H}}_\pi({\mathbb{S}};\mc{X}_0,\mc{X}_1)$ is expressed in terms of the function $f_\theta$ for some $\theta \in (0,1)$, whereas the norm of classical complex interpolation methods is expressed in terms of boundary functions $f_j$ for $j=0,1$.

For $f \in {\ms{H}}_\pi({\mathbb{S}};\mc{X}_0,\mc{X}_1)$ it is not clear whether an extension to $\overline{\mbb{S}}$ exists and thus whether a reformulation of Proposition \ref{proposition:complexsimple} closer in spirit to the complex interpolation methods is possible. In the upcoming two subsections we will handle this extension problem in two different ways:
\begin{itemize}
  \item In Subsection \ref{subs:lowercomplex} we will restrict ${\ms{H}}_\pi({\mathbb{S}};\mc{X}_0,\mc{X}_1)$ to those functions that have a continuous extension to $\overline{\mbb{S}}$, yielding a formulation close in spirit to Calder\'on's lower complex interpolation method.
        \item In Subsection \ref{subs:uppercomplex} we will show that any function in ${\ms{H}}_\pi({\mathbb{S}};\mc{X}_0,\mc{X}_1)$ extends to $\overline{\mbb{S}}$ in a distributional sense, yielding a formulation close in spirit to Calder\'on's upper complex interpolation method.
\end{itemize}
Both approaches have advantages and disadvantages. The first approach is the least technical, but in general only yields a norm equivalence for $x \in X_0\cap X_1$. The second approach does yield a norm equivalence for $x \in X_0+X_1$, but requires us to develop some distribution theory on the torus.

\subsection{The lower complex formulation}\label{subs:lowercomplex}
We start with the formulation of the sequentially structured interpolation method in the spirit of the lower complex interpolation method. Let $\ms{H}_\pi(\overline{\mathbb{S}};X)$ be the subspace of $\ms{H}_\pi(\mathbb{S};X)$ consisting of all functions $f\in \ms{H}_\pi(\mathbb{S};X)$ which extend to a continuous function $f:\overline{\mathbb{S}} \to X$ and for $j=0,1$ define
$$
f_j(t):= f(j+it),\qquad t \in \R.
$$
Note that in the following theorem one only obtains a norm equivalence for $x \in X_0\cap X_1$.

\begin{theorem}\label{theorem:complexmethodformulation}
Let $(\mc{X}_0,\mc{X}_1)$ be a compatible couple of sequentially structured Banach spaces and let $\theta \in (0,1)$.
For $x \in X_0+X_1$ we define
\begin{equation*}
\nrm{x}_{(\mc{X}_0,\mc{X}_1)_\theta}^{(c)} := \inf\,\cbraces{\max_{j = 0,1}\,\nrm{\widehat{f}_j}_{\mf{S}_j}: f \in {\ms{H}}_\pi(\overline{\mathbb{S}};X_0+X_1),\, f(\theta) =x}.
\end{equation*}
Then
\begin{equation}\label{eq:theorem:complexmethodformulation;sum}
\nrm{x}_{(\mc{X}_0,\mc{X}_1)_\theta} \leq \nrm{x}_{(\mc{X}_0,\mc{X}_1)_\theta}^{(c)}, \qquad x \in X_0+X_1,
\end{equation}
and, if $\mf{S}_0$ and $\mf{S}_1$ are Ces\`aro bounded, we have
\begin{equation}\label{eq:theorem:complexmethodformulation;intersection}
\nrm{x}_{(\mc{X}_0,\mc{X}_1)_\theta} \gtrsim_\theta  \nrm{x}_{(\mc{X}_0,\mc{X}_1)_\theta}^{(c)}, \qquad x \in X_0 \cap X_1.
\end{equation}
\end{theorem}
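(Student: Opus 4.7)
The plan is to apply Proposition~\ref{proposition:complexsimple} together with the formula $\widehat{f}_s(k) = a_k \ee^{ks}$ relating the Fourier coefficients of $f_s$ to the Laurent coefficients $(a_k)_{k \in \Z}$ of the corresponding analytic function on the annulus $\mbb{A}$, as derived in the computation preceding Lemma~\ref{lemma:indeps}.

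For the embedding \eqref{eq:theorem:complexmethodformulation;sum}, I would start with an arbitrary $f \in \ms{H}_\pi(\overline{\mbb{S}};X_0+X_1)$ with $f(\theta)=x$, assuming without loss of generality that $\max_{j}\nrm{\widehat{f}_j}_{\mf{S}_j} < \infty$. Since $f$ is $2\pi i$-periodic, it descends to a continuous map on the compact quotient cylinder $\overline{\mbb{S}}/(2\pi i \Z)$, hence is uniformly continuous as an $(X_0+X_1)$-valued function on $\overline{\mbb{S}}$. Therefore $f_s \to f_j$ uniformly as $s \to j$, which forces $\widehat{f}_s(k) \to \widehat{f}_j(k)$ in $X_0+X_1$; combined with $\widehat{f}_s(k) = a_k \ee^{ks}$ I conclude that $\widehat{f}_j(k) = a_k \ee^{kj}$. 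The elementary identity
\begin{equation*}
\nrm{\widehat{f}_s}_{\mf{S}_j(\ee^{j-s})} = \nrm{(a_k\ee^{ks}\ee^{k(j-s)})_k}_{\mf{S}_j} = \nrm{(a_k\ee^{kj})_k}_{\mf{S}_j} = \nrm{\widehat{f}_j}_{\mf{S}_j}
\end{equation*}
then yields $\nrm{f}_{\ms{H}_\pi(\mbb{S};\mc{X}_0,\mc{X}_1)} = \max_{j=0,1}\nrm{\widehat{f}_j}_{\mf{S}_j}$, and Proposition~\ref{proposition:complexsimple} gives \eqref{eq:theorem:complexmethodformulation;sum} after taking the infimum over such $f$.

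For the reverse inequality \eqref{eq:theorem:complexmethodformulation;intersection}, I would use the $\ell^\infty$-assumption to invoke Lemma~\ref{lemma:finiterep} with $\breve{X} = X_0\cap X_1$, obtaining a finitely supported $\vec{x} \in c_{00}(\Z; X_0 \cap X_1)$ with $\sum_{k \in \Z} x_k = x$ and
\begin{equation*}
\nrm{\vec{x}}_{\mf{S}_0(\ee^{-\theta}) \cap \mf{S}_1(\ee^{1-\theta})} \lesssim_\theta \nrm{x}_{(\mc{X}_0,\mc{X}_1)_\theta}.
\end{equation*}
I then set $f(z) := \sum_{k \in \Z} \ee^{k(z-\theta)} x_k$. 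Because $\vec{x}$ has finite support, $f$ is a finite sum of $2\pi i$-periodic entire $(X_0\cap X_1)$-valued functions, and so $f \in \ms{H}_\pi(\overline{\mbb{S}}; X_0+X_1)$ with $f(\theta)=x$. A direct computation gives $\widehat{f}_j(k) = \ee^{k(j-\theta)}x_k$, whence
\begin{equation*}
\max_{j=0,1}\nrm{\widehat{f}_j}_{\mf{S}_j} = \nrm{\vec{x}}_{\mf{S}_0(\ee^{-\theta}) \cap \mf{S}_1(\ee^{1-\theta})} \lesssim_\theta \nrm{x}_{(\mc{X}_0,\mc{X}_1)_\theta},
\end{equation*}
proving \eqref{eq:theorem:complexmethodformulation;intersection}.

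The most delicate step will be the boundary-limit argument in Part~1, where I need to upgrade continuity on $\overline{\mbb{S}}$ to \emph{uniform} continuity via periodicity in order to identify $\widehat{f}_j$ with the scaled Laurent coefficients $(a_k\ee^{kj})_k$; without periodicity the Fourier coefficients on the boundary need not match the Laurent coefficients. The $\ell^\infty$-sequence structure hypothesis intervenes only in Part~2, through Lemma~\ref{lemma:finiterep}, which also explains the restriction to $X_0 \cap X_1$: outside the intersection, the finite-support approximation needed to guarantee that the candidate $f$ extends continuously (indeed, entirely) to $\overline{\mbb{S}}$ is not available.
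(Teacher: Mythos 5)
Your proposal is correct and follows essentially the same route as the paper: for \eqref{eq:theorem:complexmethodformulation;sum} the paper likewise takes the limit $s \to j$ in Lemma~\ref{lemma:indeps} to identify $\widehat{f}_j(k)=\ee^{k(j-\theta)}\widehat{f}_\theta(k)$ and then invokes Proposition~\ref{proposition:complexsimple} (your uniform-continuity-on-the-compact-quotient argument just makes this limit passage explicit), and for \eqref{eq:theorem:complexmethodformulation;intersection} it uses exactly your finite-sum trigonometric ansatz $f(z)=\sum_k \ee^{k(z-\theta)}x_k$ built from Lemma~\ref{lemma:finiterep}.
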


\begin{proof}
Suppose that $x \in X_0+X_1$ with $\nrm{x}_{(\mc{X}_0,\mc{X}_1)_\theta}^{(c)}< \infty$ and fix an $f \in \ms{H}_\pi(\overline{\mathbb{S}};X_0+X_1)$ with $f(\theta)=x$. Taking the limit $s \to j$ in Lemma \ref{lemma:indeps}, we have
\begin{equation*}
  \widehat{f}_j(k) = \ee^{k(j-\theta)}\widehat{f}_\theta(k), \qquad k \in \Z.
\end{equation*}
Since  $\ms{H}_\pi(\overline{\mathbb{S}};X_0+X_1) \subseteq \ms{H}_\pi({\mathbb{S}};X_0+X_1)$, we deduce \eqref{eq:theorem:complexmethodformulation;sum} directly from Proposition~\ref{proposition:complexsimple}.

Next we assume that $\mf{S}_0$ and $\mf{S}_1$ are Ces\`aro bounded.
Take $x \in X_0 \cap X_1$ and choose a finitely non-zero $\vec{x} \in \mf{S}_0(\ee^{-\theta}) \cap \mf{S}_1(\ee^{1-\theta})$ such that $\sum_{k\in \Z}x_k =x$.
For
\begin{equation*}
  f(z):= \sum_{k \in \Z}  \ee^{k(z-\theta)}x_k , \qquad z \in \overline{\mathbb{S}}
\end{equation*}
we have $f \in \ms{H}_\pi(\overline{\mathbb{S}};X_0+X_1)$, $f(\theta)=x$ and $\widehat{f}_j(k) =  \ee^{k(j-\theta)}x_k$ for all $k \in \Z$.
Therefore,
\begin{equation*}
  \max_{j = 0,1}\,\nrm{\widehat{f}_j}_{\mf{S}_j} = \max_{j = 0,1}\,\nrm{\vec{x}}_{\mf{S}_j(\ee^{j-\theta})},
\end{equation*}
so taking the infimum over all such $\vec{x}$, we obtain $\nrm{x}_{(\mc{X}_0,\mc{X}_1)_\theta}^{(c)} \lesssim \nrm{x}_{(\mc{X}_0,\mc{X}_1)_\theta}$ by Lemma \ref{lemma:finiterep}.
\end{proof}

Note that the proof of Theorem~\ref{theorem:complexmethodformulation} shows that, in the inequality \eqref{eq:theorem:complexmethodformulation;intersection}, it is already enough to restrict the infimum in the definition of $\nrm{\,\cdot\,}_{(\mc{X}_0,\mc{X}_1)_\theta}^{(c)} $ to $ f \in {\ms{H}}_\pi(\overline{\mathbb{S}};X_0+X_1)$ of the form $$f(z)=\sum_{k=-N}^{N} \ee^{k(z-\theta)} x_k, \qquad z \in \overline{\mathbb{S}},$$ with $(x_k)_{k=-N}^{N} \subseteq X_0 \cap X_1$ and $N \in \N$.
Furthermore, using the full power of Lemma \ref{lemma:finiterep}, we can restrict to a dense subspace of $X_0\cap X_1$. For future reference we record this observation  in
the following corollary.

\begin{corollary}\label{cor:theorem:complexmethodformulation;tensor}
Let $(\mc{X}_0,\mc{X}_1)$ be a compatible couple of Ces\`aro bounded sequentially structured Banach spaces, let $\breve{X}$ be a dense subspace of $X_0\cap X_1$ and let $\theta \in (0,1)$.
Then we have for $x \in \breve{X}$
\begin{equation*}
 \nrm{x}_{(\mc{X}_0,\mc{X}_1)_\theta} \eqsim_\theta \inf\,\cbraces{\max_{j = 0,1}\,\nrm{\widehat{f}_j}_{\mf{S}_j}: f \in \ms{H}_{\mathrm{Trig}}(\overline{\mathbb{S}}) \otimes \breve{X},\, f(\theta) =x},
\end{equation*}
where we set
\begin{equation*}
  \ms{H}_{\mathrm{Trig}}(\overline{\mathbb{S}}):= \cbrace{z\mapsto \ee^{kz}:k \in \Z}\subseteq  \ms{H}_\pi(\overline{\mathbb{S}}).
\end{equation*}
\end{corollary}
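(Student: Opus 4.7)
The plan is to sandwich the quantity on the right-hand side between $\nrm{x}_{(\mc{X}_0,\mc{X}_1)_\theta}$ (up to constants depending on $\theta$) using the two main inputs already available: Theorem~\ref{theorem:complexmethodformulation} for the easy direction, and Lemma~\ref{lemma:finiterep} for the hard direction.

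For the inequality $\nrm{x}_{(\mc{X}_0,\mc{X}_1)_\theta} \leq \inf\{\ldots\}$, I would observe that any $f \in \ms{H}_{\mathrm{Trig}}(\overline{\mbb{S}}) \otimes \breve{X}$ lies in $\ms{H}_\pi(\overline{\mbb{S}};X_0+X_1)$, so restricting the infimum in the definition of $\nrm{x}_{(\mc{X}_0,\mc{X}_1)_\theta}^{(c)}$ to this smaller class only makes the infimum larger. Thus \eqref{eq:theorem:complexmethodformulation;sum} in Theorem~\ref{theorem:complexmethodformulation} immediately yields one half of the equivalence, with constant $1$.

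For the converse inequality, the key step is to produce a good representative in $\ms{H}_{\mathrm{Trig}}(\overline{\mbb{S}}) \otimes \breve{X}$. Given $x \in \breve{X}$, Lemma~\ref{lemma:finiterep} (applied with the dense subspace $\breve{X}$) produces $\vec{x} \in c_{00}(\Z;\breve{X})$ with $\sum_{k \in \Z} x_k = x$ and $\max_{j=0,1} \nrm{\vec{x}}_{\mf{S}_j(\ee^{j-\theta})} \lesssim_\theta \nrm{x}_{(\mc{X}_0,\mc{X}_1)_\theta}$. I would then set
\begin{equation*}
f(z) := \sum_{k \in \Z} \ee^{k(z-\theta)} x_k, \qquad z \in \overline{\mbb{S}},
\end{equation*}
which is a finite sum, hence lies in $\ms{H}_{\mathrm{Trig}}(\overline{\mbb{S}}) \otimes \breve{X}$, and clearly satisfies $f(\theta) = x$. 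A direct Fourier coefficient computation gives $\widehat{f}_j(k) = \ee^{k(j-\theta)} x_k$, so $\nrm{\widehat{f}_j}_{\mf{S}_j} = \nrm{\vec{x}}_{\mf{S}_j(\ee^{j-\theta})}$ for $j=0,1$, and taking the maximum yields the desired estimate.

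There is no real obstacle; the result is essentially the content of the remark made immediately after Theorem~\ref{theorem:complexmethodformulation}, upgraded from $X_0 \cap X_1$ to the dense subspace $\breve{X}$ by invoking the stronger form of the finite approximation lemma. The only minor bookkeeping point is verifying that the Fourier coefficients of the boundary traces of $f(z) = \sum_k \ee^{k(z-\theta)} x_k$ are exactly $\ee^{k(j-\theta)} x_k$, which is a one-line orthogonality computation on $\T$.
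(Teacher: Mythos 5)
Your proposal is correct and follows exactly the paper's argument: the paper derives this corollary from the observation that the proof of Theorem~\ref{theorem:complexmethodformulation} already restricts to functions of the form $f(z)=\sum_{k=-N}^{N}\ee^{k(z-\theta)}x_k$, combined with Lemma~\ref{lemma:finiterep} applied to the dense subspace $\breve{X}$. Both directions, including the identification $\widehat{f}_j(k)=\ee^{k(j-\theta)}x_k$, match the paper's reasoning.
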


Let $(\mc{X}_0,\mc{X}_1)$ be a compatible couple of sequentially structured Banach spaces and let $\theta \in (0,1)$.
Then
\begin{equation*}
(\mc{X}_0,\mc{X}_1)^{(c)}_\theta := \left\{ x \in X_0+X_1 : \nrm{x}_{(\mc{X}_0,\mc{X}_1)_\theta}^{(c)} < \infty \right\}
\end{equation*}
is a normed space.
Indeed, it follows readily from the definition that $\nrm{\,\cdot\,}_{(\mc{X}_0,\mc{X}_1)_\theta}^{(c)}$ is a semi-norm and it follows from  \eqref{eq:theorem:complexmethodformulation;sum} that it is also positive definite.

Theorem \ref{theorem:complexmethodformulation} raises the question whether
\begin{equation}\label{eq:complex_formulation}
(\mc{X}_0,\mc{X}_1)_\theta =(\mc{X}_0,\mc{X}_1)^{(c)}_\theta.
\end{equation}
In the case that $\mf{S}_0$ and $\mf{S}_1$ are Ces\`aro bounded and additionally either $\mf{S}_0$ or $\mf{S}_1$ is Ces\`aro convergent, we have that $X_0 \cap X_1$ is a dense subspace of $(\mc{X}_0,\mc{X}_1)_\theta$ by Corollary~\ref{corollary:denseX0X1}, so we would be able to extend \eqref{eq:theorem:complexmethodformulation;intersection} by density if $(\mc{X}_0,\mc{X}_1)^{(c)}_\theta$ defines a Banach space.

Although the above question is interesting from a theoretical perspective, Theorem~\ref{theorem:complexmethodformulation} already suffices for most applications involving Ces\`aro convergent sequence structures.
In particular, this is the case for proving the first part of
Example~\ref{example:interpolationmethods}\ref{it:ssicomplex}.
Therefore, we will first engage ourselves in the latter in the next example and come back to \eqref{eq:complex_formulation} afterwards.

\begin{example}\label{example:complexmethod}
Let $(\mc{X}_0,\mc{X}_1)$ be a compatible couple of sequentially structured Banach spaces, let $\theta \in (0,1)$ and take $p_0,p_1 \in [1,\infty]$.
If
\begin{equation*}
\mf{S}_j= \begin{cases}
\widehat{L}^{p_j}(\T;X_j)&\text{ if }p_j<\infty,\\
\widehat{C}(\T;X_j)&\text{ if }p_j=\infty,
\end{cases}
\end{equation*} for $j=0,1$, then we have $(\mc{X}_0,\mc{X}_1)_\theta  = \brac{X_0,X_1}_{\theta}$.
\end{example}

Let us mention that the above example for $p= \infty$ is implicitly contained in \cite{Ca64,Cw78}.

\begin{proof}
Let $\mc{Y}_j=[X_j,\mf{T}_j]$ be given by $\mf{T}_j := \widehat{C}(\T;X_j)$ for $j=0,1$.
By Proposition~\ref{proposition:embeddings}, we have $(\mc{Y}_0,\mc{Y}_1)_\theta \hookrightarrow (\mc{X}_0,\mc{X}_1)_\theta$, so it suffices to show that
$(\mc{X}_0,\mc{X}_1)_\theta \hookrightarrow \brac{X_0,X_1}_{\theta}$ and $\brac{X_0,X_1}_{\theta} \hookrightarrow (\mc{Y}_0,\mc{Y}_1)_\theta$.

In order to show that $(\mc{X}_0,\mc{X}_1)_\theta \hookrightarrow \brac{X_0,X_1}_{\theta}$, let $x \in X_0 \cap X_1$ and let $f \in \ms{H}_\pi(\overline{\mathbb{S}})\otimes(X_0\cap X_1)$ be such that $f(\theta) = x$. Then we have for $g(z) := \ee^{(z-\theta)^2}f(z)$ that $g \in \ms{H}(\overline{\mathbb{S}};X_0+X_1)$,  $g(\theta)=x$ and
\begin{align*}
\nrm{g_j}_{L^{p_j}(\R;X_j)}
&\leq e^{(j-\theta)^2}\sum_{k \in \Z} \ee^{-k^2\pi^2 } \nrm{f_j}_{L^{p_j}(\T;X_j)} \lesssim \nrm{\widehat{f}_j}_{\widehat{L}^{p_j}(\T;X_j)} = \nrm{\widehat{f}_j}_{\mf{S}_j}.
\end{align*}
Therefore, by \cite[Corollary C.2.11]{HNVW16}, we have
\begin{align*}
\nrm{x}_{[X_0,X_1]_\theta} &\lesssim_{\theta,p_0,p_1} \max_{j=0,1} \, \nrm{\widehat{f}_j}_{\mf{S}_j}.
\end{align*}
Taking the infimum over all such $f$, we obtain the estimate
$\nrm{x}_{[X_0,X_1]_\theta} \lesssim_{\theta,p_0,p_1} \nrm{x}_{(\mc{X}_0,\mc{X}_1)_\theta}$ by Corollary \ref{cor:theorem:complexmethodformulation;tensor}.
Since $X_0 \cap X_1$ is dense in $(\mc{X}_0,\mc{X}_1)_\theta$ by Corollary~\ref{corollary:denseX0X1},  the embedding $(\mc{X}_0,\mc{X}_1)_\theta  \hookrightarrow \brac{X_0,X_1}_{\theta}$ follows.

Next, in order to prove the embedding $\brac{X_0,X_1}_{\theta} \hookrightarrow (\mc{Y}_0,\mc{Y}_1)_\theta$, let $x \in \brac{X_0,X_1}_{\theta}$.
Take $f \in \ms{H}(\overline{\mathbb{S}};X_0+X_1)$ such that $f(\theta) = x$. Define \begin{equation}\label{eq:psiperiodic}
\psi(z) := \frac{1}{z-\theta}\hab{\ee^{{z-\theta}}-1}\cdot  \ee^{(z-\theta)^2},\qquad z \in \overline{\mathbb{S}},
\end{equation}
which is analytic on $\mathbb{S}$ and satisfies $\psi(\theta)=1$ and $\psi(\theta+2\pi i k) = 0$ for $k \in \Z\setminus \cbrace{0}$. Thus if we define
\begin{equation*}
g(z) := \sum_{n \in \Z} f(z+2\pi i n) \cdot \psi(z+2\pi i n), \qquad z \in \overline{\mathbb{S}},
\end{equation*}
we have $g \in \ms{H}_\pi(\overline{\mathbb{S}};X_0+X_1)$ and $g(\theta) = f(\theta)=x$. Moreover, for $j=0,1$ we have
\begin{align*}
\nrm{g_j}_{L^{\infty}(\T;X_j)} &\leq \nrm{f_j}_{L^{\infty}(\R;X_j)} \cdot \sup_{t \in \T} \sum_{n \in \Z} \abs{\psi(j+it+2\pi i n)}\\
&\lesssim_\theta \nrm{f_j}_{L^{\infty}(\R;X_j)}
\end{align*}
Thus, using Theorem~\ref{theorem:complexmethodformulation}, we have
\begin{align*}
\nrm{x}_{(\mc{X}_0,\mc{X}_1)_\theta} &\leq \max_{j=0,1} \, \nrm{\widehat{g}_j}_{\widehat{C}(\T;X_j)} \leq  \max_{j=0,1} \, \nrm{g_j}_{{L}^{\infty}(\T;X_j)}\lesssim_\theta \max_{j=0,1} \, \nrm{f_j}_{{L}^{\infty}(\R;X_j)}.
\end{align*}
Taking the infimum over all such $f$, the embedding $\brac{X_0,X_1}_{\theta} \hookrightarrow (\mc{Y}_0,\mc{Y}_1)_\theta$ follows.
 \end{proof}

\begin{remark}\label{remark:nonperiodic}
The classical definition of the lower complex interpolation method uses non-periodic holomorphic functions on the strip $\mathbb{S}$, which in turn give rise to a function, rather than a sequence, on the Fourier side. In order to fit this into our abstract framework, one would need a continuous version of a sequence structure, which is not always possible. For the specific case of real interpolation, this was done in \cite{LL21c}.
\end{remark}

Now let us return to the question in \eqref{eq:complex_formulation}, which is mainly of theoretical interest. It boils down to the question whether $(\mc{X}_0,\mc{X}_1)^{(c)}_\theta$ is a Banach space. To answer this question, let us define some auxiliary normed spaces.
Let $(\mc{X}_0,\mc{X}_1)$ be a compatible couple of sequentially structured Banach spaces.
We define
\begin{align*}
  {\ms{H}}_\pi(\overline{\mathbb{S}};\mc{X}_0,\mc{X}_1):={\ms{H}}_\pi({\mathbb{S}};\mc{X}_0,\mc{X}_1) \cap {\ms{H}}_\pi(\overline{\mathbb{S}};X_0+X_1),
\end{align*}
equipped with the norm
\begin{equation*}
\nrm{f}_{{\ms{H}}_\pi(\overline{\mathbb{S}};\mc{X}_0,\mc{X}_1)} := \nrm{f}_{{\ms{H}}_\pi({\mathbb{S}};\mc{X}_0,\mc{X}_1)} =
\max_{j = 0,1}\,\nrm{\widehat{f}_j}_{\mf{S}_j},
\end{equation*}
where the second equality follows by taking limits in Lemma \ref{lemma:indeps}.
Furthermore, we define
\begin{equation*}
{\ms{H}}^\infty_\pi(\overline{\mathbb{S}};\mc{X}_0,\mc{X}_1)
:= {\ms{H}}_\pi(\overline{\mathbb{S}};\mc{X}_0,\mc{X}_1) \cap C_{\rm b}(\overline{\mathbb{S}};X_0+X_1)
\end{equation*}
equipped with the intersection norm.
Note that we have $${\ms{H}}^\infty_\pi(\overline{\mathbb{S}};\mc{X}_0,\mc{X}_1) = {\ms{H}}_\pi(\overline{\mathbb{S}};\mc{X}_0,\mc{X}_1)$$ as sets, but their respective norms differ. We start with two preparatory lemmata.

\begin{lemma}\label{lemma:H_infty_BS}
Let $(\mc{X}_0,\mc{X}_1)$ be a compatible couple of sequentially structured Banach spaces. Then ${\ms{H}}^\infty_\pi(\overline{\mathbb{S}};\mc{X}_0,\mc{X}_1)$ is a Banach space.
\end{lemma}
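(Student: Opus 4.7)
The plan is to take a Cauchy sequence $(f_n)_{n \in \N}$ in ${\ms{H}}^\infty_\pi(\overline{\mathbb{S}};\mc{X}_0,\mc{X}_1)$ and build a candidate limit through the $C_b$-component, then verify that it lies in ${\ms{H}}_\pi(\overline{\mathbb{S}};\mc{X}_0,\mc{X}_1)$ with convergence in the $\mf{S}_j$-norms.

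First, since $C_b(\overline{\mbb{S}};X_0+X_1)$ is a Banach space and $(f_n)$ is in particular Cauchy there, there exists $f \in C_b(\overline{\mbb{S}};X_0+X_1)$ with $f_n \to f$ uniformly on $\overline{\mbb{S}}$. Standard complex analysis (the locally uniform limit of analytic functions is analytic) combined with the $2\pi i$-periodicity of each $f_n$ shows that $f \in {\ms{H}}_\pi(\overline{\mbb{S}};X_0+X_1)$. It remains to identify $f$ as the limit of $(f_n)$ in the $\ms{H}_\pi(\overline{\mbb{S}};\mc{X}_0,\mc{X}_1)$-seminorm.

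Next, for $j=0,1$, the sequence $(\widehat{(f_n)_j})_{n \in \N}$ is Cauchy in the Banach space $\mf{S}_j$, hence converges to some $\vec{y}^j \in \mf{S}_j$. The main obstacle, and really the only delicate point, is to show that $\vec{y}^j = \widehat{f_j}$. This is where the coordinate contraction \eqref{eq:sequence3} enters: it guarantees that $\mf{S}_j$-convergence implies coordinate-wise convergence in $X_j$, so $\widehat{(f_n)_j}(k) \to \vec{y}^j_k$ in $X_j$ for every $k \in \Z$. On the other hand, since $(f_n)_j \to f_j$ uniformly on $\T$ in the norm of $X_0+X_1$, the continuous linear functional $g \mapsto \widehat{g}(k)$ on $C(\T;X_0+X_1)$ yields $\widehat{(f_n)_j}(k) \to \widehat{f_j}(k)$ in $X_0+X_1$. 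The Hausdorff property of $X_0+X_1$ (into which $X_j$ embeds continuously) forces $\widehat{f_j}(k) = \vec{y}^j_k$ for every $k$, giving $\widehat{f_j} = \vec{y}^j \in \mf{S}_j$.

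Finally, assembling the two pieces, $\max_{j=0,1}\nrm{\widehat{(f_n)_j}-\widehat{f_j}}_{\mf{S}_j} \to 0$ and $\nrm{f_n-f}_{C_b(\overline{\mbb{S}};X_0+X_1)} \to 0$, which together say exactly that $f_n \to f$ in the intersection norm. Since $f \in \ms{H}^\infty_\pi(\overline{\mbb{S}};\mc{X}_0,\mc{X}_1)$ by construction, this proves completeness. The only non-routine step is the identification of $\vec{y}^j$ with $\widehat{f_j}$, which hinges precisely on the coordinate contractivity built into the definition of a sequence structure.
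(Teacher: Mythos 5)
Your proposal is correct and follows essentially the same route as the paper: extract the limit in $C_b(\overline{\mathbb{S}};X_0+X_1)$, extract the limits $\vec{y}^j$ of the boundary Fourier coefficients in $\mf{S}_j$, and identify the two via coordinate-wise convergence (the paper uses $\mf{S}_j \hookrightarrow \ell^0(\Z;X_0+X_1)$ together with $C_b(\T;X_0+X_1)\hookrightarrow L^1(\T;X_0+X_1)$, which is the same mechanism as your appeal to \eqref{eq:sequence3} and the continuity of $g\mapsto\widehat{g}(k)$). No gaps.
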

\begin{proof}
Let $(f^{n})_{n \in \N}$ be a Cauchy sequence in ${\ms{H}}^\infty_\pi(\overline{\mathbb{S}};\mc{X}_0,\mc{X}_1)$.
On the one hand, as ${\ms{H}}_\pi(\overline{\mathbb{S}};X_0+X_1)$ is a closed linear subspace of the Banach space $ C_{\rm b}(\overline{\mathbb{S}};X_0+X_1)$,  we obtain that $f=\lim_{n \to \infty}f^{n}$ in $ C_{\rm b}(\overline{\mathbb{S}};X_0+X_1)$ for some $f \in {\ms{H}}_\pi(\overline{\mathbb{S}};X_0+X_1)$.
On the other hand, as $(\widehat{f}_{j}^n)_{n \in \N}$ is a Cauchy sequence in $\mf{S}_j$, we have $\vec{x}_j = \lim_{n \to \infty}\widehat{f}_{j}^n$ in $\mf{S}_j$ for some $\vec{x}_j \in \mf{S}_j$ for $j=0,1$.
The convergence in $ C_{\rm b}(\overline{\mathbb{S}};X_0+X_1)$ implies that $f_j=\lim_{n \to \infty}f_{j}^n$ in $$ C_{\rm b}(\T;X_0+X_1) \hookrightarrow L^1(\T;X_0+X_1)$$ for $j=0,1$, so that
$$
\widehat{f}_{j}(k) = \lim_{n \to \infty}\widehat{f}_{j}^n(k), \qquad k \in \Z, j=0,1.
$$
Since $\vec{x}_j = \lim_{n \to \infty}\widehat{f}_{j}^n$ in $\mf{S}_j \hookrightarrow \ell^0(\Z;X_0+X_1)$, it follows that $\widehat{f}_{j} = \vec{x}_j$ and thus $\widehat{f}_{j} = \lim_{n \to \infty}\widehat{f}_{j}^n$ in $\mf{S}_j$ for $j=0,1$.
Therefore, $f = \lim_{n \to \infty}f^n$ in ${\ms{H}}^\infty_\pi(\overline{\mathbb{S}};\mc{X}_0,\mc{X}_1)$.
\end{proof}

\begin{lemma}\label{lem:H_infty_X_vs_Y}
Let $(\mc{X}_0,\mc{X}_1)$ be a compatible couple of sequentially structured Banach spaces. Let $\mf{T}_j$ be the sequence structure on $X_j$ given by $\mf{T}_j := \mf{S}_j \cap \widehat{C}(\T;X_0+X_1)$ and set $\mc{Y}_j:=[X_j,\mf{T}_j]$.  Then
\begin{equation*}
{\ms{H}}^\infty_\pi(\overline{\mathbb{S}};\mc{X}_0,\mc{X}_1) = {\ms{H}}_\pi(\overline{\mathbb{S}};\mc{Y}_0,\mc{Y}_1) = {\ms{H}}^\infty_\pi(\overline{\mathbb{S}};\mc{Y}_0,\mc{Y}_1)
\end{equation*}
with an equality of norms.
\end{lemma}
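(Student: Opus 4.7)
The first ingredient I will use is topological: the quotient $\overline{\mathbb{S}}/2\pi i \Z$ is homeomorphic to the compact cylinder $[0,1] \times \T$. Any $f \in {\ms{H}}_\pi(\overline{\mathbb{S}};X_0+X_1)$ therefore descends to a continuous function on this compact set and is automatically bounded, so that ${\ms{H}}_\pi(\overline{\mathbb{S}};X_0+X_1) \subseteq C_b(\overline{\mathbb{S}};X_0+X_1)$. Consequently the inclusions $\ms{H}^\infty_\pi(\overline{\mathbb{S}};\mc{X}_0,\mc{X}_1) \subseteq \ms{H}_\pi(\overline{\mathbb{S}};\mc{X}_0,\mc{X}_1)$ and its analogue for $\mc{Y}$ are equalities as sets, and since $\mf{T}_j \subseteq \mf{S}_j$ we have $\ms{H}_\pi(\overline{\mathbb{S}};\mc{Y}_0,\mc{Y}_1) \subseteq \ms{H}_\pi(\overline{\mathbb{S}};\mc{X}_0,\mc{X}_1)$; the reverse set-theoretic inclusion will fall out of the next step.

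The second ingredient is the maximum principle. For $f \in {\ms{H}}_\pi(\overline{\mathbb{S}};X_0+X_1)$, the scalar function $z \mapsto \nrm{f(z)}_{X_0+X_1}$ is continuous on $\overline{\mathbb{S}}$ and subharmonic on $\mathbb{S}$ (combine the vector-valued Cauchy integral formula with Hahn--Banach). Viewed on the compact cylinder, the classical maximum principle yields
\begin{equation*}
\nrm{f}_{C_b(\overline{\mathbb{S}};X_0+X_1)} = \max_{j=0,1}\,\nrm{f_j}_{C(\T;X_0+X_1)}.
\end{equation*}
By continuity of $f$ up to the boundary, $f_s \to f_j$ in $L^1(\T;X_0+X_1)$ as $s \to j$, and passing to this limit in Lemma~\ref{lemma:indeps} shows that $\widehat{f}_j$ is literally the Fourier coefficient sequence of $f_j \in C(\T;X_0+X_1)$. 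Hence $\widehat{f}_j \in \widehat{C}(\T;X_0+X_1)$ automatically with $\nrm{\widehat{f}_j}_{\widehat{C}(\T;X_0+X_1)} = \nrm{f_j}_{C(\T;X_0+X_1)}$, and combined with the previous display this shows $\widehat{f}_j \in \mf{T}_j$ whenever $\widehat{f}_j \in \mf{S}_j$. This completes the set-theoretic equality of all three spaces.

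Finally I combine the identities. By the intersection norm definition and the two displays above,
\begin{align*}
\nrm{f}_{\ms{H}^\infty_\pi(\overline{\mathbb{S}};\mc{X}_0,\mc{X}_1)}
&= \max\hab{\max_{j}\nrm{\widehat{f}_j}_{\mf{S}_j},\,\max_{j}\nrm{\widehat{f}_j}_{\widehat{C}(\T;X_0+X_1)}} \\
&= \max_{j}\,\max\hab{\nrm{\widehat{f}_j}_{\mf{S}_j},\,\nrm{\widehat{f}_j}_{\widehat{C}(\T;X_0+X_1)}} \\
&= \max_{j}\,\nrm{\widehat{f}_j}_{\mf{T}_j} = \nrm{f}_{\ms{H}_\pi(\overline{\mathbb{S}};\mc{Y}_0,\mc{Y}_1)}.
\end{align*}
This last quantity dominates $\nrm{f}_{C_b(\overline{\mathbb{S}};X_0+X_1)}$ by the maximum principle step, so it also equals $\nrm{f}_{\ms{H}^\infty_\pi(\overline{\mathbb{S}};\mc{Y}_0,\mc{Y}_1)}$, giving the asserted equalities with matching norms. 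The only real substance is the vector-valued maximum principle, which I expect to be the main (but routine) step, as it reduces via Hahn--Banach to the classical scalar maximum principle on the compact cylinder $[0,1]\times\T$.
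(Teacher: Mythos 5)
Your proof is correct and follows essentially the same route as the paper: the entire content is the identity $\nrm{f}_{C_b(\overline{\mathbb{S}};X_0+X_1)} = \max_{j=0,1}\nrm{f_j}_{C(\T;X_0+X_1)}$, which the paper gets from Phragm\'en--Lindel\"of and you get equivalently from the maximum principle on the compact cylinder (the periodic setting makes these the same), after which the norm identities follow by unwinding the intersection norms exactly as you do.
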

\begin{proof}
By the  Phragm\'en-Lindel\"of theorem, we have for every function $f \in {\ms{H}}_\pi(\overline{\mathbb{S}};X_0+X_1)$ that
$$
\nrm{f}_{L^\infty(\mathbb{S};X_0+X_1)} = \max_{j=0,1}\nrm{f_j}_{ C_{\rm b}(\R;X_0+X_1)},
$$
from which the desired result follows.
\end{proof}

We are now ready to characterize when \eqref{eq:complex_formulation} holds.

\begin{proposition}\label{prop:theorem:complexmethodformulation;beyond_intersection}
Let $(\mc{X}_0,\mc{X}_1)$ be a compatible couple of Ces\`aro bounded sequentially structured Banach spaces such that $\mf{S}_0$ or $\mf{S}_1$ is Ces\`aro convergent and let $\theta \in (0,1)$.
Let $\mf{T}_j$ be the sequence structure on $X_j$ given by
\begin{equation*}
\mf{T}_j := \mf{S}_j \cap \widehat{C}(\T;X_0+X_1)
\end{equation*}
and set $\mc{Y}_j:=[X_j,\mf{T}_j]$.
Then the following statements are equivalent.
\begin{enumerate}[(i)]
    \item\label{it:cor:prop:theorem:complexmethodformulation;beyond_intersection;c} $(\mc{X}_0,\mc{X}_1)_\theta = (\mc{X}_0,\mc{X}_1)^{(c)}_\theta$ with an equivalence of norms.

    \item\label{it:cor:prop:theorem:complexmethodformulation;beyond_intersection;BS} $(\mc{X}_0,\mc{X}_1)^{(c)}_\theta$ is a Banach space.

    \item\label{it:cor:prop:theorem:complexmethodformulation;beyond_intersection;PL} There exists a finite constant $C \geq 1$ such that, for all $f \in {\ms{H}}_\pi(\overline{\mathbb{S}};\mc{X}_0,\mc{X}_1)$ there exists $g \in {\ms{H}}^\infty_\pi(\overline{\mathbb{S}};\mc{X}_0,\mc{X}_1)$ with $g(\theta)=f(\theta)$ and
\begin{equation*}
\nrm{g}_{{\ms{H}}^\infty_\pi(\overline{\mathbb{S}};\mc{X}_0,\mc{X}_1)} \leq C\nrm{f}_{{\ms{H}}_\pi(\overline{\mathbb{S}};\mc{X}_0,\mc{X}_1)}.
\end{equation*}

    \item\label{it:cor:prop:theorem:complexmethodformulation;beyond_intersection;X_vs_Y} $(\mc{X}_0,\mc{X}_1)_\theta = (\mc{Y}_0,\mc{Y}_1)_\theta$ with an equivalence of norms.
\end{enumerate}
\end{proposition}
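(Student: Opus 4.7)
The plan is to establish the cycle (i) $\Rightarrow$ (ii) $\Rightarrow$ (iii) $\Rightarrow$ (iv) $\Rightarrow$ (i). The implication (i) $\Rightarrow$ (ii) is immediate: equivalence of norms lets $(\mc{X}_0,\mc{X}_1)^{(c)}_\theta$ inherit completeness from the Banach space $(\mc{X}_0,\mc{X}_1)_\theta$ of Proposition~\ref{proposition:Banachembeddings}. For (ii) $\Rightarrow$ (iii) I apply the open mapping theorem to the evaluation map
\begin{equation*}
\Phi:{\ms{H}}^\infty_\pi(\overline{\mathbb{S}};\mc{X}_0,\mc{X}_1) \to (\mc{X}_0,\mc{X}_1)^{(c)}_\theta,\qquad f \mapsto f(\theta).
\end{equation*}
The domain is Banach by Lemma~\ref{lemma:H_infty_BS}, the codomain by (ii), and $\Phi$ is bounded directly from the definition of the target norm. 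Surjectivity is exactly the remark preceding Lemma~\ref{lem:H_infty_X_vs_Y} that ${\ms{H}}^\infty_\pi(\overline{\mathbb{S}};\mc{X}_0,\mc{X}_1) = {\ms{H}}_\pi(\overline{\mathbb{S}};\mc{X}_0,\mc{X}_1)$ as sets, since any continuous $2\pi$-periodic function on $\overline{\mathbb{S}}$ is automatically bounded. Open mapping then delivers a universal constant $C \geq 1$ and bounded lifts, which applied to $x=f(\theta)$ for an arbitrary $f \in {\ms{H}}_\pi(\overline{\mathbb{S}};\mc{X}_0,\mc{X}_1)$ yields (iii).

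For (iii) $\Rightarrow$ (iv), Proposition~\ref{proposition:embeddings} gives $(\mc{Y}_0,\mc{Y}_1)_\theta \hookrightarrow (\mc{X}_0,\mc{X}_1)_\theta$, so only the reverse norm estimate requires work. Since one of $\mf{S}_0,\mf{S}_1$ is a $c_0$-sequence structure, Corollary~\ref{corollary:denseX0X1} makes $X_0 \cap X_1$ dense in $(\mc{X}_0,\mc{X}_1)_\theta$, and I first restrict to $x \in X_0 \cap X_1$. Corollary~\ref{cor:theorem:complexmethodformulation;tensor} supplies a trigonometric polynomial $f \in {\ms{H}}_\pi(\overline{\mathbb{S}};\mc{X}_0,\mc{X}_1)$ with $f(\theta)=x$ and $\nrm{f}_{{\ms{H}}_\pi(\overline{\mathbb{S}};\mc{X}_0,\mc{X}_1)} \lesssim_\theta \nrm{x}_{(\mc{X}_0,\mc{X}_1)_\theta}$. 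Invoking (iii) and Lemma~\ref{lem:H_infty_X_vs_Y} produces $g \in {\ms{H}}^\infty_\pi(\overline{\mathbb{S}};\mc{X}_0,\mc{X}_1) = {\ms{H}}_\pi(\overline{\mathbb{S}};\mc{Y}_0,\mc{Y}_1)$ with $g(\theta)=x$ and controlled norm, yielding $\nrm{x}_{(\mc{Y}_0,\mc{Y}_1)^{(c)}_\theta} \lesssim \nrm{x}_{(\mc{X}_0,\mc{X}_1)_\theta}$, after which Theorem~\ref{theorem:complexmethodformulation} dominates $\nrm{x}_{(\mc{Y}_0,\mc{Y}_1)_\theta}$. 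Extending from the dense $X_0 \cap X_1$ to all of $(\mc{X}_0,\mc{X}_1)_\theta$ is then routine because $(\mc{Y}_0,\mc{Y}_1)_\theta$ is Banach and both spaces embed continuously into $X_0+X_1$, forcing the limit of any approximating sequence to agree with $x$.

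The most delicate step is (iv) $\Rightarrow$ (i). The upper bound $\nrm{x}_{(\mc{X}_0,\mc{X}_1)_\theta} \leq \nrm{x}_{(\mc{X}_0,\mc{X}_1)^{(c)}_\theta}$ is Theorem~\ref{theorem:complexmethodformulation}. For the reverse bound on $x \in X_0 \cap X_1$ the chain
\begin{equation*}
\nrm{x}_{(\mc{X}_0,\mc{X}_1)^{(c)}_\theta} \leq \nrm{x}_{(\mc{Y}_0,\mc{Y}_1)^{(c)}_\theta} \lesssim_\theta \nrm{x}_{(\mc{Y}_0,\mc{Y}_1)_\theta} = \nrm{x}_{(\mc{X}_0,\mc{X}_1)_\theta}
\end{equation*}
combines $\mf{T}_j \hookrightarrow \mf{S}_j$, Theorem~\ref{theorem:complexmethodformulation} applied to $\mc{Y}$ (whose $\ell^\infty$-property is inherited from $\mc{X}$ via the Fej\'er estimate on $\widehat{C}(\T;X_0+X_1)$), and (iv). The main obstacle is extending this inequality to every $x \in (\mc{X}_0,\mc{X}_1)_\theta$: a priori we do not know $(\mc{X}_0,\mc{X}_1)^{(c)}_\theta$ is complete, so a naive density argument fails. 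The workaround is to realize the extension inside the Banach space ${\ms{H}}^\infty_\pi(\overline{\mathbb{S}};\mc{X}_0,\mc{X}_1)$ of Lemma~\ref{lemma:H_infty_BS}. Choosing $(x_n) \subseteq X_0 \cap X_1$ with $x_0 := 0$ and $\nrm{x_{n+1}-x_n}_{(\mc{X}_0,\mc{X}_1)_\theta} \leq 2^{-n}$, each telescoped difference lies in $X_0 \cap X_1$ and, by the chain together with Lemma~\ref{lem:H_infty_X_vs_Y}, admits a lift $f^{(n)} \in {\ms{H}}^\infty_\pi(\overline{\mathbb{S}};\mc{X}_0,\mc{X}_1)$ with $f^{(n)}(\theta) = x_{n+1}-x_n$ and $\nrm{f^{(n)}}_{{\ms{H}}^\infty_\pi(\overline{\mathbb{S}};\mc{X}_0,\mc{X}_1)} \lesssim 2^{-n}$. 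The absolutely convergent series $F := \sum_{n \geq 0} f^{(n)}$ then sums in ${\ms{H}}^\infty_\pi(\overline{\mathbb{S}};\mc{X}_0,\mc{X}_1) \subseteq {\ms{H}}_\pi(\overline{\mathbb{S}};\mc{X}_0,\mc{X}_1)$, and continuity of evaluation at $\theta$ forces $F(\theta)=x$; hence $\nrm{x}_{(\mc{X}_0,\mc{X}_1)^{(c)}_\theta} \leq \nrm{F}_{{\ms{H}}_\pi(\overline{\mathbb{S}};\mc{X}_0,\mc{X}_1)} \lesssim \nrm{x}_{(\mc{X}_0,\mc{X}_1)_\theta}$, closing the cycle.
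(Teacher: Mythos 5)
Your proof is correct, and it uses the same toolbox as the paper (Lemma~\ref{lemma:H_infty_BS}, Lemma~\ref{lem:H_infty_X_vs_Y}, Theorem~\ref{theorem:complexmethodformulation}, Corollary~\ref{cor:theorem:complexmethodformulation;tensor}, the open mapping theorem), but the logical organization differs. The paper works with a hub structure: it proves \ref{it:cor:prop:theorem:complexmethodformulation;beyond_intersection;c}$\Leftrightarrow$\ref{it:cor:prop:theorem:complexmethodformulation;beyond_intersection;BS} directly by a density--completeness argument, \ref{it:cor:prop:theorem:complexmethodformulation;beyond_intersection;BS}$\Leftrightarrow$\ref{it:cor:prop:theorem:complexmethodformulation;beyond_intersection;PL} via the open mapping theorem applied to the two quotient realizations of $(\mc{X}_0,\mc{X}_1)^{(c)}_\theta$ (your evaluation map $\Phi$ is the same argument in different clothing), and then disposes of \ref{it:cor:prop:theorem:complexmethodformulation;beyond_intersection;X_vs_Y}$\Rightarrow$\ref{it:cor:prop:theorem:complexmethodformulation;beyond_intersection;c} by observing that \ref{it:cor:prop:theorem:complexmethodformulation;beyond_intersection;PL} holds \emph{automatically} (with $C=1$) for the couple $(\mc{Y}_0,\mc{Y}_1)$ by Phragm\'en--Lindel\"of, so that the already-established equivalence applied to $(\mc{Y}_0,\mc{Y}_1)$ gives $(\mc{Y}_0,\mc{Y}_1)_\theta = (\mc{Y}_0,\mc{Y}_1)^{(c)}_\theta$, after which one sandwiches. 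You instead close the cycle with a hands-on telescoping-series argument inside the Banach space ${\ms{H}}^\infty_\pi(\overline{\mathbb{S}};\mc{X}_0,\mc{X}_1)$. Both are valid; the paper's bootstrap is shorter once the equivalences for a general couple are in place, while your series construction makes completely explicit where completeness enters and why the naive density extension of the inequality $\nrm{x}^{(c)}_{(\mc{X}_0,\mc{X}_1)_\theta} \lesssim \nrm{x}_{(\mc{X}_0,\mc{X}_1)_\theta}$ from $X_0 \cap X_1$ would otherwise fail. One point worth stating explicitly in your write-up of the telescoping step: the reason each difference $x_{n+1}-x_n$ admits a lift in ${\ms{H}}^\infty_\pi(\overline{\mathbb{S}};\mc{X}_0,\mc{X}_1)$ with norm controlled by $\nrm{x_{n+1}-x_n}_{(\mc{Y}_0,\mc{Y}_1)^{(c)}_\theta}$ is precisely the Phragm\'en--Lindel\"of identity \eqref{eq:phragmenlindelof}, which identifies the $(\mc{Y}_0,\mc{Y}_1)^{(c)}_\theta$-norm with the quotient norm of ${\ms{H}}^\infty_\pi(\overline{\mathbb{S}};\mc{X}_0,\mc{X}_1)$ under evaluation at $\theta$; you gesture at this via Lemma~\ref{lem:H_infty_X_vs_Y} but it is the load-bearing identification.
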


Before we go to the proof of Proposition~\ref{prop:theorem:complexmethodformulation;beyond_intersection}, let us note that the sequence structure $\mf{T}_j$ on $X_j$ in Proposition \ref{prop:theorem:complexmethodformulation;beyond_intersection} is not a ``natural'' sequence structure.
Loosely speaking, unless $\mf{S}_j \hookrightarrow \widehat{C}(\T;X_j)$, $\mf{T}_j$ has information of the other Banach space $X_k$, $j \neq k \in \{0,1\}$, built into its definition.
In this sense Proposition~\ref{prop:theorem:complexmethodformulation;beyond_intersection} provides many ``artificial'' examples for which \eqref{eq:complex_formulation} holds true. However, there are only a few ``natural'' examples, such as complex interpolation and real interpolation with parameter $1$.

\begin{proof}
The equivalence ``\ref{it:cor:prop:theorem:complexmethodformulation;beyond_intersection;c}$\Leftrightarrow$\ref{it:cor:prop:theorem:complexmethodformulation;beyond_intersection;BS}''
 follows from a combination of Theorem~\ref{theorem:complexmethodformulation} and Corollary~\ref{corollary:denseX0X1}.
For ``\ref{it:cor:prop:theorem:complexmethodformulation;beyond_intersection;BS}$\Leftrightarrow$\ref{it:cor:prop:theorem:complexmethodformulation;beyond_intersection;PL}'' note that
$$
(\mc{X}_0,\mc{X}_1)^{(c)}_\theta = \faktor{\ms{H}_\pi(\overline{\mathbb{S}};\mc{X}_0,\mc{X}_1)}{\big\{f \in \ms{H}_\pi(\overline{\mathbb{S}};\mc{X}_0,\mc{X}_1) : f(\theta)=0 \big\}}.
$$
Defining
$$
({\mc{X}_0,\mc{X}_1})^{(c);\infty}_\theta := \faktor{{\ms{H}}^\infty_\pi(\overline{\mathbb{S}};\mc{X}_0,\mc{X}_1)}{\big\{f \in {\ms{H}}^\infty_\pi(\overline{\mathbb{S}};\mc{X}_0,\mc{X}_1) : f(\theta)=0 \big\}},
$$
we have $(\mc{X}_0,\mc{X}_1)^{(c)}_\theta = ({\mc{X}_0,\mc{X}_1})^{(c);\infty}_\theta$ as linear spaces with $({\mc{X}_0,\mc{X}_1})^{(c);\infty}_\theta \hookrightarrow (\mc{X}_0,\mc{X}_1)^{(c)}_\theta$ thanks to the fact that ${\ms{H}}_\pi(\overline{\mathbb{S}};\mc{X}_0,\mc{X}_1) = {\ms{H}}^\infty_\pi(\overline{\mathbb{S}};\mc{X}_0,\mc{X}_1)$ as linear spaces with ${\ms{H}}^\infty_\pi(\overline{\mathbb{S}};\mc{X}_0,\mc{X}_1) \hookrightarrow {\ms{H}}_\pi(\overline{\mathbb{S}};\mc{X}_0,\mc{X}_1)$.
Recall that $(\mc{X}_0,\mc{X}_1)^{(c)}_\theta$ is a normed space.
Furthermore, as $$\big\{f \in {\ms{H}}^\infty_\pi(\overline{\mathbb{S}};\mc{X}_0,\mc{X}_1) : f(\theta)=0 \big\}$$ is a closed linear subspace of ${\ms{H}}^\infty_\pi(\overline{\mathbb{S}};\mc{X}_0,\mc{X}_1)$, we see that $({\mc{X}_0,\mc{X}_1})^{(c);\infty}_\theta$ is a Banach space in view of Lemma~\ref{lemma:H_infty_BS}.
Therefore, by the open mapping theorem, \ref{it:cor:prop:theorem:complexmethodformulation;beyond_intersection;BS} holds true if and only if $(\mc{X}_0,\mc{X}_1)^{(c)}_\theta \hookrightarrow ({\mc{X}_0,\mc{X}_1})^{(c);\infty}_\theta$, which is readily be seen to be equivalent to \ref{it:cor:prop:theorem:complexmethodformulation;beyond_intersection;PL}.

For ``\ref{it:cor:prop:theorem:complexmethodformulation;beyond_intersection;c}$\Rightarrow$\ref{it:cor:prop:theorem:complexmethodformulation;beyond_intersection;X_vs_Y}''
assume that \ref{it:cor:prop:theorem:complexmethodformulation;beyond_intersection;c} holds true.
Then, by the above, \ref{it:cor:prop:theorem:complexmethodformulation;beyond_intersection;PL} holds true as well.
So $(\mc{X}_0,\mc{X}_1)_\theta = (\mc{X}_0,\mc{X}_1)^{(c)}_\theta \hookrightarrow ({\mc{X}_0,\mc{X}_1})^{(c);\infty}_\theta$. By the  Phragm\'en-Lindel\"of theorem we note that
\begin{equation}\label{eq:phragmenlindelof}
  \nrm{f}_{L^\infty(\overline{\mathbb{S}};X_0+X_1)} = \max_{j=0,1}\nrm{f_j}_{ C_{\rm b}(\R;X_0+X_1)}, \qquad f \in {\ms{H}}_\pi(\overline{\mathbb{S}};X_0+X_1),
\end{equation}
so we have $({\mc{X}_0,\mc{X}_1})^{(c);\infty}_\theta = (\mc{Y}_0,\mc{Y}_1)_\theta^{(c)}$.
Since $(\mc{Y}_0,\mc{Y}_1)_\theta^{(c)} \hookrightarrow (\mc{Y}_0,\mc{Y}_1)_\theta$ by Theorem~\ref{theorem:complexmethodformulation} and since $(\mc{Y}_0,\mc{Y}_1)_\theta \hookrightarrow (\mc{X}_0,\mc{X}_1)_\theta$ by Proposition~\ref{proposition:embeddings}, it follows that
\begin{equation*}
(\mc{X}_0,\mc{X}_1)_\theta \hookrightarrow  (\mc{Y}_0,\mc{Y}_1)_\theta \hookrightarrow (\mc{X}_0,\mc{X}_1)_\theta.
\end{equation*}
This shows that \ref{it:cor:prop:theorem:complexmethodformulation;beyond_intersection;X_vs_Y} holds true.

Finally, for ``\ref{it:cor:prop:theorem:complexmethodformulation;beyond_intersection;X_vs_Y}$\Rightarrow$\ref{it:cor:prop:theorem:complexmethodformulation;beyond_intersection;c}''
assume that \ref{it:cor:prop:theorem:complexmethodformulation;beyond_intersection;X_vs_Y} holds true. Note that by \eqref{eq:phragmenlindelof} we know that
\ref{it:cor:prop:theorem:complexmethodformulation;beyond_intersection;PL} holds true with
$(\mc{Y}_0,\mc{Y}_1)$ in place of $(\mc{X}_0,\mc{X}_1)$.
Furthermore, note that $\mc{Y}_0$ and $\mc{Y}_1$ are Ces\`aro bounded sequentially structured Banach spaces such that either $\mf{T}_0$ or $\mf{T}_1$ is Ces\`aro convergent.
Therefore, by the equivalence of \ref{it:cor:prop:theorem:complexmethodformulation;beyond_intersection;c} and \ref{it:cor:prop:theorem:complexmethodformulation;beyond_intersection;PL} applied to $(\mc{Y}_0,\mc{Y}_1)$ instead of $(\mc{X}_0,\mc{X}_1)$, we have $(\mc{Y}_0,\mc{Y}_1)_\theta = (\mc{Y}_0,\mc{Y}_1)^{(c)}_\theta$. Since we clearly have $(\mc{Y}_0,\mc{Y}_1)^{(c)}_\theta \hookrightarrow (\mc{X}_0,\mc{X}_1)^{(c)}_\theta$ and since $(\mc{X}_0,\mc{X}_1)^{(c)}_\theta \hookrightarrow (\mc{X}_0,\mc{X}_1)_\theta$ by Theorem~\ref{theorem:complexmethodformulation}, it follows that
$$
(\mc{X}_0,\mc{X}_1)_\theta = (\mc{Y}_0,\mc{Y}_1)_\theta \hookrightarrow (\mc{X}_0,\mc{X}_1)^{(c)}_\theta \hookrightarrow (\mc{X}_0,\mc{X}_1)_\theta.
$$
Therefore, \ref{it:cor:prop:theorem:complexmethodformulation;beyond_intersection;c} holds true.
\end{proof}

\subsection{The upper complex formulation}\label{subs:uppercomplex}
We now turn to a formulation of the sequence structured interpolation method in the spirit of the upper complex method. This formulation is more general than the formulation in the spirit of the lower complex method in the previous subsection, as it allows for sequence structures which are not Ces\`aro bounded and it gives a norm equality for all $x \in (\mc{X}_0,\mc{X}_1)_\theta$, rather than just $x \in X_0\cap X_1$. The price we pay is that we need to work with distributions, rather than functions, on the boundary of $\mathbb{S}$.

We will need some distribution theory on the torus in this section. Let $X$ be a Banach space and define the following spaces of Schwartz functions:
\begin{align*}
 \ms{S}(\R;X)&:= \cbrace{f\colon \R \to X : f \text{ rapidly decreasing and smooth}},\\
 \ms{S}(\T;X)&:= \cbrace{f\colon \T \to X : f \text{ smooth}},\\
 \ms{S}(\Z;X)&:= \cbrace{f\colon \Z \to X : f \text{ rapidly decreasing}},
\end{align*}
with their natural topologies. We omit $X$ when $X=\C$. For $G \in \cbrace{\R,\T,\Z}$ we define the spaces of distributions
\begin{align*}
  \ms{S}'(G;X) &:= \mc{L}(\ms{S}(G),X).
\end{align*}
We equip these spaces of distributions the topology of pointwise convergence, which is the analogue of the usual weak$^*$ topology from the scalar-valued case $X=\C$.
However, we will also use the topology of bounded convergence.
To this end, it will be convenient to write $\ms{S}'_{\mrm{bc}}(G;X)$ for $\ms{S}'(G;X)$ equipped with the topology of bounded convergence.

Let us remark that, although the topology of bounded convergence is stronger than the topology of pointwise convergence, $\ms{S}'(G;X)$ and $\ms{S}'_{\mrm{bc}}(G;X)$ actually have the same convergent sequences since the Schwartz space $\ms{S}(G)$ is a so-called \emph{Montel space} (see e.g.\ \cite[Section~34.4]{Tr06})

We note that (using the topology of bounded convergence) we have the topological linear isomorphism (which can be seen by inspection of \cite[Remark 3.22]{BGH19})
\begin{equation}\label{eq:OMisomorphism}
  \ms{S}'_{\mrm{bc}}(\Z;X) = \mc{O}_{\mc{M}}(\Z;X),
\end{equation}
where $\mc{O}_{\mc{M}}(\Z;X)$ is the space of all $f \colon \Z \to X$ of at most polynomial growth, i.e. the space of all $f \colon \Z \to X$ such that there exist an  $N>0$ such that
\begin{equation*}
  \nrm{f(k)}_{X} \lesssim (1+\abs{k})^N, \qquad k \in \Z,
\end{equation*}
with its natural inductive limit topology. Furthermore we note that the Fourier transform and its inverse
\begin{align*}
  \ms{F}&\colon \ms{S}(\T;X) \to \ms{S}(\Z;X), &(\ms{F}f)(k) &:= \frac{1}{2\pi}\int_{\T} f(t)\ee^{ikt}\dd t,\\
  \ms{F}^{-1}&\colon \ms{S}(\Z;X) \to \ms{S}(\T;X), &(\ms{F}^{-1}g)(t) &:= \sum_{k\in \Z} \ee^{ikt} g(k),
\end{align*}
are continuous and $\ms{F}^{-1} \circ \ms{F}$ and $\ms{F} \circ \ms{F}^{-1}$ are the identity mappings on $\ms{S}(\T;X)$ and $\ms{S}(\Z;X)$ respectively (see \cite[Section 3.3]{BGH19}). On the spaces of distributions the mappings
\begin{align*}
  \ms{F}&\colon \ms{S}'(\T;X) \to \ms{S}'(\Z;X), &\ms{F}u\,(f) &:= u\left((\ms{F}^{-1}f)(-\cdot)\right),\\
  \ms{F}&\colon \ms{S}'(\Z;X) \to \ms{S}'(\T;X), &\ms{F}^{-1}u\,(g) &:= u\left((\ms{F}g)(-\cdot)\right),
\end{align*}
have similar properties.
The latter remains true when $\ms{S}'$ replaced by $\ms{S}'_{\mrm{bc}}$, that is, when we use the topology of bounded convergence instead of the topology of pointwise convergence.

There is a continuous embedding (cf.\ \cite[Theorem~16.3]{DK10})
\begin{equation}\label{eq:periodic_extension}
\iota_{\T,\R}:\ms{S}'_{\mrm{bc}}(\T;X) \hookrightarrow \ms{S}'_{\mrm{bc}}(\R;X),
\end{equation}
with the property that, for all $\mu \in \Lambda^1(\T;X) \subseteq \ms{S}'(\T;X)$, $\iota_{\T,\R}\mu$ equals the $2\pi$-periodic extension of $\mu$.
This can be seen as follows.
Note that
$$
T:\ms{S}(\R) \to \ms{S}(\T),\,
\phi \mapsto \sum_{k \in \Z}\phi(\,\cdot\,+2\pi k),
$$
is a well-defined continuous linear mapping with the property that, for every $f \in \Lambda^1(\T;X) {\subseteq} \ms{S}'(\T;X)$, $\mu \circ T$ equals the $2\pi$-periodic extension of $\mu$.
Therefore, we can define $\iota_{\T,\R}\,u:=u \circ T$.

Lemma~\ref{lemma:indeps} allows us to define ``distributional boundary values'' $f_j$ of certain $f \in \ms{H}_\pi(\mathbb{S};X)$ under a suitable condition that connects well with the setting of Proposition~\ref{proposition:complexsimple}.

\begin{lemma}\label{lem:boundary_distributions;periodic}
Let $X$ be a Banach space and let $\theta \in (0,1)$.
Then, for each $f \in \ms{H}_\pi(\mathbb{S};X)$ with
\begin{equation}\label{eq:lem:boundary_distributions;periodic;distr_assump}
(\ee^{k(j-\theta)}\widehat{f}_\theta(k))_{k \in \Z} \in \ms{S}'(\Z;X), \qquad j=0,1,
\end{equation}
there exist unique distributions $f_j \in \ms{S}'(\T;X)$ with the property that
\begin{equation}\label{eq:lem:boundary_distributions;periodic}
f_j = \lim_{s \to j}f_s \quad \text{in} \quad \ms{S}'_{\mrm{bc}}(\T;X), \qquad j=0,1.
\end{equation}
Moreover, the Fourier transform of $f_j$ is given by
\begin{equation}\label{eq:measure_distr_FT}
\widehat{f}_j(k) = \ee^{k(j-\theta)}\widehat{f}_\theta(k),\quad k \in \Z, \, j=0,1.
\end{equation}

\end{lemma}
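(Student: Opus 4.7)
The plan is to explicitly construct $f_j$ as the inverse Fourier transform of the sequence provided by the hypothesis, and then verify the convergence \eqref{eq:lem:boundary_distributions;periodic} by transferring it, through the continuity of $\ms{F}^{-1}$ on $\ms{S}'_{\mrm{bc}}$, to a convergence statement on the Fourier side where it can be handled by a direct dominated-convergence argument.

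First I would set $\vec{a}^j := (\ee^{k(j-\theta)}\widehat{f}_\theta(k))_{k \in \Z}$ for $j=0,1$. By \eqref{eq:lem:boundary_distributions;periodic;distr_assump} and the identification $\ms{S}'_{\mrm{bc}}(\Z;X) = \mc{O}_{\mc{M}}(\Z;X)$ from \eqref{eq:OMisomorphism}, each $\vec{a}^j$ is polynomially bounded. I then define $f_j := \ms{F}^{-1}\vec{a}^j \in \ms{S}'(\T;X)$, which gives \eqref{eq:measure_distr_FT} by construction. Uniqueness is immediate: any two candidates $f_j, g_j$ satisfying \eqref{eq:lem:boundary_distributions;periodic} must agree in $\ms{S}'(\T;X)$ by the Hausdorff property (since convergence in $\ms{S}'_{\mrm{bc}}$ implies pointwise convergence on test functions). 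By Lemma~\ref{lemma:indeps}, $\widehat{f}_s(k) = \ee^{k(s-\theta)}\widehat{f}_\theta(k) = \ee^{k(s-j)}\vec{a}^j(k)$, so applying the continuity of $\ms{F}^{-1}: \ms{S}'_{\mrm{bc}}(\Z;X) \to \ms{S}'_{\mrm{bc}}(\T;X)$ reduces the problem to showing that $((\ee^{k(s-j)}-1)\vec{a}^j(k))_{k \in \Z} \to 0$ in $\ms{S}'_{\mrm{bc}}(\Z;X)$ as $s \to j$.

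The key point for the convergence is that the two hypotheses (for $j=0$ and $j=1$) jointly force the bound
\begin{equation*}
\|\widehat{f}_\theta(k)\| \lesssim (1+|k|)^N \min\!\big(\ee^{-k(1-\theta)},\, \ee^{k\theta}\big), \qquad k \in \Z,
\end{equation*}
so $\vec{a}^0$ decays exponentially as $k \to +\infty$ and $\vec{a}^1$ decays exponentially as $k \to -\infty$. Treating $j=0$ with $s \in (0,\theta]$ (the case $s \to 1^-$ being symmetric), I would split into $k \geq 0$ and $k < 0$: for $k \geq 0$, $\|(\ee^{ks}-1)\vec{a}^0(k)\| \leq 2\|\widehat{f}_\theta(k)\| \lesssim (1+k)^N \ee^{-k(1-\theta)}$, which is a summable $s$-independent dominating bound; for $k < 0$, $|\ee^{ks}-1| \leq s|k|$ gives $\|(\ee^{ks}-1)\vec{a}^0(k)\| \lesssim s(1+|k|)^{N+1}$, which vanishes uniformly in $k$ as $s \to 0^+$. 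Given a bounded set $B \subseteq \ms{S}(\Z)$, all seminorms $\sup_k (1+|k|)^M |\phi(k)|$ are uniformly bounded for $\phi \in B$; hence for any $\varepsilon > 0$ one can first choose $K$ so that the tail $\sum_{|k|>K}|(\ee^{ks}-1)\vec{a}^0(k)\phi(k)|$ is bounded by $\varepsilon$ uniformly in $\phi \in B$ and $s \in (0,\theta]$, and then choose $s$ small so that the finite middle sum $\sum_{|k|\leq K}|(\ee^{ks}-1)\vec{a}^0(k)\phi(k)|$ is bounded by $\varepsilon$ uniformly in $\phi \in B$. This yields the required convergence in $\ms{S}'_{\mrm{bc}}(\Z;X)$.

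The main obstacle lies in the fact that $\ee^{k(s-j)}-1$ blows up in one direction (e.g.\ $k \to +\infty$ when $s \to 0^+$), so polynomial boundedness of $\vec{a}^j$ alone cannot control the expression $(\ee^{k(s-j)}-1)\vec{a}^j(k)$ there. The remedy, and the only place where both assumptions are genuinely used, is to invoke the hypothesis with $j$ replaced by $1-j$ to gain exponential decay of $\widehat{f}_\theta$ in the problematic direction. Once this is in place, uniformity over bounded subsets of $\ms{S}(\Z)$ (as opposed to mere pointwise convergence on $\ms{S}(\Z)$) is obtained by a routine tail-plus-middle splitting.
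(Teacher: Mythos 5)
Your proof is correct and follows the same skeleton as the paper's: define $f_j$ by prescribing its Fourier coefficients as in \eqref{eq:measure_distr_FT}, transfer the convergence statement to the Fourier side via the continuity of $\ms{F}^{-1}$ on $\ms{S}'_{\mrm{bc}}$, and exploit that the two hypotheses in \eqref{eq:lem:boundary_distributions;periodic;distr_assump} jointly force $\nrm{\widehat{f}_\theta(k)}_X \lesssim (1+\abs{k})^{N}\min\{\ee^{k\theta},\ee^{-k(1-\theta)}\}$. Where you diverge is the final verification of convergence in $\ms{S}'_{\mrm{bc}}(\Z;X)$: the paper pushes the identification \eqref{eq:OMisomorphism} one step further, so that this convergence becomes $\sup_{k}(1+\abs{k})^{-N}\nrm{\widehat{f}_j(k)-\widehat{f}_s(k)}_X \to 0$ for a single $N$, and the mean value bound $\abs{\ee^{(j-\theta)k}-\ee^{(s-\theta)k}} \leq \abs{j-s}\,\abs{k}\max\{\ee^{(1-\theta)k},\ee^{-\theta k}\}$ combined with the two-sided decay then yields the quantitative estimate $\nrm{\widehat{f}_j(k)-\widehat{f}_s(k)}_X \leq C\abs{j-s}(1+\abs{k})^{N+1}$ in one line, whereas your tail-plus-middle splitting against bounded subsets of $\ms{S}(\Z)$ reaches the same conclusion qualitatively with more case analysis. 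One small misstatement: for $k<0$ the bound $s(1+\abs{k})^{N+1}$ does \emph{not} vanish uniformly in $k$ as $s \to 0^{+}$, since the polynomial factor is unbounded; this does not damage the proof, because your subsequent splitting correctly controls the negative tail using the uniform rapid decay of $\phi$ over the bounded set $B$ together with $s \leq \theta$, but that phrase should be deleted or corrected.
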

\begin{proof}
Uniqueness follows from the uniqueness of limits in $\ms{S}'_{\mrm{bc}}(\T;X)$.
By assumption \eqref{eq:lem:boundary_distributions;periodic;distr_assump}  we can define $f_j \in \ms{S}'(\T;X)$ through \eqref{eq:measure_distr_FT}.
We only need to show the convergence in \eqref{eq:lem:boundary_distributions;periodic}.

As the Fourier transform is a topological isomorphism from $\ms{S}'_{\mrm{bc}}(\T;X)$ to $\ms{S}'_{\mrm{bc}}(\Z;X)$, \eqref{eq:lem:boundary_distributions;periodic} is equivalent to
\begin{equation*}
\widehat{f}_j = \lim_{s \to j}\widehat{f}_s \quad \text{in} \quad \ms{S}'_{\mrm{bc}}(\Z;X),
\end{equation*}
which, using \eqref{eq:OMisomorphism}, can be reformulated as the existence of $N \in \N$ such that
\begin{equation}\label{eq:lem:boundary_distributions;periodic;reformulated}
\lim_{s \to j}\sup_{k \in \Z}(1+\abs{k})^{-N}\,\nrm{\widehat{f}_j(k)-\widehat{f}_s(k)}_{X} =  0.
\end{equation}
We claim that \eqref{eq:lem:boundary_distributions;periodic;reformulated} holds true for $N=M+1$, where $M \in \N$ is such that
\begin{equation*}
C:= \sup_{k \in \Z}\, (1+\abs{k})^{-M}\max\{\ee^{-k\theta}, \ee^{k(1-\theta)}\}\nrm{\widehat{f}_\theta(k)}_X < \infty,
\end{equation*}
which exists thanks to the assumption in \eqref{eq:lem:boundary_distributions;periodic;distr_assump}.
Note that, by \eqref{eq:measure_distr_FT} and Lemma~\ref{lemma:indeps},
\begin{align*}
\widehat{f}_j(k)-\widehat{f}_s(k)
=(\ee^{k(j-\theta)}-\ee^{k(s-\theta)})\widehat{f}_\theta(k).
\end{align*}
Moreover,  we have
\begin{align*}
\abs{\ee^{(j-\theta)k}-\ee^{(s-\theta)k}}
&\leq \abs{j-s}\,\abs{k} \max\{\ee^{(1-\theta)k},\ee^{-\theta k}\}.
\end{align*}
Therefore,
\begin{align*}
\nrm{\widehat{f}_j(k)-\widehat{f}_s(k)}_{X}
&\leq \abs{j-s}\,\abs{k}\max\{\ee^{k(1-\theta)},\ee^{-k\theta }\}\nrm{\widehat{f}_\theta(k)}_X \\
&\leq C\abs{j-s}(1+\abs{k})^{M+1},
\end{align*}
which shows that \eqref{eq:lem:boundary_distributions;periodic;reformulated} indeed holds true for $N=M+1$.
\end{proof}

The next lemma allows us to bootstrap pointwise convergence to bounded convergence for $f \in \ms{H}_\pi(\mathbb{S};X)$.

\begin{lemma}\label{lem:bootstrap_pointwise_bounded_convergence}
Let $X$ be a Banach space, let $\theta \in (0,1)$, let $f \in \ms{H}_\pi(\mathbb{S};X)$ and let $f_j \in \ms{S}'(\T;X)$ for $j=0,1$.
Then the following assertions are equivalent:
\begin{enumerate}[(i)]
    \item\label{it:lem:bootstrap_pointwise_bounded_convergence;FT_theta} $\widehat{f}_j(k) = \ee^{k(j-\theta)}\widehat{f}_\theta(k)$ for $k \in \Z$ and $j=0,1$.
    \item\label{it:lem:bootstrap_pointwise_bounded_convergence;FT_s} $\widehat{f}_j(k) = \ee^{k(j-s)}\widehat{f}_s(k)$ for every $k \in \Z$, $s \in (0,1)$ and $j=0,1$.
    \item\label{lem:bootstrap_pointwise_bounded_convergence;bc} $f_j = \lim_{s \to j}f_{s}$ in $\ms{S}'_{\mrm{bc}}(\T;X)$ for $j=0,1$.
    \item\label{lem:bootstrap_pointwise_bounded_convergence;pc} $f_j = \lim_{s \to j}f_{s}$ in $\ms{S}'(\T;X)$ for $j=0,1$.
\end{enumerate}
\end{lemma}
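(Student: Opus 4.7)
The plan is to establish \ref{it:lem:bootstrap_pointwise_bounded_convergence;FT_theta}$\Leftrightarrow$\ref{it:lem:bootstrap_pointwise_bounded_convergence;FT_s} together with the cycle \ref{it:lem:bootstrap_pointwise_bounded_convergence;FT_theta}$\Rightarrow$\ref{lem:bootstrap_pointwise_bounded_convergence;bc}$\Rightarrow$\ref{lem:bootstrap_pointwise_bounded_convergence;pc}$\Rightarrow$\ref{it:lem:bootstrap_pointwise_bounded_convergence;FT_theta}. The equivalence of the two Fourier-coefficient conditions is immediate from Lemma~\ref{lemma:indeps}, since $\ee^{-ks}\widehat{f}_s(k)=\ee^{-k\theta}\widehat{f}_\theta(k)$ converts one formula into the other by cancellation; and \ref{lem:bootstrap_pointwise_bounded_convergence;bc}$\Rightarrow$\ref{lem:bootstrap_pointwise_bounded_convergence;pc} is automatic, since the topology of bounded convergence is finer than the topology of pointwise convergence.

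For \ref{lem:bootstrap_pointwise_bounded_convergence;pc}$\Rightarrow$\ref{it:lem:bootstrap_pointwise_bounded_convergence;FT_theta} I would transport the convergence to the Fourier side. The Fourier transform $\ms{F}\colon\ms{S}'(\T;X)\to\ms{S}'(\Z;X)$ is continuous in the topology of pointwise convergence by its defining formula, so $\widehat{f}_s\to\widehat{f}_j$ pointwise on $\ms{S}(\Z)$. Pairing both sides with the rapidly decreasing test function $\delta_k\in\ms{S}(\Z)$ and using the identification \eqref{eq:OMisomorphism} of $\ms{S}'(\Z;X)$ with $\mc{O}_{\mc{M}}(\Z;X)$, this amounts to pointwise convergence $\widehat{f}_s(k)\to\widehat{f}_j(k)$ in $X$ for every $k\in\Z$. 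Combined with Lemma~\ref{lemma:indeps}, which identifies $\widehat{f}_s(k)=\ee^{k(s-\theta)}\widehat{f}_\theta(k)$, passing to the limit $s\to j$ yields \ref{it:lem:bootstrap_pointwise_bounded_convergence;FT_theta}.

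The only step requiring genuine input is \ref{it:lem:bootstrap_pointwise_bounded_convergence;FT_theta}$\Rightarrow$\ref{lem:bootstrap_pointwise_bounded_convergence;bc}, which I would reduce directly to Lemma~\ref{lem:boundary_distributions;periodic}. The key observation is that its hypothesis \eqref{eq:lem:boundary_distributions;periodic;distr_assump} is automatic: since $f_j\in\ms{S}'(\T;X)$, its Fourier transform $\widehat{f}_j$ lies in $\ms{S}'(\Z;X)=\mc{O}_{\mc{M}}(\Z;X)$ and therefore has at most polynomial growth by \eqref{eq:OMisomorphism}, and under \ref{it:lem:bootstrap_pointwise_bounded_convergence;FT_theta} this identifies the sequence $(\ee^{k(j-\theta)}\widehat{f}_\theta(k))_{k\in\Z}$ with $\widehat{f}_j$, giving the required polynomial growth. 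Lemma~\ref{lem:boundary_distributions;periodic} then produces the unique $\tilde f_j\in\ms{S}'(\T;X)$ with $\widehat{\tilde f}_j(k)=\ee^{k(j-\theta)}\widehat{f}_\theta(k)$ satisfying $\tilde f_j=\lim_{s\to j}f_s$ in $\ms{S}'_{\mrm{bc}}(\T;X)$; the coincidence of Fourier coefficients forces $\tilde f_j=f_j$, and hence \ref{lem:bootstrap_pointwise_bounded_convergence;bc} holds. The main content of the lemma is thus really this automatic upgrade from pointwise to bounded convergence supplied by Lemma~\ref{lem:boundary_distributions;periodic}, and no step involves a serious obstacle beyond correctly invoking the earlier results.
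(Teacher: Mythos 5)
Your proof is correct and follows essentially the same route as the paper: the same two key inputs (Lemma~\ref{lemma:indeps} and Lemma~\ref{lem:boundary_distributions;periodic}) and the same continuity-of-$\ms{F}$ argument for passing from pointwise distributional convergence back to the Fourier-coefficient identities; the only difference is that you close the cycle via \ref{lem:bootstrap_pointwise_bounded_convergence;pc}$\Rightarrow$\ref{it:lem:bootstrap_pointwise_bounded_convergence;FT_theta} with \ref{it:lem:bootstrap_pointwise_bounded_convergence;FT_theta}$\Leftrightarrow$\ref{it:lem:bootstrap_pointwise_bounded_convergence;FT_s} handled separately, while the paper routes \ref{lem:bootstrap_pointwise_bounded_convergence;pc}$\Rightarrow$\ref{it:lem:bootstrap_pointwise_bounded_convergence;FT_s}$\Rightarrow$\ref{it:lem:bootstrap_pointwise_bounded_convergence;FT_theta}, which is an immaterial reorganization. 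Your explicit verification that hypothesis \eqref{eq:lem:boundary_distributions;periodic;distr_assump} is automatic under \ref{it:lem:bootstrap_pointwise_bounded_convergence;FT_theta} is a point the paper leaves implicit, and it is handled correctly.
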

\begin{proof}
Note that the implication ``\ref{it:lem:bootstrap_pointwise_bounded_convergence;FT_theta}$\Rightarrow$\ref{lem:bootstrap_pointwise_bounded_convergence;bc}'' follows from Lemma~\ref{lem:boundary_distributions;periodic} and the implication  ``\ref{lem:bootstrap_pointwise_bounded_convergence;bc}$\Rightarrow$\ref{lem:bootstrap_pointwise_bounded_convergence;pc}'' is a direct consequence of the fact that the topology of bounded convergence is stronger than the topology of pointwise convergence.
As the implication ``\ref{it:lem:bootstrap_pointwise_bounded_convergence;FT_s}$\Rightarrow$\ref{it:lem:bootstrap_pointwise_bounded_convergence;FT_theta}'' holds true trivially, it thus suffices to prove that ``\ref{lem:bootstrap_pointwise_bounded_convergence;pc}$\Rightarrow$``\ref{it:lem:bootstrap_pointwise_bounded_convergence;FT_s}''

For ``\ref{lem:bootstrap_pointwise_bounded_convergence;pc}$\Rightarrow$``\ref{it:lem:bootstrap_pointwise_bounded_convergence;FT_s}''
 we deduce from $f_j = \lim_{s \to j}f_s$ in $\ms{S}'(\T;X)$ and the continuity of
the Fourier transform from $\ms{S}'(\T;X)$ to $\ms{S}'(\Z;X)$ that $\widehat{f}_j = \lim_{s \to j}\widehat{f}_s$ in $\ms{S}'(\Z;X) \hookrightarrow \ell^0(\Z;X).$
After pointwise multiplication with $(\ee^{(j-s)k})_{k \in \Z}$ we thus find that for $j=0,1$
$$
\widehat{f}_j(k) = \lim_{s \to j}\ee^{(j-s)k}\widehat{f}_s(k), \qquad k \in \Z.
$$
In view of Lemma~\ref{lemma:indeps}, the latter implies that $\widehat{f}_j(k) = \ee^{(j-s)k}\widehat{f}_s(k)$ for all $s \in (0,1)$ and $k \in \Z$.
\end{proof}

Combining Lemma \ref{lem:boundary_distributions;periodic} and Lemma \ref{lem:bootstrap_pointwise_bounded_convergence}, we now obtain the following statement on the existence of boundary values for functions in  $\ms{H}_{\pi}(\mbb{S};\mc{X}_0,\mc{X}_1)$.

\begin{proposition}\label{proposition:boundaryvalues}
  Let $(\mc{X}_0,\mc{X}_1)$ be a compatible couple of sequentially structured Banach spaces and let $\theta \in (0,1)$. Let $f \in \ms{H}_{\pi}(\mathbb{S};X_0+X_1)$. Then we have
 $f \in \ms{H}_\pi(\mathbb{S};\mc{X}_0,\mc{X}_1)$ if and only if there exist unique distributions $f_j \in \ms{S}'(\T;X_j)$ with $\widehat{f}_j \in \mf{S}_j$ and with the property that
\begin{equation*}
f_j = \lim_{s \to j}f_s \quad \text{in} \quad \ms{S}'(\T;X_0+X_1), \qquad j=0,1.
\end{equation*}
Moreover, we have
\begin{equation*}
  \nrm{f}_{\ms{H}_\pi(\mathbb{S};\mc{X}_0,\mc{X}_1)} = \max_{j = 0,1}\,\nrm{\widehat{f}_j}_{\mf{S}_j}.
\end{equation*}
\end{proposition}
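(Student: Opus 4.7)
The plan is to assemble the statement directly from Lemmas \ref{lem:boundary_distributions;periodic} and \ref{lem:bootstrap_pointwise_bounded_convergence}, with the only real bookkeeping being the compatibility between the ambient Banach spaces $X_j$ and $X_0+X_1$ in which the boundary distributions live, and between the topologies of pointwise and bounded convergence. The sandwich $\mf{S}_j \hookrightarrow \ell^\infty(\Z;X_j) \hookrightarrow \ms{S}'(\Z;X_j)$ from \eqref{eq:ell_1_infty_sandwich} and the continuous embedding $\ms{S}'(\T;X_j) \hookrightarrow \ms{S}'(\T;X_0+X_1)$ induced by $X_j \hookrightarrow X_0+X_1$ will be used implicitly throughout.

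For the forward direction, suppose $f \in \ms{H}_\pi(\mathbb{S};\mc{X}_0,\mc{X}_1)$. Then by definition $(\ee^{k(j-\theta)}\widehat{f}_\theta(k))_{k\in\Z} \in \mf{S}_j$, which is bounded in $X_j$ and in particular satisfies the distributional growth assumption \eqref{eq:lem:boundary_distributions;periodic;distr_assump} with respect to the Banach space $X_j$. Applying Lemma \ref{lem:boundary_distributions;periodic} with $X=X_j$ produces unique $f_j \in \ms{S}'(\T;X_j)$ with $\widehat{f}_j(k) = \ee^{k(j-\theta)}\widehat{f}_\theta(k)$ and $f_j = \lim_{s \to j} f_s$ in $\ms{S}'_{\mrm{bc}}(\T;X_j)$, which a fortiori gives convergence in $\ms{S}'(\T;X_0+X_1)$. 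By construction $\widehat{f}_j \in \mf{S}_j$.

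For the converse, suppose we are given $f_j \in \ms{S}'(\T;X_j)$ with $\widehat{f}_j \in \mf{S}_j$ and $f_j = \lim_{s\to j}f_s$ in $\ms{S}'(\T;X_0+X_1)$. Viewing each $f_j$ inside $\ms{S}'(\T;X_0+X_1)$ via the embedding above, we apply Lemma \ref{lem:bootstrap_pointwise_bounded_convergence} with $X=X_0+X_1$: hypothesis \ref{lem:bootstrap_pointwise_bounded_convergence;pc} of that lemma holds, so its conclusion \ref{it:lem:bootstrap_pointwise_bounded_convergence;FT_theta} yields the identity $\widehat{f}_j(k) = \ee^{k(j-\theta)}\widehat{f}_\theta(k)$ in $X_0+X_1$. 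Since the left-hand side lies in $X_j$, so does the right-hand side, and the equality persists in $X_j$. Therefore the sequence $(\ee^{k(j-\theta)}\widehat{f}_\theta(k))_{k\in\Z}$ coincides with $\widehat{f}_j \in \mf{S}_j$, proving $f \in \ms{H}_\pi(\mathbb{S};\mc{X}_0,\mc{X}_1)$. Uniqueness of $f_j$ is automatic from the Hausdorff property of $\ms{S}'(\T;X_0+X_1)$.

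The norm identity is then pure bookkeeping: using the definition of the $\ms{H}_\pi(\mathbb{S};\mc{X}_0,\mc{X}_1)$-norm at $s=\theta$, together with the identification $\widehat{f}_j(k) = \ee^{k(j-\theta)}\widehat{f}_\theta(k)$ obtained in either direction, one gets
\begin{equation*}
\nrm{f}_{\ms{H}_\pi(\mathbb{S};\mc{X}_0,\mc{X}_1)} = \max_{j=0,1}\,\nrmb{(\ee^{k(j-\theta)}\widehat{f}_\theta(k))_{k\in\Z}}_{\mf{S}_j} = \max_{j=0,1}\,\nrm{\widehat{f}_j}_{\mf{S}_j}.
\end{equation*}
I do not anticipate a genuine obstacle; the only point to be careful about is that the convergence hypothesis in the statement is phrased in $\ms{S}'(\T;X_0+X_1)$, weaker than what Lemma \ref{lem:boundary_distributions;periodic} produces, but Lemma \ref{lem:bootstrap_pointwise_bounded_convergence} precisely bootstraps pointwise convergence, making the two formulations interchangeable.
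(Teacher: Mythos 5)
Your proof follows the same route as the paper's --- Lemma~\ref{lem:boundary_distributions;periodic} for the existence of the boundary distributions and Lemma~\ref{lem:bootstrap_pointwise_bounded_convergence} for the converse and for moving between the two topologies --- but there is one technical misstep in the forward direction. You apply Lemma~\ref{lem:boundary_distributions;periodic} with $X=X_j$; that lemma, however, requires $f \in \ms{H}_\pi(\mathbb{S};X)$, and $f$ is only $X_0+X_1$-valued, so its hypothesis is not satisfied for $X=X_j$. Relatedly, the convergence $f_j=\lim_{s\to j}f_s$ in $\ms{S}'_{\mrm{bc}}(\T;X_j)$ that you claim does not even typecheck: the interior traces $f_s$ for $s\in(0,1)$ are distributions valued in $X_0+X_1$, not in $X_j$, so the convergence can only take place in (a topology on) $\ms{S}'(\T;X_0+X_1)$.

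The repair is exactly what the paper does, and it is one line: apply Lemma~\ref{lem:boundary_distributions;periodic} with $X=X_0+X_1$, noting that the growth hypothesis \eqref{eq:lem:boundary_distributions;periodic;distr_assump} holds because $\mf{S}_j \hookrightarrow \ell^\infty(\Z;X_j) \hookrightarrow \ms{S}'(\Z;X_0+X_1)$; this gives $f_j \in \ms{S}'(\T;X_0+X_1)$ with the convergence there. The membership $f_j \in \ms{S}'(\T;X_j)$ is then recovered \emph{separately} from the identity $\widehat{f}_j = \hab{\ee^{k(j-\theta)}\widehat{f}_\theta(k)}_{k \in \Z} \in \mf{S}_j \hookrightarrow \ms{S}'(\Z;X_j)$, rather than from the lemma itself. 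With that correction, the remainder of your argument --- the converse via the pointwise-convergence-implies-Fourier-coefficient-identity implication of Lemma~\ref{lem:bootstrap_pointwise_bounded_convergence}, the uniqueness from Hausdorffness, and the norm identity read off from $\widehat{f}_j(k)=\ee^{k(j-\theta)}\widehat{f}_\theta(k)$ --- coincides with the paper's proof.
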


\begin{proof}
  First assume that $f \in \ms{H}_\pi(\mathbb{S};\mc{X}_0,\mc{X}_1)$. Since $\mf{S}_j \hookrightarrow \ell^{\infty}(\Z;X_j) \hookrightarrow \ms{S}'(\Z;X_0+X_1)$ for $j=0,1$, we see that $f$ satisfies \eqref{eq:lem:boundary_distributions;periodic;distr_assump}, so by Lemma \ref{lem:boundary_distributions;periodic} we obtain the existence of $f_j \in \ms{S}'(\T;X_0+X_1)$ with the convergence $f_j = \lim_{s \to j}f_s$ in $\ms{S}'_{\mrm{bc}}(\T;X_0+X_1)$, which implies convergence in $\ms{S}'(\T;X_0+X_1)$.
  Furthermore we know that $$\widehat{f}_j = \ha{\ee^{k(j-\theta)}\widehat{f}_\theta(k)}_{k \in \Z} \in \mf{S}_j \hookrightarrow \ms{S}'(\Z;X_j),$$ so $f_j \in \ms{S}'(\T;X_j)$ and we have the norm equality. The converse implication is a direct consequence of Lemma \ref{lem:bootstrap_pointwise_bounded_convergence}.
\end{proof}

Proposition \ref{proposition:boundaryvalues} shows that, for an analytic $f \colon \mathbb{S} \to X_0+X_1,$ we can fully characterize when $f$ belongs to $\ms{H}_\pi(\mathbb{S};\mc{X}_0,\mc{X}_1)$ in terms of the existence of boundary data with certain properties. Moreover, it allows us to rewrite the norm of $\ms{H}_\pi(\mathbb{S};\mc{X}_0,\mc{X}_1)$ in terms of this boundary data. Combined with
 Proposition~\ref{proposition:complexsimple}, this now yields the upper complex method reformulation of our sequentially structured interpolation method.

\begin{theorem}\label{thm:complexsimple;upper-c}
Let $(\mc{X}_0,\mc{X}_1)$ be a compatible couple of sequentially structured Banach spaces and let $\theta \in (0,1)$.
For $x \in X_0+X_1$ we have
\begin{align*}
\nrm{x}_{(\mc{X}_0,\mc{X}_1)_\theta} &= \inf\,\cbraces{
\max_{j = 0,1}\,\nrm{\widehat{f}_j}_{\mf{S}_j}: f \in {\ms{H}}_\pi(\mathbb{S};\mc{X}_0,\mc{X}_1),\, f(\theta) =x}.
\end{align*}
\end{theorem}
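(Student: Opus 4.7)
The plan is to deduce Theorem~\ref{thm:complexsimple;upper-c} by directly combining the two principal results just established in this subsection: Proposition~\ref{proposition:complexsimple} and Proposition~\ref{proposition:boundaryvalues}. Proposition~\ref{proposition:complexsimple} already expresses the norm on $(\mc{X}_0,\mc{X}_1)_\theta$ as an infimum over $f \in \ms{H}_\pi(\mathbb{S};\mc{X}_0,\mc{X}_1)$ with $f(\theta)=x$, using the ``interior'' norm
\begin{equation*}
\nrm{f}_{\ms{H}_\pi(\mathbb{S};\mc{X}_0,\mc{X}_1)} = \max_{j=0,1}\,\nrm{\widehat{f}_s}_{\mf{S}_j(\ee^{j-s})},
\end{equation*}
which is independent of $s \in (0,1)$ by Lemma~\ref{lemma:indeps}. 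The goal is therefore simply to replace the interior evaluation by the boundary data.

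First, I would invoke Proposition~\ref{proposition:boundaryvalues}, which tells us two things at once: every $f \in \ms{H}_\pi(\mathbb{S};\mc{X}_0,\mc{X}_1)$ admits unique distributional boundary values $f_j \in \ms{S}'(\T;X_j)$ with $\widehat{f}_j \in \mf{S}_j$ obtained as the limits $f_j = \lim_{s \to j} f_s$ in $\ms{S}'(\T;X_0+X_1)$, and moreover
\begin{equation*}
\nrm{f}_{\ms{H}_\pi(\mathbb{S};\mc{X}_0,\mc{X}_1)} = \max_{j=0,1}\,\nrm{\widehat{f}_j}_{\mf{S}_j}.
\end{equation*}
Substituting this identity into the infimum furnished by Proposition~\ref{proposition:complexsimple} immediately yields the stated equality.

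The only point requiring any care is that the infimum on the right-hand side of Theorem~\ref{thm:complexsimple;upper-c} is taken over $f \in \ms{H}_\pi(\mathbb{S};\mc{X}_0,\mc{X}_1)$ in the sense defined at the beginning of the section, which is exactly the class to which Proposition~\ref{proposition:complexsimple} refers, so no additional reduction is needed. In particular, we do not need to exhibit the boundary values $f_j$ a priori: their existence and the identity $\nrm{\widehat{f}_j}_{\mf{S}_j} = \nrm{\widehat{f}_\theta}_{\mf{S}_j(\ee^{j-\theta})}$ are precisely what Proposition~\ref{proposition:boundaryvalues} provides. Since neither Proposition~\ref{proposition:complexsimple} nor Proposition~\ref{proposition:boundaryvalues} requires any further structural assumption (such as the $\ell^\infty$- or $c_0$-sequence structure hypotheses needed for the lower complex formulation in Theorem~\ref{theorem:complexmethodformulation}), the resulting identity holds for arbitrary compatible couples of sequentially structured Banach spaces and for all $x \in X_0+X_1$, which is exactly the strength of the upper complex formulation. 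There is essentially no obstacle here; the theorem is a clean corollary of the preceding two propositions, and the real work has already been done in establishing the boundary-value statement of Proposition~\ref{proposition:boundaryvalues}.
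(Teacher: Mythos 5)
Your proposal is correct and coincides with the paper's own argument: the theorem is obtained there exactly by combining Proposition~\ref{proposition:complexsimple} with the norm identity $\nrm{f}_{\ms{H}_\pi(\mathbb{S};\mc{X}_0,\mc{X}_1)} = \max_{j=0,1}\nrm{\widehat{f}_j}_{\mf{S}_j}$ from Proposition~\ref{proposition:boundaryvalues}. Your remark that no $\ell^\infty$- or $c_0$-hypotheses are needed is also exactly the point the paper emphasizes.
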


Theorem~\ref{thm:complexsimple;upper-c} will enable us to use the formulation of Calder\'on's upper complex interpolation method given in \cite[Section~8.3]{Pi16} to prove the second part of Example \ref{example:interpolationmethods}\ref{it:ssicomplex}. As a preparation, let us discuss when one can write a holomorphic function on the strip $\mathbb{S}$ as a Poisson integral.

The Poisson kernels for the strip $\mathbb{S}$ are the functions $P_j$ for $j=0,1$ on $\mathbb{S} \times \R$ given by
\begin{equation}
P_{j}(u+iv;t)=\frac{\sin(\pi u)\exp(\pi(v-t))}{\sin^2(\pi u)+(\cos(\pi u)-(-1)^j\exp(\pi(v-t)))^2},
\end{equation}
for $u \in (0,1)$ and $v, t \in \R.$
Note that, for each $z\in \mathbb{S}$, $P_{j}(z;\,\cdot\,) \in \ms{S}(\R)$.

\begin{lemma}\label{lem:Poisson_transform}
Let $X$ be a Banach space. Let $f \in \ms{H}(\mathbb{S};X)$ be such that
\begin{equation*}\label{eq:lem:Poisson_transform;bdd_small_strips}
\sup_{s_0<\re(z)<s_1}\nrm{f(z)}_X < \infty, \qquad 0 < s_0 < s_1 < 1,
\end{equation*}
and the limits $f_j = \lim_{s \to j}f_{s}$ exist in $\ms{S}'(\R;X)$ for $j=0,1$.
Then
\begin{equation}\label{eq:lem:Poisson_transform}
f(z) = f_0[P_0(z;\,\cdot\,)] + f_1[P_1(z;\,\cdot\,)], \qquad z \in \mathbb{S}.
\end{equation}
\end{lemma}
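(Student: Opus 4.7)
The plan is to reduce to the classical Poisson integral representation for bounded holomorphic functions on the closed strip $\overline{\mathbb{S}}$, apply it on shrinking substrips, and then pass to the limit using the assumed distributional convergence of the boundary data.

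First I would fix $\delta \in (0,1/2)$ and introduce the rescaled function $g_\delta(\zeta) := f(\delta + (1-2\delta)\zeta)$ for $\zeta \in \overline{\mathbb{S}}$. By the assumed boundedness of $f$ on substrips, $g_\delta$ is bounded and continuous on $\overline{\mathbb{S}}$ and holomorphic on $\mathbb{S}$. The classical Poisson integral representation for such functions, which is a standard consequence of the Phragm\'en-Lindel\"of uniqueness of bounded harmonic extensions with prescribed continuous boundary values, yields
\begin{equation*}
g_\delta(\zeta) = \int_\R g_\delta(i\tau)\,P_0(\zeta;\tau)\,d\tau + \int_\R g_\delta(1+i\tau)\,P_1(\zeta;\tau)\,d\tau, \qquad \zeta \in \mathbb{S}.
\end{equation*}
For fixed $z \in \mathbb{S}$ with $\delta < \re(z) < 1-\delta$, setting $\zeta = (z-\delta)/(1-2\delta)$ and substituting $\tau = t/(1-2\delta)$ transforms this identity into
\begin{equation*}
f(z) = f_\delta\bracb{P_0^{(\delta)}(z;\cdot)} + f_{1-\delta}\bracb{P_1^{(\delta)}(z;\cdot)},
\end{equation*}
where $P_j^{(\delta)}(z;t) := (1-2\delta)^{-1}P_j\has{(z-\delta)/(1-2\delta);\,t/(1-2\delta)}$ belongs to $\ms{S}(\R)$ for each fixed $z$, and the pairings are interpreted via the canonical inclusion of bounded continuous functions into $\ms{S}'(\R;X)$.

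Next I would pass to the limit $\delta \to 0^+$. From the closed-form expression one checks that $P_j(u+iv;\tau)$ together with all of its partial derivatives decays exponentially in $\tau$, uniformly on compact subsets of $\mathbb{S}$; combined with the smooth dependence of the rescaling on $\delta$, this gives $P_j^{(\delta)}(z;\cdot) \to P_j(z;\cdot)$ in $\ms{S}(\R)$ as $\delta \to 0$, for every fixed $z \in \mathbb{S}$. The hypothesis provides $f_\delta \to f_0$ and $f_{1-\delta} \to f_1$ pointwise as distributions in $\ms{S}'(\R;X)$, and the Banach-Steinhaus theorem on the Fr\'echet space $\ms{S}(\R)$ upgrades this to equicontinuity of the families $\{f_\delta\}$ and $\{f_{1-\delta}\}$ as $X$-valued linear maps on $\ms{S}(\R)$. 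Equicontinuity together with the $\ms{S}(\R)$-convergence of the rescaled kernels then yields
\begin{equation*}
f_\delta\bracb{P_0^{(\delta)}(z;\cdot)} \to f_0\bracb{P_0(z;\cdot)}, \qquad f_{1-\delta}\bracb{P_1^{(\delta)}(z;\cdot)} \to f_1\bracb{P_1(z;\cdot)},
\end{equation*}
closing the argument.

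The main technical obstacle will be the $\ms{S}(\R)$-convergence of the rescaled kernels: while transparent from the explicit formula, verifying it requires simultaneous control of polynomial weights against exponential decay across all derivative orders under a smoothly varying rescaling of all three arguments. A secondary point is the appeal to the classical Poisson integral representation for bounded holomorphic functions on $\overline{\mathbb{S}}$ with continuous boundary values; this is standard but is not established within the excerpt, so it would need to be cited or briefly justified via Phragm\'en-Lindel\"of.
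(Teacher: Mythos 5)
Your proof is correct and follows essentially the same route as the paper's: restrict to a substrip, apply the classical Poisson representation for bounded holomorphic functions on $\overline{\mathbb{S}}$ (via \cite[Lemma~C.2.9]{HNVW16} or Phragm\'en--Lindel\"of), and let the substrip exhaust $\mathbb{S}$ using the assumed convergence $f_s \to f_j$ in $\ms{S}'(\R;X)$. If anything you are more explicit than the paper, which suppresses the rescaled kernels $P_j^{(\delta)}$ and the equicontinuity/Banach--Steinhaus step needed to pair a moving distribution with a moving test function; those details are exactly right as you describe them.
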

\begin{proof}
Fix $z_0 \in \mathbb{S}$. Choose $0<s_0<s_1<1$ such that $s_0 < \re(z_0) < s_1$. Then $g(z):= f((1-z)s_0+zs_1)$ for $z \in \overline{\mathbb{S}}$ defines a bounded function $g \in \ms{H}(\overline{\mathbb{S}};X)$. Therefore, by \cite[Lemma~C.2.9]{HNVW16},
\begin{equation*}
g(z) = \int_\R g_0(t)P_0(z;t)\dd t+ \int_\R g_1(t)P_1(z;t)\dd t, \qquad z \in \mathbb{S}.
\end{equation*}
In particular, taking $z=z_0$, interpreting $f_{s_0}$ and $f_{s_1}$ as tempered distributions  and using the definition of $g$, we have
\begin{equation*}
f((1-z_0)s_0+z_0s_1) =  f_{s_0}[P_0(z_0;\cdot)]+ f_{s_1}[P_1(z_0;\cdot)].
\end{equation*}
Since $P_{j}(z;\,\cdot\,) \in \ms{S}(\R)$ for $j=0,1$, we obtain \eqref{eq:lem:Poisson_transform} in the limit by letting $s_0 \searrow 0$ and $s_1 \nearrow 1$.
\end{proof}

If  $(\mc{X}_0,\mc{X}_1)$ is a compatible couple of sequentially structured Banach spaces, any $f \in {\ms{H}}_\pi(\mathbb{S};\mc{X}_0,\mc{X}_1)$ satisfies the conditions of Lemma~\ref{lem:Poisson_transform} by Proposition \ref{proposition:boundaryvalues}. Moreover, one can check the convergence assumption $f_j = \lim_{s \to j} f_s$ for $j=0,1$ in Lemma~\ref{lem:Poisson_transform} for bounded functions in ${\ms{H}}(\mathbb{S};X)$, in which case we have a stronger form of convergence.

\begin{lemma}\label{lem:bm}
Let $X$ be a Banach space.
For each bounded function $f \in \ms{H}(\mathbb{S};X)$ there exists  unique measures $\mu_j \in \Lambda^\infty(\R;X)$ for $j=0,1$ with $\nrm{\mu_j}_{\Lambda^\infty(\R;X)} \leq \nrm{f}_{L^\infty(\mathbb{S};X)}$ and with the property that for all $\phi \in L^1(\R)$.
\begin{equation}\label{lem:boundary_measures}
\int_{\R}\phi(t)\dd\mu_j(t) = \lim_{s \to j}\int_{\R}\phi(t)f_s(t)\dd t
\end{equation}
with convergence in $X$.
\end{lemma}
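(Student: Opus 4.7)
The plan is to first establish, for a dense class of test functions, that the boundary limit \eqref{lem:boundary_measures} exists in $X$, then extend this convergence to all of $L^1(\R)$ by density, and finally realize the resulting bounded operator as an $X$-valued measure in $\Lambda^\infty(\R;X)$. Write $M := \nrm{f}_{L^\infty(\mathbb{S};X)}$ throughout.

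First, for $\phi \in \ms{S}(\R)$ I would consider $h_\phi:(0,1) \to X$ defined by $h_\phi(s) := \int_\R \phi(t)f(s+it)\ddn t$; the Bochner integral exists and $\nrm{h_\phi(s)}_X \leq M\nrm{\phi}_{L^1(\R)}$. Using the uniform bound $\nrm{f'(z)}_X \leq M/\mathrm{dist}(z,\partial\mathbb{S})$ coming from Cauchy's formula, differentiation under the integral is justified, and Cauchy-Riemann together with integration by parts---where the boundary terms at $\pm\infty$ vanish thanks to the rapid decay of $\phi$---yield
\begin{equation*}
h'_\phi(s) = -i\int_\R \phi(t)\partial_t f(s+it)\ddn t = i\int_\R \phi'(t)f(s+it)\ddn t,
\end{equation*}
hence $\nrm{h'_\phi(s)}_X \leq M\nrm{\phi'}_{L^1(\R)}$. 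The Bochner fundamental theorem of calculus then gives the Lipschitz estimate $\nrm{h_\phi(s)-h_\phi(s')}_X \leq M\nrm{\phi'}_{L^1(\R)}\abs{s-s'}$ on $(0,1)$, so $h_\phi$ extends continuously to $[0,1]$ and $T_j\phi := \lim_{s \to j}h_\phi(s)$ exists in $X$ for $j=0,1$.

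Next, the uniform bound $\nrm{\int_\R \phi f_s\ddn t}_X \leq M\nrm{\phi}_{L^1(\R)}$ together with the density of $\ms{S}(\R)$ in $L^1(\R)$ allow a standard $3\varepsilon$ argument to extend $T_j$ to a bounded operator $T_j \in B(L^1(\R),X)$ with $\nrm{T_j} \leq M$ and with $T_j\phi = \lim_{s\to j}\int_\R \phi f_s\ddn t$ for every $\phi \in L^1(\R)$. I would then set $\mu_j(A) := T_j(\ind_A)$ for Borel $A \subseteq \R$ of finite Lebesgue measure. Since $\ind_{\bigsqcup_n A_n} = \sum_n \ind_{A_n}$ in $L^1(\R)$ whenever $\bigsqcup_n A_n$ has finite total measure, the continuity of $T_j$ yields countable additivity of $\mu_j$; moreover, for any measurable partition $A = \bigsqcup_i A_i$ of a finite-measure set,
\begin{equation*}
\sum_i \nrm{\mu_j(A_i)}_X \leq M\sum_i \abs{A_i} = M\abs{A},
\end{equation*}
so $\abs{\mu_j}(A) \leq M\abs{A}$. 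Hence $\mu_j \ll dt$ with Radon-Nikodym derivative in $L^\infty(\R)$ of norm at most $M$, i.e.\ $\mu_j \in \Lambda^\infty(\R;X)$ with $\nrm{\mu_j}_{\Lambda^\infty(\R;X)} \leq M$. The identity \eqref{lem:boundary_measures} holds by definition of $\mu_j$ for simple $\phi$ and extends to $L^1(\R)$ by density; uniqueness is immediate since any measure in $\Lambda^\infty(\R;X)$ is determined by its integrals against characteristic functions of finite-measure sets.

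The main obstacle is twofold. The first step requires care in justifying the differentiation-under-the-integral-sign and the integration by parts, both of which genuinely use $\phi \in \ms{S}(\R)$ rather than merely $L^1$. More conceptually, a generic bounded operator $L^1(\R) \to X$ need not be representable by an $X$-valued measure---without RNP one typically only obtains an $X^{**}$-valued representation---but this obstruction is sidestepped here because $T_j(\ind_A)$ is defined directly as a norm limit in $X$, so the set function $\mu_j$ takes values in $X$ by construction and countable additivity is inherited from the $L^1$-continuity of $T_j$.
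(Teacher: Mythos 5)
Your proof is correct, but it reaches the conclusion by a genuinely different route than the paper. The paper applies the Cauchy--Goursat theorem to the rectangles $\Gamma_{s,t,a,b}$ to show directly that $\lim_{s\to j}\int_a^b f_s(\tau)\,\d\tau$ exists in $X$ with norm at most $(b-a)\nrm{f}_{L^\infty(\mathbb{S};X)}$, and then invokes \cite[Remark~2.17]{Pi16} to extend this interval set function to a unique measure in $\Lambda^\infty(\R;X)$; the identity \eqref{lem:boundary_measures} is first checked on step functions and then extended to $L^1(\R)$ by approximation. You instead establish convergence of the pairings $\int_\R\phi f_s\,\ddn t$ for $\phi\in\ms{S}(\R)$ via the uniform Lipschitz bound $\nrm{h_\phi'(s)}_X\le M\nrm{\phi'}_{L^1(\R)}$ (resting on differentiation under the integral, justified by the Cauchy estimate for $f'$, and integration by parts), pass to all of $L^1(\R)$ by uniform boundedness and density, and then build $\mu_j$ directly from the limit operator $T_j$, verifying countable additivity and the total variation estimate $\abs{\mu_j}(A)\le M\abs{A}$ by hand. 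Both arguments give the sharp constant. The paper's contour argument is the more elementary way to produce the limits, as it uses only boundedness and analyticity and tests against indicators of intervals from the start, while outsourcing the measure extension to Pisier; your version requires the extra analytic bookkeeping of the first step but is self-contained on the measure-theoretic side, and your closing remark is exactly the right one: the RNP obstruction to representing operators $L^1(\R)\to X$ by densities is irrelevant here because $\mu_j(A)=T_j(\ind_A)$ is defined directly as a norm limit in $X$, and membership in $\Lambda^\infty(\R;X)$ only requires the Radon--Nikod\'ym derivative of the total variation $\abs{\mu_j}$, not of $\mu_j$ itself.
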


\begin{proof}
Note that $\mu_j$ is uniquely determined by the condition \eqref{lem:boundary_measures}.
In order to establish the existence, for $s,t \in (0,1)$ and $a,b \in \R$ with $s \neq t$ and $a<b$ we denote by $\Gamma_{s,t,a,b}$ the rectangle with corners $$s+ai,t+ai,t+bi,s+bi,$$
oriented clockwise if $s>t$ and counterclockwise if $s<t$.
Then, by the Cauchy-Goursat theorem,
\begin{align*}
0 &= \int_{\Gamma_{s,t,a,b}}f(z)\dd z= i \int_a^b f_s(\tau)-  f_{t}(\tau)\dd \tau
+ \int_{s}^{t}f(\tau+ib)-f(\tau+ia)\dd\tau.
\end{align*}
As a consequence,
\begin{equation*}
\lim_{s \to j}\int_a^b f_s(\tau)\dd \tau    = \int_a^b f_{t}(\tau)\dd \tau +i\int_{j}^{t}[f(\tau+ib)-f(\tau+ia)]\dd\tau
\end{equation*}
and therefore
\begin{align*}
\nrms{\lim_{s \to j}\int_a^b f_s(\tau)\dd \tau}_{X}
&\leq \limsup_{t \to j}\big((b-a)+2|t-j| \big)\nrm{f}_{L^\infty(\mathbb{S};X)} \\
&=(b-a)\nrm{f}_{L^\infty(\mathbb{S};X)}.
\end{align*}
As in \cite[Remark~2.17]{Pi16}, it can then be shown that
\begin{equation}\label{eq:lem:boundary_measures;def_interval}
\mu_j([a,b)) :=  \lim_{s \to j}\int_a^b f_s(\tau)\dd \tau, \qquad a,b \in \R, a < b,
\end{equation}
extends to a unique $X$-valued measure $\mu_j \in \Lambda^\infty(\R;X)$ with $\nrm{\mu_j}_{\Lambda^\infty(\R;X)} \leq \nrm{f}_{L^\infty(\mathbb{S};X)}$.
From \eqref{eq:lem:boundary_measures;def_interval} it follows that \eqref{lem:boundary_measures} holds true for all step functions $\phi \in L^1(\R)$.
As $f$ is bounded on $\mathbb{S}$, an approximation argument subsequently yields \eqref{lem:boundary_measures} for all $\phi \in L^1(\R)$.
\end{proof}

Let $(X_0,X_1)$ be a compatible couple of Banach spaces.
As in \cite[Section~8.3]{Pi16}, we denote by $\widetilde{\mc{F}}(X_0,X_1)$ the space of all bounded analytic functions $f:\mathbb{S} \to X_0+X_1$ that are given as the Poisson transform
\begin{equation}\label{eq:Poisson_transform_measure}
f(z) = \int_\R P_0(z;t)\dd \mu_0(t)+\int_\R P_1(z;t)\dd \mu_1(t), \qquad  z \in \mathbb{S},
\end{equation}
for (necessarily unique) measures $\mu_j \in \Lambda^\infty(\R;X_j)$, $j=0,1$.
 $\widetilde{\mc{F}}(X_0,X_1)$ is equipped with the norm
$$
\nrm{f}_{\widetilde{\mc{F}}(X_0,X_1)} := \max_{j=0,1}\nrm{\mu_j}_{\Lambda^\infty(\R;X_j)}.
$$
In this notation, \cite[Theorem~8.31]{Pi16} states that we can formulate the norm of Calder\'on's upper complex method \cite{Ca64} as
\begin{equation}\label{eq:upper_complex_Pisier}
\nrm{x}_{\brac{X_0,X_1}^{\theta}} = \inf\,\cbraces{
\nrm{f}_{\widetilde{\mc{F}}(X_0,X_1)}: f \in \widetilde{\mc{F}}(X_0,X_1),\, f(\theta) =x}.
\end{equation}

\begin{example}\label{example:complexmethod;upper}
Let $(\mc{X}_0,\mc{X}_1)$ be a compatible couple of sequentially structured Banach spaces and let $\theta \in (0,1)$.
Take $p_0,p_1 \in [1,\infty]$.
If $\mf{S}_j  = \widehat{\Lambda}^{p_j}(\T;X_j)$ for $j=0,1$, then we have $(\mc{X}_0,\mc{X}_1)_\theta  = \brac{X_0,X_1}^{\theta}$.
\end{example}
\begin{proof}
Let $\mc{Y}_j=[X_j,\mf{T}_j]$ be given by $\mf{T}_j := \widehat{\Lambda}^{\infty}(\T;X_j)$ for $j=0,1$.
Then, by Proposition~\ref{proposition:embeddings}, $(\mc{Y}_0,\mc{Y}_1)_\theta \hookrightarrow (\mc{X}_0,\mc{X}_1)_\theta$.
Therefore, it suffices to show that
$(\mc{X}_0,\mc{X}_1)_\theta \hookrightarrow \brac{X_0,X_1}^{\theta}$ and $\brac{X_0,X_1}^{\theta} \hookrightarrow (\mc{Y}_0,\mc{Y}_1)_\theta$.

In order to show that $(\mc{X}_0,\mc{X}_1)_\theta \hookrightarrow \brac{X_0,X_1}^{\theta}$, let $x \in (\mc{X}_0,\mc{X}_1)_\theta$.
Take $f \in {\ms{H}}_\pi(\mathbb{S};\mc{X}_0,\mc{X}_1)$ with $f(\theta)=x$.
Then we have for $g(z) := \ee^{(z-\theta)^2}f(z)$ that $g \in \ms{H}(\mathbb{S};X_0+X_1)$ and $g(\theta)=x$. Define $h_s(t):= \ee^{(s+it-\theta)^2}$, which means that $g_{s}=h_{s}f_{s}$ for each $s \in (0,1)$. Note that $\lim_{s\to j} h_s= h_j$ in $\ms{S}(\R)$.
Furthermore, note that $\cbrace{h_s}_{s \in (0,1)}$ is bounded in $\ms{S}(\R)$ and, as a consequence, pointwise multiplication with $\cbrace{h_s}_{s \in (0,1)}$ is a uniformly bounded collection of operators on $\ms{S}(\R)$.
By Proposition \ref{proposition:boundaryvalues} and \eqref{eq:periodic_extension}, we have $f_j=\lim_{s \to j}f_{s}$ in $\ms{S}'_{\mrm{bc}}(\T;X) \hookrightarrow \ms{S}'_{\mrm{bc}}(\R;X)$.
Therefore, setting $g_j := h_jf_j$, we obtain for every $\phi \in \ms{S}(\R)$ that
\begin{equation*}
  g_j[\phi] - g_s[\phi] = \hab{f_j[h_j\phi] -  f_j[h_s\phi]}  + \hab{f_j[h_s\phi] - f_s[h_s\phi]} \to 0 \quad \text{for }s \to j
\end{equation*}
for $j=0,1$, i.e $\lim_{s \to j} g_s =  g_j$  in $\ms{S}'(\R;X)$.

Now, as in the proof of Example~\ref{example:complexmethod},
\begin{align*}
\nrm{g_j}_{\Lambda^{p_j}(\R;X_j)}
&\leq e^{(j-\theta)^2}\sum_{k \in \Z} \ee^{-k^2\pi^2 } \nrm{f_j}_{\Lambda^{p_j}(\T;X_j)} \\&\lesssim \nrm{\widehat{f}_j}_{\widehat{\Lambda}^{p_j}(\T;X_j)}\leq \max_{j = 0,1}\,\nrm{\widehat{f}_j}_{\mf{S}_j}.
\end{align*}
Since $g$ is given as
$$
g(z) = g_0[P_0(z;\,\cdot\,)] + g_1[P_1(z;\,\cdot\,)], \qquad z \in \mathbb{S},
$$
by Lemma~\ref{lem:Poisson_transform}, it follows from \cite[Remark~8.36]{Pi16} that
$$
\nrm{x}_{\brac{X_0,X_1}^{\theta}}
\lesssim_{p_0,p_1}\max_{j=0,1}\,\nrm{g_j}_{\Lambda^{p_j}(\R;X_j)} \lesssim \max_{j = 0,1}\,\nrm{\widehat{f}_j}_{\mf{S}_j}.
$$
Taking the infimum over all $f$ as above, we conclude $\nrm{x}_{\brac{X_0,X_1}^{\theta}} \lesssim_{p_0,p_1} \nrm{x}_{(\mc{X}_0,\mc{X}_1)_\theta}$ by Theorem~\ref{thm:complexsimple;upper-c}.

Next, in order to prove the embedding $\brac{X_0,X_1}^{\theta} \hookrightarrow (\mc{Y}_0,\mc{Y}_1)_\theta$, let $x \in \brac{X_0,X_1}^{\theta}$.
Take $f \in \widetilde{\mc{F}}(X_0,X_1)$ such that $f(\theta) =x$.
Then, by a combination of Lemma~\ref{lem:Poisson_transform} and Lemma~\ref{lem:bm}, we have that $f$ is given as \eqref{eq:Poisson_transform_measure} for measures $\mu_j \in \Lambda^\infty(\R;X_0+X_1)$, $j=0,1$, with for $\phi \in L^1(\R)$
\begin{equation}\label{eq:theorem:complexmethod;upper;conv_measure}
\int_{\R}\phi\,\dd\mu_j = \lim_{s \to j}\int_{\R}\phi(t)f_s(t)\dd t
\end{equation}
with convergence in $X_0+X_1$.
By the discussion on \cite[p.~322]{Pi16}, $\mu_0$ and $\mu_1$ are uniquely determined by~\eqref{eq:Poisson_transform_measure}.
Therefore, $\mu_j \in \Lambda^\infty(\R;X_j)$ with
\begin{equation*}
\max_{j=0,1}\,\nrm{\mu_j}_{\Lambda^\infty(\R;X_j)} = \nrm{f}_{\widetilde{\mc{F}}(X_0,X_1)}
\end{equation*}
by definition of $\widetilde{\mc{F}}(X_0,X_1)$.
Let $\psi$ be as in \eqref{eq:psiperiodic} and define
\begin{align*}
g(z) &:= \sum_{n \in \Z} f(z+2\pi i n) \cdot \psi(z+2\pi i n), &&z \in {\mathbb{S}}\\
\nu_j(A) &:= \sum_{n \in \Z} \int_{A+2\pi n}  \psi(j+it)\dd \mu_j(t), &&A \in \mc{B}(\T).
\end{align*}
Then, as in the proof of Example~\ref{example:complexmethod}, we have $g \in \ms{H}_\pi(\mathbb{S};X_0+X_1)$ with $g(\theta)=f(\theta)=x$ and   $\nrm{\nu_j}_{\Lambda^{\infty}(\T;X_j)}\lesssim \nrm{\mu_j}_{\Lambda^\infty(\R;X_j)}$.
Moreover, using \eqref{eq:theorem:complexmethod;upper;conv_measure}, we find that
\begin{equation*}
\int_{\R}\phi\dd\nu_j = \lim_{s \to j}\int_{\R}\phi(t)g_s(t)\dd t \qquad \text{in} \quad X_0+X_1, \qquad \phi \in L^1(\T),
\end{equation*}
so that $\nu_j = \lim_{s \to j}g_{s}$ in $\ms{S}'(\T;X_0+X_1)$. Therefore, by Proposition \ref{proposition:boundaryvalues}, we have
$g \in {\ms{H}}_\pi(\mathbb{S};\mc{Y}_0,\mc{Y}_1)$ with
\begin{equation*}
\nrm{g}_{{\ms{H}}_\pi(\mathbb{S};\mc{Y}_0,\mc{Y}_1)} = \max_{j=0,1}\nrm{\nu_j}_{\Lambda^{\infty}(\T;X_j)}\lesssim \max_{j=0,1} \,\nrm{\mu_j}_{\Lambda^\infty(\R;X_j)}= \nrm{f}_{\widetilde{\mc{F}}(X_0,X_1)}.
\end{equation*}
By Proposition~\ref{proposition:complexsimple}, we thus obtain that $\nrm{x}_{(\mc{Y}_0,\mc{Y}_1)_\theta} \lesssim \nrm{f}_{\widetilde{\mc{F}}(X_0,X_1)}$.
Taking the infimum over all $f$ as above we deduce that $\nrm{x}_{(\mc{Y}_0,\mc{Y}_1)_\theta} \lesssim \nrm{x}_{\brac{X_0,X_1}^{\theta}}$ by~\eqref{eq:upper_complex_Pisier}.
\end{proof}

\subsection{Changing the base number and the torus}\label{subsec:change_torus}

Recall from Subsection~\ref{subsec:change_base_number} that all the concrete interpolation methods from Example~\ref{example:interpolationmethods}, with the exception of the complex methods in \ref{it:ssicomplex}, fulfill the conditions of Proposition \ref{prop:change_basis} and can thus be realized with any base number $b \in (1,\infty)$.
In this subsection we will see that the complex interpolation methods from Example~\ref{example:interpolationmethods}\ref{it:ssicomplex} can also be realized with any base number $b \in (1,\infty)$ as well as with any underlying torus $\T_\lambda$ with $\lambda \in (0,\infty)$, where
$$
\T_\lambda:=  \faktor{\R}{2\lambda\Z} \simeq \cbraceb{ e^{i\frac{\pi}{\lambda}t} : t \in [-\lambda,\lambda) }.
$$
This will play an important role in our study of reiteration in Section \ref{section:reiteration}.

Let $\lambda \in (0,\infty)$.
For $p\in [1,\infty]$ and a Banach space $X$, we set
    \begin{equation*}
      \widehat{L}^p(\T_\lambda;X):= \cbraceb{\ms{F}_\lambda f: f \in L^p(\T_\lambda;X)},
    \end{equation*}
        with norm
\begin{equation*}
  \nrm{\ms{F}_\lambda f}_{\widehat{L}^p(\T;X)} := \frac{1}{(2\lambda)^{1/p}}\nrm{f}_{L^p(\T_\lambda;X)},
\end{equation*}
    where we use the $2\lambda$-periodic Fourier coefficients
    \begin{equation*}
  \ms{F}_\lambda f\,(k):= \frac{1}{2 \lambda} \int_{\T_\lambda} f(t)\ee^{-\frac{\pi}{\lambda}ikt}\dd t, \qquad  k \in \Z.
\end{equation*}
We define $\widehat{C}(\T_\lambda;X)$ and $\widehat{\Lambda}^p(\T_\lambda;X)$ analogously.

Let $b \in (1,\infty)$ and $\lambda \in (0,\infty)$ satisfy $\lambda = \frac{\pi}{\log b}$. As mentioned in Subsection \ref{subsec:change_base_number}, all the general theory of sequentially structured interpolation carries over verbatim from $(\mc{X}_0,\mc{X}_1)_{\theta} = (\mc{X}_0,\mc{X}_1)_{\theta;\ee}$ to  $(\mc{X}_0,\mc{X}_1)_{\theta;b}$.
For the theory on complex formulations in this section we need to add that the torus $\T=\T_{\pi}$ needs to be replaced by the torus $\T_{\lambda}$ and that the $2\pi$-periodicity in the spaces of analytic functions needs to be replaced by $2\lambda$-periodicity.
Regarding notation, we will correspondingly write $\ms{H}_{\lambda}$ instead of $\ms{H}_{\pi}$ in each of the spaces of analytic functions that appear in this section.

In the specific case of the complex interpolation methods from Example~\ref{example:interpolationmethods}\ref{it:ssicomplex}, we can change the base number and the torus independently from each other thanks to the scaling properties of the Fourier transform and the corresponding function spaces on the Fourier side, which is the content of the next example.

\begin{example}\label{ex:changing_base_torus_complex_int}
Let $(\mc{X}_0,\mc{X}_1)$ be a compatible couple of sequentially structured Banach spaces, let $q_0,q_1 \in [1,\infty]$, let $b \in (1,\infty)$, let  $\lambda \in (0,\infty)$ and let $\theta \in (0,1)$.
\begin{enumerate}[(i)]
    \item\label{it:ex:changing_base_torus_complex_int;lower} If
    \begin{equation*}
    \mf{S}_j= \begin{cases}
    \widehat{L}^{p_j}(\T_\lambda;X_j) &\text{ if $p_j\in [1,\infty)$}\\
    \widehat{C}(\T_\lambda;X_j)&\text{ if $p_j = \infty$},
    \end{cases}
    \end{equation*}
    for  $j=0,1$, then
    \begin{equation*}
    \brac{X_0,X_1}_{\theta} =
    \ha{\mc{X}_0,\mc{X}_1}_{\theta;b}.
    \end{equation*}
    \item\label{it:ex:changing_base_torus_complex_int;upper} If $\mf{S}_j = \widehat{\Lambda}^{p_j}(\T_\lambda;X_j)$  for $j=0,1$, then
    \begin{equation*}
    \brac{X_0,X_1}^{\theta} =
    \ha{\mc{X}_0,\mc{X}_1}_{\theta;b}.
    \end{equation*}
\end{enumerate}
\end{example}
\begin{proof}
Set $\mu := \frac{\pi}{\log b} \in (0,\infty)$ and note that the proofs of
Example~\ref{example:complexmethod} and Example~\ref{example:complexmethod;upper} remain valid with the base number $b$ instead of $\ee$ and the torus $\T_\mu$ instead of $\T=\T_{\pi}$.
Defining $\mf{T}_j$ in the same way as $\mf{S}_j$  with $\T_\lambda$ replaced by $\T_\mu$, we accordingly find that
$$
\left([X_0,\mf{T}_0],[X_1,\mf{T}_1]\right)_{\theta;b} = \left\{\begin{array}{lll}
\brac{X_0,X_1}_{\theta}, & \text{in case  \ref{it:ex:changing_base_torus_complex_int;lower}}, \\
\brac{X_0,X_1}^{\theta}, & \text{in case \ref{it:ex:changing_base_torus_complex_int;upper}}.
\end{array}\right.
$$
On the other hand, the observation that
$$
\ms{F}_\mu^{-1}\vec{x}=[\ms{F}_\lambda^{-1}\vec{x}](\frac{\lambda}{\mu}\,\cdot\,),\qquad\qquad  \vec{x} \in \ms{S}'(\Z;X_j),
$$
implies that $\mf{S}_j=\mf{T}_j$ for $j=0,1$. Combining the above we arrive at the desired result.
\end{proof}

\section{Interpolation of operators}\label{section:interpolationofoperators}
We now turn to the interpolation of operators using the sequentially structured interpolation method.
Let $(\mc{X}_0,\mc{X}_1)$ and $(\mc{Y}_0,\mc{Y}_1)$ be compatible couples of sequentially structured Banach spaces. To interpolate the boundedness of an operator $T\colon X_0 +Y_0 \to X_1+Y_1$ with the sequentially structured interpolation method, it is in general not sufficient to assume boundedness of $T$ from $X_j$ to $Y_j$ for $j=0,1$. Instead, we need a so-called $(\mf{S}_j, \mf{T}_j)$-boundedness assumption on $T$ for $j=0,1$, which we will introduce now.

\subsection{$(\mf{S},\mf{T})$-boundedness}\label{subsection:STboundedness}
Let $\mc{X}$ and $\mc{Y}$ be sequentially structured Banach spaces. Let $\vec{T} \in \ell^0(\Z;\mc{L}(X,Y))$ and define
\begin{equation*}
  \vec{T}\vec{x} := (T_kx_k)_{k\in \Z}, \qquad \vec{x} \in \ell^0(\Z;X).
\end{equation*}
We say that $\vec{T}$ is $(\mf{S}, \mf{T})$-bounded if $\vec{T}$ defines a bounded operator from $\mf{S}$ to $\mf{T}$.
When $\mc{X}=\mc{Y}$ we say that $\vec{T}$ is $\mf{S}$-bounded.
We say that a single operator $T \in \mc{L}(X,Y)$ is $(\mf{S},\mf{T})$-bounded if the sequence $(\ldots,T,T,T,\ldots)$ is $(\mf{S},\mf{T})$-bounded and write $$\nrm{T}_{\mf{S} \to \mf{T}}:= \nrm{(\ldots,T,T,T,\ldots)}_{\mf{S} \to \mf{T}}.$$

For specific choices of the sequence structures $\mf{S}$ and $\mf{T}$ we have that any $T \in \mc{L}(X,Y)$ is $(\mf{S},\mf{T})$-bounded. Indeed, this is for example the case in the following examples:
\begin{enumerate}[(i)]
  \item $\mf{S}=\ell^p(\Z;X)$ and $\mf{T} = \ell^p(\Z;Y)$ for $p \in [1,\infty]$.
  \item $\mf{S}=\widehat{L}^p(\T;X)$ and $\mf{T} = \widehat{L}^p(\T;Y)$ for $p \in [1,\infty]$.
  \item $\mf{S}=\widehat{\Lambda}^p(\T;X)$ and $\mf{T} = \widehat{\Lambda}^p(\T;Y)$ for $p \in [1,\infty]$.
  \item $\mf{S}=\varepsilon^p(\Z;X)$ and $\mf{T} = \varepsilon^q(\Z;Y)$ for $p,q \in [1,\infty)$.
  \item $\mf{S}=\gamma^p(\Z;X)$ and $\mf{T} = \gamma^q(\Z;Y)$ for $p,q \in [1,\infty)$.
  \item If $X$ and $Y$ are Banach lattices and $\mf{S}=X(\ell^2(\Z))$ and $\mf{T} = Y(\ell^2(\Z))$.
\end{enumerate}
The proof of the first four claims is trivial, the fifth and sixth follow from the Kahane-Khintchine inequalities (see \cite[Section 6.2.b]{HNVW16}) and the final claim is a consequence of the Krivine-Grothendieck theorem (see \cite[Theorem 1.f.14]{LT79}). The final claim fails when one replaces the $2$ by any $q \in [1,\infty] \setminus \cbrace{2}$ (see \cite[Example 2.16]{KU14}).

For sequences $\vec{T} \in \ell^0(\Z;\mc{L}(X,Y))$, we have the following relations to pre-existing notions in the literature.

\begin{example}
Let $X$ and $Y$ be Banach spaces and $\vec{T} \in \ell^0(\Z;\mc{L}(X,Y))$.
\begin{enumerate}[(i)]
  \item For $p \in [1,\infty]$  we have
  \begin{equation*}
    \nrm{\vec{T}}_{\ell^p(\Z;X)\to \ell^p(\Z;Y)} = \sup_{k \in \Z}\, \nrm{T_k}_{X \to Y}.
  \end{equation*}
  \item For $p,q \in [1,\infty)$ we have
  \begin{align*}
     \nrm{\vec{T}}_{\varepsilon^p(\Z;X)\to \varepsilon^q(\Z;Y)} &\eqsim_{p,q} \nrm{\cbrace{T_k:k \in \Z}}_{\mc{R}} \\
     \nrm{\vec{T}}_{\gamma^p(\Z;X)\to \gamma^q(\Z;Y)} &\lesssim_{p,q} \nrm{\cbrace{T_k:k \in \Z}}_{\gamma},
  \end{align*}
  where $\nrm{\Gamma}_{\mc{R}}$ and $\nrm{\Gamma}_{\gamma}$ denote the $\mc{R}$- and $\gamma$-bound of a set $\Gamma\subseteq \mc{L}(X,Y)$ respectively (see \cite[Chapter 8]{HNVW17}).
  \item If $X$ and $Y$ are Banach lattices,   we have for $q \in [1,\infty]$
  \begin{equation*}
    \nrm{\vec{T}}_{X(\ell^q(\Z))\to Y(\ell^q(\Z))} \leq \nrm{\cbrace{T_k:k \in \Z}}_{\ell^q}
  \end{equation*}
  where $\nrm{\Gamma}_{\ell^q}$ denotes the $\ell^q$-bound of a set $\Gamma\subseteq \mc{L}(X,Y)$  (see \cite{KU14}).
\end{enumerate}
\end{example}

\subsection{Interpolation of operators} Let $(\mc{X}_0,\mc{X}_1)$ and $(\mc{Y}_0,\mc{Y}_1)$ be compatible couples of sequentially structured Banach spaces. To interpolate the boundedness of an operator $T\colon X_0 +Y_0 \to X_1+Y_1$ with the sequentially structured interpolation method, we will use $(\mf{S}_j,\mf{T}_j)$-boundedness assumptions on $T$. As we saw in the previous subsection, when $\mf{S}_j$ and $\mf{T}_j$ are e.g. the sequence structures associated with real or complex interpolation method, the boundedness of $T$ from $X_j$ to $Y_j$ implies the $(\mf{S}_j,\mf{T}_j)$-boundedness of $T$ for $j=0,1$. So, in these cases, our theory covers the well-known interpolation of operators with these methods. Our theory also covers e.g. $\ell^q$-interpolation, in which case we can only interpolate $\ell^q$-bounded operators.

\begin{theorem}\label{theorem:interpolation_operators}
Let $(\mc{X}_0,\mc{X}_1)$ and $(\mc{Y}_0,\mc{Y}_1)$ be compatible couples of sequentially structured Banach spaces.
Let $T\colon X_0+X_1 \to Y_0 + Y_1$ be a linear operator such that $T\colon X_j \to Y_j$ is $(\mf{S}_j,\mf{T}_j)$-bounded for $j=0,1$.
Then $T$ acts as a bounded linear operator from $(\mc{X}_0,\mc{X}_1)_\theta$ to $(\mc{Y}_0,\mc{Y}_1)_\theta$ for any $\theta \in (0,1)$ with
\begin{equation*}
  \nrm{T}_{(\mc{X}_0,\mc{X}_1)_\theta \to (\mc{Y}_0,\mc{Y}_1)_\theta} \leq \ee^\theta \nrm{T}^{1-\theta}_{\mf{S}_0 \to \mf{T}_0} \nrm{T}^\theta_{\mf{S}_1 \to \mf{T}_1}.
\end{equation*}
\end{theorem}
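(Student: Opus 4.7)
The proof proceeds along the expected reproducing-the-decomposition route, with Lemma \ref{lemma:logconvex} providing the sharp constant $\ee^\theta$.

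Fix $\theta \in (0,1)$ and $x \in (\mc{X}_0,\mc{X}_1)_\theta$. The plan is to take any admissible decomposition $\vec{x} \in \mf{S}_0(\ee^{-\theta}) \cap \mf{S}_1(\ee^{1-\theta})$ of $x$ (i.e.\ $\sum_{k \in \Z}x_k = x$ in $X_0+X_1$) and apply $T$ termwise to obtain $T\vec{x} = (Tx_k)_{k \in \Z}$, which is a candidate decomposition of $Tx$. The $(\mf{S}_j, \mf{T}_j)$-boundedness of $T$ viewed as the constant sequence $(\ldots,T,T,T,\ldots)$ passes to the weighted versions verbatim: for $j=0,1$, the operator $\vec{y} \mapsto (Ty_k)_{k \in \Z}$ maps $\mf{S}_j(\ee^{j-\theta})$ into $\mf{T}_j(\ee^{j-\theta})$ with the same norm $\nrm{T}_{\mf{S}_j \to \mf{T}_j}$, since multiplication by the weight $(\ee^{k(j-\theta)})_{k\in\Z}$ commutes with the diagonal action of $T$.

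In particular, $T\vec{x} \in \mf{T}_0(\ee^{-\theta}) \cap \mf{T}_1(\ee^{1-\theta})$, so by Remark \ref{remark:automaticconvergenceX0X1} the series $\sum_{k \in \Z} Tx_k$ converges in $Y_0+Y_1$ automatically. Since $T \colon X_0+X_1 \to Y_0+Y_1$ is continuous (it is linear and bounded on each $X_j$, hence bounded on the sum by the definition of the sum norm), we can conclude that $\sum_{k \in \Z} T x_k = Tx$ in $Y_0+Y_1$, so $T\vec{x}$ is indeed an admissible decomposition of $Tx$ for the method $(\mc{Y}_0,\mc{Y}_1)_\theta$.

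Using the log-convex bound for the norm of $Tx$, applying the operator norm inequality in each factor gives
\begin{align*}
\nrm{T\vec{x}}_{\mf{T}_0(\ee^{-\theta})}^{1-\theta}\cdot \nrm{T\vec{x}}_{\mf{T}_1(\ee^{1-\theta})}^\theta
&\leq \nrm{T}_{\mf{S}_0 \to \mf{T}_0}^{1-\theta}\nrm{T}_{\mf{S}_1 \to \mf{T}_1}^{\theta} \cdot \nrm{\vec{x}}_{\mf{S}_0(\ee^{-\theta})}^{1-\theta}\nrm{\vec{x}}_{\mf{S}_1(\ee^{1-\theta})}^{\theta}.
\end{align*}
Taking the infimum over all admissible decompositions $\vec{x}$ and invoking the ``$\lesssim$'' direction of Lemma \ref{lemma:logconvex}, whose proof yields the explicit constant $\ee^\theta$, one obtains
\[
\nrm{Tx}_{(\mc{Y}_0,\mc{Y}_1)_\theta} \leq \ee^\theta\, \nrm{T}_{\mf{S}_0 \to \mf{T}_0}^{1-\theta}\nrm{T}_{\mf{S}_1 \to \mf{T}_1}^{\theta}\, \nrm{x}_{(\mc{X}_0,\mc{X}_1)_\theta},
\]
which is the claimed estimate.

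There is no real obstacle here: the only point that could trip one up is verifying that $\sum_k Tx_k$ equals $Tx$ in $Y_0+Y_1$, but this is handled by Remark \ref{remark:automaticconvergenceX0X1} combined with the continuity of $T$ on $X_0+X_1$. The proof uses essentially only the definitions, the passage of $(\mf{S}_j,\mf{T}_j)$-boundedness to weighted spaces, and Lemma \ref{lemma:logconvex} to recover the sharp exponent $\theta$ with constant $\ee^\theta$.
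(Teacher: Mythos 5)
Your proof is correct and follows essentially the same route as the paper's: both hinge on the same shift argument that balances the two operator norms to produce the log-convex combination with constant $\ee^\theta$, the only difference being that you delegate that shift to Lemma \ref{lemma:logconvex} (whose proof contains it) while the paper inlines it by choosing $n$ with $\ee^n \leq \nrm{T}_{\mf{S}_0\to\mf{T}_0}/\nrm{T}_{\mf{S}_1\to\mf{T}_1}\leq \ee^{n+1}$ and translating the decomposition before applying $T$. Your verification that $T\vec{x}$ is an admissible decomposition of $Tx$ via Remark \ref{remark:automaticconvergenceX0X1} and the continuity of $T$ on $X_0+X_1$ is sound, and the weighted-norm commutation step is exactly right.
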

\begin{proof}
Let $x \in (\mc{X}_0,\mc{X}_1)_\theta$ and $\varepsilon>0$ and assume without loss of generality that $\nrm{T}_{\mf{S}_0 \to \mf{T}_0}, \nrm{T}_{\mf{S}_1 \to \mf{T}_1}>0$.
 Let $\vec{x} \in \mf{S}_0(\ee^{-\theta}) \cap \mf{S}_1(\ee^{1-\theta})$ such that
$\sum_{k \in \Z} x_k = x$ with convergence in $X_0+X_1$  and
\begin{equation*}
\nrm{\vec{x}}_{\mf{S}_0(\ee^{-\theta})\cap \mf{S}_1(\ee^{1-\theta})} \leq \nrm{x}_{(\mc{X}_0,\mc{X}_1)_\theta}+\varepsilon.
\end{equation*}
Let $n \in \Z$ be such that $\ee^n \leq \frac{\nrm{T}_{\mf{S}_0 \to \mf{T}_0}}{\nrm{T}_{\mf{S}_1 \to \mf{T}_1} }\leq \ee^{n+1}$ and define $\vec{y}:= (x_{k-n})_{k \in \Z}$. Then  $\sum_{k \in \Z} y_k = x$ with convergence in $X_0+X_1$. Moreover, by \eqref{eq:sequence2}, we have $\vec{y} \in \mf{S}_0(\ee^{-\theta}) \cap \mf{S}_1(\ee^{1-\theta})$  with
$$
\nrm{\vec{y}}_{\mf{S}_0(\ee^{-\theta})\cap \mf{S}_1(\ee^{1-\theta})} \leq  \ee^{n(j-\theta)}\,\nrm{\vec{x}}_{\mf{S}_0(\ee^{-\theta})\cap \mf{S}_1(\ee^{1-\theta})}.
$$
Therefore, we obtain
\begin{align*}
  \nrm{Tx}_{(\mc{Y}_0,\mc{Y}_1)_\theta} &\leq \max_{j=0,1}\,\nrmb{(\ee^{k(j-\theta)}Ty_k)_{k \in \Z}}_{\mf{T}_j}\\
  &\leq \max_{j=0,1}\,\ee^{n(j-\theta)}  \nrm{T}_{\mf{S}_j \to \mf{T}_j} \nrmb{(\ee^{k(j-\theta)}x_k)_{k \in \Z}}_{\mf{S}_j}\\
  &\leq \max_{j=0,1}\,\nrm{T}_{\mf{S}_j \to \mf{T}_j}\has{\frac{\nrm{T}_{\mf{S}_0 \to \mf{T}_0}}{\nrm{T}_{\mf{S}_1 \to \mf{T}_1} }}^{j-\theta} \ee^{\theta} \nrm{\vec{x}}_{\mf{S}_j(\ee^{j-\theta})}\\
  &\leq \ee^\theta \nrm{T}^{1-\theta}_{\mf{S}_0 \to \mf{T}_0} \nrm{T}^\theta_{\mf{S}_1 \to \mf{T}_1}\hab{\nrm{x}_{(\mc{X}_0,\mc{X}_1)_\theta}+\varepsilon}
\end{align*}
Noting that $\varepsilon>0$ was arbitrary, this proves the proposition.
\end{proof}

\begin{remark}\label{rmk:theorem:interpolation_operators}
  The constant $\ee^\theta$ in Theorem \ref{theorem:interpolation_operators} is an artifact of our discrete method. Upon inspection of the proof of Theorem \ref{theorem:interpolation_operators}, one sees that one can get rid of this constant when $\frac{\nrm{T}_{\mf{S}_0 \to \mf{T}_0}}{\nrm{T}_{\mf{S}_1 \to \mf{T}_1} }= \ee^{n}$ for some $n \in \Z$. This holds in particular  in the important special case that $\nrm{T}_{\mf{S}_0 \to \mf{T}_0} = \nrm{T}_{\mf{S}_1 \to \mf{T}_1} = 1$.
\end{remark}

Let us illustrate that the $(\mf{S}_j,\mf{T}_j)$-boundedness assumptions for $j=0,1$ in Theorem \ref{theorem:interpolation_operators} can not be omitted in general, but also that it is not a necessary condition in all cases.

\begin{example}\label{ex:interpolatrion_Fourier_transform}
Let $q \in (1,\infty)$ and $\theta \in (0,1)$. Set
\begin{equation*}
(\mc{X}_0,\mc{X}_1) := \left([L^2(\R^n),L^2(\R^n;\ell^q(\Z))],[W^{1,2}(\R^n),W^{1,2}(\R^n;\ell^q(\Z))]\right),
\end{equation*}
and
\begin{equation*}
(\mc{Y}_0,\mc{Y}_1) := \left([L^2(\R^n),L^2(\R^n;\ell^q(\Z))],[L^2_1(\R^n),L^{2}_1(\R^n;\ell^q(\Z))]\right),
\end{equation*}
where for a Banach space $X$ we set
\begin{equation*}
L^{2}_s(\R^n;X) = \big\{ f \in L^0(\R^n;X) : (1+\abs{\,\cdot\,}^2)^{s/2} f \in L^2(\R^n;X) \big\}  ,\qquad s \in \R.
\end{equation*}
Then the Fourier transform $\ms{F}$ is bounded from $X_j$ to $Y_j$ for $j=0,1$, while, on the one hand,
\begin{equation}\label{eq:ex:inter_Fourier_transform}
\ms{F} \in \mc{L}\left((\mc{X}_0,\mc{X}_1)_\theta,(\mc{Y}_0,\mc{Y}_1)_\theta\right)
\end{equation}
if and only if $q \in (1,2]$ and, on the other hand, $\ms{F}$ is $(\mf{S}_j,\mf{T}_j)$-bounded for $j=0,1$ if and only if $q=2$.
\end{example}
\begin{proof}
Let us first note that the statement on the $(\mf{S}_j,\mf{T}_j)$-boundedness of $\ms{F}$ follows from Kwapie\'n's theorem (see e.g.\ \cite[Theorem~2.1.18]{HNVW16}).
So it remains to prove the characterization of \eqref{eq:ex:inter_Fourier_transform}.

By \cite[Proposition~5.1]{Ku15} and \cite[Proposition~4.13]{KU14}, we have
\begin{equation*}
(\mc{X}_0,\mc{X}_1)_\theta = F^{\theta}_{2,q}(\R^{n}),
\end{equation*}
where $F^{\theta}_{2,q}(\R^{n})$ denotes a Triebel-Lizorkin space.
Furthermore, by Example~\ref{example:BFSembeddings} and \cite[1.18.5]{Tr78}, we have
\begin{equation*}
(\mc{Y}_0,\mc{Y}_1)_\theta = ({Y}_0,{Y}_1)_{\theta, \ell^q,\ell^q} =  [L^2(\R^n),L^2_1(\R^n)]_\theta = L^2_\theta(\R^n).
\end{equation*}
Therefore, \eqref{eq:ex:inter_Fourier_transform} is equivalent to $F^{\theta}_{2,q}(\R^{n}) \hookrightarrow H^{\theta,2}(\R^n)$, where $H^{\theta,2}(\R^n)$ denotes a Bessel potential space.
In view of the Littlewood-Paley decomposition $H^{\theta,2}(\R^n) = F^{\theta}_{2,2}(\R^{n})$, the embedding $F^{\theta}_{2,q}(\R^{n}) \hookrightarrow H^{\theta,2}(\R^n)$ holds true if and only if $q \in (1,2]$. This finishes the proof.
\end{proof}

The above example implies that $(\mc{X}_0,\mc{X}_1)_\theta$ and $(\mc{Y}_0,\mc{Y}_1)_\theta$ are not necessarily what in the literature is called an \emph{interpolation pair} for $(X_0,X_1)$ and $(Y_0,Y_1)$. However, this is not the right way to think about sequentially structured interpolation, as it does not take the given sequence structures on $(X_0,X_1)$ and $(Y_0,Y_1)$ into account.
Instead, we need to think about $(\mc{X}_0,\mc{X}_1)_\theta$ and $(\mc{Y}_0,\mc{Y}_1)_\theta$ as an interpolation pair for $(\mc{X}_0,\mc{X}_1)$ and $(\mc{Y}_0,\mc{Y}_1)$. See also \cite[Remark~2.12]{Ku15} for some category theoretical considerations for the $\ell^q$-interpolation method.

\subsection{Stein interpolation}
In \cite{St56} Stein proved a convexity principle for the interpolation of analytic operator families on $L^p$-spaces. An important special case of \cite[Theorem 1]{St56} states that an analytic family of linear operators $\cbrace{T_z}_{z \in \overline{\mathbb{S}}}$ for which
\begin{equation*}
  \nrm{T_{j+it}f}_{L^{q_j}(S)} \leq M_j \nrm{f}_{L^{p_j}(S)}, \qquad t \in \R,
\end{equation*}
for any simple function $f$ and $p_0,p_1,q_0,q_1 \in [1,\infty]$ also satisfies
\begin{equation*}
  \nrm{T_{\theta}f}_{L^{q}(S)} \leq M_0^{1-\theta}M_1^\theta \nrm{f}_{L^p{(S)}},
\end{equation*}
for $\theta \in (0,1)$, $\frac{1}{p} = \frac{1-\theta}{p_0}+\frac{\theta}{p_1}$ and $\frac{1}{q} = \frac{1-\theta}{q_0}+\frac{\theta}{q_1}$.
After the development of the complex interpolation method by Calder\'on \cite{Ca64}, this theorem was generalized to general compatible couples of (quasi)-Banach spaces, see e.g. \cite{CJ84,CS88,Vo92}.
 In \cite{SW06} Stein interpolation was proved for the $\gamma$-interpolation method, using the complex formulation of the $\gamma$-interpolation method in \cite{KLW19}. Using the complex formulation of the sequentially structured interpolation method, we will now prove Stein interpolation for any two compatible couples of sequentially structured Banach spaces $(\mc{X}_0,\mc{X}_1)$ and $(\mc{Y}_0,\mc{Y}_1)$. In particular we obtain Stein interpolation for the real interpolation method, which we already treated in a continuous setting in \cite{LL21c}. Note that Stein interpolation for the abstract framework in \cite{CKMR02} was posed as an open problem, see \cite[p.662]{Ka16b}.

\begin{theorem}\label{theorem:Steininterpolation1}
Let $(\mc{X}_0,\mc{X}_1)$ and $(\mc{Y}_0,\mc{Y}_1)$ be compatible couples of sequentially structured Banach spaces and let $\breve{X}$ be a dense subspace of $X_0\cap X_1$. Suppose that $\mf{S}_0$ and $\mf{S}_1$ are Ces\`aro bounded. Let $\cbrace{T(z)}_{z \in \overline{\mathbb{S}}}$ be a family of linear operators from $\breve{X}$ to $Y_0+Y_1$ such that:
\begin{enumerate}[(i)]
  \item $T(\cdot) x \in \ms{H}_{\pi}(\overline{\mathbb{S}};Y_0+Y_1)$ for all $x \in \breve{X}$
  \item \label{it:stein2} For $j=0,1$ there are $M_j>0$ such that for any $\vec{x} \in c_{00}(\Z;\breve{X})$.
  \begin{equation*}
    \nrms{ \ms{F}\hab{t \mapsto \sum_{k \in \Z} \ee^{ikt} T\ha{j+it}x_k} }_{\mf{S}_j} \leq M_j \, \nrm{\vec{x}}_{\mf{S}_j}.
  \end{equation*}
\end{enumerate}
Then we have $T(\theta)\breve{X} \subseteq (\mc{Y}_0,\mc{Y}_1)_\theta$ for any $\theta \in (0,1)$ with
\begin{equation*}
  \nrm{T(\theta) x}_{(\mc{Y}_0,\mc{Y}_1)_\theta} \lesssim_\theta  M_0^{1-\theta} M_1^\theta \, \nrm{x}_{(\mc{X}_0,\mc{X}_1)_\theta}, \qquad x \in \breve{X}.
\end{equation*}
In particular, if either $\mc{X}_0$ or $\mc{X}_1$ is Ces\`aro convergent, $T(\theta)$ extends to a bounded linear operator from $(\mc{X}_0,\mc{X}_1)_\theta$ to $(\mc{Y}_0,\mc{Y}_1)_\theta$
\end{theorem}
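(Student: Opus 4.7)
The plan is to transport the problem to the complex formulation of the sequentially structured interpolation method provided by Theorem~\ref{theorem:complexmethodformulation} and Corollary~\ref{cor:theorem:complexmethodformulation;tensor}. Fix $x \in \breve{X}$ and $\varepsilon>0$. Since $\mf{S}_0,\mf{S}_1$ are $\ell^\infty$-sequence structures and $\breve{X}$ is dense in $X_0\cap X_1$, Corollary~\ref{cor:theorem:complexmethodformulation;tensor} supplies an
\[
f(z)=\sum_{k=-N}^{N}\ee^{kz}y_k\in \ms{H}_{\mathrm{Trig}}(\overline{\mathbb{S}})\otimes \breve{X},\qquad f(\theta)=x,
\]
with $\max_{j=0,1}\nrm{\widehat{f}_j}_{\mf{S}_j}\lesssim_\theta \nrm{x}_{(\mc{X}_0,\mc{X}_1)_\theta}$. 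I would then introduce the natural $Y$-valued trial function $g(z):=T(z)f(z)=\sum_{k=-N}^{N}\ee^{kz}T(z)y_k$. By hypothesis~(i) each $T(\,\cdot\,)y_k$ lies in $\ms{H}_\pi(\overline{\mathbb{S}};Y_0+Y_1)$, and since $\ee^{kz}$ is $2\pi i$-periodic for $k\in\Z$, this finite sum defines a function $g\in\ms{H}_\pi(\overline{\mathbb{S}};Y_0+Y_1)$ with $g(\theta)=T(\theta)x$.

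\textbf{Bounding the boundary values via~(ii).} On the line $\re z=j$ the factorization
\[
g(j+it)=\sum_{k}\ee^{ikt}\,T(j+it)\bigl(\ee^{kj}y_k\bigr)
\]
shows that the coefficient sequence feeding into hypothesis~(ii) is precisely $\widehat{f}_j=(\ee^{kj}y_k)_{k\in\Z}\in c_{00}(\Z;\breve{X})$. Hypothesis~(ii) therefore yields
\[
\nrm{\widehat{g}_j}_{\mf{T}_j}\leq M_j\,\nrm{\widehat{f}_j}_{\mf{S}_j},\qquad j=0,1.
\]
Combined with the universally valid inequality~\eqref{eq:theorem:complexmethodformulation;sum} from Theorem~\ref{theorem:complexmethodformulation}, and using $g(\theta)=T(\theta)x$, this gives
\[
\nrm{T(\theta)x}_{(\mc{Y}_0,\mc{Y}_1)_\theta}\leq \max_{j=0,1}M_j\,\nrm{\widehat{f}_j}_{\mf{S}_j}.
\]

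\textbf{Log-convex improvement and extension.} To convert $\max_{j}M_j$ into $M_0^{1-\theta}M_1^{\theta}$, I would apply the shift trick of Theorem~\ref{theorem:interpolation_operators}: for any $n\in\Z$ the function $\tilde{f}(z):=\ee^{n(z-\theta)}f(z)$ again belongs to $\ms{H}_{\mathrm{Trig}}(\overline{\mathbb{S}})\otimes \breve{X}$ with $\tilde{f}(\theta)=x$, and a direct Fourier-side computation combined with the translation invariance~\eqref{eq:sequence2} of $\mf{S}_j$ yields $\nrm{\widehat{\tilde{f}}_j}_{\mf{S}_j}=\ee^{n(j-\theta)}\nrm{\widehat{f}_j}_{\mf{S}_j}$. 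Re-running the previous step with $\tilde{f}$ in place of $f$ and choosing $n\in\Z$ such that $\ee^n$ approximates the ratio $M_0\nrm{\widehat{f}_0}_{\mf{S}_0}/(M_1\nrm{\widehat{f}_1}_{\mf{S}_1})$, exactly as in the proof of Lemma~\ref{lemma:logconvex}, produces the log-convex bound
\[
\nrm{T(\theta)x}_{(\mc{Y}_0,\mc{Y}_1)_\theta}\leq \ee^{\theta}M_0^{1-\theta}M_1^{\theta}\max_{j=0,1}\nrm{\widehat{f}_j}_{\mf{S}_j}\lesssim_\theta M_0^{1-\theta}M_1^{\theta}\,\nrm{x}_{(\mc{X}_0,\mc{X}_1)_\theta}.
\]
For the ``in particular'' claim, if $\mf{S}_0$ or $\mf{S}_1$ is a $c_0$-sequence structure, then Corollary~\ref{corollary:denseX0X1} combined with the continuous embedding $X_0\cap X_1\hookrightarrow (\mc{X}_0,\mc{X}_1)_\theta$ from Proposition~\ref{proposition:Banachembeddings} makes $\breve{X}$ dense in $(\mc{X}_0,\mc{X}_1)_\theta$, so $T(\theta)$ extends uniquely by density. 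The most delicate point I anticipate is matching hypothesis~(ii) to the boundary expression of $g=Tf$: the use of finite trigonometric sums from $\ms{H}_{\mathrm{Trig}}(\overline{\mathbb{S}})\otimes\breve{X}$ is essential both to make sense of $g=Tf$ pointwise without any convergence issues and to guarantee that the scalar multiplier $\ee^{n(z-\theta)}$ introduced by the shift remains $2\pi i$-periodic, which crucially requires $n$ to be an integer.
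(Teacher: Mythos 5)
Your proposal is correct and follows essentially the same route as the paper: pass to the complex formulation via Corollary~\ref{cor:theorem:complexmethodformulation;tensor}, form the trial function $T(z)f(z)$ multiplied by a periodic shift $\ee^{n(z-\theta)}$, match its boundary Fourier coefficients to hypothesis~(ii), and bound $\nrm{T(\theta)x}_{(\mc{Y}_0,\mc{Y}_1)_\theta}$ by the one-sided inequality of Theorem~\ref{theorem:complexmethodformulation}. The only (harmless) cosmetic differences are that you run the argument twice — first with $\max_j M_j$, then optimizing the shift $n$ against the ratio $M_0\nrm{\widehat f_0}_{\mf{S}_0}/(M_1\nrm{\widehat f_1}_{\mf{S}_1})$ — whereas the paper fixes $n$ from $M_0/M_1$ alone and folds the shift into a single estimate; both choices yield the stated log-convex bound.
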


\begin{proof}
  Let $x \in \breve{X}$. Using Corollary \ref{cor:theorem:complexmethodformulation;tensor} we can find an
  $\vec{x} \in c_{00}(\Z;\breve{X})$ such that $f(z) := \sum_{k \in \Z}\ee^{k(z-
  \theta)} x_k$ satisfies $f(\theta)=x$ and
  \begin{equation*}
    \max_{j = 0,1}\,\nrm{\widehat{f}_j}_{\mf{S}_j} \lesssim_\theta \nrm{x}_{(\mc{X}_0,\mc{X}_1)_\theta}.
  \end{equation*}
  Note that $(\ee^{k(j-\theta)}x_k)_{k \in \Z} = \widehat{f}_j$.
  Let $n \in \Z$ such that $\ee^n\leq \frac{M_0}{M_1} \leq \ee^{n+1}$ and define $g(z) := \ee^{n(z-\theta)} T(z) f(z)$. Then $g(\theta) = T(\theta) x$ and by our assumptions $g \in \ms{H}_\pi(\overline{\mathbb{S}};Y_0+Y_1)$. Thus, using Theorem \ref{theorem:complexmethodformulation} and \eqref{eq:sequence2}, we have
  \begin{align*}
    \nrm{T(\theta) x}_{(\mc{Y}_0,\mc{Y}_1)} &\leq \max_{j=0,1} \, \nrm{\widehat{g}_j}_{\mf{S}_j}\\
    &= \max_{j=0,1} \, \ee^{n(j-\theta)} \nrms{ \ms{F}\hab{t \mapsto \sum_{k \in \Z}  \ee^{k(j-\theta)} \cdot\ee^{i(k+n)t} \cdot T\ha{j+it}x_k} }_{\mf{S}_j}\\
    &\leq \ee^{\theta}\max_{j=0,1} \has{\frac{M_0}{M_1}}^{j-\theta} M_j \, \nrm{(\ee^{(k-n)(j-\theta)}x_{k-n})_{k \in \Z}}_{\mf{S}_j}\\
    &\lesssim_\theta  M_0^{1-\theta} M_1^\theta\, \nrm{x}_{(\mc{X}_0,\mc{X}_1)_\theta},
  \end{align*}
 proving the first claim. The second claim follows by density (see Corollary \ref{corollary:denseX0X1}).
\end{proof}

\begin{remark}~\label{remark:stein}
  \begin{enumerate}[(i)]
    \item Stein interpolation for the complex method is often formulated for families of analytic operators from either $X_0+X_1$ or $X_0\cap X_1$ to $Y_0+Y_1$. However, checking the analyticity of $T(\cdot)x$ in applications  is often only feasible for specific $x$, e.g.\ taking $\breve{X}$ the space of simple functions when $X_j = L^{p_j}$ for $j=0,1$. See also \cite[Remark 2.2]{Vo92}.
    \item The periodicity assumption on $\cbrace{T(z)}_{z \in \overline{\mathbb{S}}}$ in Theorem \ref{theorem:Steininterpolation1}, which is an artifact of our discrete framework, can sometimes be inconvenient in applications. For specific examples, one can use a continuous formulation to circumvent the periodicity assumption, which for the real interpolation can be found \cite{LL21c} and for the $\gamma$-interpolation method in  \cite{SW06}.
    \item\label{it:steinlambda} In the spirit of Subsection \ref{subsec:change_torus}, Theorem \ref{theorem:Steininterpolation1} also works in a $2\lambda$-periodic setting for $\lambda \in (0,\infty)$.
  \end{enumerate}
\end{remark}

Taking $\mf{S}_j = \widehat{C}(\T;X_j)$ and $\mf{T}_j = \widehat{C}(\T;Y_j)$ in Theorem \ref{theorem:Steininterpolation1}, assumption  \ref{it:stein2} reduces to $T\ha{j+it}x \in C(\T;Y_j)$ for all $x \in \breve{X}$ and
\begin{equation*}
  \nrm{T\ha{j+it}x}_{Y_j} \leq M_j \, \nrm{x}_{X_j}, \qquad t \in \R,\, x \in \breve{X},
\end{equation*}
for $j=0,1$, which yields a periodic version of Stein interpolation for complex interpolation (see \cite[Theorem 2.1]{Vo92}). The power of Theorem~\ref{theorem:Steininterpolation1} is of course that it also works for other interpolation methods. For example, using Stein interpolation for the real interpolation method we interpolated weighted $L^p$-spaces and analytic semigroups in \cite{LL21c}. Stein interpolation for the sequentially structured interpolation method will play a key role in
Lemma~\ref{lemma:steinweightedss}, which in turn allows us to deduce a reiteration theorem.

\section{Reiteration}\label{section:reiteration}
In this section we will study reiteration for the sequentially structured interpolation method, i.e. we will investigate what happens when you interpolate sequentially structured interpolation spaces. For this, it will be crucial to work with sequentially structured interpolation spaces with general base number, which we introduced in Subsection \ref{subsec:change_base_number}. We will
freely use the developed general theory for sequentially structured interpolation
with base number $e$ for the general base number $b$, see also Subsections \ref{subsec:change_base_number} and \ref{subsec:change_torus}.

Our main  result reads as follows:

\begin{theorem}\label{thm:reiteration}
Let $(\mc{X}_0,\mc{X}_1)$ and $(\mc{Y}_0,\mc{Y}_1)$ be compatible couples of sequentially structured Banach spaces.
Let $0 \leq \theta_0 < \theta_1 \leq 1$ and $\theta \in (0,1)$ and set $\omega = (1-\theta)\theta_0 + \theta\theta_1$. Take $a,b \in (1,\infty)$ satisfying $b=a^{\theta_1-\theta_0}$ and suppose
\begin{equation}\label{eq:reitembedding}
    \mf{S}_0(a^{-\theta_j}) \cap \mf{S}_1(a^{1-\theta_j}) \hookrightarrow
    \mf{T}_j\hookrightarrow \mf{S}_0(a^{-\theta_j}) + \mf{S}_1(a^{1-\theta_j}),
     \end{equation}
for $j=0,1$. Then we have
    \begin{equation*}
   (\mc{X}_0,\mc{X}_1)_{\omega;a}=  (\mc{Y}_0,\mc{Y}_1)_{\theta;b}.
    \end{equation*}
\end{theorem}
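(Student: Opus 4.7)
The plan is to reduce the reiteration identity to a sandwich of weighted sequence spaces at the new parameters $\omega$ and $b$, which will follow from the assumed embeddings \eqref{eq:reitembedding} by a weight computation. Once this sandwich is in hand, both inclusions of the theorem fall out quickly: one from the definition of $(\mc{X}_0,\mc{X}_1)_{\omega;a}$ and the other from the mean method formulation of Theorem \ref{theorem:meanmethod}.

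First I would verify the weight arithmetic. Because $b = a^{\theta_1-\theta_0}$ and $\omega = \theta_0 + \theta(\theta_1-\theta_0)$, we have, for $j = 0, 1$,
\begin{equation*}
a^{-\theta_j}\cdot b^{j-\theta} = a^{-\omega} \qquad \text{and} \qquad a^{1-\theta_j}\cdot b^{j-\theta} = a^{1-\omega}.
\end{equation*}
Since $\mf{R}(c)(d) = \mf{R}(cd)$ for any sequence structure $\mf{R}$, the isometric reweighting $\vec{x} \mapsto (b^{-(j-\theta)k} x_k)_{k \in \Z}$ turns \eqref{eq:reitembedding} into
\begin{equation*}
\mf{S}_0(a^{-\omega}) \cap \mf{S}_1(a^{1-\omega}) \hookrightarrow \mf{T}_j(b^{j-\theta}) \hookrightarrow \mf{S}_0(a^{-\omega}) + \mf{S}_1(a^{1-\omega}), \qquad j = 0, 1.
\end{equation*}
Intersecting the first embedding over $j$ and summing the second over $j$ leaves both outer spaces unchanged, yielding the sandwich
\begin{align*}
\mf{S}_0(a^{-\omega}) \cap \mf{S}_1(a^{1-\omega}) &\hookrightarrow \mf{T}_0(b^{-\theta}) \cap \mf{T}_1(b^{1-\theta}), \\
\mf{T}_0(b^{-\theta}) + \mf{T}_1(b^{1-\theta}) &\hookrightarrow \mf{S}_0(a^{-\omega}) + \mf{S}_1(a^{1-\omega}).
\end{align*}

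For the inclusion $(\mc{X}_0,\mc{X}_1)_{\omega;a} \hookrightarrow (\mc{Y}_0,\mc{Y}_1)_{\theta;b}$ I take $x$ in the left-hand space together with a representing sequence $\vec{x} \in \mf{S}_0(a^{-\omega}) \cap \mf{S}_1(a^{1-\omega})$ satisfying $\sum_{k \in \Z} x_k = x$ in $X_0+X_1$. The first half of the sandwich places $\vec{x}$ in $\mf{T}_0(b^{-\theta}) \cap \mf{T}_1(b^{1-\theta})$, and the version of Remark \ref{remark:automaticconvergenceX0X1} for base $b$ applied to $(\mc{Y}_0,\mc{Y}_1)$ then ensures absolute convergence of $\sum_{k \in \Z} x_k$ in $Y_0+Y_1$. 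Since \eqref{eq:reitembedding} forces $Y_0+Y_1 \hookrightarrow X_0+X_1$, the two limits agree and equal $x$; taking the infimum over such $\vec{x}$ gives $\nrm{x}_{(\mc{Y}_0,\mc{Y}_1)_{\theta;b}} \lesssim \nrm{x}_{(\mc{X}_0,\mc{X}_1)_{\omega;a}}$. For the reverse inclusion I would invoke the mean method identities
\begin{equation*}
\nrm{x}_{(\mc{Y}_0,\mc{Y}_1)_{\theta;b}} \eqsim \nrm{(\ldots,x,x,\ldots)}_{\mf{T}_0(b^{-\theta}) + \mf{T}_1(b^{1-\theta})}
\end{equation*}
and $\nrm{x}_{(\mc{X}_0,\mc{X}_1)_{\omega;a}} \eqsim \nrm{(\ldots,x,x,\ldots)}_{\mf{S}_0(a^{-\omega}) + \mf{S}_1(a^{1-\omega})}$ from Theorem \ref{theorem:meanmethod} at the appropriate base numbers; the second half of the sandwich then gives the required estimate immediately.

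The substantive content of the proof is the weight identity, so I do not expect a genuine obstacle beyond that algebra. The only bookkeeping point that requires some care is the consistent interpretation of sums like $\sum_k x_k$ across $X_0+X_1$ and $Y_0+Y_1$, which is handled by the ambient embeddings $X_0\cap X_1 \hookrightarrow Y_0 \cap Y_1$ and $Y_0+Y_1 \hookrightarrow X_0+X_1$ implicit in \eqref{eq:reitembedding}.
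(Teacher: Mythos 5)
Your proposal is correct and follows essentially the same route as the paper: the same weight identity $a^{\theta_j-\omega}=b^{j-\theta}$, the same use of Remark \ref{remark:automaticconvergenceX0X1} and of $Y_0+Y_1\hookrightarrow X_0+X_1$ for the first inclusion, and the same appeal to Theorem \ref{theorem:meanmethod} for the second. The only difference is cosmetic: you isolate the reweighted sandwich of sequence spaces as an explicit intermediate step, whereas the paper performs the same computation inline within each of the two norm estimates.
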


In Theorem \ref{thm:reiteration}, one typically takes $Y_j=(\mc{X}_0,\mc{X}_1)_{\theta_j}$. In the upcoming subsections, we will check that the sequence structure $\mf{T}_j$ on $Y_j$ satisfies the assumed embeddings in \eqref{eq:reitembedding} in various concrete situations. Using the change of basis results in Proposition \ref{prop:change_basis} and Example~\ref{ex:changing_base_torus_complex_int}, this will, in particular, yield reiteration identities for the real, complex and $\gamma$-interpolation methods.

\begin{proof}[Proof of Theorem~\ref{thm:reiteration}]
First let $x \in (\mc{X}_0,\mc{X}_1)_{\omega;a}$.
Let $\vec{x} \in \ell^0(\Z;X)$ be such that $\vec{x} \in \mf{S}_0(a^{-\omega}) \cap \mf{S}_1(a^{1-\omega})$ and $x = \sum_{k \in \Z}x_k$ in $X_0+X_1$.
Then, as $(j-\theta)(\theta_1-\theta_0) = \theta_j-\omega$, we have by the first embedding in \eqref{eq:reitembedding}
\begin{align*}
\nrm{\vec{x}}_{\mf{T}_0(b^{-\theta}) \cap \mf{T}_1(b^{1-\theta})}
= \max_{j=0,1}\nrm{\vec{x}}_{\mf{T}_j(a^{\theta_j-\omega})} &\lesssim \max_{i=0,1}\,\nrm{\vec{x}}_{\mf{S}_i(a^{i-\omega})} \\&= \nrm{\vec{x}}_{\mf{S}_0(a^{-\omega}) \cap \mf{S}_1(a^{1-\omega})}.
\end{align*}
In particular, by Remark~\ref{remark:automaticconvergenceX0X1}, we know that $\sum_{k \in \Z}x_k$ also converges in $Y_0+Y_1$ and since the second embedding in \eqref{eq:reitembedding} implies that $Y_0+Y_1 \hookrightarrow X_0+X_1$, we must have $x = \sum_{k \in \Z}x_k$ in $Y_0+Y_1$.
Therefore, we obtain
\begin{equation*}
\nrm{x}_{(\mc{Y}_0,\mc{Y}_1)_{\theta;b}} \lesssim \nrm{\vec{x}}_{\mf{S}_0(a^{-\omega}) \cap \mf{S}_1(a^{1-\omega})}.
\end{equation*}
Taking the infimum over all representations $x = \sum_{k \in \Z}x_k$, we conclude
\begin{equation*}
\nrm{x}_{(\mc{Y}_0,\mc{Y}_1)_\theta;b} \lesssim \nrm{x}_{(\mc{X}_0,\mc{X}_1)_{\omega;a}}.
\end{equation*}
This proves the embedding  $(\mc{X}_0,\mc{X}_1)_{\omega;a} \hookrightarrow (\mc{Y}_0,\mc{Y}_1)_{\theta;b}$.

Now let $y \in (\mc{Y}_0,\mc{Y}_1)_{\theta;b}$ and put $\vec{y} := (\ldots,y,y,y,\ldots)$.
Let $\vec{y}_0 \in \mf{T}(b^{-\theta})$ and $\vec{y}_1 \in \mf{T}(b^{1-\theta})$ be such that $\vec{y}=\vec{y}_0+\vec{y}_1$.
Then, using the second embedding in \eqref{eq:reitembedding}, we have
\begin{align*}
\nrm{\vec{y}}_{\mf{S}_0(a^{-\omega})+\mf{S}_1(a^{1-\omega})}
&\leq \norm{\vec{y}_0}_{\mf{S}_0(a^{-\omega})+\mf{S}_1(a^{1-\omega})} + \nrm{\vec{y}_1}_{\mf{S}_0(a^{-\omega})+\mf{S}_1(a^{1-\omega})} \\
&\lesssim \norm{\vec{y}_0}_{\mf{T}_0(a^{\theta_0-\omega})} + \nrm{\vec{y}_1}_{\mf{T}_1(a^{\theta_1-\omega})} \\
&= \nrm{\vec{y}_0}_{\mf{T}_0(b^{-\theta})} + \nrm{\vec{y}_1}_{\mf{T}_1(b^{1-\theta})}.
\end{align*}
Taking the infimum over all decompositions $\vec{y}=\vec{y}_0+\vec{y}_1$, we find that
\begin{equation*}
\norm{\vec{y}}_{\mf{S}_0(a^{-\omega})+\mf{S}_1(a^{1-\omega})} \lesssim
\norm{\vec{y}}_{\mf{T}_0(b^{-\theta})+\mf{S}_0(b^{1-\theta})}.
\end{equation*}
By  Theorem~\ref{theorem:meanmethod}, this proves the embedding $(\mc{Y}_0,\mc{Y}_1)_{\theta;b} \hookrightarrow (\mc{X}_0,\mc{X}_1)_{\omega;a}$.
\end{proof}

\subsection{The real interpolation method}
In this subsection we will show that Theorem~\ref{thm:reiteration} implies the classical reiteration theorem for the real interpolation method. Let us start by recalling the classes $J_\theta$ and $K_\theta$ (see e.g. \cite[1.10.1]{Tr78}).
Let $(X_0,X_1,Y)$ be a compatible triple of Banach spaces   and let $\theta \in [0,1]$.
\begin{itemize}
    \item $Y$ is said to be of \emph{class $J_\theta$} between $X_0$ and $X_1$ if $X_0 \cap X_1 \hookrightarrow Y$ and for all $t \in (0,\infty)$ and $x \in X_0 \cap X_1$,
    $$
    \nrm{x}_{Y} \lesssim \max\big\{t^{-\theta}\nrm{x}_{{X}_0},t^{1-\theta}\nrm{x}_{{X}_1}\big\} =: t^{-\theta}J(t,x;X_0,X_1).
    $$
    \item $Y$ is said to be of \emph{class $K_\theta$} between $X_0$ and $X_1$ if $Y \hookrightarrow X_0+X_1$ and for all $t \in (0,\infty)$ and $x \in Y$,
    $$
    t^{-\theta}K(t,x;X_0,X_1) :=
    \inf\left(t^{-\theta}\nrm{x_0}_{X_0}+t^{1-\theta}\nrm{x_1}_{X_1}\right) \lesssim \nrm{x}_{Y},
    $$
    where the infimum is taken over all $x_0 \in X_0$ and $x_1 \in X_1$ such that $x=x_0+x_1$.
\end{itemize}
Given these definitions, the classical reiteration theorem for the real interpolation method (see e.g. {\cite[1.10.2]{Tr78}}) reads as follows:

\begin{example} \label{example:realreit}
Let $(X_0,X_1)$ and $(Y_0,Y_1)$ be compatible couples of Banach spaces.
Let $0 \leq \theta_0 < \theta_1 \leq 1$ and $\theta \in (0,1)$ and set $\omega = (1-\theta)\theta_0 + \theta\theta_1$. Suppose that $Y_j$ is of class $J_{\theta_j}$ and $K_{\theta_j}$ between $X_0$ and $X_1$
for $j=0,1$. Then we have for $p \in [1,\infty]$
    \begin{equation*}
   ({X}_0,{X}_1)_{\omega,p}=  ({Y}_0,{Y}_1)_{\theta,p}.
    \end{equation*}
\end{example}

Using Example \ref{example:interpolationmethods}\ref{it:ssireal} and Proposition \ref{prop:change_basis}, we know that the conclusions of Theorem~\ref{thm:reiteration} and Example~\ref{example:realreit} coincide when one takes the sequence structures in Theorem~\ref{thm:reiteration} to be $\ell^p(\Z;X)$-spaces. Therefore, to deduce Example~\ref{example:realreit}  from Theorem~\ref{thm:reiteration},
it suffices to relate the
the classes $J_\theta$ and $K_\theta$  to the assumed embeddings in \eqref{eq:reitembedding}.  This is the content of the following proposition, which thus, in particular, implies Example \ref{example:realreit}.
\begin{proposition}\label{proposition:def:class_J_K_seqstruct}
Let $(X_0,X_1,Y)$ be a compatible triple of Banach spaces, let $p,p_0,p_1 \in [1,\infty]$ and $\theta \in  (0,1)$ satisfy $\frac{1}{p}=\frac{1-\theta}{p_0}+\frac{\theta}{p_1}$, let $\mf{S}_j = \ell^{p_j}(\Z;X_j)$ for $j=0,1$ and let $\mf{T} = \ell^p(\Z;Y)$.
Then the following statements hold true for any $a \in (1,\infty)$:
\begin{enumerate}[(i)]
    \item\label{it:ex:def:class_J_K_seqstruct;J} $Y$ is of class $J_\theta$ between $X_0$ and $X_1$ if and only if $$\mf{S}_0(a^{-\theta}) \cap \mf{S}_1(a^{1-\theta}) \hookrightarrow
    \mf{T}.$$
    \item\label{it:ex:def:class_J_K_seqstruct;K} $Y$ is of class $K_\theta$ between $X_0$ and $X_1$ if and only if $$\mf{T}\hookrightarrow \mf{S}_0(a^{-\theta}) + \mf{S}_1(a^{1-\theta}).$$
\end{enumerate}
\end{proposition}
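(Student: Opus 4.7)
I will prove the four implications (forward and reverse for each of \ref{it:ex:def:class_J_K_seqstruct;J} and \ref{it:ex:def:class_J_K_seqstruct;K}) separately. The reverse directions will follow by testing the hypothesized embeddings on singleton sequences $(\ldots,0,x,0,\ldots)$ supported at position $k \in \Z$ (with $x \in X_0 \cap X_1$ or $x \in Y$) and varying $k$ to sweep out the scale $t = a^k$. The forward directions will rely essentially on the relation $\frac{1}{p} = \frac{1-\theta}{p_0}+\frac{\theta}{p_1}$.

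For the forward implication of \ref{it:ex:def:class_J_K_seqstruct;J}, fix $\vec{x} \in \mf{S}_0(a^{-\theta}) \cap \mf{S}_1(a^{1-\theta})$ and set $u_k := a^{-k\theta}\nrm{x_k}_{X_0}$ and $v_k := a^{k(1-\theta)}\nrm{x_k}_{X_1}$. Optimizing the $J_\theta$ inequality over $t > 0$ coordinate-wise will yield the pointwise bound $\nrm{x_k}_Y \lesssim \nrm{x_k}_{X_0}^{1-\theta}\nrm{x_k}_{X_1}^\theta = u_k^{1-\theta}v_k^\theta$. H\"older's inequality with exponents $p_0/((1-\theta)p)$ and $p_1/(\theta p)$, which are H\"older conjugate precisely by the assumed relation on $p, p_0, p_1$, will then give $\nrm{(x_k)_k}_{\ell^p(\Z;Y)} \lesssim \nrm{\vec{u}}_{\ell^{p_0}}^{1-\theta}\nrm{\vec{v}}_{\ell^{p_1}}^\theta \leq \max\cbrace{\nrm{\vec{u}}_{\ell^{p_0}},\nrm{\vec{v}}_{\ell^{p_1}}}$, as required (the case $p = \infty$ forces $p_0=p_1=\infty$ and is handled directly by taking suprema). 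For the reverse implication, testing the embedding on the singleton at position $k$ for $x \in X_0 \cap X_1$ will give $\nrm{x}_Y \lesssim \max\cbrace{a^{-k\theta}\nrm{x}_{X_0}, a^{k(1-\theta)}\nrm{x}_{X_1}}$ for every $k \in \Z$; given $t > 0$, choosing $k$ with $a^k \leq t < a^{k+1}$ upgrades this to the $J_\theta$ inequality at scale $t$ with a harmless factor of $a^\theta$, while $k=0$ recovers $X_0 \cap X_1 \hookrightarrow Y$.

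For the reverse implication of \ref{it:ex:def:class_J_K_seqstruct;K}, I will test the embedding on the singleton $(\ldots,0,y,0,\ldots)$ at position $k$ for $y \in Y$ and project the resulting decomposition onto position $k$; this will produce $y = y_{0,k} + y_{1,k}$ with $a^{-k\theta}\nrm{y_{0,k}}_{X_0} + a^{k(1-\theta)}\nrm{y_{1,k}}_{X_1} \lesssim \nrm{y}_Y$, which is precisely the $K_\theta$ bound at $t = a^k$. Extension to general $t \in (0,\infty)$ uses the monotonicity of $t \mapsto K(t,y;X_0,X_1)$, and taking $k=0$ yields the ambient embedding $Y \hookrightarrow X_0+X_1$.

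The main obstacle is the forward implication of \ref{it:ex:def:class_J_K_seqstruct;K}, since a naive pointwise $K_\theta$-decomposition at fixed scale $t_k = a^k$ only yields $\ell^{p_j}$-control of the decomposed pieces in terms of the $\ell^{p_j}$-norm of $(\nrm{y_k}_Y)_k$, rather than the required $\ell^p$-norm. To bridge this gap, given $\vec{y} \in \ell^p(\Z;Y)$ with $\alpha_k := \nrm{y_k}_Y$ and $\nrm{\vec{\alpha}}_{\ell^p} > 0$, I will choose the $\vec{y}$-dependent decomposition scale $t_k := a^k(\alpha_k/\nrm{\vec{\alpha}}_{\ell^p})^\mu$ with $\mu := p/p_0 - p/p_1$. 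The choice of $\mu$ is dictated by the algebraic identities $1+\mu\theta = p/p_0$ and $1-\mu(1-\theta) = p/p_1$, both equivalent to the relation $\frac{1}{p}=\frac{1-\theta}{p_0}+\frac{\theta}{p_1}$. Using the $K_\theta$ inequality to decompose $y_k = y_{0,k}+y_{1,k}$ with $\nrm{y_{0,k}}_{X_0}+t_k\nrm{y_{1,k}}_{X_1} \lesssim t_k^\theta \alpha_k$ will yield $a^{-k\theta}\nrm{y_{0,k}}_{X_0} \lesssim \alpha_k^{p/p_0}\nrm{\vec{\alpha}}_{\ell^p}^{1-p/p_0}$ and $a^{k(1-\theta)}\nrm{y_{1,k}}_{X_1} \lesssim \alpha_k^{p/p_1}\nrm{\vec{\alpha}}_{\ell^p}^{1-p/p_1}$, so that taking $\ell^{p_0}$- and $\ell^{p_1}$-norms each produces $\nrm{\vec{\alpha}}_{\ell^p} = \nrm{\vec{y}}_{\ell^p(\Z;Y)}$. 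Zero entries and infinite-valued $p_j$ will be handled by the obvious conventions (set $y_{0,k}=y_{1,k}=0$ when $\alpha_k=0$; interpret the $\ell^{\infty}$-bound as a supremum).
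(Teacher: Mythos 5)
Your proposal is correct and follows essentially the same route as the paper's proof: singleton sequences supported at a single position $k$ (with $t\eqsim a^k$) for the two "embedding implies class" directions, the optimized $J_\theta$ bound $\nrm{x}_Y\lesssim\nrm{x}_{X_0}^{1-\theta}\nrm{x}_{X_1}^{\theta}$ followed by H\"older for the $J_\theta$ direction, and a $\vec y$-dependent decomposition scale for the $K_\theta$ direction. Your exponent $\mu=p/p_0-p/p_1$ coincides with the paper's $\beta=\frac{p_1-p_0}{\theta p_0+(1-\theta)p_1}$ after using $\frac1p=\frac{1-\theta}{p_0}+\frac{\theta}{p_1}$, so the key step is identical.
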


\begin{proof}
We start by proving \ref{it:ex:def:class_J_K_seqstruct;J}. First assume that
$\mf{S}_0(a^{-\theta}) \cap \mf{S}_1(a^{1-\theta}) \hookrightarrow
    \mf{T},$
let $t \in (1,\infty)$ and $x \in X_0 \cap X_1$.
Pick $k \in \Z$ such that $t \in [a^{k},a^{k+1})$ and consider $\vec{x} :=x \otimes \one_{\{k\}} = (\ldots,0,x,0,\ldots) \in \ell^0(\Z;X_0 \cap X_1)$ .
In light of \eqref{eq:sequence1} and \eqref{eq:sequence2} and by choice of $k$, we get
\begin{align*}
\nrm{x}_{Y} &= \nrm{\vec{x}}_{\mf{T}} \lesssim \max_{j=0,1}\,\nrm{\vec{x}}_{\mf{S}_j(a^{j-\theta})} = \max_{j=0,1}\,a^{(j-\theta) k}\nrm{x}_{X_j} \\
&\lesssim  t^{-\theta}J(t,x;X_0,X_1).
\end{align*}
Therefore, $Y$ is of class $J_\theta$ between $X_0$ and $X_1$. Conversely, assume that $Y$ is of class $J_\theta$ between $X_0$ and $X_1$. Taking $t = \nrm{x}_{X_0} /\nrm{x}_{X_1}$ in the definition of $J_\theta$, we see that $$\norm{x}_Y \lesssim \norm{x}^{1-\theta}_{X_0}\norm{x}^{\theta}_{X_1}, \qquad x \in X_0 \cap X_1.$$
Therefore, given $\vec{x} \in \mf{S}_0(a^{-\theta}) \cap \mf{S}_1(a^{1-\theta})$, H\"older's inequality yields that
\begin{align*}
\nrm{\vec{x}}_{\mf{T}}
= \norm{\vec{x}}_{\ell^{p}(\Z;Y)} &\lesssim \norm{(a^{-k\theta }x_k)_{k \in \Z}}_{\ell^{p_0}(\Z;X_0)}^{\theta}\norm{(a^{k(1-\theta)}x_k)_{k \in \Z}}_{\ell^{p_1}(\Z;X_1)}^{1-\theta} \\
&= \norm{\vec{x}}_{\mf{S}_0(a^{-\theta})}^{\theta}\norm{\vec{x}}_{\mf{S}_1(a^{1-\theta})}^{1-\theta}
\lesssim \nrm{\vec{x}}_{\mf{S}_0(a^{-\theta}) \cap \mf{S}_1(a^{1-\theta})},
\end{align*}
which proves that $\mf{S}_0(a^{-\theta}) \cap \mf{S}_1(a^{1-\theta}) \hookrightarrow
    \mf{T}.$

For \ref{it:ex:def:class_J_K_seqstruct;K}, we first assume $\mf{T}\hookrightarrow \mf{S}_0(a^{-\theta}) + \mf{S}_1(a^{1-\theta})$, let $t \in (1,\infty)$ and $x \in Y$. Pick $k \in \Z$ such that $t \in [a^{k},a^{k+1})$ and consider $$\vec{x} :=x \otimes \one_{\{k\}} = (\ldots,0,x,0,\ldots) \in \mf{T}.$$ Let $\vec{x}^j \in \mf{S}_j(a^{j-\theta})$ for $j=0,1$ such that $\vec{x} = \vec{x}^0 +\vec{x}^1$.
By choice of $k$, \eqref{eq:sequence1} and \eqref{eq:sequence2} we have
\begin{align*}
t^{-\theta}K(t,x;X_0,X_1) \leq t^{-\theta}\nrm{x_k^0}_{X_0}+t^{1-\theta}\nrm{x_k^1}_{X_1} \lesssim  \nrm{\vec{x}^0}_{\mf{S}_0(a^{-\theta})}+\nrm{\vec{x}^1}_{\mf{S}_1(a^{1-\theta})}
\end{align*}
Taking the infimum over all such $\vec{x}^j$ shows that $Y$ is of class $K_\theta$. For the converse we will only treat the case $p_0,p_1<\infty$, the cases where $p_0=\infty$ and/or $p_1=\infty$ being similar, but simpler. Take $\tau > 0$ and $y \in Y$. Using the assumption that $Y$ is of class $K_\theta$ between $X_0$ and $X_1$ with $t = \tau \nrm{y}^\beta$, where $\beta := \frac{p_1-p_0}{\theta p_0+(1-\theta)p_1}$, we see that there exists $x_0 \in X_0$ and $x_1 \in X_1$ such that $y=x_0+x_1$ and
\begin{equation*}
\tau^{j-\theta}\nrm{x_j}_{X_j} = t^{j-\theta}\nrm{x_j}_{X_j} \nrm{y}_Y^{-\beta(j-\theta)}\leq C\nrm{y}_{Y}^{1-\beta(j-\theta)} = C \norm{y}_{Y}^{\frac{p}{p_j}},
\end{equation*}
for $j=0,1$. Choosing $\tau=a^{k}$ we find that, given $\vec{y} \in \mf{T}$ with $\nrm{\vec{y}}_{\mf{T}}=1$, there exist $\vec{x}^0 \in \ell^0(\Z;X_0)$ and $\vec{x}^1 \in \ell^0(\Z;X_1)$ with $\vec{y}=\vec{x}^0+\vec{x}^1$ and $$a^{k(j-\theta)}\norm{x^j_{k}}_{X_j}\leq 2C\, \norm{y_k}_Y^{\frac{p}{p_j}}, \qquad j=0,1,$$ for all  $k \in \Z$.
So
\begin{align*}
\nrm{\vec{y}}_{\mf{S}_0(a^{-\theta})+\mf{S}_1(a^{1-\theta})}
&\leq
\nrm{\vec{x}^0}_{\mf{S}_0(a^{-\theta})} + \nrm{\vec{x}^1}_{\mf{S}_1(a^{1-\theta})}
\\&=\sum_{j=0,1}\norm{(a^{k(j-\theta)}x^j_k)}_{\ell^{p_j}(\Z;X_j)} \\
&\lesssim \sum_{j=0,1}\nrm{(y_k)_{k \in \Z}}_{\ell^{p}(\Z;Y)}^{\frac{p}{p_j}} = \sum_{j=0,1}\nrm{\vec{y}}_{\mf{T}}^{\frac{p}{p_j}} = 2,
\end{align*}
proving the embedding $\mf{T}\hookrightarrow \mf{S}_0(a^{-\theta}) + \mf{S}_1(a^{1-\theta}).$
\end{proof}

\begin{remark}
The ``if'' parts of the statements in Proposition \ref{proposition:def:class_J_K_seqstruct} do not use the specific choice of $\mf{S}_0$, $\mf{S}_1$ and $\mf{T}$ and hold for any sequence structures.
\end{remark}

The classes $J_\theta$ and $K_\theta$ can be characterized by embeddings using the real interpolation spaces $(X_0,X_1)_{\theta,1}$ and $(X_0,X_1)_{\theta,\infty}$. In particular, we have that
\begin{equation*}
(X_0,X_1)_{\theta,1} \hookrightarrow  Y \hookrightarrow (X_0,X_1)_{\theta,\infty}.
\end{equation*}
if and only if $Y$ is of class $J_\theta$ and $K_\theta$ between $X_0$ and $X_1$ (see e.g.\ \cite[1.10.1]{Tr78}). Therefore, in view of \eqref{eq:proposition:embeddings;embd_real_int}, we know that any  for any compatible couple of sequentially structured Banach spaces $(\mc{X}_0,\mc{X}_1)$ we have  that $(\mc{X}_0,\mc{X}_1)_\theta$ is of class $J_\theta$ and $K_\theta$ between $X_0$ and $X_1$ for  $\theta \in (0,1)$. Combining this observation with Example \ref{example:realreit}, we obtain:

\begin{example}\label{ex:thm:reiteration;abstract_example}
Let $(X_0,X_1)$ be a compatible couple of Banach spaces. Let $\mf{S}_j$ and $\mf{T}_j$ be sequence structures on $X_j$ and set $\mc{X}_j := [X_j,\mf{S}_j]$ and $\mc{Y}_j:= [X_j,\mf{T}_j]$ for $j=0,1$.
Let $p \in [1,\infty]$, let $0 < \theta_0 < \theta_1 < 1$ and $\theta \in (0,1)$ and set $\omega = (1-\theta)\theta_0 + \theta\theta_1$.
Then we have the reiteration identity
\begin{equation*}
\hab{(\mc{X}_0,\mc{X}_1)_{\theta_0},(\mc{Y}_0,\mc{Y}_1)_{\theta_1}}_{\theta,p}
= (X_0,X_1)_{\omega,p}.
\end{equation*}
\end{example}

\subsection{The complex interpolation method}
We now turn to reiteration for the complex interpolation method. In contrast to the previous subsection, in which we only recovered the already known reiteration theorem for the real interpolation method, we will obtain various new reiteration results for the complex interpolation method. In particular, we will deduce the following example from Theorem \ref{thm:reiteration}:

\begin{example}\label{ex:thm:reiteration_complex}
Let $(\mc{X}_0,\mc{X}_1)$ be a compatible couple of  Ces\`aro bounded  sequentially structured Banach spaces such that $\mf{S}_0$ or $\mf{S}_1$ is Ces\`aro convergent.
Assume that for $j=0,1$
\begin{equation*}
\nrm{(\ee^{iks}x_{k})_{k \in \Z}}_{\mf{S}_j} \lesssim \nrm{\vec{x}}_{\mf{S}_j}, \qquad s \in \R, \,\vec{x} \in \mf{S}_j.
\end{equation*}
Let $0 < \theta_0 < \theta_1 < 1$ and $\theta \in (0,1)$ and set $\omega = (1-\theta)\theta_0 + \theta\theta_1$.
Then we have the reiteration identity
\begin{equation*}
\left[(\mc{X}_0,\mc{X}_1)_{\theta_0},(\mc{X}_0,\mc{X}_1)_{\theta_1}\right]_{\theta} = (\mc{X}_0,\mc{X}_1)_\omega.
\end{equation*}
In particular, we have  the reiteration identities:
\begin{align}
\notag \bracb{[X_0,X_1]_{\theta_0},[X_0,X_1]_{\theta_1}}_{\theta} &= [X_0,X_1]_{\omega}, \\
\notag \bracb{(X_0,X_1)_{\theta_0,\varepsilon},(X_0,X_1)_{\theta_1,\varepsilon}}_{\theta} &= (X_0,X_1)_{\omega,\varepsilon}, \\
\notag \bracb{(X_0,X_1)_{\theta_0,\gamma},(X_0,X_1)_{\theta_1,\gamma}}_{\theta} &= (X_0,X_1)_{\omega,\gamma}, \\
\notag \bracb{(X_0,X_1)_{\theta_0,\alpha},(X_0,X_1)_{\theta_1,\alpha}}_{\theta} &= (X_0,X_1)_{\omega,\alpha},
\end{align}
 where $\alpha$ is a global Euclidean structure.
\end{example}

\begin{remark}\label{rmk:ex:thm:reiteration_complex;concrete}
Concerning the complex interpolation of real interpolation spaces, it is known that
\begin{equation}\label{eq:rmk:ex:thm:reiteration_complex;concrete}
\bracb{(X_0,X_1)_{\theta_0,p_0},(X_0,X_1)_{\theta_1,p_1}}_{\theta} = (X_0,X_1)_{\omega,p}
\end{equation}
is valid for all $p,p_0,p_1 \in [1,\infty]$ with $\frac{1}{p}=\frac{1-\theta}{p_0}+\frac{\theta}{p_1}$ (see e.g.\ \cite[Theorem~4.7.2]{BL76}). However, this result is only contained in Example \ref{ex:thm:reiteration_complex} in the specific case that $p_0=p_1<\infty$. The finiteness assumption is just a technicality caused by our proof, whereas the assumption $p_0=p_1$ is inherent to our approach. Indeed, the assumed embeddings in Theorem \ref{thm:reiteration} only connect $p_0$ to $p$ and $p_1$ to $p$ separately, so they can not encode the relation between $p_0, p_1$ and $p$ required for \eqref{eq:rmk:ex:thm:reiteration_complex;concrete}.
It would be interesting to find a suitable extension of \eqref{eq:rmk:ex:thm:reiteration_complex;concrete} to the abstract setting of sequentially structured interpolation.
\end{remark}

They key to deduce Example \ref{ex:thm:reiteration_complex} from  Theorem~\ref{thm:reiteration} is
the following proposition, in which we prove the  embeddings in \eqref{eq:reitembedding} for the sequence structures associated with the complex interpolation method (see Example~\ref{example:interpolationmethods}\ref{it:ssicomplex}).

\begin{proposition}\label{proposition:complexreiterationabstract}
Let $(\mc{X}_0,\mc{X}_1)$ be a compatible couple of Ces\`aro bounded sequentially structured Banach spaces such that $\mf{S}_0$ or $\mf{S}_1$ is Ces\`aro convergent. Let $\theta \in (0,1)$ and let $a \in (1,\infty)$.
Assume that for $j=0,1$
\begin{equation}\label{ex:prop:class_J_complex_formulation}
\nrm{(\ee^{iks}x_{k})_{k \in \Z}}_{\mf{S}_j} \lesssim \nrm{\vec{x}}_{\mf{S}_j}, \qquad s \in \R,\, \vec{x} \in \mf{S}_j.
\end{equation}
Set $\lambda := \frac{\pi}{\log a} \in (0,\infty)$.
Then we have for $p \in [1,\infty)$
$$
\mf{S}_0(a^{-\theta}) \cap \mf{S}_1(a^{1-\theta})\hookrightarrow \widehat{L}^p(\T_\lambda;(\mc{X}_0,\mc{X}_1)_{\theta;a}) \hookrightarrow \mf{S}_0(a^{-\theta}) + \mf{S}_1(a^{1-\theta}).
$$
\end{proposition}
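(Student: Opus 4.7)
Setting $Y := (\mc{X}_0,\mc{X}_1)_{\theta;a}$, my plan leverages the identity $a^{ikt} = \ee^{ik\pi t/\lambda}$, which links the Fourier basis on $\T_\lambda$ to the boundary restriction of the analytic basis $z \mapsto a^{k(z-\theta)}$ underlying Proposition~\ref{proposition:complexsimple} in its base-$a$, torus-$\T_\lambda$ form; the multiplier assumption \eqref{ex:prop:class_J_complex_formulation} encodes the stability of the sequence structures under these phase rotations. For the first embedding, I would take $\vec{x} \in \mf{S}_0(a^{-\theta}) \cap \mf{S}_1(a^{1-\theta})$ and set $f(t) := \sum_k \ee^{i\pi kt/\lambda}x_k$. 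By \eqref{ex:prop:class_J_complex_formulation}, the twisted sequence $(\ee^{i\pi kt/\lambda}x_k)_k$ lies in the same space with uniform-in-$t$ norm control, so the base-$a$ analogue of Remark~\ref{remark:automaticconvergenceX0X1} gives convergence in $X_0+X_1$ and $\nrm{f(t)}_Y \lesssim \nrm{\vec{x}}_{\mf{S}_0(a^{-\theta}) \cap \mf{S}_1(a^{1-\theta})}$. Strong measurability of $f\colon\T_\lambda \to Y$ follows from pointwise convergence in $Y$ of the continuous partial sums $f_N(t) = \sum_{|k|\leq N} \ee^{i\pi kt/\lambda}x_k$, itself a consequence of Lemma~\ref{lemma:logconvex} and the $c_0$-assumption on $\mf{S}_0$ or $\mf{S}_1$. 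Then $f \in L^{\infty}(\T_\lambda;Y) \hookrightarrow L^p(\T_\lambda;Y)$ with $\widehat{f}(k) = x_k$ by orthogonality, giving the first embedding.

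For the second embedding, I would first reduce to simple $Y$-valued functions, using the density of such in $L^p(\T_\lambda;Y)$ together with completeness of $\mf{S}_0(a^{-\theta}) + \mf{S}_1(a^{1-\theta})$ to extend the resulting linear-in-$f$ bound. For simple $f = \sum_n \one_{A_n}y_n$, I split each value $y_n \in Y$ using the construction in the proof of Theorem~\ref{theorem:meanmethod}: choose a near-optimal $\vec{z}^n \in \mf{S}_0(a^{-\theta}) \cap \mf{S}_1(a^{1-\theta})$ with $\sum_k z^n_k = y_n$ and $\max_j \nrm{\vec{z}^n}_{\mf{S}_j(a^{j-\theta})} \leq 2\nrm{y_n}_Y$, then set $g^{n,0}_k := \sum_{m \leq k}z^n_m$ and $g^{n,1}_k := \sum_{m > k}z^n_m$. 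This yields $g^{n,0}_k + g^{n,1}_k = y_n$ for every $k$ and $\nrm{\vec{g}^{n,j}}_{\mf{S}_j(a^{j-\theta})} \lesssim_{\theta,a} \nrm{y_n}_Y$ for $j = 0,1$. Setting $u^j_k := \sum_n \widehat{\one_{A_n}}(k)\,g^{n,j}_k \in X_j$ then produces $u^0_k + u^1_k = \widehat{f}(k)$ by linearity.

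To control $\nrm{\vec{u}^j}_{\mf{S}_j(a^{j-\theta})}$, I would realise $\vec{u}^j$ as the Bochner integral in $\mf{S}_j(a^{j-\theta})$
\begin{equation*}
\vec{u}^j \;=\; \frac{1}{2\lambda}\int_{\T_\lambda}\bigl(\ee^{-i\pi kt/\lambda}\,g^{n(t),j}_k\bigr)_{k\in\Z}\dd t,
\end{equation*}
where $n(t)$ denotes the unique index with $t \in A_{n(t)}$; as the integrand takes only finitely many values, Bochner integrability is automatic. Minkowski's integral inequality together with \eqref{ex:prop:class_J_complex_formulation} then yields
\begin{equation*}
\nrm{\vec{u}^j}_{\mf{S}_j(a^{j-\theta})} \;\lesssim\; \frac{1}{2\lambda}\int_{\T_\lambda}\nrm{\vec{g}^{n(t),j}}_{\mf{S}_j(a^{j-\theta})}\dd t \;\lesssim_{\theta,a}\; \nrm{f}_{L^1(\T_\lambda;Y)} \;\lesssim_{p,\lambda}\; \nrm{\widehat{f}}_{\widehat{L}^p(\T_\lambda;Y)},
\end{equation*}
the final step being H\"older on the finite measure space $\T_\lambda$. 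The hard part is precisely this second embedding: one must produce a single decomposition of the full Fourier sequence $\widehat{f}$ rather than pointwise mean-method decompositions of the values $f(t)$, and the simple-function reduction combined with the concrete splitting from Theorem~\ref{theorem:meanmethod} is what allows the decompositions of the individual values $y_n$ to be assembled, via the scalar-valued Fourier kernel $\widehat{\one_{A_n}}$, into an $\mf{S}_0(a^{-\theta}) + \mf{S}_1(a^{1-\theta})$-sequence with a bound that is linear in $f$.
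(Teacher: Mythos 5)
Your first embedding is essentially the paper's argument (the paper phrases it via the isometry of Lemma~\ref{lemma:complexisometric} with $\ms{H}_\lambda(\mathbb{S};\mc{X}_0,\mc{X}_1)$ and translates the analytic function, you rotate the sequence directly; the substance is identical). One small repair: your measurability argument uses sharp partial sums $f_N$, but sharp truncation is not assumed bounded on a general sequence structure (only Ces\`aro boundedness \eqref{eq:sequence4b} and Ces\`aro convergence \eqref{eq:sequence4} are); replace $f_N$ by the Fej\'er means and the argument goes through.

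The second embedding is where you genuinely diverge from the paper, and there is a gap at the decisive step. The integrand $t \mapsto \bigl(\ee^{-i\pi kt/\lambda}g^{n(t),j}_k\bigr)_{k\in\Z}$ does \emph{not} take finitely many values: the index $n(t)$ does, but the phase factors $\ee^{-i\pi kt/\lambda}$ vary continuously with $t$ in every coordinate, so on each $A_n$ the integrand runs through the whole modulation orbit of $\vec{g}^{n,j}$. For an $\ell^\infty$-type structure (which is permitted here, since only one of $\mf{S}_0,\mf{S}_1$ is assumed to be a $c_0$-structure) this orbit is typically uniformly separated and uncountable, hence non-separable, so $t \mapsto (\ee^{-i\pi kt/\lambda}g_k)_k$ is not strongly measurable and the Bochner integral does not exist; Minkowski's integral inequality therefore cannot be invoked. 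What you are really using at this point is the $\ell^\infty$-multiplier bound $\nrm{(\widehat{\one_{A}}(k)\,g_k)_k}_{\mf{S}_j} \lesssim \abs{A}\,\nrm{\vec{g}}_{\mf{S}_j}$, i.e.\ a transference of the uniformly bounded modulation group \eqref{ex:prop:class_J_complex_formulation} to averages against $L^1$-densities. Such transference requires strong continuity (or measurability) of the orbit, which holds for $\vec{g} \in c_{00}$ but not for the infinite tail sequences $\vec{g}^{n,j}$ produced by the mean-method decomposition of Theorem~\ref{theorem:meanmethod}; it is also not implied by hypotheses of the type \eqref{eq:prop:change_basis;1}, which are not assumed here. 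This is exactly the obstruction the paper's proof avoids: it establishes the second embedding by duality, applying the first embedding (which only needs a pointwise-in-$t$ bound) to the dual couple $((\mf{S}_0^\circ)^*,(\mf{S}_1^\circ)^*)$ via Lemma~\ref{lemma:dualstructure} and Proposition~\ref{proposition:duality}, and then testing a given $\vec{x}$ against a norming functional in $(\mf{S}_0^\circ)^{*}(a^{\theta}) \cap (\mf{S}_1^\circ)^{*}(a^{\theta-1})$. Your reduction to simple functions and the reassembly $u^j_k = \sum_n \widehat{\one_{A_n}}(k)g^{n,j}_k$ is an attractive direct alternative, but as written it needs either an additional multiplier hypothesis on $\mf{S}_0,\mf{S}_1$ or a dualization of precisely the kind the paper performs.
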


\begin{proof}
For the first embedding we will use the isometric isomorphism between ${\ms{H}}_\pi(\mathbb{S};\mc{X}_0,\mc{X}_1)$ and $\mf{S}_0(e^{-\theta}) \cap \mf{S}_1(e^{1-\theta})$ from Lemma~\ref{lemma:complexisometric}. Fix $f \in {\ms{H}}_\lambda(\mathbb{S};\mc{X}_0,\mc{X}_1)$ and note that for $t \in \R$ we have
\begin{align*}
\nrm{f(\,\cdot\,+it)}_{{\ms{H}}_\lambda(\mathbb{S};\mc{X}_0,\mc{X}_1)} &=
\max_{j = 0,1}\,\nrmb{\ms{F}_\lambda[ f_\theta(\,\cdot\,+t)]}_{\mf{S}_j(\ee^{j-\theta})}  \\
&= \max_{j = 0,1}\,\nrmb{(a^{ikt}\ms{F}_\lambda f_\theta (k))_{k \in \Z}}_{\mf{S}_j(\ee^{j-\theta})} \\
&\stackrel{\eqref{ex:prop:class_J_complex_formulation}}{\lesssim} \max_{j = 0,1}\,\nrmb{(\ms{F}_\lambda f_\theta (k))_{k \in \Z}}_{\mf{S}_j(\ee^{j-\theta})} \\
&= \nrm{f}_{{\ms{H}}_\lambda(\mathbb{S};\mc{X}_0,\mc{X}_1)}.
\end{align*}
Therefore, using the complex formulation in Proposition \ref{proposition:complexsimple}, we have $f_\theta(t) \in (\mc{X}_0,\mc{X}_1)_{\theta;a}$ with for any $t \in \T_\lambda$
\begin{equation*}
 \nrm{f_\theta(t)}_{(\mc{X}_0,\mc{X}_1)_{\theta;a}}\leq \nrm{f(\,\cdot\,+t)}_{{\ms{H}}_\lambda(\mathbb{S};\mc{X}_0,\mc{X}_1)}  \lesssim \nrm{f}_{{\ms{H}}_\lambda(\mathbb{S};\mc{X}_0,\mc{X}_1)}.
\end{equation*}
Therefore, $f_\theta \in L^p(\T_\lambda;(\mc{X}_0,\mc{X}_1)_{\theta})$ and
  \begin{equation*}
\nrm{f_\theta}_{{L^p}(\T_\lambda;(\mc{X}_0,\mc{X}_1)_{\theta;a})} \lesssim \nrm{f}_{\ms{H}_\lambda(\mathbb{S};\mc{X}_0,\mc{X}_1)}
\end{equation*}
In view of Lemma~\ref{lemma:complexisometric}, this proves the first embedding.

We will deduce the second embedding by duality. By Lemma \ref{lemma:c00dense}, we know that $\mc{X}_0^\circ$ and $\mc{X}_1^\circ$ are Ces\`aro convergent. Thus, by Lemma~\ref{lemma:dualstructure}, we know that $(\mc{X}_0^{\circ})^*$ and $(\mc{X}_1^\circ)^*$ are Ces\`aro bounded  sequence structures, for which one can easily check that for $j=0,1$
\begin{equation*}
\nrm{(\ee^{iks}x_{k}^*)_{k \in \Z}}_{(\mf{S}_j^\circ)^{*}} \lesssim \nrm{\vec{x}^*}_{(\mf{S}_j^\circ)^{*}}, \qquad s \in \R,\, \vec{x}^* \in (\mf{S}_j^\circ)^{*}.
\end{equation*}
Noting that the first part of the proof did not use the Ces\`aro convergence assumption and also works for $a^{-1} \in (0,1)$,  we have by Proposition \ref{proposition:duality}
\begin{equation}\label{eq:dualreitembedding}
\begin{aligned}
    (\mf{S}_0^\circ)^{*}(a^{\theta}) \cap (\mf{S}_1^\circ)^{*}(a^{\theta-1})&\hookrightarrow \widehat{L}^p(\T_\lambda;((\mc{X}_0^{\circ})^*,(\mc{X}_1^{\circ})^*)_{\theta;a})\\
  &\hookrightarrow \widehat{L}^p(\T_\lambda;(\mc{X}_0,\mc{X}_1)_{\theta;a})^*
\end{aligned}
\end{equation}
Take $\vec{x} \in c_{00}(\Z;X_0 \cap X_1)$ and let $\varepsilon > 0$. Using \eqref{eq:proposition:duality;dual_sum} we can find a $\vec{x}^* \in    (\mf{S}_0^\circ)^{*}(a^{\theta}) \cap (\mf{S}_1^\circ)^{*}(a^{\theta-1})$ of norm $1$ such that $$\nrm{\vec{x}}_{\mf{S}_0^\circ(a^{-\theta}) + \mf{S}^{\circ}_1(a^{1-\theta})} \leq |\ip{\vec{x},\vec{x}^*}|+\varepsilon.$$
Then, by the embedding in \eqref{eq:dualreitembedding}, we have
\begin{align*}
\nrm{\vec{x}}_{\mf{S}_0(a^{-\theta}) + \mf{S}_1(a^{1-\theta})} &\leq |\ip{\vec{x},\vec{x}^*}|+\varepsilon \\&\leq \nrm{\vec{x}}_{\widehat{L}^p(\T_\lambda;(\mc{X}_0,\mc{X}_1)_{\theta;a})} \nrm{\vec{x}^*}_{{\widehat{L}^p(\T_\lambda;(\mc{X}_0,\mc{X}_1)_{\theta;a})}^*}+\varepsilon \\&\lesssim \nrm{\vec{x}}_{\widehat{L}^p(\T_\lambda;(\mc{X}_0,\mc{X}_1)_{\theta;a})}+\varepsilon.
\end{align*}
Since $\varepsilon > 0$ was arbitrary and $c_{00}(\Z;X_0 \cap X_1)$ is dense in $\widehat{L}^p(\T_\lambda;(\mc{X}_0,\mc{X}_1)_{\theta;a})$ by Corollary \ref{corollary:denseX0X1}, the second embedding follows.
\end{proof}

Example \ref{ex:thm:reiteration_complex} now follows from Theorem \ref{thm:reiteration} using $a=\ee^{{1}/\ha{\theta_1-\theta_0}}$, $b=e$, $Y_j = (\mc{X}_0,\mc{X}_1)_{\theta_j}$ and
$$
\mf{T}_j = \widehat{L}^p(\T_\lambda;(\mc{X}_0,\mc{X}_1)_{\theta_j})
$$
for $\lambda := \frac{\pi}{\log a}$ and $p \in [1,\infty)$. Indeed, the assumed embeddings in Theorem \ref{thm:reiteration} follow from
Proposition \ref{proposition:complexreiterationabstract} and the conclusion of Theorem \ref{thm:reiteration} is equivalent to the conclusion of Example \ref{ex:thm:reiteration_complex} by Example \ref{ex:changing_base_torus_complex_int}.

\subsection{The $\gamma$-interpolation method}
To conclude our discussion of reiteration, we will discuss Theorem \ref{thm:reiteration} in the setting of the $\gamma$-interpolation method. In particular, will prove the following concrete example. For an introduction to nontrivial type and Pisier's contraction property we refer to \cite[Chapter 7]{HNVW17}.

\begin{example}\label{ex:cor:thm:reiteration}
Let $(X_0,X_1)$ be a compatible couple of Banach spaces with non-trivial type.
Let $0 < \theta_0 < \theta_1 < 1$ and $\theta \in (0,1)$ and set $\omega = (1-\theta)\theta_0 + \theta\theta_1$.
We have the reiteration identity
\begin{equation*}
\left([X_0,X_1]_{\theta_0},[X_0,X_1]_{\theta_1}\right)_{\theta,\gamma} = (X_0,X_1)_{\omega,\gamma}
\end{equation*}
and, if in addition $X_0$ and $X_1$ have Pisier's contraction property, we also have the reiteration identity
\begin{equation*}
\left((X_0,X_1)_{\theta_0,\gamma},(X_0,X_1)_{\theta_1,\gamma}\right)_{\theta,\gamma} = (X_0,X_1)_{\omega,\gamma}.
\end{equation*}
\end{example}

As before, to prove that Example \ref{ex:cor:thm:reiteration} follows from Theorem \ref{thm:reiteration}, we need to study the embeddings in \eqref{eq:reitembedding}.
We start with an interpolation result for weighted sequence structures, which is
an application of the Stein interpolation in Theorem~\ref{theorem:Steininterpolation1}.

\begin{lemma}\label{lemma:steinweightedss}
  Let $(\mc{X}_0,\mc{X}_1)$ be a compatible couple of Ces\`aro convergent sequentially structured Banach spaces, let $\mf{U}_j$ be a Ces\`aro convergent sequence structure on $\mf{S}_j$ and set $\mc{S}_j:=[\mf{S}_j,\mf{U}_j]$ for $j=0,1$. Suppose that
  \begin{equation*}
\nrmb{\big((x_{k,n-k})_{k \in \Z}\big)_{n \in \Z}}_{\mf{U}_j} \lesssim \nrm{\vec{x}}_{\mf{U}_j}, \qquad \vec{x} = \big((x_{k,n})_{k \in \Z}\big)_{n \in \Z} \in \mf{U}_j.
\end{equation*}
for $j=0,1$.   For  $\theta \in (0,1)$ and $a \in (1,\infty)$ we have
\begin{equation*}
 (\mc{S}_0,\mc{S}_1)_{\theta;a} = \big(\mc{S}_0(a^{-\theta}),\mc{S}_1(a^{1-\theta})\big)_{\theta;a},
\end{equation*}
where $\mc{S}_j(a^{j-\theta}) := [\mf{S}_j(a^{j-\theta}),\mf{V}_j]$ with
$$
\mf{V}_j := \left\{ ((x_{k,n})_{k \in \Z})_{n \in \Z} \in \ell^0(\Z;\mf{S}_j(a^{j-\theta})) : ((a^{(j-\theta)k}x_{k,n})_{k \in \Z})_{n \in \Z} \in \mf{U}_j \right\}.
$$
\end{lemma}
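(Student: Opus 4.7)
The plan is to invoke the Stein interpolation theorem (Theorem~\ref{theorem:Steininterpolation1}) in its $2\lambda$-periodic form (Remark~\ref{remark:stein}\ref{it:steinlambda}) with $\lambda := \pi/\log a$, applied to the analytic family of inner multipliers
\[
T(z)\vec{y} := (a^{(z-\theta)k} y_k)_{k \in \Z}, \qquad z \in \overline{\mathbb{S}},
\]
on the dense subspace $\breve{X} := c_{00}(\Z; X_0 \cap X_1) \subseteq \mf{S}_0 \cap \mf{S}_1$ (the density here follows from Lemma~\ref{lemma:c00dense} and the $c_0$-structure assumption on $\mf{S}_0$ and $\mf{S}_1$). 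Note that $T(\theta) = \mrm{id}_{\breve{X}}$, that $T(\cdot)\vec{y}$ is analytic and $2\lambda i$-periodic, and that by Corollary~\ref{corollary:denseX0X1} it suffices to verify the equivalence of the two interpolation norms on $\breve{X}$.

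For the inclusion ``$\supseteq$'' I would apply Stein with $\mc{X}_j^{\mrm{Stein}} := \mc{S}_j(a^{j-\theta})$ and $\mc{Y}_j^{\mrm{Stein}} := \mc{S}_j$, and verify the boundary condition~(ii). For $\vec{x} = ((x_{n,k})_k)_n \in c_{00}(\Z;\breve{X})$ a direct computation, using that the exponent $(z-\theta)k$ is linear in $z$ and that $a^{2\lambda i k}=1$ for all $k \in \Z$, shows that the $2\lambda$-periodic Fourier coefficient of $t \mapsto \sum_n a^{int}T(j+it)\vec{x}^n$ at index $m$ equals the inner sequence $(a^{(j-\theta)k}x_{m-k,k})_{k \in \Z}$. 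Setting $v_{k,n} := a^{(j-\theta)k}x_{n,k}$, the outer $\mf{U}_j$-norm of this Fourier sequence reads $\nrm{((v_{k,m-k})_k)_m}_{\mf{U}_j}$, which by the assumed skew-shift invariance of $\mf{U}_j$ is dominated by $\nrm{((v_{k,n})_k)_n}_{\mf{U}_j}$; the latter quantity equals $\nrm{\vec{x}}_{\mf{V}_j}$ by the very definition of $\mf{V}_j$. Stein interpolation then extends $T(\theta) = \mrm{id}$ to a continuous inclusion $(\mc{S}_0(a^{-\theta}),\mc{S}_1(a^{1-\theta}))_{\theta;a} \hookrightarrow (\mc{S}_0,\mc{S}_1)_{\theta;a}$.

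For the reverse inclusion I would run the symmetric Stein argument with the conjugate family $\tilde{T}(z)\vec{y} := (a^{-(z-\theta)k}y_k)_{k \in \Z}$ and Stein's roles of $\mc{X}_j^{\mrm{Stein}}$ and $\mc{Y}_j^{\mrm{Stein}}$ swapped. The analogous Fourier calculation now yields $(a^{-(j-\theta)k}x_{m+k,k})_{k \in \Z}$ at index $m$; the outer $\mf{V}_j$-weight $a^{(j-\theta)k}$ then cancels the inner factor $a^{-(j-\theta)k}$, so Stein's condition~(ii) reduces to a shift bound $\nrm{((x_{k,n+k})_k)_n}_{\mf{U}_j} \lesssim \nrm{((x_{k,n})_k)_n}_{\mf{U}_j}$ in the direction \emph{opposite} to the stated hypothesis. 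The main obstacle will therefore be deducing this inverse skew-shift bound from the one-sided assumption: one substitutes $x'_{k,n} := x_{k,n+k}$ into the hypothesis and uses that the two skew shifts are mutually inverse on $\ell^0(\Z;\ell^0(\Z;X_j))$, provided one first verifies the invariance of $\mf{U}_j$ under the inverse shift on $c_{00}$-representatives and extends by density using the $c_0$-structure of $\mf{U}_j$.
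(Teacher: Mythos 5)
Your strategy coincides with the paper's: Stein interpolation (Theorem~\ref{theorem:Steininterpolation1} in its $2\lambda$-periodic form with $\lambda=\pi/\log a$) applied to the multiplier families $z\mapsto (a^{\pm(z-\theta)k}y_k)_{k\in\Z}$ on the dense subspace $c_{00}(\Z;X_0\cap X_1)$ of $\mf{S}_0\cap\mf{S}_1$, one sign per embedding. Your Fourier computation for the embedding $(\mc{S}_0(a^{-\theta}),\mc{S}_1(a^{1-\theta}))_{\theta;a}\hookrightarrow(\mc{S}_0,\mc{S}_1)_{\theta;a}$ is correct and the weight cancellation reduces the boundary condition exactly to the stated skew-shift hypothesis; that half is complete.

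The gap is in your treatment of the reverse embedding. You correctly identify that the conjugate family forces the inverse skew shift $\vec{x}\mapsto((x_{k,n+k})_{k})_{n}$, but your proposed derivation of its $\mf{U}_j$-boundedness from the stated hypothesis fails: substituting $x'_{k,n}:=x_{k,n+k}$ into $\nrm{((x'_{k,n-k})_k)_n}_{\mf{U}_j}\lesssim\nrm{\vec{x}'}_{\mf{U}_j}$ only yields $\nrm{\vec{x}}_{\mf{U}_j}\lesssim\nrm{((x_{k,n+k})_k)_n}_{\mf{U}_j}$, i.e.\ a \emph{lower} bound for the inverse shift, whereas you need an upper bound. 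Boundedness of a bijection of $\ell^0(\Z^2;X_j)$ on $\mf{U}_j$ does not imply boundedness of its inverse, and "invariance of $\mf{U}_j$ under the inverse shift on $c_{00}$-representatives" is precisely what has to be proved, not a consequence of the hypothesis. The correct repair is to assume (or, in applications, verify) the skew-shift bound in both directions; this costs nothing in practice, since in Proposition~\ref{ex:prop:interp_of_seq_structures} the bound comes from Kahane's contraction principle or Pisier's contraction property, which are insensitive to the sign of $a^{\pm ikt}$. For what it is worth, the paper's own displayed computation contains a reindexing slip (the coefficient at $n$ of $t\mapsto\sum_{m}a^{i(m-k)t}y_m$ is $y_{n+k}$, not $y_{n-k}$) and then dismisses the reverse embedding with "can be obtained in the same way", so the difficulty you ran into reflects a genuine imprecision in the source, which a careful write-up should resolve by symmetrizing the hypothesis rather than by the inversion argument you sketch.
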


\begin{proof}
  Set $\lambda := \frac{\pi}{\log a}$.
Let $\breve{X} := c_{00}(\Z;X_0 \cap X_1)$ and let
$\cbrace{T(z)}_{z \in \overline{\mathbb{S}}}$ be the family of operators $\breve{X} \to \mf{S}_0(a^{-\theta})+\mf{S}_1(a^{1-\theta})$
given by
$$
T(z) \vec{x} :=(a^{k(\theta-z)}x_{k})_{k \in \Z}.
$$
Note that $\breve{X}$ is dense in $\mf{S}_0 \cap \mf{S}_1$ and note that $$z \mapsto T(z) \vec{x} \in \ms{H}_{\lambda}(\overline{\mathbb{S}};\mf{S}_0(a^{-\theta})+\mf{S}_1(a^{1-\theta})), \qquad \vec{x} \in \breve{X}.$$
For $\vec{x} = ((x_{n,k})_{k \in \Z})_{n \in \Z} \in c_{00}(\Z;\breve{X}) =  c_{00}(\Z^2;X_0 \cap X_1)$ we have
\begin{align*}
\ms{F}_\lambda \bracs{t \mapsto \sum_{n \in \Z}a^{int}T\ha{j+it}(x_{k,n})_{k \in \Z} }
&= \ms{F}_\lambda \bracs{ t \mapsto \sum_{n \in \Z}a^{int}(a^{k(\theta-j-it)}x_{k,n})_{k \in \Z}} \nonumber \\
&= \has{ \ms{F}_\lambda \bracb{t \mapsto \sum_{n \in \Z}a^{i(n-k)t}a^{(\theta-j)k}x_{k,n} } }_{k \in \Z} \nonumber \\
&= \ms{F}_\lambda \bracs{ t \mapsto \sum_{n \in \Z}a^{int}(a^{(\theta-j)k}x_{k,n-k})_{k \in \Z} } \nonumber \\
&= \big((a^{(\theta-j)k}x_{k,n-k})_{k \in \Z}\big)_{n \in \Z}, 
\end{align*}
so that
\begin{align*}
\nrms{ \ms{F}_\lambda \bracb{t \mapsto \sum_{n \in \Z} a^{int} T\ha{j+it}(x_{k,n})_{k \in \Z}} }_{\mf{V}_j}
&= \nrmb{\big((x_{k,n-k})_{k \in \Z}\big)_{n \in \Z}}_{\mf{U}_j} \lesssim \nrm{\vec{x}}_{\mf{U}_j}.
\end{align*}
Therefore, we can apply Theorem~\ref{theorem:Steininterpolation1} (see also Remark \ref{remark:stein}\ref{it:steinlambda}) and Corollary \ref{corollary:denseX0X1} to find that $T(\theta)=I$ extends to a bounded operator from $(\mc{S}_0,\mc{S}_1)_{\theta;a}$ to $\big(\mc{S}_0(a^{-\theta}),\mc{S}_1(a^{1-\theta})\big)_{\theta;a}$, which is equivalent to the embedding ``$\hookrightarrow$''.
The reverse embedding can be obtained in the same way.
\end{proof}

Using Lemma \ref{lemma:steinweightedss}, we can prove the embeddings in \eqref{eq:reitembedding} for the sequence structures associated with the $\gamma$-interpolation method introduced in Example~\ref{example:interpolationmethods}\ref{it:ssigaussian}. Example \ref{ex:cor:thm:reiteration} follows by combining the following proposition with Theorem \ref{thm:reiteration} and Proposition \ref{prop:change_basis}.

\begin{proposition}\label{ex:prop:interp_of_seq_structures}
Let $(X_0,X_1)$ be an compatible couple of Banach spaces with non-trivial type, let $\theta \in (0,1)$, $p \in [1,\infty)$ and let $a \in (1,\infty)$. Set $\mf{S}_j=\gamma^p(\Z;X_j)$ for $j=0,1$. Then we have
\begin{align*}
    \mf{S}_0(a^{-\theta}) \cap \mf{S}_1(a^{1-\theta}) \hookrightarrow
    \gamma^p(\Z;[X_0,X_1]_\theta)\hookrightarrow \mf{S}_0(a^{-\theta}) + \mf{S}_1(a^{1-\theta}),
  \intertext{and, if $X_0$ and $X_1$ have Pisier's contraction property, we have}
      \mf{S}_0(a^{-\theta}) \cap \mf{S}_1(a^{1-\theta}) \hookrightarrow
    \gamma^p(\Z;(X_0,X_1)_{\theta,\gamma})\hookrightarrow \mf{S}_0(a^{-\theta}) + \mf{S}_1(a^{1-\theta}).
\end{align*}
\end{proposition}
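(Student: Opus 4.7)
The plan is to deduce both embeddings from Lemma~\ref{lemma:steinweightedss} applied to a suitably chosen $c_0$-sequence structure $\mf{U}_j$ on $\mf{S}_j=\gamma^p(\Z;X_j)$, combined with the basic sandwich $X_0 \cap X_1 \hookrightarrow (\mc{X}_0,\mc{X}_1)_\theta \hookrightarrow X_0+X_1$ from Proposition~\ref{proposition:Banachembeddings}, and the well-known Fubini-type identifications of complex/$\gamma$-interpolation with a $\gamma^p$-sum on the Fourier side. Concretely, with $\mc{S}_j := [\mf{S}_j,\mf{U}_j]$, Lemma~\ref{lemma:steinweightedss} would give
$(\mc{S}_0,\mc{S}_1)_{\theta;a} = (\mc{S}_0(a^{-\theta}),\mc{S}_1(a^{1-\theta}))_{\theta;a}$, and applying Proposition~\ref{proposition:Banachembeddings} to the right-hand side immediately yields
\[
\mf{S}_0(a^{-\theta}) \cap \mf{S}_1(a^{1-\theta})
\hookrightarrow (\mc{S}_0,\mc{S}_1)_{\theta;a}
\hookrightarrow \mf{S}_0(a^{-\theta}) + \mf{S}_1(a^{1-\theta}).
\]
It then suffices to identify the middle space with $\gamma^p(\Z;Y_\theta)$ for the appropriate $Y_\theta$.

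For the first embedding I would take $\lambda := \pi/\log a$ and $\mf{U}_j := \widehat{L}^p(\T_\lambda;\mf{S}_j)$; by Example~\ref{example:sequencestructures}\ref{it:sscomplex} this is a $c_0$-sequence structure on $\mf{S}_j$. The shift condition in Lemma~\ref{lemma:steinweightedss} reduces to: if $f \in L^p(\T_\lambda;\mf{S}_j)$ represents $\vec{x}$ via Fourier coefficients, then $((x_{k,n-k})_k)_n$ is represented by $g(t):=(e^{i(\pi/\lambda)kt}f(t)_k)_{k \in \Z}$, and a direct calculation shows that this is an unimodular pointwise modulation of $f(t)$ in $\gamma^p(\Z;X_j)$, whose norm is preserved up to a constant by rotational invariance of (complex) Gaussian sums. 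Having verified the hypotheses, Example~\ref{ex:changing_base_torus_complex_int}\ref{it:ex:changing_base_torus_complex_int;lower} identifies $(\mc{S}_0,\mc{S}_1)_{\theta;a}$ with the complex interpolation space $[\gamma^p(\Z;X_0),\gamma^p(\Z;X_1)]_\theta$; and under non-trivial type on both $X_0$ and $X_1$ (which gives $K$-convexity), the classical Fubini-type identity $[\gamma^p(\Z;X_0),\gamma^p(\Z;X_1)]_\theta=\gamma^p(\Z;[X_0,X_1]_\theta)$ closes the argument. For the second embedding I would take instead $\mf{U}_j := \gamma^p(\Z;\mf{S}_j)$. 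Pisier's contraction property gives the norm-equivalence $\gamma^p(\Z;\gamma^p(\Z;X_j)) \eqsim \gamma^p(\Z^2;X_j)$; under this identification the substitution $(k,n) \mapsto (k,n-k)$ is a bijection of $\Z^2$, and Gaussian exchangeability yields the desired shift invariance. By Proposition~\ref{prop:change_basis} together with Example~\ref{example:interpolationmethods}\ref{it:ssigaussian}, the middle space is now the $\gamma$-interpolation space $(\gamma^p(\Z;X_0),\gamma^p(\Z;X_1))_{\theta,\gamma}$, which under Pisier's contraction property coincides with $\gamma^p(\Z;(X_0,X_1)_{\theta,\gamma})$ by the Fubini-type theorem for the $\gamma$-method (as in \cite{SW06,KLW19}).

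The main obstacle I expect is the verification of the shift invariance condition in Lemma~\ref{lemma:steinweightedss}, since it is the step where the hypotheses on $X_0$ and $X_1$ are essentially used: in the first case one needs the interaction between unimodular multipliers and real Gaussians (which is where a careful passage between real and complex Gaussians, benefitting from non-trivial type, is convenient), and in the second case one needs Pisier's contraction property to replace iterated $\gamma^p$-sums with a single $\gamma^p$-sum on $\Z^2$ so that exchangeability can be invoked. The final identification of the interpolation space on the Fourier side as a $\gamma^p$-space of the interpolation space is then standard, once one has reduced to the classical $\gamma$- and complex-interpolation setting.
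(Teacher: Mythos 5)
Your proposal is correct and follows essentially the same route as the paper: the paper also applies Lemma~\ref{lemma:steinweightedss} with $\mf{U}_j = \widehat{L}^2(\T_\lambda;\mf{S}_j)$ (any finite exponent works equally well, as you chose $p$) resp.\ $\mf{U}_j=\gamma^p(\Z;\mf{S}_j)$, verifies the shift condition via Kahane's contraction principle for unimodular scalars resp.\ Pisier's contraction property plus exchangeability, identifies the middle space through Example~\ref{ex:changing_base_torus_complex_int} and the Fubini-type theorems for the complex and $\gamma$-methods under $K$-convexity, and concludes with Proposition~\ref{proposition:Banachembeddings}. The only cosmetic remark is that no passage between real and complex Gaussians or appeal to non-trivial type is needed for the modulation step, since the complex-scalar contraction principle holds in any Banach space; non-trivial type enters only through the Fubini identification, exactly as you use it.
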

\begin{proof}
Set $\lambda := \frac{\pi}{\log a}$. For the first chain of embeddings, let  $\mf{U}_j := \widehat{L}^2(\T_\lambda;\mf{S}_j)$ and set $\mc{S}_j:= [\mf{S}_j,\mf{U}_j].$
By \cite[Proposition~7.1.3]{HNVW17}, we know that $[X_0,X_1]_\theta$ has non-trivial type. It thus follows from \cite[Theorem~7.1.14 and Theorem~7.4.23]{HNVW17} that $X_0$, $X_1$ and $[X_0,X_1]_\theta$ have finite cotype and are $K$-convex. We can therefore apply \cite[Corollary~7.2.10 and Theorem~7.4.16]{HNVW17} in combination with Example~\ref{ex:changing_base_torus_complex_int} to obtain that
\begin{equation*}
\begin{aligned}
(\mc{S}_0,\mc{S}_1)_{\theta;a} = \bracb{\gamma^p(\Z;X_0),\gamma^p(\Z;X_1)}_\theta &=
\bracb{\varepsilon^p(\Z;X_0),\varepsilon^p(\Z;X_1)}_\theta \\
&= \varepsilon^p(\Z;[X_0,X_1]_\theta) = \gamma^p(\Z;[X_0,X_1]_\theta).
\end{aligned}
\end{equation*}
Using Kahane's contraction principle (see \cite[Theorem~6.1.13]{HNVW17}) we find that, for all $\vec{x}=\big((x_{k,n})_{k \in \Z}\big)_{n \in \Z} \in \mf{U}_j$,
\begin{align*}
\nrmb{\big((x_{k,n-k})_{k \in \Z}\big)_{n \in \Z}}_{\mf{U}_j}
&= \nrmb{t \mapsto \big(a^{-ikt}\ms{F}_\lambda[(x_{k,n})_{n \in \Z}](t)\big)_{k \in \Z}}_{L^2(\T_\lambda;\gamma^p(\Z;X_j))} \\
&\leq \nrmb{t \mapsto \big(\ms{F}_\lambda[(x_{k,n})_{n \in \Z}](t)\big)_{k \in \Z}}_{L^2(\T_\lambda;\gamma^p(\Z;X_j))} = \nrm{\vec{x}}_{\mf{U}_j}.
\end{align*}
Therefore, by Lemma \ref{lemma:steinweightedss}, we have
$$
 (\mc{S}_0,\mc{S}_1)_{\theta;a}  = \big(\mc{S}_0(a^{-\theta}),\mc{S}_1(a^{1-\theta})\big)_{\theta;a}.
$$
Since, by Proposition~\ref{proposition:Banachembeddings}, we have
$$
    \mf{S}_0(a^{-\theta}) \cap \mf{S}_1(a^{1-\theta}) \hookrightarrow
     \big(\mc{S}_0(a^{-\theta}),\mc{S}_1(a^{1-\theta})\big)_{\theta;a} \hookrightarrow \mf{S}_0(a^{-\theta}) + \mf{S}_1(a^{1-\theta}),
$$
 this proves first chain of embeddings.

 The proof of the second chain of embeddings is analogous, using $\mf{U}_j := \gamma^p(\Z;\mf{S}_j)$ for $j=0,1$. Indeed, using \cite[Corollary~3.2 and Corollary~3.3]{SW06}, \cite[Corollary~7.2.10]{HNVW17} and Proposition~\ref{prop:change_basis} we find that
\begin{equation*}
\begin{aligned}
(\mc{S}_0,\mc{S}_1)_{\theta;a} &= \hab{\gamma^p(\Z;X_0),\gamma^p(\Z;X_1)}_{\theta,\gamma} =
\hab{\varepsilon^p(\Z;X_0),\varepsilon^p(\Z;X_1)}_{\theta,\gamma} \\
&= \varepsilon^p(\Z;(X_0,X_1)_{\theta,\gamma}) = \gamma^p(\Z;(X_0,X_1)_{\theta,\gamma}).
\end{aligned}
\end{equation*}
As $X_j$ has Pisier's contraction property, we have $\mf{U}_j =\gamma^p(\Z^2;X_j)$ (see \cite[Corollary~7.5.19]{HNVW17}), which yields that
  \begin{equation*}
\nrmb{\big((x_{k,n-k})_{k \in \Z}\big)_{n \in \Z}}_{\mf{U}_j} \lesssim \nrm{\vec{x}}_{\mf{U}_j}, \qquad \vec{x} = \big((x_{k,n})_{k \in \Z}\big)_{n \in \Z} \in \mf{U}_j.
\end{equation*}
The proof is finished using Lemma \ref{lemma:steinweightedss} and Proposition~\ref{proposition:Banachembeddings} as before.
\end{proof}

\section{Interpolation of intersections}\label{section:intersections}

In this final section we will study intersection representations for the sequentially structured interpolation method.
These results can be used to derive several intersection representations for anisotropic mixed-norm function spaces.
In future work we will for instance see that the intersection representation for anisotropic mixed-norm Triebel-Lizorkin spaces (cf.\ \eqref{eq:intro:space_BV})
\begin{equation*}
F^{(s,t)}_{(p,q),q}(\R^{d} \times \R^{k}) = F^{s}_{p,q}(\R^{d};L^q(\R^{k}) \cap L^p(\R^{d};F^{t}_{q,q}(\R^{k}))
\end{equation*}
from \cite{Li21} can be obtained from the
elementary  intersection representation for anisotropic mixed-norm Sobolev spaces
\begin{equation*}
W^{(m,n)}_{(p,q)}(\R^{d} \times \R^{k})
= W^{m}_{p}(\R^d;L^q(\R^k)) \cap L^{p}(\R^d;W^{n}_{q}(\R^k))
\end{equation*}
by means of $\ell^q$-interpolation.
Besides that this yields a tremendously simplified proof, it will also provide us valuable insight in the trace theory behind the maximal $L^p$-$L^q$-regularity approach to parabolic boundary value problems.

The following intersection representation is an extension of a result by Peetre \cite[Korollar~1.1]{Pe74} (also see \cite[1.12.1 and 1.12.2]{Tr78}) on the intersection of real interpolation spaces to the setting of sequentially structured interpolation. Other results on interpolation of intersections as well as sums can be found in \cite{Ba13,EPS03,Gr72a,Gr72b,Ha06c,Ma84,Ma86}.

\begin{theorem}\label{thm:interpol_intersection}
Let $\mc{X}=[X,\mf{S}]$, $\mc{Y}=[Y,\mf{T}]$ and $\mc{Z}=[Z,\mf{U}]$ be sequentially structured Banach spaces such that $(X,Y,Z)$ is a compatible triple of Banach spaces.
Assume that for each $k \in \Z$ there exist linear operators $S_k:X+Y \to X$ and $T_k:X+Y+Z \to Y+Z$ such that:
\begin{align}
\label{eq:thm:interpol_intersection;1}
S_k+T_k &= I_{X+Y}, && k \in \Z,\\
\label{eq:thm:interpol_intersection;2}   \nrm{(S_k v_{k})_{k \in \Z}}_{\mf{S}} &\lesssim \nrm{\vec{v}}_{\mf{S}+\mf{T}(\ee)}, && \vec{v} \in \mf{S}+\mf{T}(\ee),\\
\label{eq:thm:interpol_intersection;3}   \nrm{(T_k v_{k})_{k \in \Z}}_{\mf{T}(\ee)} &\lesssim \nrm{\vec{v}}_{\mf{S}+\mf{T}(\ee)}, && \vec{v} \in \mf{S}+\mf{T}(\ee),\\
\label{eq:thm:interpol_intersection;4}   \nrm{(T_k z_{k})_{k \in \Z}}_{\mf{U}} &\lesssim \nrm{\vec{z}}_{\mf{U}}, && \vec{z} \in \mf{U}.
\end{align}
Then, for all $\theta \in (0,1)$, we have
\begin{equation}\label{eq:thm:interpol_intersection;statement}
(\mc{X},\mc{Y})_\theta \cap (\mc{X},\mc{Z})_\theta = (\mc{X},\mc{Y} \cap \mc{Z})_\theta,
\end{equation}
where $\mc{Y} \cap \mc{Z} = [Y \cap Z,\mf{T} \cap \mf{U}]$.
\end{theorem}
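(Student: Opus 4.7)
The easy embedding $(\mc{X},\mc{Y} \cap \mc{Z})_\theta \hookrightarrow (\mc{X},\mc{Y})_\theta \cap (\mc{X},\mc{Z})_\theta$ is immediate from Proposition~\ref{proposition:embeddings} applied to the couples $(X,Y \cap Z) \hookrightarrow (X,Y)$ and $(X,Y \cap Z) \hookrightarrow (X,Z)$, using that $\mf{T} \cap \mf{U} \hookrightarrow \mf{T}$ and $\mf{T} \cap \mf{U} \hookrightarrow \mf{U}$ contractively.

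For the reverse embedding I would first unpack the hypotheses. A standard rescaling $v_k \rightsquigarrow e^{-\theta k} v_k$ transforms \eqref{eq:thm:interpol_intersection;2}--\eqref{eq:thm:interpol_intersection;4} into the weighted estimates
\begin{align*}
\nrm{(S_k v_k)}_{\mf{S}(e^{-\theta})} &\lesssim \nrm{\vec{v}}_{\mf{S}(e^{-\theta})+\mf{T}(e^{1-\theta})}, \\
\nrm{(T_k v_k)}_{\mf{T}(e^{1-\theta})} &\lesssim \nrm{\vec{v}}_{\mf{S}(e^{-\theta})+\mf{T}(e^{1-\theta})}, \\
\nrm{(T_k z_k)}_{\mf{U}(e^{1-\theta})} &\lesssim \nrm{\vec{z}}_{\mf{U}(e^{1-\theta})}.
\end{align*}
Testing these against sequences concentrated at a single index, I would deduce that $T_k$ acts as a bounded operator $X+Y \to Y$ and $Z \to Z$. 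The key consequence is that for every $v \in X \cap Z$ one has $T_k v \in X \cap Y \cap Z$, because $T_k v \in Y$ (via $v \in X \subseteq X+Y$), $T_k v \in Z$ (via $v \in Z$) and $T_k v = v - S_k v \in X$ (since both $v,S_k v \in X$).

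Now take $x \in (\mc{X},\mc{Y})_\theta \cap (\mc{X},\mc{Z})_\theta$ and near-optimal representations $x=\sum_k y_k$ with $\vec{y} \in \mf{S}(e^{-\theta}) \cap \mf{T}(e^{1-\theta})$, $y_k \in X \cap Y$, and $x = \sum_k z_k$ with $\vec{z} \in \mf{S}(e^{-\theta}) \cap \mf{U}(e^{1-\theta})$, $z_k \in X \cap Z$. Applying $T_k$ to the $z_k$'s produces the natural candidate $\vec{w}_0 := (T_k z_k)$, each entry of which lies in $X \cap Y \cap Z$ and which satisfies $\vec{w}_0 \in \mf{S}(e^{-\theta}) \cap (\mf{T} \cap \mf{U})(e^{1-\theta})$ by the three weighted bounds above; the $\mf{S}(e^{-\theta})$-bound follows from $T_k z_k = z_k - S_k z_k$.

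The main obstacle is that $\sum_k T_k z_k = x - r$ where $r := \sum_k S_k z_k \in X$, so $\vec{w}_0$ fails to sum to $x$. To correct this while remaining inside the triple intersection, I would bring in the $(\mc{X},\mc{Y})_\theta$-representation and set $v_k := y_k - z_k \in X$, so that $\sum_k v_k = 0$ and hence $\sum_k S_k v_k = -\sum_k T_k v_k$; this is the algebraic link between the two representations. Note that $T_k v_k \in X \cap Y$ (by the same three-way argument applied to $v_k \in X$, although not necessarily in $Z$). The plan is to define the final sequence $\vec{w} = \vec{w}_0 + \vec{c}$ where the correction $\vec{c}$ is built from $T_k v_k$ (together with a telescoping/shift reorganization using translation invariance of $\mf{S},\mf{T},\mf{U}$) and is arranged so that $\sum_k c_k = r$ while each $c_k \in X \cap Y \cap Z$ and $\vec{c} \in \mf{S}(e^{-\theta}) \cap (\mf{T} \cap \mf{U})(e^{1-\theta})$. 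Finding this algebraic correction is the delicate point; once it is in place, the norm bounds and the identity $\sum_k w_k = x$ combine with Definition~\ref{definition:sequenceinterpolation} to yield $\nrm{x}_{(\mc{X},\mc{Y} \cap \mc{Z})_\theta} \lesssim \nrm{x}_{(\mc{X},\mc{Y})_\theta} + \nrm{x}_{(\mc{X},\mc{Z})_\theta}$, finishing the proof. Passing to the mean-method formulation of Theorem~\ref{theorem:meanmethod} may simplify the bookkeeping, as there one only needs to decompose the constant sequence $(\ldots,x,x,\ldots)$ as $\vec{\alpha}+\vec{\delta}$ with $\alpha_k \in X$ and $\delta_k \in Y \cap Z$ pointwise, so that the summation constraint $\sum_k w_k = x$ is replaced by pointwise identities.
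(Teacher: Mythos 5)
The first half of your argument is sound: the easy embedding via Proposition~\ref{proposition:embeddings}, the weighted reformulation of \eqref{eq:thm:interpol_intersection;2}--\eqref{eq:thm:interpol_intersection;4} (which follows from translation invariance, since diagonal operator sequences commute with the weights), and the observation that $\vec{w}_0=(T_kz_k)_{k\in\Z}$ lies in $\mf{S}(\ee^{-\theta})\cap(\mf{T}\cap\mf{U})(\ee^{1-\theta})$. But there is a genuine gap exactly at the point you flag as "delicate": the correction sequence $\vec{c}$ is never constructed, and with the ingredients you propose it cannot be. In the sum-representation of Definition~\ref{definition:sequenceinterpolation} the corrected sequence $\vec{w}=\vec{w}_0+\vec{c}$ must itself lie in $(\mf{T}\cap\mf{U})(\ee^{1-\theta})$, so $\vec{c}$ needs $\mf{U}(\ee^{1-\theta})$-control; yet the only hypothesis producing any $\mf{U}$-bound is \eqref{eq:thm:interpol_intersection;4}, whose input must already be $\mf{U}$-bounded, and the sequences $\vec{y}$ and $(y_k-z_k)_{k\in\Z}$ from which you want to build $\vec{c}$ carry no $\mf{U}$-information at all. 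Moreover $r=\sum_k S_kz_k$ is only known to converge in $X+Y+Z$ and need not lie in $Y\cap Z$, so even a one-term correction is unavailable. The sum-representation route therefore stalls.

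The remark you make only in passing at the very end is in fact the resolution, and it is how the paper argues: working with the mean-method formulation of Theorem~\ref{theorem:meanmethod} does not merely "simplify the bookkeeping" — it removes the need for a correction altogether, because the constraint becomes the pointwise identity $v=a_k+b_k$ rather than $\sum_k w_k=v$. Concretely, one decomposes the constant sequence $\vec{v}=(\ldots,v,v,v,\ldots)=\vec{x}+\vec{z}$ with $\vec{x}\in\mf{S}(\ee^{-\theta})$ and $\vec{z}\in\mf{U}(\ee^{1-\theta})$ (using $v\in(\mc{X},\mc{Z})_\theta$), and sets $\vec{a}:=\vec{v}-T\vec{z}$ and $\vec{b}:=T\vec{z}$. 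The identities $\vec{a}=S\vec{v}+\vec{x}-S\vec{x}$ and $\vec{b}=T\vec{v}-T\vec{x}$ then yield the bound for $\vec{a}$ in $\mf{S}(\ee^{-\theta})$ from \eqref{eq:thm:interpol_intersection;2} and for $\vec{b}$ in $\mf{T}(\ee^{1-\theta})$ from \eqref{eq:thm:interpol_intersection;3}, applied to the constant sequence $\vec{v}$, which lies in $\mf{S}(\ee^{-\theta})+\mf{T}(\ee^{1-\theta})$ precisely because $v\in(\mc{X},\mc{Y})_\theta$; this is how the two memberships are combined, which your device $v_k=y_k-z_k$ does not achieve. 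Finally $\vec{b}=T\vec{z}$ is estimated in $\mf{U}(\ee^{1-\theta})$ by \eqref{eq:thm:interpol_intersection;4}. I would rewrite the second half of your proof along these lines.
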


\begin{remark}\label{rmk:thm:interpol_intersection}
The existence of linear operators $S_k:X+Y \to X$ and $T_k:X+Y \to Y$ for each $k \in \Z$ such that \eqref{eq:thm:interpol_intersection;1}, \eqref{eq:thm:interpol_intersection;2} and \eqref{eq:thm:interpol_intersection;3} are satisfied would be the natural way to define the quasilinearizability of $(\mc{X},\mc{Y})$, extending the notion of a quasilinearizable compatible couple of Banach spaces (see e.g.\ \cite[1.8.4]{Tr78}) to the setting of compatible couples of sequentially structured Banach spaces. Note that \eqref{eq:thm:interpol_intersection;2} and \eqref{eq:thm:interpol_intersection;3} hold true if and only if
\begin{align}
\label{eq:rmk:thm:interpol_intersection;1}
\nrm{(S_k)_{k \in \Z}}_{\mf{S} \to \mf{S}} < \infty, &\quad \nrm{(\ee^{-k}S_k)_{k \in \Z}}_{\mf{T} \to \mf{S}} < \infty, \\
\label{eq:rmk:thm:interpol_intersection;2}
\nrm{(\ee^k T_k)_{k \in \Z}}_{\mf{S} \to \mf{T}} < \infty, &\quad \nrm{(T_k)_{k \in \Z}}_{\mf{T} \to \mf{T}} < \infty,
\end{align}
hold true (cf.\ \cite[Definition~4.1]{Ku15}).
\end{remark}

\begin{proof}[Proof of Theorem~\ref{thm:interpol_intersection}]
It will be convenient to define the linear operators $S: \ell^0(\Z;X+Y) \to \ell^0(\Z;X)$ and $T:\ell^0(\Z;X+Y+Z) \to \ell^0(\Z;Y+Z)$ by $S\vec{v}:=(S_k v_k)_{k \in Z}$ and $T\vec{v}:=(T_k v_k)_{k \in Z}$.

Note that "$\hookleftarrow$" in \eqref{eq:thm:interpol_intersection;statement} follows by two applications of Proposition~\ref{proposition:embeddings}.
In order to establish the converse, let $v \in (\mc{X},\mc{Y})_\theta \cap (\mc{X},\mc{Z})_\theta \subseteq X + Y$.
Then, in particular, $v \in (\mc{X},\mc{Z})_\theta$, so that $\vec{v}:=(\ldots,v,v,v,\ldots) \in \mf{S}(\ee^{-\theta})+\mf{U}(\ee^{1-\theta})$ by Theorem~\ref{theorem:meanmethod}.
Therefore, there exist $\vec{x} \in \mf{S}(\ee^{-\theta})$ and $\vec{z} \in \mf{U}(\ee^{1-\theta})$ such that
$\vec{v}=\vec{x}+\vec{z}$ and
\begin{equation}\label{eq:thm:interpol_intersection;proof;1}
\nrm{\vec{x}}_{\mf{S}(\ee^{-\theta})}+\nrm{\vec{z}}_{\mf{U}(\ee^{1-\theta})}
\leq 2\nrm{\vec{v}}_{\mf{S}(\ee^{-\theta})+\mf{U}(\ee^{1-\theta})}.
\end{equation}
Defining
\begin{align*}
\vec{a} &:= \vec{v}-T\vec{z} \stackrel{\eqref{eq:thm:interpol_intersection;1}}{=}  S\vec{v}+T\vec{x} \stackrel{\eqref{eq:thm:interpol_intersection;1}}{=} S\vec{v}+\vec{x}-S\vec{x},\\
\vec{b} &:= T\vec{z} = T\vec{v}-T\vec{x},
\end{align*}
we have $\vec{v}=\vec{a}+\vec{b}$.
Furthermore, we have the following estimates:
\begin{align*}
\nrm{\vec{a}}_{\mf{S}(\ee^{-\theta})}
&\leq \nrm{S\vec{v}}_{\mf{S}(\ee^{-\theta})}   + \nrm{\vec{x}}_{\mf{S}(\ee^{-\theta})} + \nrm{S\vec{x}}_{\mf{S}(\ee^{-\theta})} \\
&\stackrel{\eqref{eq:thm:interpol_intersection;2}}{\lesssim} \nrm{\vec{v}}_{\mf{S}(\ee^{-\theta})+\mf{T}(\ee^{1-\theta})}   + \nrm{\vec{x}}_{\mf{S}(\ee^{-\theta})} + \nrm{S\vec{x}}_{\mf{S}(\ee^{-\theta})} \\
&\stackrel{\eqref{eq:rmk:thm:interpol_intersection;1}}{\lesssim} \nrm{\vec{v}}_{\mf{S}(\ee^{-\theta})+\mf{T}(\ee^{1-\theta})}   + \nrm{\vec{x}}_{\mf{S}(\ee^{-\theta})} \\ &\stackrel{\eqref{eq:thm:interpol_intersection;proof;1}}{\lesssim}\max\big\{\nrm{\vec{v}}_{\mf{S}(\ee^{-\theta})+\mf{T}(\ee^{1-\theta})},\nrm{\vec{v}}_{\mf{S}(\ee^{-\theta})+\mf{U}(\ee^{1-\theta})}\big\}, \intertext{and}
\nrm{\vec{b}}_{\mf{T}(\ee^{1-\theta})}
&\leq \nrm{T\vec{v}}_{\mf{T}(\ee^{1-\theta})} + \nrm{T\vec{x}}_{\mf{T}(\ee^{1-\theta})}  \\
&\stackrel{\eqref{eq:thm:interpol_intersection;3}}{\lesssim} \nrm{\vec{v}}_{\mf{S}(\ee^{-\theta})+\mf{T}(\ee^{1-\theta})} + \nrm{T\vec{x}}_{\mf{T}(\ee^{1-\theta})} \\
&\stackrel{\eqref{eq:rmk:thm:interpol_intersection;2}}{\lesssim} \nrm{\vec{v}}_{\mf{S}(\ee^{-\theta})+\mf{T}(\ee^{1-\theta})} + \nrm{\vec{x}}_{\mf{S}(\ee^{-\theta})} \\ &\stackrel{\eqref{eq:thm:interpol_intersection;proof;1}}{\lesssim}\max \big\{\nrm{\vec{v}}_{\mf{S}(\ee^{-\theta})+\mf{T}(\ee^{1-\theta})}, \nrm{\vec{v}}_{\mf{S}(\ee^{-\theta})+\mf{U}(\ee^{1-\theta})}\big\},
\intertext{and finally}
\nrm{\vec{b}}_{\mf{U}(\ee^{1-\theta})}
&=\nrm{T\vec{z}}_{\mf{U}(\ee^{1-\theta})}
\stackrel{\eqref{eq:thm:interpol_intersection;4}}{\lesssim} \nrm{\vec{z}}_{\mf{U}(\ee^{1-\theta})}
\stackrel{\eqref{eq:thm:interpol_intersection;proof;1}}{\leq} 2\nrm{\vec{v}}_{\mf{S}(\ee^{-\theta})+\mf{U}(\ee^{1-\theta})}.
\end{align*}
Therefore,
\begin{align*}
\nrm{\vec{v}}_{\mf{S}(\ee^{-\theta})+[\mf{T}\cap \mf{U}](\ee^{1-\theta})}
&\leq \nrm{\vec{a}}_{\mf{S}(\ee^{-\theta})} + \nrm{\vec{b}}_{[\mf{T}\cap \mf{U}](\ee^{1-\theta})} \\
&= \nrm{\vec{a}}_{\mf{S}(\ee^{-\theta})} + \max\{\nrm{\vec{b}}_{\mf{T}(\ee^{1-\theta})},\nrm{\vec{b}}_{\mf{U}(\ee^{1-\theta})}\}  \\
&\lesssim \max\big\{\nrm{\vec{v}}_{\mf{S}(\ee^{-\theta})+\mf{T}(\ee^{1-\theta})},\nrm{\vec{v}}_{\mf{S}(\ee^{-\theta})+\mf{U}(\ee^{1-\theta})}\big\}.
\end{align*}
By Theorem~\ref{theorem:meanmethod} we thus obtain that
\begin{equation*}
\nrm{v}_{(\mc{X},\mc{Y} \cap \mc{Z})_\theta} \lesssim \max\big\{\nrm{v}_{(\mc{X},\mc{Y})_\theta},\nrm{v}_{(\mc{X},\mc{Z})_\theta}\big\},
\end{equation*}
finishing the proof.
\end{proof}

As in \cite{Pe74}, we obtain the following corollary in the special case that $\mc{Y}$ is the sequentially structured Banach space induced from $\mc{X}$ by an operator $A$ on $X$, which is an extension of a classical result by Grisvard \cite{Gr72a} in the setting of real interpolation.

\begin{corollary}\label{cor:thm:interpol_intersection;operators}
Let $(\mc{X},\mc{Z})$ be a compatible couple of sequentially structured Banach and let $A$ be a closed, injective linear operator on $X$ with $\{\ee^{k} : k \in \Z\} \subseteq \rho(-A)$ such that $(\ee^{k}+A)^{-1}$ has an extension to a linear operator on $X+Z$ for which $Z$ is an invariant subspace for every $k \in \Z$.
Assume that $\big(e^{k}(e^{k}+A)^{-1}\big)_{k \in \Z}$ is both $\mf{S}$-bounded and $\mf{U}$-bounded.
Let $\mc{Y} = [\dot{\mrm{D}}(A),\mf{S}_A]$ with $$\mf{S}_A = \left\{ \vec{y} \in \ell^0(\Z;\dot{\mrm{D}}(A)) : (Ay_k)_{k \in \Z} \in \mf{S} \right\}.$$
Then, for all $\theta \in (0,1)$, we have the intersection representation
\eqref{eq:thm:interpol_intersection;statement}.
\end{corollary}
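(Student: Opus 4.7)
The plan is to apply Theorem~\ref{thm:interpol_intersection} with the resolvent splitting
\begin{equation*}
S_k := \ee^k(\ee^k+A)^{-1}, \qquad T_k := A(\ee^k+A)^{-1} = I - S_k,
\end{equation*}
extended to $X+Z$ via the hypothesized extension of $(\ee^k+A)^{-1}$, on which $Z$ is invariant. Condition~\eqref{eq:thm:interpol_intersection;1} is then immediate from the resolvent identity $(\ee^k+A)(\ee^k+A)^{-1}=I$.

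The core task is to verify the four operator-theoretic bounds that Remark~\ref{rmk:thm:interpol_intersection} extracts from \eqref{eq:thm:interpol_intersection;2}--\eqref{eq:thm:interpol_intersection;3}, plus the analogue on $Z$ giving \eqref{eq:thm:interpol_intersection;4}. Two of these are direct: $(S_k)_k$ is $\mf{S}$-bounded by hypothesis, and $(T_k)_k = (I-S_k)_k$ then inherits $\mf{S}$-boundedness automatically. The two cross bounds,
\begin{equation*}
(\ee^{-k}S_k)_k:\mf{T}\to\mf{S} \quad\text{and}\quad (\ee^k T_k)_k:\mf{S}\to\mf{T},
\end{equation*}
form the technical heart and rest on the commutation of $A$ with $(\ee^k+A)^{-1}$, which yields the operator identities
\begin{equation*}
A\cdot \ee^{-k}S_k = T_k, \qquad A\cdot S_k = \ee^k T_k.
\end{equation*}
Read against the definition $\mf{T}=\mf{S}_A$ and the isomorphism $A:\dot{\mrm{D}}(A)\to X$ that it encodes, these identities let the $\mf{S}$-boundedness of $(T_k)_k$ and $(S_k)_k$ be translated into the required mixed bounds. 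Condition~\eqref{eq:thm:interpol_intersection;4} follows identically from the $\mf{U}$-boundedness of $(S_k)_k$ on $Z$, since $T_k = I - S_k$ inherits $\mf{U}$-boundedness and $Z$-invariance from $S_k$. Once the bounds are in place, Theorem~\ref{thm:interpol_intersection} yields the intersection identity \eqref{eq:thm:interpol_intersection;statement}.

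The main obstacle is the careful execution of the two cross bounds. The weights $\ee^{\pm k}$ appearing in $(\ee^{-k}S_k)_k$ and $(\ee^k T_k)_k$ are not in $\ell^{\infty}$ and so cannot be treated as pointwise multipliers on $\mf{S}$; instead the commutation identities above must be combined with the $A$-isomorphism built into $\mf{S}_A$. Concretely, for $\vec{y}\in\mf{T}$ the identity $A(\ee^{-k}S_k y_k)=T_k y_k$ identifies the sequence $(\ee^{-k}S_k y_k)_k$ with the $\dot{\mrm{D}}(A)$-valued sequence whose $A$-image is $(T_k y_k)_k$, which lies in $\mf{S}$ with the right norm control by the $\mf{S}$-boundedness of $(T_k)_k$; the $\mf{S}$-bound on $(\ee^{-k}S_k y_k)_k$ itself then follows by passing through the isomorphism $A:\dot{\mrm{D}}(A)\to X$. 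The symmetric argument, applied to $A(\ee^k T_k x_k) = \ee^k T_k(Ax_k)$-type identities, handles the bound on $(\ee^k T_k x_k)_k$. Making this identification watertight — so that all sequences can be read compatibly in $X$ or in $\dot{\mrm{D}}(A)$ and the resolvent identity closes the loop — is the essential point; the remainder of the argument is routine bookkeeping.
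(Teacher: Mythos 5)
Your overall strategy is the paper's: feed a Grisvard-type resolvent quasilinearization into Theorem~\ref{thm:interpol_intersection} via Remark~\ref{rmk:thm:interpol_intersection}. However, you have swapped the roles of $S_k$ and $T_k$, and the hypotheses of Theorem~\ref{thm:interpol_intersection} are not symmetric in $S$ and $T$: the operator $T_k$ must map into $Y+Z=\dot{\mrm{D}}(A)+Z$ and carry the $\mf{T}(\ee)$- and $\mf{U}$-bounds, so $T_k$ has to be the resolvent piece (whose range lies in $\mrm{D}(A)$), while $S_k=I-T_k=A(\cdot+A)^{-1}$ is the complementary piece that stays in $X$. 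This is the paper's assignment. With your choice $T_k=A(\ee^{k}+A)^{-1}$, the operator $T_k$ does not map $X$ into $\dot{\mrm{D}}(A)+Z$ at all, and the required bound $\nrm{(\ee^{k}T_k)_{k}}_{\mf{S}\to\mf{T}}<\infty$ would amount to controlling $\nrm{(A^2\ee^{k}(\ee^{k}+A)^{-1}x_k)_{k}}_{\mf{S}}$ by $\nrm{\vec{x}}_{\mf{S}}$, which is hopeless.

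The place where your argument actually breaks is the sentence ``the $\mf{S}$-bound on $(\ee^{-k}S_ky_k)_k$ then follows by passing through the isomorphism $A:\dot{\mrm{D}}(A)\to X$.'' Controlling $(Au_k)_{k}$ in $\mf{S}$ controls $(u_k)_{k}$ in $\mf{S}_A=\mf{T}$, \emph{not} in $\mf{S}$: the homogeneous norm $\nrm{u}_{\dot{\mrm{D}}(A)}=\nrm{Au}_X$ gives no control of $\nrm{u}_X$ (indeed $\dot{\mrm{D}}(A)$ need not even embed into $X$ when $A$ is merely injective). So your identity $A(\ee^{-k}S_ky_k)=T_ky_k$ proves a $\mf{T}\to\mf{T}$ estimate where Remark~\ref{rmk:thm:interpol_intersection} demands a $\mf{T}\to\mf{S}$ estimate. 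With the correct assignment no inversion of $A$ is ever needed: for $w\in\dot{\mrm{D}}(A)$ one uses the commutation $AT_kw=T_k(Aw)$ to get $\nrm{(T_k)_k}_{\mf{T}\to\mf{T}}<\infty$, and $S_kw=(\lambda_k+A)^{-1}(Aw)$ to reduce the cross bound $\nrm{(\ee^{-k}S_k)_k}_{\mf{T}\to\mf{S}}$ to the assumed $\mf{S}$-boundedness of the resolvent family applied to the $X$-valued sequence $(Aw_k)_k$ — provided the resolvent parameter $\lambda_k$ is matched to the weight in Theorem~\ref{thm:interpol_intersection}. (Checking this matching is worthwhile: with the weights as stated one needs $\lambda_k=\ee^{-k}$, i.e.\ $T_k=\ee^{-k}(\ee^{-k}+A)^{-1}$, so that $\ee^{-k}S_kw_k=T_k(Aw_k)$ and $A(\ee^{k}T_kx_k)=(I-T_k)x_k$; the paper's proof writes $\ee^{k}$ here, which is an indexing slip, but the structure of the argument is as just described.)
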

\begin{proof}
For $k \in \Z$ we define the linear operators $S_k:X+Y \to X$ and $T_k:X+Y+Z \to Y+Z$ by
\begin{align*}
S_k &:= I-\ee^{k}(\ee^{k}+A)^{-1} = A(\ee^{k}+A)^{-1}, \\
T_k &:= \ee^{k}(\ee^{k}+A)^{-1}.
\end{align*}
Then we clearly have \eqref{eq:thm:interpol_intersection;1}.
From the $\mf{S}$-boundedness of $\big(e^{k}(e^{k}+A)^{-1}\big)_{k \in \Z}$
it follows that \eqref{eq:rmk:thm:interpol_intersection;1} and \eqref{eq:rmk:thm:interpol_intersection;2} are satisfied, and thus by Remark~\ref{rmk:thm:interpol_intersection}, that \eqref{eq:thm:interpol_intersection;2} and \eqref{eq:thm:interpol_intersection;3} are satisfied. Furthermore, the $\mf{U}$-boundedness of $\big(e^{k}(e^{k}+A)^{-1}\big)_{k \in \Z}$ yields that \eqref{eq:thm:interpol_intersection;4} is satisfied.
We can thus apply Theorem~\ref{thm:interpol_intersection} to obtain that there is the intersection representation
\eqref{eq:thm:interpol_intersection;statement} for every $\theta \in (0,1)$.
\end{proof}

In \cite[Section~3]{Pe74} it is posed as an open problem to obtain a version of \cite[Korollar~1.1]{Pe74} for the complex interpolation functor.
On an abstract level, Theorem~\ref{thm:interpol_intersection} provides a solution to this problem in the full generality of sequentially structured interpolation, which contains the complex interpolation functor as a special case by Example~\ref{example:interpolationmethods}\ref{it:ssicomplex}.
However, it remains the question whether Theorem~\ref{thm:interpol_intersection} or Corollary~\ref{cor:thm:interpol_intersection;operators} has any practical relevance in this specific case.
Is it for instance possible to derive \cite[Lemma~9.5]{EPS03} from Corollary~\ref{cor:thm:interpol_intersection;operators}?

\bibliographystyle{alpha}
\bibliography{seqstructbib}

\end{document}